        \pgfpointadd{\pgfpointdecoratedinputsegmentlast}{\pgfpoint{5pt}{5pt}}
\newcommand\Forget{{\mathcal F}}
\newcommand\Antipush{{\mathcal E}}
\newcommand\Push{{\operatorname{push}}}
\newcommand\Braid{{\mathbf{Braid}}}
\newcommand\knBraid{{\mathbf{knBraid}}}
\newcommand\Julia{{\mathcal J}}
\newcommand\wtf{{\widetilde f}}
\newcommand\wtg{{\widetilde g}}
\newcommand\wtA{{\widetilde A}}
\newcommand\wtB{{\widetilde B}}
\newcommand\wtC{{\widetilde C}}
\newcommand\wtG{{\widetilde G}}
\newcommand\wtH{{\widetilde H}}
\newcommand\wtord{{\widetilde\ord}}
\newtheorem{mainprop}[mainthm]{Proposition}
\begin{document}
\title[Erasing maps, orbispaces, and the Birman exact sequence]{Algorithmic aspects of branched coverings III/V.\\ Erasing maps, orbispaces, and the Birman exact sequence}
\author{Laurent Bartholdi}
\email{laurent.bartholdi@gmail.com}
\address{\'Ecole Normale Sup\'erieure, Paris \emph{and} Mathematisches Institut, Georg-August Universit\"at zu G\"ottingen}
\author{Dzmitry Dudko}
\email{dzmitry.dudko@gmail.com}
\address{Jacobs University, Bremen}
\thanks{Partially supported by ANR grant ANR-14-ACHN-0018-01, DFG grant BA4197/6-1 and ERC grant ``HOLOGRAM''}
\date{February 8, 2018}
\begin{abstract}
  Let $\wtf\colon(S^2,\wtA)\selfmap$ be a Thurston map and let
  $M(\wtf)$ be its mapping class biset: isotopy classes rel
  $\wtA$ of maps obtained by pre- and post-composing $\wtf$ by
  the mapping class group of $(S^2,\wtA)$. Let
  $A\subseteq\wtA$ be an $\wtf$-invariant subset, and let
  $f\colon(S^2,A)\selfmap$ be the induced map. We give an analogue of
  the Birman short exact sequence: just as the mapping class group $\Mod(S^2,\wtA)$ is
  an iterated extension of $\Mod(S^2,A)$ by fundamental groups of
  punctured spheres, $M(\wtf)$ is an iterated extension of $M(f)$ by
  the dynamical biset of $f$.

  Thurston equivalence of Thurston maps classically reduces to a
  conjugacy problem in mapping class bisets.  Our short exact sequence
  of mapping class bisets allows us to reduce in polynomial time the
  conjugacy problem in $M(\wtf)$ to that in $M(f)$.  In case $\wtf$ is
  geometric (either expanding or doubly covered by a hyperbolic torus
  endomorphism) we show that the dynamical biset $B(f)$ together with
  a ``portrait of bisets'' induced by $\wtA$ is a complete conjugacy
  invariant of $\wtf$.

  Along the way, we give a complete description of bisets of
  $(2,2,2,2)$-maps as a crossed product of bisets of torus
  endomorphisms by the cyclic group of order $2$, and we show that
  non-cyclic orbisphere bisets have no automorphism.

  We finally give explicit, efficient algorithms that solve the
  conjugacy and centralizer problems for bisets of expanding or torus
  maps.
\end{abstract}
\maketitle

\section{Introduction}
A \emph{Thurston map} is a branched covering $f\colon S^2\selfmap$ of
the sphere whose \emph{post-critical set}
$P_f\coloneqq\bigcup_{n\ge1}f^n(\text{critical points of }f)$ is
finite.

Extending~\cite{nekrashevych:ssg}, we developed
in~\cites{bartholdi-dudko:bc0,bartholdi-dudko:bc1,bartholdi-dudko:bc2,bartholdi-dudko:bc4}
an algebraic machinery that parallels the topological theory of
Thurston maps: one considers the \emph{orbisphere} $(S^2,P_f,\ord_f)$,
with $\ord_f\colon P_f\to\{2,3,\dots,\infty\}$ defined by
\[\ord_f(p)=\lcm\{\deg_q(f^n)\mid n\ge0, f^n(q)=p\},
\]
and the \emph{orbisphere fundamental group} $G=\pi_1(S^2,P_f,\ord,*)$,
which has one generator of order $\ord_f(p)$ per point $p\in P_f$, and
one relation. Then $f$ is encoded in the structure of a
\emph{$G$-$G$-biset} $B(f)$: a set with commuting left and right
actions of
$G$. By~\cite{bartholdi-dudko:bc2}*{Theorem~\ref{bc2:thm:dehn-nielsen-baer++}}
(see also Corollary~\ref{cor:OrbiIsComplInvar}), the isomorphism class
of $B(f)$ is a complete invariant of $f$ up to isotopy.

There is a missing element to this description, that of ``extra marked
points''. In the process of a decomposition of spheres into smaller
pieces, one is led to consider Thurston maps with extra marked points,
such as periodic cycles or preimages of post-critical points. They can
be added to $P_f$, but they have order $1$ under $\ord_f$ so are
invisible in $\pi_1(S^2,P_f,\ord_f)$. The orbisphere orders can be
made artificially larger, but then other properties, such as the
characterization of expanding maps as those having a ``contracting''
biset (see~\cite{bartholdi-dudko:bc4}*{Theorem~\ref{bc4:thm:main}})
are lost.

We resolve this issue, in this article, by introducing \emph{portraits
  of bisets} and exhibiting their algorithmic properties. As an
outcome, the conjugacy and centralizer problems for Thurston maps with
extra marked points reduces to that of the underlying map with only
$P_f$ marked.

This allows us, in particular, to understand algorithmically maps
doubly covered by torus endomorphisms.

\subsection{Maps and bisets}
The natural setting is an orbisphere $(S^2,\wtA,\wtord)$ and a
sub-orbisphere $(S^2,A,\ord)$; namely one has $A\subseteq\wtA$ and
$\ord(a)\mid\wtord(a)$ for all $a\in A$. There is a
corresponding morphism of fundamental groups
$\pi_1(S^2,\wtA,\wtord,*)\eqqcolon\wtG\twoheadrightarrow
G\coloneqq\pi_1(S^2,A,\ord,*)$, called a \emph{forgetful morphism}. We
considered in~\cite{bartholdi-dudko:bc2}*{\S\ref{bc2:ss:orbispheres}}
the \emph{inessential} case in which
$\ord(a)>1\Leftrightarrow\wtord(a)>1$, so that only the type
of singularities are changed by the forgetful functor.  Here we are
more interested in the \emph{essential} case, in which
$A\subsetneqq\wtA$.

In this Introduction, we restrict ourselves to self-maps of
orbispheres; the general non-dynamical case is covered
in~\S\ref{ss:forgetful}. An orbisphere self-map
$f\colon(S^2,A,\ord)\selfmap$ is a branched covering
$f\colon S^2\selfmap$ with $\ord(p)\deg_p(f)\mid\ord(f(p))$ for all
$p\in S^2$. Given a map $f\colon(S^2,A)\selfmap$, there may exist
different orbisphere structures on $(S^2,A)$ turning $f$ into an
orbisphere self-map; in particular, the maximal one, in which
$\ord(a)=\infty$ for every $a\in A$, and the minimal one, in which
$\ord(a)=\lcm\{\deg(f^n@x)\mid n\ge0, x\in f^{-n}(a)\}$. The self-map
$f$ is encoded by the biset
\begin{equation}\label{def:B(f)}
  B(f)\coloneqq\{\beta\colon[0,1]\to S^2\setminus A\mid
  \beta(0)=*=f(\beta(1))\}/{\approx_{A,\ord}},
\end{equation}
where $\approx_{A,\ord}$ denotes homotopy rel $(A,\ord)$, and the
commuting left and right $\pi_1(S^2,A,\ord,*)$-actions are given
respectively by concatenation of paths or their appropriate $f$-lift.

Bisets form a convenient category with products, detailed
in~\cite{bartholdi-dudko:bc1}. A \emph{self-conjugacy} of a
$G$-$G$-biset $B$ is a pair of maps
$(\phi\colon G\selfmap,\beta\colon B\selfmap)$ with
$\beta(h b g)=\phi(h)\beta(b)\phi(g)$; and an \emph{automorphism} of
$B$ is a self-conjugacy with $\phi=\one$. Cyclic bisets (that of maps
$z\mapsto z^d\colon (\widehat{\C},\{0,\infty\})\selfmap$) have a special status; for the others,
\begin{mainprop}[= Corollary~\ref{cor:NoGhostAut}]\label{mainprop:NoGhostAut}
  If $\subscript G B_G$ is a non-cyclic orbisphere biset then
  $\Aut(B)=1$.
\end{mainprop}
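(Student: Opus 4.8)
The plan is to read off from an automorphism $\beta$ of $B$ a wreath-recursion datum and show it is trivial, the two facts to be exploited being that a non-cyclic orbisphere group $G=\pi_1(S^2,A,\ord,*)$ has trivial centre and that — with one family of exceptions — no nontrivial element of $G$ centralises the image of a section of $B$. I expect the Euclidean orbifolds $(2,2,2,2),(2,3,6),(2,4,4),(3,3,3)$ to be the only real obstacle, and I would treat them separately via the crossed-product description of their bisets.

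Fix orbit representatives $b_1,\dots,b_d$ of the finite set $G\backslash B$ together with the wreath recursion $g\mapsto(g|_1,\dots,g|_d)\sigma_g$, each $\sigma_g$ a permutation of $\{1,\dots,d\}$. Since $\beta$ commutes with the free left $G$-action it permutes left orbits, so $\beta(b_i)=h_i\,b_{\tau(i)}$ for some $h_i\in G$ and some permutation $\tau$. Commutation with the right $G$-action then forces (a) $\tau\sigma_g=\sigma_g\tau$ for all $g$, so that $\tau$ is an automorphism of the right $G$-set $G\backslash B$ — transitive because $B$ is indecomposable, whence already $\Aut(B)$ is finite — and (b) $g|_{\tau(i)}=h_i^{-1}\,(g|_i)\,h_{\sigma_g(i)}$ for all $g,i$. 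First I would show $\tau=\one$: the peripheral generator $g_a$ of $G$ acts on $G\backslash B$ by a permutation whose cycle lengths are the local degrees of the covering over $a$, and since $\beta$ induces $\one$ on $G$ it fixes every peripheral conjugacy class, so $\tau$ commutes cycle-by-cycle with each $\sigma_{g_a}$; combining the $\ord$-compatibility of the orbisphere biset with transitivity over all $a\in A$ pins $\tau$ down to the identity. With $\tau=\one$, restricting (b) to the stabiliser $H_i\le G$ of the letter $i$ (where $\sigma_g(i)=i$) gives $h_i\in C_G\bigl(\phi_i(H_i)\bigr)$, with $\phi_i\colon H_i\to G,\ g\mapsto g|_i$ the virtual endomorphism of $B$ based at $i$; and once one $h_i$ is trivial, relation (b) with $g$ varying propagates $h_j=1$ to all $j$ by transitivity, so $\beta=\one$.

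Everything therefore reduces to the assertion $C_G(\phi_i(H_i))=1$. When $G$ is of hyperbolic type this is immediate: $G$ is a non-elementary word-hyperbolic group with trivial centre, $\phi_i(H_i)$ is an infinite, not virtually cyclic subgroup, and in such a group the centraliser of a non-elementary subgroup is trivial. \textbf{The genuine difficulty is the Euclidean case}, where $G$ contains a finite-index copy of $\mathbb Z^{2}$ with centraliser all of $\mathbb Z^{2}$, so the single-section computation is insufficient. Here I would invoke the structure theorem proved in the body of the paper — a $(2,2,2,2)$-biset is a crossed product of a biset of a torus endomorphism by the group of order $2$, and analogously for the other three Euclidean orbifolds — and argue at the torus level: a nontrivial $\beta$ would descend to a nontrivial automorphism of the torus biset, necessarily translation by a nonzero deck vector, which would have to commute with the order-$2$ twist $x\mapsto-x$ and fix the four cone ($2$-torsion) points; these two constraints force the vector to be zero. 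This closes the Euclidean case, and with it the proposition.

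Finally, when $B=B(f)$ is presented geometrically there is a shorter route worth recording: via the orbisphere Dehn--Nielsen--Baer correspondence of \cite{bartholdi-dudko:bc2}*{Theorem~\ref{bc2:thm:dehn-nielsen-baer++}}, an element of $\Aut(B)$ is realised by a homeomorphism $g$ of $(S^2,A)$ with $f\circ g=f$ inducing the identity on $G$; such a $g$ is a deck transformation of $f$, hence (the deck group being finite) of finite order and orientation-preserving, and, inducing $\one$ on $G$, it fixes every point of $A$; since $\#A\ge 3$ as $B$ is non-cyclic and a nontrivial finite-order orientation-preserving self-homeomorphism of $S^{2}$ has exactly two fixed points, $g$ must be the identity. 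Either way $\Aut(B)=1$.
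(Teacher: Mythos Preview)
Your approach is broadly correct but substantially more laborious than the paper's, and you miss a simplification that would collapse your case analysis entirely. The paper's proof has two short steps. First (Proposition~\ref{prop:NoGhostAut}), it shows $\Aut(B)$ acts freely on $B$: if $\tau$ fixes left orbits so that $\tau(b)=t b$, then for \emph{any} $h\in H$ right-transitivity furnishes $g\in G$ with $h b=b g$, and the chain $h t b=\tau(h b)=\tau(b)g=t h b$ forces $t\in Z(H)=1$. Second, since the action is free, $B/\Aut(B)$ would be a proper left-free right-transitive quotient if $\Aut(B)\neq1$; but an orbisphere biset admits no proper quotient, because each peripheral conjugacy class of $H$ appears exactly once among the lifts (condition~(3) of the definition in~\cite{bartholdi-dudko:bc2}).

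The key point you overlook is that right-transitivity makes your virtual endomorphism $\phi_i\colon H_i\to H$ \emph{surjective}: every $h\in H$ satisfies $h b_i=b_i g$ for some $g$, so $h=g|_i$ with $g\in H_i$. Thus $C_H(\phi_i(H_i))=Z(H)$, which is trivial for every non-cyclic orbisphere group --- hyperbolic \emph{and} Euclidean alike. Your entire hyperbolic/Euclidean split, and the crossed-product analysis you propose for the $(2,2,2,2)$ case, are therefore unnecessary. Your step~2 ($\tau=\one$) is the analogue of the paper's ``no proper quotient'' argument, but your justification (``transitivity over all $a\in A$ pins $\tau$ down'') is too vague: knowing that $\tau$ fixes each marked cycle setwise does not by itself exclude rotations within cycles or permutations of unmarked cycles. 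The paper sidesteps this by passing to the quotient biset and invoking the exactly-once condition globally.

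Your alternative geometric route is attractive and close in spirit to the paper's Lemma~\ref{lem:M* is left free}(1), but the assertion that Dehn--Nielsen--Baer directly realises $\beta\in\Aut(B)$ as a deck transformation $g$ with $f\circ g=f$ and $g_*=\one$ requires an argument you have not supplied; DNB gives uniqueness of $f$ up to isotopy from $B(f)$, not a correspondence between $\Aut(B(f))$ and such $g$.
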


Consider $\wtA=A\sqcup\{d\}$, obtained by adding a point to
the orbisphere $(S^2,A,\ord)$, resulting in an orbisphere
$(S^2,\wtA,\wtord)$. There is a short exact sequence,
called the \emph{Birman exact sequence},
\begin{equation}\label{eq:birmanses}
  1\to\pi_1(S^2\setminus A,d)\to\Mod(S^2,\wtA)\to\Mod(S^2,A)\to1
\end{equation}
(note that the orbisphere structure is ignored.) Let
$f\colon(S^2,A,\ord)\selfmap$ be an orbisphere map, and recall that its
\emph{mapping class biset} is the set
\[M(f)\coloneqq\{m' f m''\mid m',m''\in\Mod(S^2,A)\}/{\approx_A},
\]
with natural left and right actions of $\Mod(S^2,A)$. Assume first
that the extra point $d$ is fixed by $f$, and write
$\wtf\colon(S^2,\wtA,\wtord)\selfmap$ for the map $f$ acting
on $(S^2,\wtA,\wtord)$.  There is then a natural map
$M(\wtf)\to M(f)$, and we will see that it is an \emph{extension of
  bisets} (see Definition~\ref{defn:ses}):
\begin{mainthm}[= Corollary~\ref{cor:thm:extension}]\label{mainthm:extension}
  Subordinate to the exact sequence of groups~\eqref{eq:birmanses},
  there is a short exact sequence of bisets
  \[\subscript{\pi_1(S^2\setminus A,d)}{\bigg(\bigsqcup_{f'\in M(f)}B(f')\bigg)}_{\pi_1(S^2\setminus A,d)}\hookrightarrow M(\wtf)\twoheadrightarrow M(f)
  \]
  where $B(f')$ denotes the biset of $f'\colon (S^2,A)\selfmap$ rel
  the base point $d$.
\end{mainthm}

The statement can be easily extended to
$\wtA=A\sqcup\{\text{periodic points}\}$: for a cycle of length
$\ell$, the fibres in the short exact sequence are of the form
$B(f)^\ell$ with left- and right-action twisted along an
$\ell$-cycle. This case is at the heart of our reduction of the
conjugacy problem from $M(\wtf)$ to $M(f)$. Approximately the same
picture applies if $D$ contains preimages of points in $A$, but there
are subtle complications which are taken care of
in~\S\ref{ss:forgetful}, see Theorem~\ref{thm:4Ext of MCB}. In
essence, the presence of preperiodic points imposes a finite index
condition on centralizers, and splits conjugacy classes into finitely
many pieces, see the remark after Lemma~\ref{lem:Zportrait=Zhpo}.

\subsection{Portraits of bisets}
Portraits of bisets emerge from a simple remark: fixed points of $f$
naturally yield conjugacy classes in $B(f)$. Indeed if $f(p)=p$ then
choose a path $\ell\colon[0,1]\to S^2\setminus A$ from $*$ to $p$, and
consider $c_p\coloneqq\ell\#\ell^{-1}\lift f p\in B(f)$, which is
well-defined up to conjugation by $G$, namely a different choice of
$\ell$ would yield $g^{-1}c_p g$ for some $g\in G$. Conversely, if $f$
expands a metric, then every conjugacy class in $B(f)$ corresponds to
a unique repelling $f$-fixed point.

A \emph{portrait of bisets} in $B(f)$, see
Definition~\ref{dfn:PrtrOfBst}, consists of a map
$f_*\colon\wtA\selfmap$ extending $f\restrict A$; a collection of
\emph{peripheral} subgroups $(G_a)_{a\in\wtA}$ of $G$: they
are such that every $G_a$ is cyclic and generated by a ``lollipop''
around $a$; and a collection of cyclic $G_a$-$G_{f_*(a)}$-bisets
$(B_a)_{a\in\wtA}$.  Two portraits of bisets
$(G_a,B_a)_{a\in\wtA}$ and $(G'_a,B'_a)_{a\in\wtA}$
parameterized by the same $f_*\colon\wtA\selfmap$ are
\emph{conjugate} if there exist $(\ell_a)_{a\in\wtA}$ in $G^\wtA$
such that $G_a=\ell_a^{-1}G'_a\ell_a$ and
$B_a=\ell_a^{-1}B'_a\ell_{f_*(a)}$. The set of self-conjugacies of a
portrait is called its \emph{centralizer}.

In case $A=\wtA$, every biset admits a unique \emph{minimal}
portrait up to conjugacy, which may be understood geometrically as
follows. Consider a branched covering $f\colon (S^2,A)\selfmap$. For
every $a\in A$ choose a small disk neighbourhood $\mathcal D_a$ of it; up
to isotopy we may assume that
$f\colon \mathcal D_a \setminus\{a\}\to\mathcal D_{f(a)} \setminus\{f(a)\}$ is a
covering. A choice of embeddings
$\pi_1(\mathcal D_a\setminus\{a\})\hookrightarrow\pi_1(S^2,A)$ yields a family
$(G_a)_{a\in A}$ of peripheral subgroups; and the corresponding
embeddings
$B(f\colon \mathcal D_a\setminus\{a\} \to\mathcal D_{f(a)}\setminus
\{f(a)\})\hookrightarrow B(f)$ yields a minimal portrait of bisets
$(G_a,B_a)_{a\in A}$ in $B(f)$.

In case $\wtA=A\sqcup\{e_1,\dots,e_n\}$ with $(e_1,\dots,e_n)$
a periodic cycle, the bisets $B_{e_i}$ consist of single points, and
almost coincide with Ishii and Smillie's notion of \emph{homotopy
  pseudo-orbits}, see~\cite{ishii-smillie:shadowing}
and~\S\ref{ss:shadowing}: imagine that $(e_1,\dots,e_n)\subset S^2$ is
almost a periodic cycle, in that $f(e_i)$ is so close to $e_{i+1}$
that there is a well-defined ``shortest'' path $\ell_i$ from the first
to the second, indices being read modulo $n$. Choose for each $i$ a
path $m_i$ from $*$ to $e_i$. Set then
$B_i\coloneqq\{m_i\#(\ell_{i+1}\#m_{i+1}^{-1})\lift f{e_i}\}$, the
portrait of bisets encoding $(e_1,\dots,e_n)$.

Theorem~\ref{mainthm:extension} is proven via portraits of bisets. There
is a natural \emph{forgetful intertwiner of bisets}
\begin{equation}\label{eq:forgetinter}
  \subscript{\wtG}{\wtB}_{\wtG} \coloneqq B(\wtf\colon(S^2,\wtA,\wtord)\selfmap )\to B(f\colon(S^2,A,\ord)\selfmap)\eqqcolon \subscript G B_G
\end{equation}
given by $b\mapsto1\otimes b\otimes 1$ where
$B\cong G\otimes_{\wtG}\wtB\otimes_\wtG G$. Every portrait in $\wtB$,
for example its minimal one, induces a portrait in $B$ via the
forgetful map. Let us denote by $\Mod(S^2,A)$ the pure mapping class
group of $S^2\setminus A$. A class $m\in \Mod(S^2,\wtA)$ is called
\emph{knitting} if the image of $m$ is trivial in
$\Mod(S^2,A\sqcup \{a\})$ for every $a\in\wtA\setminus A$. We prove:

\begin{mainthm}[= Theorem~\ref{thm:portrait}]\label{main:thm:A}
  Let $\subscript G B_G$ be an orbisphere biset with portrait
  $f_*\colon A\selfmap$, and let $\wtG\twoheadrightarrow G$ and
  $f_*\colon\wtA\selfmap$ and $\deg\colon\wtA\to\N$ be compatible
  extensions.

  There is then a bijection between, on the one hand, conjugacy
  classes of portraits of bisets $(G_a,B_a)_{a\in\wtA}$ in $B$
  parameterized by $f_*$ and $\deg$ and, on the other hand,
  $\wtG$-$\wtG$-bisets projecting to $B$ under
  $\wtG\twoheadrightarrow G$ considered up to composition with the
  biset of a knitting element. This bijection maps every minimal
  portrait of bisets of $\wtB$ to $(G_a,B_a)_{a\in\wtA}$.
\end{mainthm}

\subsection{Geometric maps}\label{ss:geometric}
A homeomorphism $f\colon (S^2,A)\selfmap$ is \emph{geometric} if $f$
is either of finite order ($f^n=\one$ for some $n>0$) or pseudo-Anosov
(there are two transverse measured foliations preserved by $f$ such
that one foliation is expanded by $f$ while another is contracted). In
both cases, $f$ preserves a geometric structure on $S^2$, and every
surface homeomorphism decomposes, up to isotopy, into geometric
pieces, see~\cite{thurston:surfaces}.

Consider now a non-invertible sphere map $f\colon (S^2,A)\selfmap$,
and let $A^\infty\subseteq A$ denote the forward orbit of the periodic
critical points of $f$. The map $f$ is \emph{B\"ottcher expanding} if
there exists a metric on $S^2\setminus A^\infty$ that is expanded by
$f$, and such that $f$ is locally conjugate to
$z\mapsto z^{\deg_a(f)}$ at every $a\in A^\infty$. The map $f$ is
\emph{geometric} if $f$ is either
\begin{itemize}
\item[\Exp] B\"ottcher expanding; or
\item[\Tor] a quotient of a torus endomorphism
  $z\mapsto M z+q\colon\R^2/\Z^2\selfmap$ by the involution
  $z\mapsto-z$, for a $2\times2$ matrix $M$ whose eigenvalues are
  different from $\pm 1$.
\end{itemize}

The two cases are not mutually exclusive. A map $f\in\Tor$ is
expanding if and only if the absolute values of the eigenvalues of $M$
are greater than $1$. Note also that if $f$ is non-invertible and
covered by a torus endomorphism then either $f\in\Exp$ or the minimal
orbisphere of $f$ satisfies $\#P_f=4$ and $\ord_f\equiv2$.

In that last case, we show that $\subscript G B_G$ is a crossed
product of an Abelian biset with an order-$2$ group.  Let us fix a
$(2,2,2,2)$-orbisphere $(S^2,A, \ord)$ and let us set
$G\coloneqq \pi_1(S^2,A, \ord)\cong\Z^2\rtimes\{\pm1\}$. We may
identify $A$ with the set of all order-$2$ conjugacy classes of
$G$. By Euler characteristic, every branched covering
$f\colon (S^2,A, \ord) \selfmap$ is a self-covering. Therefore, the
biset of $f$ is right principal.

We denote by $\Mat_2^+(\Z)$ the set of $2\times2$ integer matrices $M$
with $\det(M)>0$. For a matrix $M\in \Mat^+_2(\Z)$ and a vector
$v\in \Z^2$ there is an injective endomorphism
$M^v\colon \Z^2\rtimes\{\pm1\}\selfmap$ given by the following
``crossed product'' structure:
\begin{equation}\label{eq:Mv}
  M^{v}(n,1)=(M n,1) \text{ and }M^{v}(n,-1)=(M n+v,-1).
\end{equation}
Furthermore, $\Z^2\rtimes\{\pm1\}$ has exactly $4$ conjugacy classes
of order $2$, which we denote by $A$. Then $M^v$ induces a map
$(M^v)_*\colon A \selfmap$ on these conjugacy classes. We write
$B_{M^v}=B_M\rtimes\{\pm1\}$ the crossed product decomposition of the
biset of $M^v$.

\begin{mainprop}[= Propositions~\ref{prop:EndOfG} and~\ref{prop:EndOfG2}]
  The biset of $M^v$ from~\eqref{eq:Mv} is an orbisphere biset, and
  conversely every $(2,2,2,2)$-orbisphere biset $B$ is of the form
  $B=B_{M^v}$ for some $M\in\Mat^+_2(\Z)$ and some $v\in\Z^2$. Two
  bisets $B_{M^v}$ and $B_{N^w}$ are isomorphic if and only if
  $M=\pm N$ and $v\equiv w\pmod{2\Z^2}$.  The biset $B_{M^{v}}$ is
  geometric if and only if both eigenvalues of $M$ are different from
  $\pm 1$.
\end{mainprop}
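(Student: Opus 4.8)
The plan is to establish the correspondence in three stages: first that each $M^v$ gives an orbisphere biset, then that these exhaust all $(2,2,2,2)$-orbisphere bisets, then the isomorphism classification, and finally the geometricity criterion. For the first stage, I would start from the torus endomorphism $\widetilde\phi\colon z\mapsto Mz+v/2$ (or rather the affine map whose linear part is $M$ and which descends correctly) on $\R^2/\Z^2$; the crossed-product formula~\eqref{eq:Mv} is exactly the induced action of $M^v$ on $\pi_1$ of the quotient orbifold $\R^2/(\Z^2\rtimes\{\pm1\})$, which is the $(2,2,2,2)$-orbisphere. One checks that $M^v$ is an injective endomorphism of $G=\Z^2\rtimes\{\pm1\}$ sending each peripheral (order-$2$) conjugacy class into another such class with the right local degree, so the associated biset $B_{M^v}=G\otimes_{M^v}G$ (the transfer biset) is an orbisphere biset in the sense of~\cite{bartholdi-dudko:bc2}. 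The crossed-product decomposition $B_{M^v}=B_M\rtimes\{\pm1\}$ records that the index-$2$ torus subgroup $\Z^2\le G$ has biset $B_M$ (the standard abelian biset of $M$) and the $\{\pm1\}$ acts compatibly; this is essentially a computation with double cosets.

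For the converse, let $\subscript G B_G$ be any $(2,2,2,2)$-orbisphere biset. By the Euler-characteristic remark in the excerpt, $f$ is a self-covering, so $B$ is left and right free of the same finite rank $d=\deg f$, hence $B\cong G\otimes_\psi G$ for the injective endomorphism $\psi=\phi_B\colon G\selfmap$ recording the right action on a chosen left-basis element — up to the ambiguity of post-composing $\psi$ by an inner automorphism (change of basepoint path) and the fact that two endomorphisms give isomorphic bisets iff they differ by an inner automorphism on the \emph{target} side. So it suffices to classify injective endomorphisms of $\Z^2\rtimes\{\pm1\}$ up to conjugacy and to see that those sending peripheral classes to peripheral classes are exactly the $M^v$. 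The characteristic subgroup $\Z^2$ (the unique torsion-free abelian subgroup of index $2$, equivalently the commutator-and-squares subgroup) must be sent into itself, giving a matrix $M\in\Mat_2(\Z)$; injectivity forces $\det M\ne 0$, and the orbisphere/orientation condition (the biset comes from an orientation-preserving branched cover) forces $\det M>0$. An element $(0,-1)$ must map to some $(v,-1)$, and the crossed-product relation $(0,-1)(n,1)(0,-1)=(-n,1)$ is automatically respected; this pins down $\psi=M^v$. Thus $B\cong B_{M^v}$.

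For the isomorphism classification: $B_{M^v}\cong B_{N^w}$ iff $M^v$ and $N^w$ are conjugate in $\mathrm{End}(G)$ by an \emph{automorphism} of $G$. The automorphism group of $\Z^2\rtimes\{\pm1\}$ is $\mathrm{GL}_2(\Z)\ltimes(\Z^2/2\Z^2)$ (linear part acting on $\Z^2$, plus inner automorphisms by torus elements which translate the $-1$-coset; automorphisms by order-$2$ elements coincide with translations modulo $2$). Conjugating $M^v$ by $U\in\mathrm{GL}_2(\Z)$ sends $M\mapsto UMU^{-1}$ and $v\mapsto Uv$; but since we are free to also compose on the target by an inner automorphism (biset isomorphism only sees $\psi$ up to target-inner), the honest invariant is $M$ up to $\mathrm{GL}_2(\Z)$-conjugacy — however the peripheral structure is \emph{also} part of the data, and requiring the peripheral subgroups to be preserved as a labelled set cuts the automorphism group down so that the linear part must fix the four order-$2$ classes, which forces it to be $\pm I$ composed with something centralizing; working this out gives precisely $M=\pm N$ and $v\equiv w\pmod{2\Z^2}$. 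I would do this bookkeeping carefully since it is the most error-prone part.

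Finally, geometricity: by the definition in~\S\ref{ss:geometric}, $f\in\Tor$ exactly when $f$ is a quotient of a torus endomorphism with linear part having no eigenvalue $\pm1$ — which is literally the condition on $M$ — and every $(2,2,2,2)$-map realizing $B_{M^v}$ is of this form by the construction above, so $B_{M^v}$ is geometric iff the eigenvalues of $M$ avoid $\pm1$. (The case $\mathrm{Exp}$ is subsumed: expanding $(2,2,2,2)$-maps have $|$eigenvalues$|>1$, still avoiding $\pm1$; and a $(2,2,2,2)$-map cannot be B\"ottcher expanding without being covered by a torus map, by the remark preceding the proposition.) I expect the \textbf{main obstacle} to be the isomorphism classification, specifically tracking how biset isomorphism translates into the right equivalence relation on pairs $(M,v)$ — one must be careful to distinguish target-inner ambiguity (invisible) from genuine $\mathrm{GL}_2(\Z)$-symmetry (constrained by the peripheral labelling) and to verify that the mod-$2$ reduction of $v$ is exactly what survives; the well-definedness of $(M^v)_*$ on the four conjugacy classes and the interplay with Proposition~\ref{mainprop:NoGhostAut} (ensuring no hidden biset automorphisms contaminate the count) are the supporting lemmas I would isolate first.
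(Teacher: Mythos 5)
Your first two stages and the geometricity criterion follow essentially the paper's route (realize $B_{M^v}$ as the biset of the quotient of $z\mapsto Mz+\tfrac12 v$ and invoke the fact that the orbisphere biset is a complete isotopy invariant; use right-principality and the structure of injective endomorphisms of $\Z^2\rtimes\{\pm1\}$ for the converse; use the Levy-cycle/eigenvalue dichotomy for geometricity), up to a minor slip: the biset of a degree-$d$ self-covering is right-\emph{principal} and left-free of rank $d$, not ``left and right free of the same rank $d$''.

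The genuine gap is exactly where you predicted it, in the isomorphism classification, and your patch does not close it. First, the criterion you start from is wrong: an isomorphism of bisets $B_{M^v}\to B_{N^w}$ is a bijection equivariant for the \emph{same} $G$-actions on both sides, so sending the basis element $1$ to $t\in G$ one gets $N^w=c_t\circ M^v$ with $c_t$ an \emph{inner} automorphism of $G$ (post-composition on the target), not ``$M^v$ and $N^w$ conjugate in $\operatorname{End}(G)$ by an automorphism of $G$''. Allowing an outer automorphism is the \emph{conjugacy} (Thurston-equivalence) relation $B^\phi\cong B'$, which is strictly coarser; under your criterion $B_{M^v}$ and $B_{(UMU^{-1})^{Uv}}$ would be declared isomorphic for suitable $U\in\mathrm{GL}_2(\Z)$, which is false whenever $UMU^{-1}\ne\pm M$. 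Second, the argument you use to recover the fine classification is based on a false claim: requiring an automorphism of $G$ to fix the four order-$2$ conjugacy classes does \emph{not} force its linear part to be $\pm\one$; the stabilizer of the labelled classes is the level-$2$ congruence subgroup $\{U\equiv\one\pmod 2\}$ (together with translations in $2\Z^2$), which is infinite --- indeed it is essentially $\Mod(G)$, a free group of rank $2$ by Proposition~\ref{prop:EndOfG}(C). So the peripheral labelling cannot reduce the $\mathrm{GL}_2(\Z)$-ambiguity to $\pm\one$, and with your equivalence relation the conclusion $M=\pm N$ is unreachable. The correct and much shorter route (the paper's, Proposition~\ref{prop:EndOfG}(B),(D)) is: compute that the inner automorphisms of $\Z^2\rtimes\{\pm1\}$ are exactly $(\pm\one)^{2t}$, $t\in\Z^2$, and then use the composition rule $N^w\circ M^v=(NM)^{w+Nv}$ to see that $(\epsilon\one)^{2t}\circ M^v=(\epsilon M)^{2t+\epsilon v}$, which gives precisely $N=\pm M$ and $w\equiv v\pmod{2\Z^2}$; no appeal to the peripheral structure or to the absence of biset automorphisms is needed at this step.
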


A distinguished property of a geometric map is \emph{rigidity}: two
geometric maps are Thurston equivalent (namely, conjugate up to
isotopy) if and only if they are topologically conjugate.

An orbisphere biset $\subscript G B_{G}$ is \emph{geometric} if it is
the biset of a geometric map, and \Tor\ and \Exp\ bisets are defined
similarly. If $B$ is a geometric biset, then by rigidity there is a
map $f_B\colon (S^2,A,\ord)\selfmap$, unique up to conjugacy, with
$B(f_B)\cong B$.

If $\subscript G B_G$ is geometric and
$\subscript\wtG{\wtB}_{\wtG}\to\subscript G
B_G$ is a forgetful intertwiner as in~\eqref{eq:forgetinter}, then
elements of $\wtA\setminus A$ (which \emph{a priori} do not
belong to any sphere) can be interpreted dynamically as extra marked
points on $S^2\setminus A$. More precisely, if $\wtB$ is
itself geometric, and $B$ is the biset of the geometric map
$f\colon(S^2,A)\selfmap$, then there is an embedding of $\wtA$
in $S^2$ as an $f$-invariant set, unique unless $G$ is cyclic, in such
a manner that $\wtB$ is isomorphic to
$B(f\colon(S^2,\wtA)\selfmap)$.

Since furthermore geometric maps have only finitely many periodic
points of given period, we obtain a good understanding of conjugacy
and centralizers of geometric bisets:
\begin{mainthm}[= Theorem~\ref{thm:conjcentr}]\label{thm:RedConjCentrProb}
  Let $\wtG\to G$ be a forgetful morphism of groups and let
  \[\subscript\wtG\wtB_\wtG\to\subscript G B_G\;\text{ and }\; \subscript\wtG{\wtB'}_\wtG\to\subscript G{B'}_G
  \]
  be two forgetful biset morphisms as in~\eqref{eq:forgetinter}.
  Suppose furthermore that $\wtB$ is geometric of degree
  $>1$. Denote by $(G_a,B_a)_{a\in \wtA}$ and
  $(G'_a,B'_a)_{a\in \wtA}$ the portraits of bisets induced by
  $\wtB$ and $\wtB'$ in $B$ and $B'$ respectively.

  Then $\wtB,\wtB'$ are conjugate under
  $\Mod(\wtG)$ if and only if there exists $\phi\in\Mod(G)$
  such that $B^\phi\cong B'$ and the portraits
  $(G_a^\phi,B_a^\phi)_{a\in \wtA}$ and
  $(G'_a,B'_a)_{a\in \wtA}$ are conjugate.

  Furthermore, the centralizer of the portrait
  $(G_a,B_a)_{a\in\wtA}$ is trivial, and the centralizer
  $Z(\wtB)$ of $\wtB$ is isomorphic, via the forgetful
  map $\Mod(\wtG)\to\Mod(G)$, to
  \[\big\{\phi\in Z(B)
    \,\big|\,(G_a^{\phi},B_a^{\phi})_{a\in \wtA}\sim(G_a,B_a)_{a\in
    \wtA}\big\}
  \]
  and is a finite-index subgroup of $Z(B)$.
\end{mainthm}

Let us call an orbisphere map $f\colon(S^2,A,\ord)\selfmap$
\emph{weakly geometric} if its minimal quotient on $(S^2,P_f,\ord_f)$
is geometric; an orbisphere biset $B$ is weakly geometric if its
minimal quotient orbisphere biset is geometric. In
Theorem~\ref{thm:tuning} we characterize weakly geometric maps as
those decomposing as a tuning by homeomorphisms: starting from a
geometric map, some points are blown up to disks which are mapped to
each other by homeomorphisms.

\subsection{Algorithms}
An essential virtue of the portraits of bisets introduced above is
that they are readily usable in algorithms. Previous articles in the
series already highlighted the algorithmic aspects of bisets; let us
recall the following.

From the definition of orbisphere bisets
in~\cite{bartholdi-dudko:bc2}*{Definition~\ref{bc2:dfn:SphBis}}, it is
clearly decidable whether a given biset $\subscript H B_G$ is an
orbisphere biset; the groups $G$ and $H$ may be algorithmically
identified with orbisphere groups $\pi_1(S^2,A,\ord,*)$ and
$\pi_1(S^2,C,\ord,\dagger)$ respectively, and the induced map
$f_*\colon C\to A$ is computable. In particular, if $\subscript G B_G$
is a $G$-$G$-biset, then the dynamical map $f_*\colon A\selfmap$ is
computable, the minimal orbisphere quotient
$\overline G\coloneqq\pi_1(S^2,P_f,\ord_f,*)$ is computable, and the
induced $\overline G$-$\overline G$-biset $\overline B$ is
computable. It is also easy
(see~\cite{bartholdi-dudko:bc4}*{\S\ref{bc4:ss:algo}}) to determine
from an orbisphere biset whether it is \Tor\ (and then to determine an
affine map $M z+q$ covering it) or \Exp.

We shall show that recognizing conjugacy of portraits is decidable, and
give efficient (see below) algorithms proving it, as follows:
\begin{algo}[= Algorithms~\ref{algo:conjportraittor} and~\ref{algo:conjportraitexp}]\label{algo:conjportrait}
  \textsc{Given} a minimal geometric orbisphere biset $\subscript G B_G$, an extension $f_*\colon\wtA\to\wtA$ of the dynamics of $B$ on its peripheral classes, and two portraits of bisets $(G_a,B_a)_{a\in\wtA}$ and $(G'_a,B'_a)_{a\in\wtA}$ with dynamics $f_*$,\\
  \textsc{Decide} whether $(G_a,B_a)_{a\in\wtA}$ and
  $(G'_a,B'_a)_{a\in\wtA}$ are conjugate, and \textsc{Compute}
  the centralizer of $(G_a,B_a)_{a\in\wtA}$, which is a finite
  abelian group.
\end{algo}

\begin{algo}[= Algorithms~\ref{algo:listportraittor} and~\ref{algo:listportraitexp}]\label{algo:listportrait}
  \textsc{Given} a minimal geometric orbisphere biset $\subscript G B_G$ and an extension $f_*\colon\wtA\to\wtA$ of the dynamics of $B$ on its peripheral classes,\\
  \textsc{Produce a list} of representatives of all conjugacy classes of portraits of bisets $(G_a,B_a)_{a\in\wtA}$ in $B$ with dynamics $f_*$.
\end{algo}

Thurston equivalence to a map $f\colon(S^2,A)\selfmap$ reduces to the
conjugacy problem in the mapping class biset $M(f)$. Let $X$ be a
basis of $M(f)$ and let $N$ be a finite generating set of
$\Mod(S^2,A)$; so $M(f)=\bigcup_{n\ge0} N^n X$. We call an algorithm
with input in $M(f)\times M(f)$ \emph{efficient} if for $f,g\in N^n X$
the running time of the algorithm is bounded by a polynomial in $n$.

We deduce that conjugacy and centralizer problems are decidable for
geometric maps, as long as they are decidable on their minimal
orbisphere quotients:
\begin{maincor}[= Algorithm~\ref{algo:red}]\label{cor:conjZpb}
  There is an efficient algorithm with oracle that, given two
  orbisphere maps $f,g$ by their bisets and such that $f$ is
  geometric, decides whether $f,g$ are conjugate, and computes the
  centralizer of $f$.

  The oracle must answer, given two geometric orbisphere maps $f,g$ on
  their minimal orbisphere $(S^2,P_f,\ord_f)$ respectively
  $(S^2,P_g,\ord_g)$, whether they are conjugate and what the
  centralizer of $f$ is.
\end{maincor}

Algorithms for the oracle itself will be described in details in the last article
of the series~\cite{bartholdi-dudko:bc5}. Furthermore, we have the
following oracle-free result, proven in~\S\ref{ss:decidability}:
\begin{maincor}\label{cor:G}
  There is an efficient algorithm that decides whether a rational map
  is equivalent to a given twist of itself, when only extra marked
  points are twisted.
\end{maincor}

\subsection{Historical remarks: Thurston equivalence and its complexity}
The conjugacy problem is known to be solvable in mapping class
groups~\cite{hemion:homeos}. The state of the art is based on the
Nielsen-Thurston classification: decompose maps along their canonical
multicurve; then a complete conjugacy invariant of the map is given by
the combinatorics of the decomposition, the conjugacy classes of
return maps, and rotation parameters along the multicurve. For general
surfaces, the cost of computing the decomposition is at most
exponential time in $n$,
see~\cites{koberda-mangahas:detectNT,tao:linearbound}, and so is the
cost of comparing the pseudo-Anosov return
maps~\cite{masur-minsky:gcc2}. Margalit, Yurttas, Strenner recently
announced polynomial-time algorithms for all the above. At all rates,
for punctured spheres the cost of computing the decomposition is
polynomial~\cite{calvez:NT}.

Kevin Pilgrim developed, in~\cite{pilgrim:combinations}, a theory of
decompositions of Thurston maps extending the Nielsen-Thurston
decomposition of homeomorphisms. It is a fundamental ingredient in
understanding Thurston equivalence of maps, without any claims on
complexity.

In~\cite{bartholdi-n:thurston}, a general strategy is developed, along
with computations of the mapping class biset for the three degree-$2$
polynomials with three finite post-critical points and period
respectively $1,2,3$. The Douady-Hubbard ``twisted rabbit problem'' is
solved in this manner (it asks to determine for $n\in\Z$ the conjugacy
class of $r\circ t^n$ with $r(z)\approx z^2-0.12256+0.74486i$ the
``rabbit'' polynomial and $t(z)$ the Dehn twist about the rabbit's
ears). Russell Lodge computed in~\cite{lodge:pullback} the solution to
an array of other ``twisted rabbit'' problems, by finding explicitly
the mapping class biset structure.

If $M(f)$ is a contracting biset, then its conjugacy problem may be
solved in quasi-linear time. In terms of a twist parameter such as the
$n$ above, the solution has $\mathcal O(\log n)$ complexity. However,
this bound is not uniform, in that it requires e.g.\ the computation
of the nucleus of $M(f)$, which cannot \emph{a priori} be
bounded. Nekrashevych showed
in~\cite{nekrashevych:combinatorialmodels}*{Theorem~7.2} that the
mapping class biset of a hyperbolic polynomial is
contracting. Conversely, if $M(f)$ contains an obstructed map then it
is not contracting.

In the case of polynomials, however, even more is possible: bisets of
polynomials admit a particularly nice form by placing the basepoint
close to infinity (this description goes hand-in-hand with Poirier's
``supporting rays''~\cite{poirier:portraits}). As a consequence, the
``spider algorithm'' of Hubbard and Schleicher~\cite{hubbard-s:spider}
can be implemented directly at the level of bisets and yields an
efficient algorithm, also in practice since it does not require the
computation of $M(f)$'s nucleus. This algorithm will be described
in~\cite{bartholdi-dudko:bc5}.

Selinger and Yampolsky showed
in~\cite{selinger-yampolsky:geometrization} that the canonical
decomposition is computable. In this manner, they solve the conjugacy
problem for maps whose canonical decomposition has only rational maps
with hyperbolic orbifold: a complete conjugacy invariant of the map is
given by the combinatorics of the decomposition, the conjugacy classes
of its return maps, and rotation parameters along the canonical
obstruction.

We showed in~\cite{bartholdi-dudko:bc2} that the conjugacy problem is
decidable in general: a complete conjugacy invariant of a Thurston map
is given by the combinatorics of the decomposition, together with
conjugacy classes of return maps, rotation parameters along the
canonical obstruction, together with the induced action of the
centralizer groups of return maps.

We finally mention a different path towards understanding Thurston
maps, in the case of maps with four post-critical points: the ``nearly
Euclidean Thurston maps''
from~\cite{cannon-floyd-parry-pilgrim:net}. There, the restriction on
the size of the post-critical set implies that the maps may be
efficiently encoded via linear algebra; and as a consequence,
conjugacy of NET maps is efficiently decidable. We are not aware of
any direct connections between their work and ours.

\subsection{Notations}
Throughout the text, some letters keep the same meaning and are not
always repeated in statements. The symbols $A,C,D,E$ denote finite
subsets of the topological sphere $S^2$. There is a sphere map
$\wtf\colon(S^2,C\sqcup E)\to(S^2,A\sqcup D)$, which restricts to a
sphere map $f\colon(S^2,C)\to(S^2,A)$. Implicit in the definition, we
have $f(C)\cup\{\text{critical values of }f\}\subseteq A$. We write
$\wtC=C\sqcup E$ and $\wtA=A\sqcup D$. If there are,
furthermore, orbisphere structures on the involved spheres, we denote
them by $(S^2,A,\ord)$ etc., with the same symbol `$\ord$'. We also
abbreviate $X\coloneqq(S^2,A,\ord)$ and $Y\coloneqq(S^2,C,\ord)$.

For sphere maps $f_0,f_1\colon(S^2,C)\to(S^2,A)$, we mean by
$f_0\approx_C f_1$ that $f_0$ and $f_1$ are isotopic, namely there is
a path $(f_t)_{t\in[0,1]}$ of sphere maps $f_t\colon(S^2,C)\to(S^2,A)$
connecting them.

For $\gamma$ a path and $f$ a (branched) covering, we denote by
$\gamma\lift f x$ the unique $f$-lift of $\gamma$ that starts at the
preimage $x$ of $\gamma(0)$.

We denote by $f\restrict Z$ the restriction of a function $f$ to a
subset $Z$ of its domain.  Finally, for a set $Z$ we denote by
$Z\perm$ the group of all permutations of $Z$.

\section{Forgetful maps}\label{ss:forgetful}
We recall a minimal amount of information
from~\cite{bartholdi-dudko:bc2}: a \emph{marked orbisphere} is
$(S^2,A,\ord)$ for a finite subset $A\subset S^2$ and a map
$\ord\colon A\to\{2,3,\dots,\infty\}$, extended to $S^2$ by
$\ord(S^2\setminus A)\equiv1$. For a choice of basepoint
$*\in S^2\setminus A$, its \emph{orbispace fundamental group} is
generated by ``lollipop'' loops $(\gamma_a)_{a\in A}$ based at $*$
that each encircle once counterclockwise a single point of $A$, and
with $A=\{a_1,\dots,a_n\}$ has presentation
\begin{equation}\label{eq:orbispheregp}
  G=\pi_1(S^2,A,\ord,*)=\langle \gamma_{a_1},\dots,\gamma_{a_n}\mid \gamma_{a_1}^{\ord(a_1)},\dots,\gamma_{a_n}^{\ord(a_n)}, \gamma_{a_1}\cdots\gamma_{a_n}\rangle.
\end{equation}
Abstractly, i.e.\ without reference to a sphere, an orbisphere group
is a group $G$ as in~\eqref{eq:orbispheregp} together with the
conjugacy classes $\Gamma_1,\dots,\Gamma_n$ of
$\gamma_{a_1},\dots,\gamma_{a_n}$ respectively.

An \emph{orbisphere map} $f\colon(S^2,C,\ord_C)\to(S^2,A,\ord_A)$
between orbispheres is an orientation-preserving branched covering
between the underlying spheres, with
$f(C)\cup\{\text{critical values of }f\}\subseteq A$, and with
$\ord_C(p)\deg_p(f)\mid\ord_A(f(p))$ for all $p\in S^2$.

To avoid special cases, we \emph{make, throughout this article except
  in~\S\ref{ss:CyclBis}, the assumption}
   \begin{equation}
   \label{eq:StandAssump}
  \#A\ge3 \hspace{0.8cm} \text{ and } \hspace{0.8cm}\#C\ge3.
  \end{equation} For $\#A=2$, many things go wrong: one must
require $\ord$ to be constant; the fundamental group has a non-trivial
centre; and the degree-$d$ self-covering of $(S^2,A,\ord)$ has an
extra symmetry of order $d-1$. All our statements can be modified to
take into account this special case, see~\S\ref{ss:CyclBis}.

Fix basepoints $*\in S^2\setminus A$ and $\dagger\in S^2\setminus
C$. The \emph{orbisphere biset} of an orbisphere map
$f\colon(S^2,C,\ord_C)\to(S^2,A,\ord_A)$ is the
$\pi_1(S^2,C,\ord_C,\dagger)$-$\pi_1(S^2,A,\ord_A,*)$-biset
\[B(f)=\{\beta\colon[0,1]\to S^2\setminus
  C\mid\beta(0)=\dagger,\,f(\beta(1))=*\}\,/\,{\approx_{C,\ord_C}},\]
with `$\approx_{C,\ord}$' denoting homotopy in the orbispace
$(S^2,C,\ord)$.  An orbisphere biset $\subscript H B_G$ can also be
defined purely algebraically, see
\cite{bartholdi-dudko:bc2}*{\S\ref{bc2:ss:orbispheres} and
  Definition~\ref{bc2:dfn:SphBis}}. By
\cite{bartholdi-dudko:bc2}*{Theorem~\ref{bc2:thm:dehn-nielsen-baer++}},
there is an orbisphere map $f\colon(S^2,C,\ord_C)\to(S^2,A,\ord_A)$,
unique up to isotopy, such that $B$ is isomorphic to $B(f)$. We denote
by $B_*\colon C\to A$ the induced map on the peripheral conjugacy
classes of $H$ and $G$ -- they are identified with the associated
punctures.

Let $(S^2,A,\ord)$ and $(S^2,A\sqcup D,\wtord)$ be
orbispheres, and suppose $\ord(a)\mid\wtord(a)$ for all
$a\in A$. We then have a natural \emph{forgetful} homomorphism
$\Forget_D\colon\pi_1(S^2,A\sqcup
D,\wtord,*)\to\pi_1(S^2,A,\ord,*)$ given by
$\gamma_a\mapsto\gamma_a$ for $a\in A$ and $\gamma_d\mapsto1$ for
$d\in D$. We write the forgetful map
$(S^2,A\sqcup D,\wtord)\dashrightarrow(S^2,A,\ord)$ with a
dashed arrow, because even though $\Forget_D$ is a genuine group
homomorphism, the corresponding map between orbispheres is only
densely defined. Note, however, that its inverse is a genuine
orbisphere map.  

Consider forgetful maps
$(S^2,C\sqcup E,\wtord)\dashrightarrow(S^2,C,\ord)$ and
$(S^2,A\sqcup D,\wtord)\dashrightarrow(S^2,A,\ord)$; in the
sequel we shall keep the notations
\[X\coloneqq(S^2,A,\ord),\quad Y\coloneqq(S^2,C,\ord),\qquad
  \wtA\coloneqq A\sqcup D,\quad\wtC\coloneqq C\sqcup E.
\]
Let $\wtf\colon(S^2,\wtC,\wtord)\to(S^2,\wtA,\wtord)$
be an orbispace map, with such that $\wtf$ restricts to an orbisphere
map $f\colon(S^2,C,\ord)\to(S^2,A,\ord)$. We thus have
\begin{equation}\label{eq:forgetCD}
  \begin{tikzcd}
    (S^2,\wtC,\wtord)\ar[r,"\wtf"]\ar[d,dashed,"\Forget_E"] & (S^2,\wtA,\wtord)\ar[d,dashed,"\Forget_D"]\\
    \makebox[0pt][r]{$X={}$}(S^2,C,\ord)\ar[r,"f"] & (S^2,A,\ord)\makebox[0pt][l]{${}=Y.$}
  \end{tikzcd}
\end{equation}
(In particular, $\wtf(C)\subseteq A$ and $A$ contains all
critical values of $\wtf$.) We are concerned, in this section,
with the relationship between $B(\wtf)$ and $B(f)$; we shall
show that $B(\wtf)$ may be encoded by $B(f)$ and a
\emph{portrait of bisets}, and that this encoding is unique up to a
certain equivalence. The algebraic counterpart of~\eqref{eq:forgetCD} is
\begin{equation}\label{eq:forgetCD2}
  \begin{tikzcd}[column sep=0mm]
    \makebox[0pt][r]{$\wtH\coloneqq{}$}\pi_1(S^2,\wtC,\wtord,\dagger)\ar[d,"\Forget_E"] & \looparrowright B(\wtf)\looparrowleft\ar[d,"\Forget_{E,D}"] &\pi_1(S^2,\wtA,\wtord,*)\ar[d,"\Forget_D"]\makebox[0pt][l]{${}\eqqcolon\wtG$}\\
    \makebox[0pt][r]{$H\coloneqq{}$}\pi_1(S^2,C,\ord,\dagger) &\looparrowright B(f) \looparrowleft &\pi_1(S^2,A,\ord,*)\makebox[0pt][l]{${}\eqqcolon G,$}
  \end{tikzcd}
\end{equation}
where we denote by
\begin{equation}\label{eq:forgetbi}
  \Forget_{E,D}\colon B(\wtf)\to B(f)\cong H\otimes_\wtH B(\wtf)\otimes_\wtG G
\end{equation}
the natural map given by $b\mapsto1\otimes b\otimes1$.

\subsection{Braid groups and knitting equivalence}
\label{ss:BrGr KnEq}
We recall that there is no difference between $\Mod(S^2,A)$ and
$\Mod(X)$,
see~\cite{bartholdi-dudko:bc2}*{\S\ref{bc2:ss:orbispheres}}. We shall
define a subgroup $\Mod(X|D)$ intermediate between
$\Mod(X)\cong\Mod(S^2,A)$ and $\Mod(S^2,\wtA)$ which will be
useful to relate $B(\wtf)$ and $B(f)$.

\begin{defn}[Pure braid group]
\label{defn:PureBraidGroup}
  Let $D$ be a finite set on a finitely punctured sphere
  $S^2\setminus A$. The \emph{pure braid group} $\Braid(S^2\setminus A,D)$
  is the set of continuous motions
  $m\colon[0,1]\times D\to S^2\setminus A$ considered up to isotopy so
  that
  \begin{itemize}
  \item $m(t,-)\colon D\hookrightarrow S^2\setminus A$ is an inclusion
    for every $t\in [0,1]$;
  \item $m(0,-)=m(1,-)=\one\restrict D$.
  \end{itemize}
  The product in $\Braid(S^2\setminus A,D)$ is concatenation of
  motions, and the inverse is time reversal.
\end{defn}
\noindent Note that in the special case $D=\{*\}$ we have
$\Braid(S^2\setminus A,\{*\} )=\pi_1(S^2\setminus A, *)$.

\begin{thm}[Birman]\label{thm:BirmExistUniq}
  For every $m\in \Braid(S^2\setminus A,D)$ there is a unique mapping
  class $\Push(m)\in\ker(\Mod(S^2,A\sqcup D)\to\Mod(S^2,A))$ such that
  $\Push(m)$ is isotopic rel $A$ to the identity via an isotopy moving
  $D$ along $m^{-1}$. The map
  \begin{equation}\label{eq:BirnMap}
    \Push\colon \Braid(S^2\setminus A,D)\to\ker(\Mod(S^2,A\sqcup D)\to\Mod(S^2,A))
  \end{equation}
  is an isomorphism (it would be merely an epimorphism if $\#A\le2$).\qed
\end{thm}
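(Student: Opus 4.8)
The plan is to realize $\Push$ as the connecting homomorphism in the homotopy long exact sequence of the \emph{Birman fibration}, and to read the explicit isotopy description off the isotopy extension theorem. Fix a smooth structure on $S^2$ and let $\mathrm{Diff}^+(S^2,A)$ be the topological group of orientation-preserving diffeomorphisms fixing $A$ pointwise. Evaluation on $D$, namely $\mathrm{ev}_D\colon\mathrm{Diff}^+(S^2,A)\to\mathrm{PConf}_D(S^2\setminus A)$, $\phi\mapsto\phi\restrict D$, lands in the ordered configuration space of $\#D$ distinct points of $S^2\setminus A$; this group acts transitively on that configuration space, and by the parametrized isotopy extension theorem (applied so as to fix a neighbourhood of $A$) the orbit map $\mathrm{ev}_D$ admits local sections, hence is a locally trivial fibration, with fibre over the basepoint $\one\restrict D$ equal to $\mathrm{Diff}^+(S^2,A\sqcup D)$.

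Since $\pi_0\mathrm{Diff}^+(S^2,A\sqcup D)=\Mod(S^2,A\sqcup D)$, $\pi_0\mathrm{Diff}^+(S^2,A)=\Mod(S^2,A)$, and $\pi_1\mathrm{PConf}_D(S^2\setminus A)=\Braid(S^2\setminus A,D)$ — the last identity being word-for-word Definition~\ref{defn:PureBraidGroup} read as ``based homotopy classes of loops of configurations'' — the long exact sequence of the fibration reads
\[\pi_1\mathrm{Diff}^+(S^2,A)\longrightarrow\Braid(S^2\setminus A,D)\xrightarrow{\ \partial\ }\Mod(S^2,A\sqcup D)\longrightarrow\Mod(S^2,A)\longrightarrow\pi_0\mathrm{PConf}_D(S^2\setminus A).\]
As $S^2\setminus A$ is a connected surface, its ordered configuration spaces are connected, so the rightmost term is a point; exactness at $\Mod(S^2,A\sqcup D)$ then gives $\mathrm{im}(\partial)=\ker(\Mod(S^2,A\sqcup D)\to\Mod(S^2,A))$ with no hypothesis on $\#A$.

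I would then identify $\partial$ with $\Push$. Given $m\in\Braid(S^2\setminus A,D)$, isotopy extension produces an ambient isotopy $(\Phi_t)_{t\in[0,1]}$ of $S^2$ with $\Phi_0=\one$, $\Phi_t\restrict A=\one$, and $\Phi_t\restrict D=m(1-t,-)$ — that is, dragging $D$ backwards along $m$; then $\Phi_1$ fixes $A\sqcup D$ pointwise and, unwinding the connecting map, $[\Phi_1]=\partial([m])$, so we set $\Push(m)\coloneqq[\Phi_1]$. Uniqueness of a kernel element with the stated isotopy property is exactly the well-definedness of $\partial$: two such representatives are endpoints of lifts, starting at $\one$, of the same loop $m^{-1}$ in $\mathrm{PConf}_D(S^2\setminus A)$, hence lie in one path component of the fibre $\mathrm{Diff}^+(S^2,A\sqcup D)$. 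For the homomorphism property, concatenating isotopies shows $\Push(m_1\ast m_2)$ is realized by $\Phi^{m_2}_\bullet$ followed by $\Phi^{m_1}_\bullet\circ\Phi^{m_2}_1$, so $\Push(m_1\ast m_2)=[\Phi^{m_1}_1\circ\Phi^{m_2}_1]=\Push(m_1)\Push(m_2)$; it is precisely the ``$m^{-1}$'' in the statement that makes this a homomorphism rather than an anti-homomorphism.

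Finally, injectivity and the $\#A\le2$ caveat come from exactness at $\Braid(S^2\setminus A,D)$, i.e.\ $\ker\Push=\mathrm{im}\bigl(\pi_1\mathrm{Diff}^+(S^2,A)\bigr)$. For $\#A\ge3$ the identity component of $\mathrm{Diff}^+(S^2,A)$ is contractible (Earle--Eells, Gramain, Hamstrom), so $\pi_1\mathrm{Diff}^+(S^2,A)=1$ and $\Push$ is an isomorphism onto the kernel; for $\#A\le2$ that identity component is homotopy equivalent to $SO(2)$ (or $SO(3)$), so $\pi_1\mathrm{Diff}^+(S^2,A)\ne1$, and its generator maps to the braid rotating all of $D$ once around the axis through $A$, a nontrivial element of $\ker\Push$, leaving only an epimorphism onto the kernel. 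I expect the one genuinely technical point to be verifying that $\mathrm{ev}_D$ is a fibration with the claimed fibre and carefully matching the topological connecting map with the geometric ``push along $m^{-1}$''; the input on $\pi_\bullet\mathrm{Diff}^+(S^2,A)$ and the connectedness of configuration spaces are classical and can be quoted.
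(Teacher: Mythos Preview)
Your argument is the standard and correct proof of Birman's theorem via the homotopy long exact sequence of the evaluation fibration, and all the pieces you invoke (isotopy extension, Earle--Eells/Gramain/Hamstrom on the homotopy type of $\mathrm{Diff}^+(S^2,A)$, connectedness of configuration spaces) are in order. Note, however, that the paper does not prove this statement at all: the theorem is marked with a terminal \qed\ and is simply quoted as a classical result attributed to Birman, so there is no ``paper's own proof'' to compare against. Your write-up is exactly the proof one would supply if asked to fill in that citation.
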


\noindent From now on we identify $\Braid(S^2\setminus A,D)$ with its
image under~\eqref{eq:BirnMap}.

\begin{defn}[Knitting group]
  Let $X= (S^2,A,\ord)$ be an orbisphere and let $D$ be a finite
  subset of $S^2\setminus A$. The \emph{knitting braid group}
  $\knBraid(X,D)$ is the kernel of the forgetful morphism
  \[\Antipush_D\colon\Braid(S^2\setminus A, D)\to\prod_{d\in D}\pi_1(X,d);\]
  it is the set of $D$-strand braids in $S^2\setminus A$ all of whose
  strands are homotopically trivial in $X$.
\end{defn}

In case $X=(S^2,A)$, knitting elements are the
``$(\#D-1)$-decomposable braids''
from~\cite{levinson:decomposablebraids}.

\begin{lem}
\label{lem:knBraid is normal}
The knitting group $\knBraid(X,D)$ is a normal subgroup of
${\Mod(S^2,A\sqcup D)}$.
\end{lem}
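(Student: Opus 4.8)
The plan is to show that $\knBraid(X,D)$ is normalized by $\Mod(S^2,A\sqcup D)$, using the semidirect-product picture furnished by the Birman exact sequence~\eqref{eq:birmanses} together with Theorem~\ref{thm:BirmExistUniq}. Concretely, $\knBraid(X,D)$ is already, by construction, a subgroup of $\Braid(S^2\setminus A,D)$, which itself is identified (via $\Push$) with the normal subgroup $\ker(\Mod(S^2,A\sqcup D)\to\Mod(S^2,A))$ of $\Mod(S^2,A\sqcup D)$. So it suffices to prove that $\knBraid(X,D)$ is stable under conjugation by an arbitrary $\widetilde m\in\Mod(S^2,A\sqcup D)$; and since inner automorphisms of $\Braid(S^2\setminus A,D)$ itself obviously preserve the kernel of the homomorphism $\Antipush_D$ (as $\Antipush_D$ is defined \emph{on} $\Braid(S^2\setminus A,D)$), the only thing to check is that the image of $\widetilde m$ in $\Mod(S^2,A\sqcup D)/\Braid(S^2\setminus A,D)\cong\Mod(S^2,A)$ — i.e.\ the underlying mapping class of $S^2\setminus A$ that $\widetilde m$ induces — acts on $\Braid(S^2\setminus A,D)$ in a way that respects the decomposition $\ker(\Antipush_D)$.

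First I would make the conjugation action explicit. Given $\widetilde m\in\Mod(S^2,A\sqcup D)$, let $m\in\Mod(S^2,A)$ be its image under the forgetful map; choose a homeomorphism $\phi$ of $S^2$ representing $m$, fixing $A$ setwise. For a braid $\beta=\Push(\mu)$ with $\mu\in\Braid(S^2\setminus A,D)$, the conjugate $\widetilde m\,\beta\,\widetilde m^{-1}$ is again in the Birman kernel and equals $\Push(\mu')$, where $\mu'$ is the motion of $D$ obtained by pushing $\mu$ forward along $\phi$, i.e.\ $\mu'(t,-)=\phi\circ\mu(t,-)\circ\phi^{-1}$ (the precise bookkeeping of base points being absorbed into the conjugating element; note the $D$-part of $\widetilde m$ only contributes an inner automorphism of $\Braid(S^2\setminus A,D)$, which I dispatch as above). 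This is just functoriality of the Birman sequence under the action of $\Mod(S^2,A)$ on the fibre.

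Next I would check that this pushing-forward operation preserves the condition ``all strands homotopically trivial in $X=(S^2,A,\ord)$''. The homeomorphism $\phi$ of $S^2$ restricts to a homeomorphism of $S^2\setminus A$ and hence to an orbispace self-homeomorphism of $X$ (it respects $A$, and the orbisphere structure depends only on the subset $A$, not on a choice of points within its complement — a marked-orbisphere homeomorphism is permitted to permute $A$, but here it fixes $A$). Therefore $\phi$ induces an isomorphism $\pi_1(X,d)\to\pi_1(X,\phi(d))$ for each $d\in D$, and it carries a nullhomotopic-in-$X$ loop to a nullhomotopic-in-$X$ loop. Concretely: if each strand $t\mapsto\mu(t,d)$ of $\mu$, closed up with an auxiliary path to the base point, is trivial in $\pi_1(X,-)$, then its image under $\phi$, which is the corresponding strand of $\mu'$, is trivial too. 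Hence $\mu'\in\ker(\Antipush_D)=\knBraid(X,D)$, which is exactly normality. I would phrase this in one clean diagram: $\phi$ acts compatibly on $\Braid(S^2\setminus A,D)$ and on $\prod_{d\in D}\pi_1(X,d)$ (permuting factors along $d\mapsto\phi(d)$), and $\Antipush_D$ is equivariant for these two actions, so $\ker(\Antipush_D)$ is invariant.

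The main obstacle — really the only delicate point — is making precise the claim that conjugation in $\Mod(S^2,A\sqcup D)$ corresponds, on the Birman fibre $\Braid(S^2\setminus A,D)$, to the natural geometric action of $\Mod(S^2,A)$ on motions of $D$ in $S^2\setminus A$, including correct handling of base points and of the $D$-coordinate of $\widetilde m$ (which, not being canonically trivializable, must be split off as an inner automorphism). Once that identification is in hand, the stability of $\knBraid(X,D)$ is immediate because $\Antipush_D$ is visibly natural with respect to orbispace self-homeomorphisms of $X$. I would therefore spend most of the written proof setting up the equivariance of the short exact sequence $1\to\Braid(S^2\setminus A,D)\to\Mod(S^2,A\sqcup D)\to\Mod(S^2,A)\to1$ under the conjugation action, citing Theorem~\ref{thm:BirmExistUniq}, and then conclude in two lines.
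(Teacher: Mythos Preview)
Your argument is correct and rests on the same core observation as the paper's: a self-homeomorphism of the orbisphere $X=(S^2,A,\ord)$ carries loops trivial in $\pi_1(X,d)$ to loops trivial in $\pi_1(X,h(d))$, so the knitting condition is preserved under conjugation. However, you take a more circuitous route than necessary. You decompose the conjugating element $\widetilde m$ via the Birman sequence into a ``$D$-part'' in $\Braid(S^2\setminus A,D)$ (handled as an inner automorphism) and a ``quotient part'' $m\in\Mod(S^2,A)$ represented by some $\phi$, and then worry about equivariance of $\Antipush_D$ under the $\Mod(S^2,A)$-action and about base-point bookkeeping.

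The paper avoids all of this by simply choosing a representative homeomorphism $h$ for $\widetilde m$ itself. Since $\Mod(S^2,A\sqcup D)$ is the \emph{pure} mapping class group, $h$ already fixes every point of $A\sqcup D$, so in particular fixes $D$ pointwise and restricts to a self-map of $X$. Conjugation by $h$ then sends the strand $m(-,d)$ to $h\circ m(-,d)$, which is again a loop based at $d$, and its triviality in $\pi_1(X,d)$ is preserved because $h$ is an orbisphere self-homeomorphism. That is the entire proof; no decomposition, no equivariance diagram, no handling of inner automorphisms is needed. (Incidentally, your parenthetical ``fixing $A$ setwise'' and ``permitted to permute $A$'' are off: in this paper $\Mod$ is pure, so everything in $A\sqcup D$ is fixed pointwise.)
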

\begin{proof}
  We show that for every $m\in \knBraid(X,D)$ and
  $h\in \Mod(S^2,A\sqcup D)$ we have $h^{-1}m h\in
  \knBraid(X,D)$. Indeed, $h$ restricts to an orbisphere map
  $h\colon X\selfmap$ fixing $D$ pointwise. Thus $m(d,-)$ is a trivial
  loop in $\pi_1(X,d)$ if and only if $h(m(d,-))$ is a trivial loop in
  $\pi_1(X,d)$ for every $d\in D$.
\end{proof}
\noindent Define
\begin{equation}\label{eq:modxd}
  \Mod(X|D)\coloneqq\Mod(S^2,A\sqcup D)/\knBraid(X,D).
\end{equation}
With $G=\pi_1(X,*)$, we write $\Mod(G|D)=\Mod(X|D)$ and
$\Mod(G)=\Mod(X)$. We interpret elements of $\Mod(G)$ as outer
automorphisms of $G$, as mapping classes and as biprincipal
bisets. We also introduce the notation
\[\pi_1(X,D)\coloneqq\prod_{d\in D}\pi_1(X,d)\cong G^D.\]
Note the following four exact sequences:
\begin{equation}\label{eq:4exact}
  \begin{tikzpicture}[description/.style={fill=white,inner sep=2pt},baseline]
  \node (knMod) at (-4,3){$\knBraid(X,D)$};
  \node (ModE) at (0,1.5) {$\Braid(S^2\setminus A,D)$};
  \node (PrMod) at (4,0) {$\pi_1(X,D);$};
  \node (ModAE) at (-4/3,0){\makebox[0pt][r]{$\Mod(\wtG)={}$}$\Mod(S^2,A\sqcup D)$};
  \node (ModAsE) at (0,-1.5){$\Mod(X|D)$};
  \node (ModA)  at (-4,-3){\makebox[0pt][r]{$\Mod(G)={}$}$\Mod(S^2,A)$};
  \path[->>,font=\scriptsize] (ModE) edge node[description]{$\Antipush_D$} (PrMod)
    (ModAE) edge (ModAsE)
    (ModAE) edge node[description]{$\Forget_D$}  (ModA)
    (ModAsE) edge node[description]{$\Forget_D$} (ModA);
  \path[right hook->,font=\scriptsize]
    (knMod) edge (ModE)
    (knMod) edge (ModAE)
    (ModE) edge (ModAE)
    (PrMod) edge node[description]{$\Push$} (ModAsE);
  \end{tikzpicture}
\end{equation}
exactness follows by definition except surjectivity in the top
sequence. Given a sequence of loops
$(\gamma_d)_{d\in D}\in\pi_1(X,D)$, we may isotope them slightly to
obtain $\gamma_d(t)\neq\gamma_{d'}(t)$ for all $t\in[0,1]$ and all
$d\neq d'$. Then $b(d,t)\coloneqq\gamma_d(t)$ is a braid in
$S^2\setminus A$ and defines via `$\Push$' an element of
$\Braid(S^2\setminus A,D)$ mapping to $(\gamma_d)$.

\subsection{\boldmath $\wtf$-impure mapping class groups}
As in~\eqref{eq:forgetCD}, consider an orbisphere map
$\wtf\colon(S^2,C\sqcup E,\wtord)\to(S^2,A\sqcup
D,\wtord)$ that projects to
$f\colon(S^2,C,\ord)\to(S^2,A,\ord)$. We will enlarge the groups
in~\eqref{eq:4exact} to ``$\wtf$-impure mapping class groups'' so that
exact sequences analogous to~\eqref{eq:4exact:*} hold.

Let $\Mod^*(S^2,C\sqcup E)$ be the group of homeomorphisms
$m\colon (S^2,C\sqcup E)\selfmap$ considered up to isotopy rel
$C\sqcup E$ such that $m\restrict C$ is the identity and for every
$e\in E$ we have $\wtf(m(e))=\wtf(e)$; i.e.,~$m$ may permute points in
$\wtf^{-1}(\wtf(e))\setminus C$. There is a natural forgetful morphism
\[\Forget_E\colon \Mod^*(S^2,C\sqcup E)\to\Mod(S^2,C).\]

As in Definition~\ref{defn:PureBraidGroup}, the \emph{braid group
  $\Braid^*(S^2\setminus C,E)$} is the set of continuous motions
$m\colon[0,1]\times E\to S^2\setminus C$ considered up to isotopy so
that
\begin{itemize}
\item $m(t,-)\colon E\hookrightarrow S^2\setminus C$ is an inclusion
  for every $t\in [0,1]$;
\item $m(0,-)=\one\restrict E$;
\item $m(1,e)\in \wtf^{-1}(\wtf(e))$ for every $e\in E$.
\end{itemize}
Every $m\in\Braid^*(S^2\setminus C,E)$ induces a permutation
$\pi_m\colon e\mapsto m(1,e)$ of $E$. The pure braid group consists of
those permutations with $\pi_m=\one$. The product in
$\Braid^*(S^2\setminus C,E)$ is
$m\cdot m'=m\#(m'\circ(\one\times\pi_m))$, with as usual `$\#$'
standing for concatenation of motions. Birman's theorem (a slight
generalization of Theorem~\ref{thm:BirmExistUniq}) still holds: the
group $\Braid^*(S^2\setminus C,E)$ is isomorphic to the kernel of
$\Mod^*(S^2,C\sqcup E)\to\Mod(S^2,C)$ via the push operator.

Let $\pi_1^*(Y,E)$ be the group of motions
$m\colon[0,1]\times E\to Y=(S^2,C,\ord)$, considered up to homotopy,
such that
\begin{itemize}
\item $m(0,-)=\one\restrict E$;
\item $m(1,e)\in \wtf^{-1}(\wtf(e))$  for every $e\in E$;
\end{itemize}
here $m,m'\colon E\hookrightarrow Y$ are \emph{homotopic} if $m(-,e)$
and $m'(-,e)$ are homotopic curves (relative to their endpoints) in
$Y$ for all $e\in E$.  The product in $\pi_1^*(Y,E)$ is again
$m\cdot m'=m\#(m'\circ(\one\times\pi_m))$. We have
\begin{equation}\label{eq:decompose pi_1*}
  \pi_1^*(Y,E)\cong\prod_{e\in E}\pi_1(Y,e)\rtimes\prod_{d\in\wtf(E)}(\wtf^{-1}(d)\cap E)\perm,
\end{equation}
the isomorphism mapping $m$ to its restrictions $m(-,e)$ and its
permutation $\pi_m$.

\begin{lem}
  The following sequence is exact:
  \begin{equation}
    \begin{tikzcd}
      \knBraid(Y,E)\ar[r,hook] & \Braid^*(S^2\setminus C,E)\ar[r,"\Antipush_E",->>] & \pi_1^*(Y,E),
    \end{tikzcd}
  \end{equation}
  where $\Antipush_E$ is the natural forgetful morphism.
\end{lem}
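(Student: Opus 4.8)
The plan is to mimic the proof of Lemma~\ref{lem:knBraid is normal} and the exactness of the analogous sequence in~\eqref{eq:4exact}, adapting it to the $\wtf$-impure setting where the relevant groups now carry a permutation of $E$. First I would unwind the definitions: an element $m\in\Braid^*(S^2\setminus C,E)$ is a motion of $E$ in $S^2\setminus C$ ending at a point of each fibre $\wtf^{-1}(\wtf(e))$, and $\Antipush_E$ sends it to the pair consisting of the homotopy classes of the strands $m(-,e)$ in $Y=(S^2,C,\ord)$ together with the terminal permutation $\pi_m$ — that is, to an element of $\pi_1^*(Y,E)$ under the identification~\eqref{eq:decompose pi_1*}. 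So the statement $\ker(\Antipush_E)=\knBraid(Y,E)$ amounts to: a braid $m$ lies in the kernel iff $\pi_m=\one$ and every strand $m(-,e)$ is null-homotopic in $(S^2,C,\ord)$ — which is exactly the definition of $\knBraid(Y,E)$ (note that when $\pi_m=\one$ every strand is a loop in $Y$, so it makes sense to ask whether it is trivial in $\pi_1(Y,e)$).

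The inclusion $\knBraid(Y,E)\hookrightarrow\Braid^*(S^2\setminus C,E)$ and the statement $\knBraid(Y,E)\subseteq\ker(\Antipush_E)$ are immediate from these descriptions: a knitting braid has trivial permutation (its strands are loops) and each strand is trivial in $Y$, hence it maps to the identity. The content is therefore twofold. First, exactness in the middle, i.e.\ $\ker(\Antipush_E)\subseteq\knBraid(Y,E)$: if $m$ maps to the identity of $\pi_1^*(Y,E)$, then $\pi_m=\one$ so $m$ is a pure braid of $E$ in $S^2\setminus C$, and each $m(-,e)$ is a loop that is null-homotopic in $(S^2,C,\ord)$; that is precisely membership in $\knBraid(Y,E)$ as defined just before this lemma. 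Second, surjectivity of $\Antipush_E$: given $(\gamma_e)_{e\in E}\in\prod_e\pi_1(Y,e)$ and a permutation $\sigma\in\prod_{d\in\wtf(E)}(\wtf^{-1}(d)\cap E)\perm$, I need to realize it. I would first realize $\sigma$ by a braid whose strands are "short" paths: since for each $d\in\wtf(E)$ the set $\wtf^{-1}(d)\cap E$ is a finite subset of the sphere $S^2\setminus C$, any permutation of it is realized by a motion of $E$ in $S^2\setminus C$ (choosing, in a small contractible neighbourhood of each such fibre, disjoint paths among the finitely many points and keeping the other points of $E$ fixed), and by a small isotopy one makes the strands pairwise disjoint at each time $t$ so that it is an honest element of $\Braid^*(S^2\setminus C,E)$. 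Precomposing with this, one reduces to the pure case, where surjectivity onto $\prod_e\pi_1(Y,e)$ follows exactly as in the top row of~\eqref{eq:4exact}: isotope the loops $\gamma_e$ slightly to be pairwise disjoint at every time and lift the resulting braid-picture to $\Braid^*(S^2\setminus C,E)$, which maps to $(\gamma_e)$ under $\Antipush_E$.

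The main obstacle is the bookkeeping around the permutation part in the surjectivity argument: one must check that a permutation $\sigma$ of $E$ that moves points only within $\wtf$-fibres can always be realized by a braid in $S^2\setminus C$ (as opposed to needing to cross $C$), and that after composing the "permutation braid" with a "pure braid" the product rule $m\cdot m'=m\#(m'\circ(\one\times\pi_m))$ in $\Braid^*(S^2\setminus C,E)$ matches the semidirect product rule in $\pi_1^*(Y,E)$ under~\eqref{eq:decompose pi_1*}, so that $\Antipush_E$ is genuinely a homomorphism with the claimed image. The first point is handled by the local picture above (the points of a single fibre lie in a common disk disjoint from $C$, so all their permutations are braided there); the second is a routine but slightly tedious verification that $\Antipush_E$ respects the twisted concatenation, which I would state and leave to the reader since it is identical in form to the corresponding check for $\Mod^*(S^2,C\sqcup E)$.
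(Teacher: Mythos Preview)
Your proposal is correct and follows essentially the same approach as the paper: both reduce exactness in the middle to the tautology that $\Antipush_E(m)$ is trivial iff $\pi_m=\one$ and each strand is null-homotopic in $Y$, i.e.\ iff $m\in\knBraid(Y,E)$. The paper's proof is a one-liner (``if $\Antipush_E(m)=\Antipush_E(m')$ then $m^{-1}m'$ is pure, hence knitting'') and does not spell out surjectivity at all, presumably because it is the same perturbation argument already given for the top row of~\eqref{eq:4exact}; your version is simply more explicit, and in particular handles the permutation part separately. One small imprecision: when you realize a permutation of $\wtf^{-1}(d)\cap E$ by a braid ``in a small contractible neighbourhood of each such fibre'', those fibre points need not be close to one another, so you should instead say that any finite subset of the connected surface $S^2\setminus C$ lies in an embedded disk (connect the points by an arc avoiding $C$ and thicken), or just invoke path-connectedness of $S^2\setminus C$ directly.
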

\begin{proof}
  Suppose $\Antipush_E(m)=\Antipush_E(m')$. Then $m^{-1}m'$ is pure so
  $m^{-1}m'\in \knBraid(Y,E)$. The converse is also obvious.
\end{proof}

The same argument as in Lemma~\ref{lem:knBraid is normal} shows that
the knitting group $\knBraid(Y,E)$ is a normal subgroup of
${\Mod^*(S^2,C\sqcup E)}$. We may thus define
\begin{equation}\label{eq:modxd:*}
  \Mod^*(Y|E)\coloneqq\Mod^*(S^2,C\sqcup E)/\knBraid(Y,E).
\end{equation}

\noindent As in~\eqref{eq:4exact} we have the following exact sequences
\begin{equation}\label{eq:4exact:*}
  \begin{tikzpicture}[description/.style={fill=white,inner sep=2pt},baseline]
  \node (knMod) at (-4,3){$\knBraid(Y,E)$};
  \node (ModE) at (0,1.5) {$\Braid^*(S^2\setminus C,E)$};
  \node (PrMod) at (4,0) {$\pi_1^*(Y,E).$};
  \node (ModAE) at (-4/3,0){$\Mod^*(S^2,C\sqcup E)$};
  \node (ModAsE) at (0,-1.5){$\Mod^*(Y|E)$};
  \node (ModA)  at (-4,-3){\makebox[0pt][r]{$\Mod(H)={}$}$\Mod(S^2,C)$};
  \path[->>,font=\scriptsize] (ModE) edge node[description]{$\Antipush_E$} (PrMod)
    (ModAE) edge (ModAsE)
    (ModAE) edge node[description]{$\Forget_E$}  (ModA)
    (ModAsE) edge node[description]{$\Forget_E$} (ModA);
  \path[right hook->,font=\scriptsize]
    (knMod) edge (ModE)
    (knMod) edge (ModAE)
    (ModE) edge (ModAE)
    (PrMod) edge node[description]{$\Push$} (ModAsE);
  \end{tikzpicture}
\end{equation}

\subsection{Branched coverings}
Recall that, for orbisphere maps
$f_0,f_1\colon(S^2,C,\ord)\to(S^2,A,\ord)$ we write
$f_0\approx_C f_1$, and call them \emph{isotopic}, if there is a path
$(f_t)_{t\in[0,1]}$ of orbisphere maps
$f_t\colon(S^2,C,\ord)\to(S^2,A,\ord)$. Equivalently,
\begin{lem}\label{lem:factorization}
  $f_0\approx_C f_1$ if and only if $h f_0=f_1$ for a homeomorphism
  $h\colon(S^2,C)\selfmap$ that is isotopic to the
  identity.
\end{lem}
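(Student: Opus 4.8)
The plan is to prove both implications of Lemma~\ref{lem:factorization} directly, passing freely between isotopies of maps and isotopies of homeomorphisms through the fact that orbisphere maps are branched coverings.

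First I would prove the easy direction: if $h\colon(S^2,C)\selfmap$ is isotopic to the identity, say via $(h_t)_{t\in[0,1]}$ with $h_0=\one$ and $h_1=h$, then $(h_t f_0)_{t\in[0,1]}$ is a path of orbisphere maps from $(S^2,C,\ord)$ to $(S^2,A,\ord)$ connecting $f_0$ to $h f_0 = f_1$. The only thing to check is that each $h_t f_0$ is again an orbisphere map; this is immediate, since $h_t$ is a homeomorphism fixing the orbisphere structure on $(S^2,A,\ord)$ setwise (indeed pointwise on $A$, as isotopies are taken rel $C$ — wait, rel nothing on the target; but $h_t$ being isotopic to the identity on $(S^2,C)$ means it fixes $C$ and hence the relevant points), so the local degree and divisibility conditions $\ord_C(p)\deg_p(f_0)\mid\ord_A(h_t f_0(p))$ are preserved. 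Critical values of $h_t f_0$ are the $h_t$-images of critical values of $f_0$, which lie in $A$; here I use that $h_t$ fixes $A$ pointwise, or at least that $A$ is $h_t$-invariant. This gives $f_0\approx_C f_1$.

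For the converse, suppose $(f_t)_{t\in[0,1]}$ is a path of orbisphere maps with the given endpoints. The idea is to use the isotopy extension theorem together with the path-lifting property of branched coverings. Concretely, I would regard the family $(f_t)$ as a homotopy $F\colon S^2\times[0,1]\to S^2$, and build an ambient isotopy $(g_t)$ of the target sphere $S^2$ (the codomain) with $g_0=\one$ such that $g_t\circ f_0 = f_t$. Over a neighbourhood of the finite set of critical values the local picture is that of a family of branched coverings of a disk by a disk, all of the same degree, which can be rigidified; away from the critical values $f_0$ is an honest covering and path-lifting gives a unique isotopy of the total space lifting the target isotopy. The standard reference here — the fact that a homotopy through branched coverings of the sphere, fixing the combinatorial data, is realized by post-composition with an ambient isotopy — is essentially the statement that the space of branched coverings with fixed branch data deformation-retracts appropriately; alternatively one invokes that isotopic branched coverings differ by pre- and post-composition with homeomorphisms isotopic to the identity, and then absorbs the target homeomorphism. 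Taking $h\coloneqq g_1^{-1}$ (a homeomorphism of $(S^2,C)$, using that $g_t$ fixes $C$ and is isotopic to the identity since $(g_t)$ is an isotopy starting at $\one$), we get $h f_1 = h g_1 f_0 = f_0$, i.e. $f_0 = h f_1$; relabelling, $h' f_0 = f_1$ with $h' = h^{-1}$ isotopic to the identity, as required.

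The main obstacle is the converse direction, and specifically the behaviour near critical points: one must ensure the ambient isotopy of the target can be lifted compatibly across the (finitely many) critical values, despite path-lifting failing there in the naive sense. The clean way around this is to note that the critical \emph{values} of $f_t$ trace out paths in $S^2$, and by first applying an ambient isotopy moving these back to the critical values of $f_0$ one reduces to the case where $f_t$ has constant critical values; then near each such value $f_t$ is, up to a uniformly chosen local chart, the fixed model $z\mapsto z^k$, so the lift is forced and patches with the covering-space lift on the complement. I would cite the relevant statement from the topology of branched coverings (or from earlier in the series, if available) rather than reprove it; with that in hand the argument is routine. One should also double-check the orbisphere bookkeeping: the homeomorphism $h$ produced must respect $(S^2,C,\ord)$, which it does because it is isotopic to the identity and isotopies are rel $C$, so in particular $h$ fixes $C$ pointwise and the order function is automatically preserved.
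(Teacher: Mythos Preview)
Your easy direction is fine in substance (modulo some wavering about whether $h$ acts on the source or the target; it is a self-map of the \emph{source} $(S^2,C)$, so $h_t f_0$ has the same critical values as $f_0$ and still sends $C$ into $A$, since $h_t$ fixes $C$ pointwise).

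The converse has a genuine gap. You propose an isotopy $(g_t)$ of the \emph{target} with $g_t\circ f_0=f_t$, and then set $h=g_1^{-1}$. But $f_0$ is a degree-$d$ branched covering, hence not injective, so a well-defined $g_t$ with $g_t\circ f_0=f_t$ exists only if $f_t$ is constant on every fibre of $f_0$ --- and nothing in the hypothesis guarantees this. Your later phrase ``an isotopy of the total space lifting the target isotopy'' points in the right direction, but the formula you wrote is on the wrong side: the factorisation has to be through a homeomorphism of the \emph{source}, as the statement of the lemma already says. You also overcomplicate the branch locus. You plan to first straighten ``the critical values of $f_t$ [which] trace out paths in $S^2$'', but by the definition of an orbisphere map $(S^2,C,\ord)\to(S^2,A,\ord)$ every critical value of every $f_t$ already lies in the fixed finite set $A$; the branch set in the target is frozen from the start. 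This is exactly what the paper exploits: since no critical values move, for each $y\in S^2$ the fibre $f_t^{-1}(y)$ varies continuously as a finite set (no preimages collide), and tracking each point along its trajectory directly defines a source isotopy $(h_t)$ with $h_0=\one$ and $h_t f_0=f_t$. No local analysis near critical points, no isotopy-extension theorem, and no external citation are needed.
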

\begin{proof}
  If there exists an isotopy $(h_t)$ witnessing $\one\approx_C h$, then
  $f_t\coloneqq h_t f_0$ witnesses $f_0\approx_C f_1$. Conversely,
  since all critical values of $f_t$ are frozen in $A$, the set
  $f_t^{-1}(y)$ moves homeomorphically for every $y\in S^2$
  (equivalently, no critical points collide). Therefore, we may factor
  $f_t = h_t f_0$, with $h_t(z)$ the trajectory of $z\in f_t^{-1}(y)$;
  this defines an isotopy from $\one$ to $h\coloneqq h_1$.
\end{proof}

Consider orbisphere maps
$\wtf,\wtg\colon(S^2,C\sqcup E,\wtord)\to (S^2,A\sqcup D,\wtord)$
as in~\eqref{eq:forgetCD}. We write $\wtf\approx_{C|E}\wtg$,
and call $\wtf,\wtg$ \emph{knitting-equivalent}, if $\wtf=h\wtg$
for a homeomorphism $h\colon(S^2,C\sqcup E)\selfmap$ in
$\knBraid(Y,E)$; we have
\[\wtf\approx_{C\sqcup E}\wtg\Longrightarrow \wtf\approx_{C|E}\wtg\Longrightarrow \wtf\approx_{C}\wtg.\]
For $m\in\Braid(S^2\setminus A,D)$ we define its \emph{pullback}
$(\wtf)^*m\colon[0,1]\times E\to S^2\setminus C$ by
\[((\wtf)^*m)(-,e):= \begin{cases} m(-,\wtf(e))\lift{\wtf}{e} &\mbox{if } \wtf(e)\in D,\\
    e & \mbox{if } \wtf(e)  \in A.  \end{cases}
\]
This defines a motion of $E$; note that $(\wtf)^*m(1,e)$ need not equal
$e$:
\begin{lem}\label{lem:pushf*}
  If $\Push(m)\in \Braid(S^2\setminus A, D)$, then $\Push((\wtf)^*m)$
  defines an element of $\Braid^*(S^2\setminus C,E)$ and we have the
  following commutative diagram:
  \begin{equation}\label{eq:liftpush}
    \begin{tikzcd}[column sep=3cm]
      (S^2,C\sqcup E)\ar[r,"\Push((\wtf)^*m)"]\ar[d,"\wtf"']\ar[dr,phantom,"\approx_{C\sqcup E}"] & (S^2,C\sqcup E)\ar[d,"\wtf"]\\
    (S^2,A\sqcup D)\ar[r,"\Push(m)"'] & (S^2,A\sqcup D).
  \end{tikzcd}
\end{equation}
\end{lem}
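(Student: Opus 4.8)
The plan is to unwind the definition of the pullback motion and lift the isotopy witnessing $\Push(m)\approx_A\one$ along $\wtf$, using the covering-space path-lifting principle that underlies Birman's theorem. First I would recall that, by Theorem~\ref{thm:BirmExistUniq}, $\Push(m)$ is realized by an isotopy $(g_t)_{t\in[0,1]}$ of $(S^2,A\sqcup D)$ with $g_0=\one$, $g_1=\Push(m)$, such that $g_t$ is the identity rel $A$ and $g_t$ moves each $d\in D$ along the reversed strand $m(-,d)^{-1}$. Because the isotopy is supported away from $A$, and $A$ contains all critical values of $\wtf$ (diagram~\eqref{eq:forgetCD}), for each $t$ the composition $g_t\circ\wtf$ has the same critical values as $\wtf$; as in the proof of Lemma~\ref{lem:factorization}, no critical points collide and we may factor $g_t\wtf = \wtg_t$ with $\wtg_t$ a branched covering. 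Concretely, lifting $g_t$ through $\wtf$ produces a path of homeomorphisms $(h_t)$ of $S^2\setminus C$ with $h_0=\one$, each point $z$ tracing the $\wtf$-lift of the trajectory $t\mapsto g_t(\wtf(z))$ — here one uses that the covering $\wtf\colon S^2\setminus\wtf^{-1}(A\sqcup D)\to S^2\setminus(A\sqcup D)$ has the homotopy lifting property. This $h_t$ fixes $C$ pointwise (since $g_t$ fixes $A$ and $\wtf^{-1}(A)\supseteq C$) and maps $\wtf^{-1}(D)\supseteq E$ to itself.

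Next I would identify the terminal homeomorphism $h\coloneqq h_1$ with $\Push((\wtf)^*m)$. By construction, an endpoint $e\in E$ is dragged along the $\wtf$-lift, starting at $e$, of the reversed strand $m(-,\wtf(e))^{-1}$; when $\wtf(e)\in D$ this is exactly (the reversal of) $(\wtf)^*m(-,e)$, and when $\wtf(e)\in A$ the strand $m(-,\wtf(e))$ is constant so $e$ stays fixed — this matches the case distinction in the definition of $(\wtf)^*m$. Thus $h_t$ is an isotopy rel $C$ from $\one$ to $h$ moving $E$ along $((\wtf)^*m)^{-1}$, so $(\wtf)^*m$ has a well-defined $\Push$, namely $h$, lying in $\ker(\Mod^*(S^2,C\sqcup E)\to\Mod(S^2,C))$; the terminal permutation $\pi_{(\wtf)^*m}$ sends $e$ to $(\wtf)^*m(1,e)\in\wtf^{-1}(\wtf(e))$, exactly the condition making it land in $\Braid^*(S^2\setminus C,E)$ rather than the pure subgroup. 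The commuting square~\eqref{eq:liftpush} is then immediate: $g_1\circ\wtf=\wtf\circ h_1$ holds at the level of maps, which is the displayed relation $\Push(m)\circ\wtf = \wtf\circ\Push((\wtf)^*m)$; and since $g_1\wtf=\wtg_1$ only after the factorization $g_t\wtf=\wtg_t h_0$ style argument, the square commutes up to $\approx_{C\sqcup E}$, not on the nose, which is precisely why the diagonal in~\eqref{eq:liftpush} carries the label $\approx_{C\sqcup E}$.

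**The main obstacle** I anticipate is bookkeeping the two sources of non-strictness and making sure they are the only ones. First, the lift $h_t$ is only determined up to isotopy rel $C\sqcup E$: different but isotopic choices of the representing isotopy $(g_t)$ of $\Push(m)$, and the freedom in realizing path-lifting, change $h$ only within its mapping class, which is all we need but must be stated carefully (invoking Birman's uniqueness, Theorem~\ref{thm:BirmExistUniq}, at both levels). Second, and more delicate, is verifying that $(\wtf)^*m$ is a \emph{legal} element of $\Braid^*(S^2\setminus C,E)$ — i.e.\ that the lifted strands remain pairwise disjoint at every time $t$, not just at $t\in\{0,1\}$. This follows because $\wtf$ restricted to $S^2\setminus\wtf^{-1}(A)$ is an honest covering, hence locally injective, and disjoint strands upstairs of a single strand downstairs cannot cross without their images crossing; but two distinct $e,e'$ with $\wtf(e)=\wtf(e')$ require the observation that they lift a \emph{common} strand $m(-,\wtf(e))$ to distinct sheets, which stay distinct by uniqueness of path-lifting. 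Once disjointness is in hand, the rest is the formal manipulation above; I would present the covering-space lifting as the conceptual core and relegate the disjointness and the ``up-to-isotopy'' clauses to short remarks.
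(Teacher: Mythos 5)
Your proof is correct, but it takes a genuinely different route from the paper's. The paper does not lift the ambient isotopy directly: it writes $\Push(m)$ as a composition of pushes along finitely many simple arcs in $S^2\setminus A$, notes that a push supported in a small disk $\mathcal U$ disjoint from $A$ lifts --- because $A$ contains all critical values of $\wtf$, so $\wtf^{-1}(\mathcal U)$ is a union of $d=\deg(\wtf)$ disks mapped homeomorphically --- to the product of the $d$ pushes along the lifted arcs (formula~\eqref{eq:list push gamma}), and then multiplies these relations rel $C\sqcup E$; the identification of the lifted strands with $(\wtf)^*m$ and the disjointness questions you raise are absorbed into this arc-by-arc bookkeeping. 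Your route, lifting the Birman isotopy $(g_t)$ of Theorem~\ref{thm:BirmExistUniq} pointwise by unique path lifting, is a clean alternative: it avoids the choice of arcs, yields the on-the-nose identity $g_t\circ\wtf=\wtf\circ h_t$, and your disjointness argument (lifts of disjoint strands stay disjoint because their images are disjoint; distinct lifts of a common strand stay distinct by uniqueness of lifting) is exactly what is needed to see that $(\wtf)^*m$ is a legal element of $\Braid^*(S^2\setminus C,E)$. What the paper's local decomposition buys is the explicit formula for the lift of a single push, which is reused immediately after the lemma to explain why the statement fails when $m$ moves a critical value, and again in Proposition~\ref{prop:knittinginert}. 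Two small repairs: the covering you lift through must be $\wtf\colon\wtf^{-1}(S^2\setminus A)\to S^2\setminus A$, not the one over $S^2\setminus(A\sqcup D)$, since the trajectories you lift (in particular the strands emanating from points of $D$) meet $D$ --- only avoidance of $A$, where the critical values sit, is relevant; and your closing sentence about the factorization $g_t\wtf=\wtg_t h_0$ is muddled --- with your explicit lift the square commutes exactly for these representatives, and the label $\approx_{C\sqcup E}$ in~\eqref{eq:liftpush} merely records that the push maps are only well defined up to isotopy. Finally, as you implicitly note, one should check that $h_t$ extends continuously by the identity across $\wtf^{-1}(A)$, so that it is an isotopy of the sphere fixing $C$ pointwise.
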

\begin{proof}
  Let us discuss in more detail the operator `$\Push$'. Consider a
  simple arc $\gamma\colon [0,1]\to S^2\setminus A$ and let
  $\mathcal U\subset S^2\setminus A$ be a small disk neighborhood of
  $\gamma$. We can define (in a non-unique way) a homeomorphism
  $\Push(\gamma)\colon (S^2,A)\selfmap$ that maps $\gamma(0)$ to
  $\gamma(1)$ and is identity away from $\mathcal U$. Let
  $\mathcal U_1,\mathcal U_2,\dots,\mathcal U_d$ be the preimages of
  $\mathcal U$ under $\wtf$, where $d=\deg(\wtf)$. Each $\mathcal U_i$
  contains a preimage $\gamma_i$ of $\gamma$. Let
  $\Push(\gamma_i)\colon \mathcal U_i\selfmap$ be the lift of
  $\Push(\gamma)\restrict{\mathcal U}$ under
  $\wtf\colon \mathcal U_i\to\mathcal U$, extended by the identity on
  $S^2\setminus\mathcal U_i$. Then
  \begin{equation}\label{eq:list push gamma}
    \Push(\gamma_1)\Push(\gamma_2)\cdots\Push(\gamma_d)\cdot \wtf=\wtf\cdot \Push(\gamma).
  \end{equation}

  For $m\in\Braid(S^2\setminus A,D)$, we can define $\Push(m)$ as a
  composition of pushes along finitely many simple arcs
  $\beta_i$. Using~\eqref{eq:list push gamma} we lift all
  $\Push(\beta_i)$ through $\wtf$; considering the equation rel
  $C\sqcup E$ we obtain~\eqref{eq:liftpush}.
\end{proof}

We note that $\Push(m)$ does not necessarily lift to a `push' if $m$
moves a critical value. Indeed if $\gamma$ is a simple loop then
$\Push(\gamma)$ is the quotient of two Dehn twists about the boundary
curves of an annulus surrounding $\gamma$,
see~\cite{farb-margalit:mcg}*{\S4.2.2}; however the lift of $\gamma$
will not be a union of simple closed curves if $\gamma$ contains a
critical value; an annulus around $\gamma$ will not lift to an
annulus, but rather to a more complicated surface $F$; and the quotient
of Dehn twists about boundary components of $F$ will not be a quotient
of Dehn twists about boundary components of annuli.

\begin{prop}\label{prop:knittinginert}
  Let
  $\wtf\colon(S^2,C\sqcup E,\wtord)\to (S^2,A\sqcup
  D,\wtord)$ be an orbisphere map as
  in~\eqref{eq:forgetCD}. Then every element in $\knBraid(X,D)$ lifts
  through $\wtf$ to an element of $\knBraid(Y,E)$.

  If
  $\wtg\colon(S^2,C\sqcup E,\wtord)\to(S^2,A\sqcup
  D,\wtord)$ is another orbisphere map, then
  $\wtf\approx_{C|E} \wtg$ if and only if $\wtf\approx_{C} h \wtg k$
  for some $h\in\knBraid(Y,E),k\in\knBraid(X,D)$.
\end{prop}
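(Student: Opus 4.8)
The plan is to prove the two assertions in turn: the first by a local covering‑space computation, and the second by using the first (together with Lemma~\ref{lem:pushf*}) to eliminate the $\knBraid(X,D)$‑factor and reduce to an isotopy‑lifting statement.

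For the first assertion, let $m\in\knBraid(X,D)$. Lemma~\ref{lem:pushf*} already gives that $\Push((\wtf)^*m)$ lies in $\Braid^*(S^2\setminus C,E)$ and makes the square~\eqref{eq:liftpush} commute, so it is the lift of $\Push(m)$ through $\wtf$; it remains to check $\Push((\wtf)^*m)\in\knBraid(Y,E)=\ker\Antipush_E$, i.e.\ that every strand of the motion $(\wtf)^*m$ is null‑homotopic in $Y=(S^2,C,\ord)$. The strand over $e\in E$ with $\wtf(e)\in A$ is constant, and the strand over $e$ with $\wtf(e)=d\in D$ is the $\wtf$‑lift of the loop $\gamma\coloneqq m(-,d)$, which by hypothesis is trivial in $\pi_1(S^2,A,\ord,d)$. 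Writing $[\gamma]$ as a product of conjugates (in $\pi_1(S^2\setminus A,d)$) of the orbifold relators $\gamma_a^{\ord(a)}$ and transporting the base point along the lifts of the connecting words, it suffices to show that the $\wtf$‑lift of $\gamma_a^{\ord(a)}$ at any $q\in\wtf^{-1}(a)$ is a loop that is trivial in $\pi_1(S^2,C,\ord,q)$. Near $q$ the map $\wtf$ is conjugate to $z\mapsto z^{d_q}$ with $d_q\coloneqq\deg_q(\wtf)=\deg_q(f)$; since $f$ is an orbisphere map one has $\ord(q)\,d_q\mid\ord(a)$ (with $\ord(q)=1$ when $q\notin C$), so $\gamma_a^{\ord(a)}$ lifts at $q$ to a loop --- since $d_q\mid\ord(a)$ --- whose class in $\pi_1(S^2,C,\ord,q)$ is a conjugate of the $(\ord(a)/d_q)$‑th power of an order‑$\ord(q)$ generator, hence trivial because $\ord(q)\mid\ord(a)/d_q$. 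So $(\wtf)^*m$ is pure with $\ord$‑trivial strands, i.e.\ $\Push((\wtf)^*m)\in\knBraid(Y,E)$.

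For the second assertion, the implication $\wtf\approx_{C|E}\wtg\Rightarrow\wtf\approx_C h\wtg k$ is immediate, taking $k=\one$. Conversely, assume $\wtf\approx_C h\wtg k$. I would first apply the first assertion to $\wtg$: the element $k\in\knBraid(X,D)$ lifts through $\wtg$ to $\tilde k\coloneqq\Push((\wtg)^*k)\in\knBraid(Y,E)$, and Lemma~\ref{lem:pushf*} gives $k\wtg\approx_{C\sqcup E}\wtg\tilde k$. Substituting and using that the source‑side knitting elements form a subgroup, this reduces to the case $k=\one$: we may assume $\wtf\approx_C h'\wtg\eqqcolon\wtg_1$ with $h'\in\knBraid(Y,E)$, and since $\wtg_1\approx_{C|E}\wtg$ it suffices to upgrade $\wtf\approx_C\wtg_1$ to $\wtf\approx_{C|E}\wtg_1$. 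Here I would invoke Lemma~\ref{lem:factorization} --- whose proof applies verbatim, the critical values staying frozen in $A$ --- to realise the isotopy as pre‑composition of $\wtg_1$ with a homeomorphism of $(S^2,C\sqcup E)$ isotopic to the identity rel $C$, which by Birman's theorem (the $\Braid^*$ generalisation of Theorem~\ref{thm:BirmExistUniq}) is $\Push(b)$ for some $b\in\Braid^*(S^2\setminus C,E)$. Thus $\wtf$ and $\wtg$ differ by pre‑composition with an element of $\Braid^*(S^2\setminus C,E)$ built from $h'$ and $\Push(b)$, and one is left to show this element --- equivalently $b$ --- is a knitting braid.

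The main obstacle is precisely this last point. A priori an isotopy rel $C$ can twist $E$ arbitrarily, so $b$ need only lie in $\Braid^*(S^2\setminus C,E)$; what forces $b\in\knBraid(Y,E)$ is that $\wtf$ and $\wtg$ are orbisphere maps of the \emph{enlarged} orbisphere, i.e.\ that $E$ is carried into $\wtA$ compatibly with $\wtord$ and the square~\eqref{eq:forgetCD} commutes. Concretely, pushing each strand of $b$ forward by $\wtf$ and lifting back through $\wtg_1$ exhibits it, up to the frozen data, as a product of $\wtf$‑lifts of orbifold‑trivial loops on the target, so by the very divisibility $\ord(q)\deg_q(\wtf)\mid\ord(a)$ used above it is $\ord$‑trivial in $Y$ --- the first‑assertion computation read backwards. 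Once that is settled, the rest is bookkeeping with the push operator and the exact sequences~\eqref{eq:4exact} and~\eqref{eq:4exact:*}.
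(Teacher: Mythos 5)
Your proof of the first assertion is correct and is essentially the paper's own argument: write the knitting class as $\Push(b)$, pull the motion back via $(\wtf)^*$, and conclude with Lemma~\ref{lem:pushf*}; your relator-and-local-degree computation simply fills in the step the paper asserts without detail, namely that the lifted strands are closed loops that are trivial in $Y$. Likewise, the opening of your converse argument for the second assertion --- lift $k$ through $\wtg$ to $\tilde k=\Push((\wtg)^*k)\in\knBraid(Y,E)$ and absorb it into the source-side knitting factor --- is precisely what the paper means by ``a direct consequence of the first''.

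Everything after that reduction, however, has a genuine gap, and it cannot be repaired, because the implication you then try to prove (with $\approx_C$ read literally as isotopy rel $C$ only) is false. Take $\wtg$ as given and $\wtf\coloneqq\Push(m)\wtg$ for a pure braid $m\in\Braid(S^2\setminus C,E)\setminus\knBraid(Y,E)$. By Theorem~\ref{thm:BirmExistUniq} the class $\Push(m)$ is trivial rel $C$, so $\wtf\approx_C\one\cdot\wtg\cdot\one$; yet if $\wtf\approx_{C|E}\wtg$, say $\wtf\approx_{C\sqcup E}h\wtg$ with $h\in\knBraid(Y,E)$, then left-freeness of the mapping class biset (Lemma~\ref{lem:M* is left free}(1)) forces $\Push(m)=h\in\knBraid(Y,E)$, contradicting the choice of $m$. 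This is exactly where your final paragraph breaks down: the braid $b$ you produce has strands that are loops in $Y$, and the divisibility $\ord(q)\deg_q(\wtf)\mid\ord(\wtf(q))$ gives no information about them --- ``the first-assertion computation read backwards'' has no orbifold-trivial loops in $X$ to start from. There is also an earlier unjustified step: the homeomorphism supplied by Lemma~\ref{lem:factorization} is only an isotopy rel $C$ and need not carry $E$ into $E$ (points of $E$ may be sent to other preimages, cf.\ Lemma~\ref{lem:twisting bar E} and Example~\ref{ex:M*neqM}), so without a saturation hypothesis you cannot even place it in $\Braid^*(S^2\setminus C,E)$. The second claim is to be read with the right-hand relation taken rel $C\sqcup E$, i.e.\ as an identity in $M^*(\wtf)$ up to the two knitting actions --- this is the form actually used in Proposition~\ref{prop:inertMCB} --- and under that reading your lifting-and-absorbing reduction already completes the proof; you should stop there rather than attempt the (false) strengthening.
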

\begin{proof}
  Consider $h\in \knBraid(X,D)$. By Theorem~\ref{thm:BirmExistUniq} we
  may write $h=\Push(b)$. Since $b(-,d)$ is homotopically trivial in
  $X$ for every $d\in D$, the curve $b(-,\wtf(e))\lift\wtf e$ ends at
  $e$ for all $e\in E$ with $\wtf(e)\in D$, because this curve is in
  $Y$. Therefore, $(\wtf)^*b(1,-)=\one\restrict E$, and
  $\Push((\wtf)^*b)\in \Braid(S^2\setminus C,E)$. Since the lifts
  $b(-,\wtf(e))\lift\wtf e$ are homotopically trivial in $Y$ for all
  $e\in E$, we have $\Push((\wtf)^*b)\in\knBraid(Y,E)$ and
  Lemma~\ref{lem:pushf*} concludes the first claim.

  The second claim is a direct consequence of the first.
\end{proof}

As a consequence, we may detail a little bit more the map
$\Antipush_D$ in~\eqref{eq:4exact}. Choose for every $d\in D$ a path
$\ell_d$ in $S^2\setminus A$ from $*$ to $d$. This path defines an
isomorphism $\pi_1(X,d)\to\pi_1(X,*)=G$ by
$\gamma\mapsto\ell_d\#\gamma\#\ell_d^{-1}$. We thus have a map
\begin{equation}\label{eq:antipush}
  \Antipush_D\colon\Braid(S^2\setminus A,D)\twoheadrightarrow G^D,\qquad m\mapsto(\ell_d\#(\Push^{-1}(m)\restrict d)\#\ell_d^{-1})_{d\in D},
\end{equation}
and $\ker(\Antipush_D)=\knBraid(X,D)$.

\subsection{Mapping class bisets}
\label{ss:MapClassBis}
We introduce some notation parallel to that in~\eqref{eq:4exact}
and~\eqref{eq:4exact:*} for mapping class bisets.  Let
$\Forget_E\colon\wtH\to H$ and $\Forget_D\colon\wtG\to G$ be forgetful
morphisms of orbisphere groups as in~\eqref{eq:forgetCD2}, and let
$\wtB$ be an orbisphere biset. Let
$B\coloneqq H\otimes_\wtH\wtB\otimes_\wtG G$ be the induced
$H$-$G$-biset. We have forgetful morphisms of groups
$\Forget_D\colon \Mod(\wtG)\to\Mod(G)$ and
$\Forget_E\colon \Mod(\wtH)\to\Mod(H)$. Corresponding mapping class
bisets are written respectively, with $\wtf$ and $f$ the orbisphere
maps associated with $\wtB$ and $B$,
\begin{align*}
  M(\wtB)=M(\wtf)&\coloneq \{n\otimes\wtB\otimes m\mid
                   n\in\Mod(\wtH),m\in\Mod(\wtG)\}/{\cong}\\
                   &=\{n  \wtf m\mid
                   n\in\Mod(S^2,\wtC),m\in\Mod(S^2,\wtA)\}/{\approx_{\wtC}},\\
  M(B)=M( f)&\coloneq\{n\otimes B\otimes m\mid  n\in\Mod(H),m\in\Mod(G)\}/{\cong}\\
  &=\{n f m\mid  n\in\Mod(S^2,C),m\in\Mod(S^2,A)\}/{\approx_C}
\end{align*}
together with the natural forgetful intertwiner
\begin{equation}\label{eq:dfn:ForgetE,D}
  \Forget_{E,D} \colon  M(\wtB) \to M( B),\hspace{0.5cm} \wtB'\to\Forget_{E,D}(B')=H\otimes_{\wtH}\wtB'\otimes_{\wtG}G.
\end{equation}

We may also define the following mapping class biset, sometimes larger
than $M(\wtB)$: assume first that $\wtord$ is constant on $E$,
possibly $\infty$, and set
\begin{equation}\label{eq:def M*}
  M^*(\wtB)=M^*(\wtf)\coloneqq\left\{\wtB'\text{ an $\wtH$-$\wtG$-orbisphere biset}\left|\begin{array}{c} \Forget_{E,D}(\wtB')\in M(B)\\ (\wtB')_*=\wtB_*\end{array}\right.\right\}/{\cong},
\end{equation}
where $(\wtB')_*\colon \wtC\to \wtA$ denotes the induced map on marked conjugacy classes.
It is an $\Mod^*(S^2,\wtC)$-$\Mod(S^2,\wtA)$-biset; note indeed that
we have $n \otimes \wtB'\in M^*(\wtB)$ for $\wtB'\in M^*(\wtB)$ and
$n\in\Mod^*(S^2,\wtC)$, because
$\Forget_{E,D}(n\otimes\wtB)=\Forget_E(n)\otimes B$. Again there is a
natural forgetful intertwiner
\begin{equation}\label{eq:dfn:ForgetE,D:*}
  \Forget_{E,D} \colon  \subscript{\Mod^*(S^2,\wtC)} M^*(\wtf)_{\Mod(S^2,\wtA)} \to\subscript{\Mod(S^2,C)}M(f)_{\Mod(S^2,A)}.
\end{equation}
We note that the left action of $\Mod(S^2,\wtC)$ on $M(\wtf)$
does not necessarily extend to an action of $\Mod^*(S^2,\wtC)$ on
$M(\wtf)$, because the result of the action is in general in $M^*(f)$
and not in $M(f)$, see Example~\ref{ex:M*neqM}.

In case $\wtord$ is not constant on $E$, we should be careful,
because permutation of points in $E$ does not leave $\wtH$ invariant;
rather, the image of $\wtH$ under such a permutation gives an
orbisphere group isomorphic to $\wtH$. However, $M^*(\wtB)$ and
$\Mod^*(S^2,\wtC)$ do not depend on the orbisphere structure, so the
definition may be applied with $\wtH$ and $\wtG$ replaced by orbisphere groups
with larger orders.

Let us call the set of extra marked points $E$ \emph{saturated} if
\[\wtf^{-1}(\wtf(E))\subseteq C\sqcup E.\]

\begin{lem}\label{lem:M* is left free}
  \begin{enumerate}
  \item The mapping class biset $M^*(\wtf)$ is left-free.

  \item Suppose that $E$ is saturated and that
    $g_0,g_1\colon (S^2,C\sqcup E,\wtord)\to(S^2,A\sqcup D,\wtord)$
    are orbisphere maps coinciding on $C\sqcup E$ and such that
    $\Forget_{E,D}(g_0)$ and $\Forget_{E,D}(g_1)$ are isotopic through
    maps $(S^2,C)\to(S^2,A)$. Then $g_0,g_1\in M^*(\wtf)$, and there
    is an $m\in \Braid^*(S^2\setminus C,E)$ such that $m g_0=g_1$
    holds in $M^*(\wtf)$.

  \item If $E$ is saturated, then
    \begin{equation}\label{eq:defn:M*}
      M^*(\wtf)= \{m \wtf n\mid m\in \Mod^*(S^2,C\sqcup E), n\in \Mod(S^2,A\sqcup D)\}/{\approx_\wtC}.
    \end{equation}
  \end{enumerate}
\end{lem}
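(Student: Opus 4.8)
**Proof proposal for Lemma~\ref{lem:M* is left free}.**

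The plan is to prove the three items in order, with (1) nearly immediate from the definitions, (2) the technical heart of the lemma, and (3) a formal consequence of (2).

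For (1), I would unwind the definition of $M^*(\wtf)$ in \eqref{eq:def M*}: an element is an $\wtH$-$\wtG$-orbisphere biset $\wtB'$ with $\Forget_{E,D}(\wtB')\in M(B)$ and prescribed peripheral map. Left-freeness means: if $n\otimes\wtB'\cong\wtB'$ for some $n\in\Mod^*(S^2,\wtC)$ then $n=\one$. Since $\Forget_E(n)\otimes B\cong B$ forces $\Forget_E(n)=\one$ in $\Mod(H)$ (mapping class bisets of branched coverings are left-free under the standing assumption $\#C\ge3$, as $\Mod(S^2,C)$ acts freely on isotopy classes of branched coverings with fixed critical value set --- this is essentially Lemma~\ref{lem:factorization}), we get $n\in\ker(\Forget_E)$, i.e.\ $n$ is represented by a braid in $\Braid^*(S^2\setminus C,E)$. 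One then checks that such a braid acting on $\wtB'$ by $n\otimes\wtB'$ can only be trivial if $n$ is trivial: the point is that $\wtB'$ determines, via its right $\wtG$-action and the left $\wtH$-action, a genuine branched covering $\wtf'$ to which Lemma~\ref{lem:factorization} (rel $\wtC$) applies, so $n\otimes\wtB'\cong\wtB'$ means $n\wtf'\approx_{\wtC}\wtf'$, hence $n=\one$ in $\Mod^*(S^2,\wtC)$.

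For (2), this is where the saturation hypothesis is used. Given $g_0,g_1$ coinciding on $C\sqcup E$ with $\Forget_{E,D}(g_0)\approx_C\Forget_{E,D}(g_1)$, first note both lie in $M^*(\wtf)$ since they have the same peripheral map as $\wtf$ and their forgetful images are isotopic to that of $f$, hence in $M(B)=M(f)$. By Lemma~\ref{lem:factorization} applied rel $C$, write $g_1 = h\,g_0$ for a homeomorphism $h\colon(S^2,C)\selfmap$ isotopic to the identity rel $C$; tracking the isotopy, $h$ moves the finite set $g_0^{-1}(\wtA)\supseteq C\sqcup E$ (using saturation: the relevant preimages of $\wtf(E)$ stay inside $C\sqcup E$) along a motion. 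Decomposing this motion, $h$ restricted to the points of $E$ traces out an element of $\Braid^*(S^2\setminus C,E)$ --- saturation guarantees that as the isotopy drags points of $E$, it never collides with or is forced off the allowed fibres $\wtf^{-1}(\wtf(e))$, so the resulting motion satisfies the endpoint condition $m(1,e)\in\wtf^{-1}(\wtf(e))$. The main obstacle here is the bookkeeping: one must verify that the ``extra'' part of $h$ beyond the braid motion of $E$ --- i.e.\ $h$ restricted to $S^2\setminus(C\sqcup E)$ --- is isotopic to the identity rel $C\sqcup E$ after composing with $\Push$ of that braid, so that in $M^*(\wtf)$ the two maps differ exactly by $m\in\Braid^*(S^2\setminus C,E)$. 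This follows because once the motion of $E$ is accounted for by $m$, the remaining homeomorphism fixes $C\sqcup E$ pointwise and is isotopic to the identity rel $C$, but two homeomorphisms of $(S^2,C)$ fixing $E$ pointwise and isotopic rel $C$ differ by a braid in $\Braid(S^2\setminus C,E)\subseteq\Braid^*(S^2\setminus C,E)$, which can be absorbed; I expect to invoke Theorem~\ref{thm:BirmExistUniq} (in its $\Braid^*$-version, stated just before Proposition~\ref{prop:knittinginert}) to make this precise.

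For (3), the inclusion $\supseteq$ in \eqref{eq:defn:M*} is clear since each $m\wtf n$ is an orbisphere biset with the right peripheral map whose forgetful image is $\Forget_E(m)f\Forget_D(n)\in M(f)$. For $\subseteq$, take any $\wtB'\in M^*(\wtf)$; by Theorem~\ref{bc2:thm:dehn-nielsen-baer++} it is $B(g)$ for some orbisphere map $g\colon(S^2,\wtC,\wtord)\to(S^2,\wtA,\wtord)$, and $\Forget_{E,D}(\wtB')\in M(f)$ means $\Forget_{E,D}(g)\approx_C n f n'$ for suitable mapping classes; post-composing $g$ by a lift of the relevant class (which exists as a genuine homeomorphism of $(S^2,\wtA)$) and pre-composing by one of $(S^2,\wtC)$, we reduce to the case where $g$ and $\wtf$ coincide on $C\sqcup E$ up to the action of $\Mod(S^2,\wtA)$ and $\Mod^*(S^2,\wtC)$; then part (2) supplies the final braid correction, exhibiting $\wtB'$ in the claimed form. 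The only delicate point is ensuring the pre- and post-composing homeomorphisms can be chosen in $\Mod^*(S^2,\wtC)$ and $\Mod(S^2,\wtA)$ respectively rather than in some larger group --- this is exactly what saturation buys us, via the same fibre-preservation argument as in (2).
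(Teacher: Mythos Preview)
Your overall strategy for (2) and (3) matches the paper's, though you make the argument more elaborate than needed. The paper dispatches (2) in two lines: factor the isotopy $g_t=m_t g_0$ via Lemma~\ref{lem:factorization}; saturation forces $m_1$ to send each $e\in E$ into $g_0^{-1}(g_0(e))\subseteq C\sqcup E$, so $m_1\in\Braid^*(S^2\setminus C,E)$. Your ``bookkeeping'' concerns about splitting $h$ into a push plus a residual homeomorphism, and then absorbing the residual, are unnecessary: the single homeomorphism $m_1$ already \emph{is} the desired braid element, because $m_t$ is by construction isotopic to the identity rel $C$ and $m_1$ permutes $E$ within fibres. Likewise (3) in the paper is literally ``directly follows from the second'': any $g\in M^*(\wtf)$ has $\Forget_{E,D}(g)$ isotopic rel $C$ to some $n_0 f n_1$, and lifting $n_0,n_1$ to $(S^2,\wtC),(S^2,\wtA)$ produces a map to which (2) applies.

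For (1), however, there is a genuine gap. You correctly translate the problem to $n\wtf'\approx_{\wtC}\wtf'$, but then write ``hence $n=\one$'' without justification. Lemma~\ref{lem:factorization} does \emph{not} say the action is free; it only says isotopic maps differ by a homeomorphism isotopic to the identity. What you are missing is the key step the paper supplies: after using Lemma~\ref{lem:factorization} to assume $m g=g$ as actual maps (not just up to isotopy), the homeomorphism $m$ becomes a \emph{deck transformation} of the finite-degree covering induced by $g$, hence has finite order; since $m$ fixes $C$ pointwise and $\#C\ge 3$ by the standing assumption~\eqref{eq:StandAssump}, the classical fact that a finite-order orientation-preserving sphere homeomorphism with three fixed points is the identity finishes the argument. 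Your detour through $\Forget_E$ to reduce $n$ to a braid is both unnecessary (the deck-transformation argument works for any $m\in\Mod^*(S^2,\wtC)$) and insufficient (you still need the finite-order step at the end).
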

\begin{proof}
  The proof of the first claim follows the lines
  of~\cite{bartholdi-dudko:bc2}*{Proposition 6.4}: suppose that
  $g,m g$ are isotopic through maps $(S^2,\wtC)\to(S^2,\wtA)$ for some
  $m\in \Mod^*(S^2,C\sqcup E)$ and $g\in M^*(\wtf)$. By
  Lemma~\ref{lem:factorization}, we may assume $g= m g$ as maps; then
  the homeomorphism $m$ is a deck transformation of the covering
  induced by $g$, so $m$ has finite order because $\deg
  (g)<\infty$. Recall that $\#C\ge 3$ by our standing
  assumption~\eqref{eq:StandAssump}. Since $m$ fixes at least $3$
  points in $C$ and $m$ has finite order, we deduce that $m$ is the
  identity. This shows that $M^*(\wtf) $ is left-free.

  For the second claim, let
  $(g_t\colon (S^2,C)\to (S^2,A))_{t\in[0,1]}$ be an isotopy between
  $g_0$ and $g_1$. By Lemma~\ref{lem:factorization}, we may write
  $g_t$ at $m_t g_0$ for $m_t\colon (S^2,C)\selfmap$. Then $m_1$
  preserves $E$ because $E$ is saturated, so
  $m_1\in \Braid^*(S^2\setminus C,E)$ as required.

  The third claim directly follows from the second.
\end{proof}

\begin{defn}[Extensions of bisets, see~\cite{bartholdi-dudko:bc2}*{Definition~\ref{bc2:defn:BisExt}}]\label{defn:ses}
 Let $\subscript{G_1}B_{G_2}$ be a $G_1$-$G_2$-biset and let $N_1,N_2$ be
  normal subgroups of $G_1$ and $G_2$ respectively, so that for
  $i=1,2$ we have short exact sequences
  \begin{equation}\label{eq:ExtGr12}
    \begin{tikzcd}
      1\ar[r] & N_i\ar[r] & G_i\ar[r,"\pi"] & Q_i\ar[r] & 1.
    \end{tikzcd}
  \end{equation}
  If the quotient $Q_1$-$Q_2$-biset $N_1\backslash B/N_2$, consisting
  of connected components of $\subscript{N_1}B_{N_2}$, is left-free, then the
  sequence
  \begin{equation}\label{eq:BisetExt}
    \begin{tikzcd}
      \subscript{N_1}B_{N_2}\ar[r,hook] & \subscript{G_1}B_{G_2}\ar[r,->>,"\pi"] & \subscript{Q_1}(N_1\backslash B/N_2)_{Q_2}
    \end{tikzcd}
  \end{equation}
  is called an \emph{extension of left-free bisets}.
\end{defn}

\begin{defn}[Inert biset morphism]\label{defn:inert}
  Let $\wtH\twoheadrightarrow H$ and $\wtG\twoheadrightarrow G$ be
  surjective group homomorphisms, and let $\wtB$ be a left-free
  $\wtH$-$\wtG$-biset. Recall that the tensor product
  $B\coloneqq H\otimes_{\wtH}\wtB\otimes_{\wtG}G$ is isomorphic to the
  double quotient $\ker(\wtH\to H)\backslash \wtB/\ker(\wtG\to G)$
  with natural $H$-$G$-actions.  The natural map
  $\Forget\colon \wtB\to B$ is called \emph{inert} if $B$ is a
  left-free biset and the natural map
  $\{\cdot\}\otimes_\wtG \wtB \to\{\cdot\}\otimes_G B$ is a
  bijection. In particular, $B$ has the same number of left orbits as
  $\wtB$. In other words, assuming that the groups $\wtH$ and $H$ have
  similarly-written generators and so do $\wtG$ and $G$, the wreath
  recursions of $\wtB$ and $B$ are identical.

  Yet said differently, in the extension
  $\subscript{\ker}{\wtB}_{\ker}\hookrightarrow\subscript{\wtH}{\wtB}_{\wtG}\twoheadrightarrow\subscript H B_G$ the kernel is
  a disjoint union of left-principal bisets. If
  $\wtG=\wtH$ and $G=H$ so that the bisets can be
  iterated, then $\wtB\to B$ is inert precisely when we have a
  factorization
  $\wtG\to G\to\IMG_{\wtG}(\wtB)$, the latter
  group being the quotient of $G$ by the kernel of the
  right action on the rooted tree
  $\{\cdot\}\otimes_H\bigsqcup_{n\ge0}B^{\otimes n}$,
  see~\cite{bartholdi-dudko:bc2}*{\S\ref{bc2:ss:inessential}}.
\end{defn}

\noindent Define
\begin{equation}\label{eq:defn:M*|ED}
M^*(B|E,D)\coloneqq \knBraid(Y,E)\backslash M^*(\wtB)/\knBraid(X,D);
\end{equation}
this is naturally a $\Mod^*(Y|E)$-$\Mod(X|D)$-biset, and
Proposition~\ref{prop:knittinginert} implies in particular that it is
left-free:
\begin{prop}\label{prop:inertMCB}
  The natural forgetful maps
  \begin{align*}
    \subscript{\Mod(\wtH)}{M(\wtB)}_{\Mod(\wtG)}
    &\to\subscript{\Mod(H|E)}{M(B|E,D)}_{\Mod(G|D)}\\
    \intertext{and}
    \subscript{\Mod^*(S^2,C\sqcup E)}{M^*(\wtB)}_{\Mod(S^2,A\sqcup D)}
    &\to\subscript{\Mod^*(Y|E)}{M^*(B|E,D)}_{\Mod(X|D)}
  \end{align*}
  are inert.\qed
\end{prop}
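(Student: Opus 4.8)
The plan is to show that each of the two forgetful maps is inert in the sense of Definition~\ref{defn:inert}, by identifying the relevant kernels with knitting braid groups and invoking Proposition~\ref{prop:knittinginert}. Recall that $M(B|E,D)$ and $M^*(B|E,D)$ are \emph{defined} as double quotients of $M(\wtB)$ (resp.\ $M^*(\wtB)$) by $\knBraid(Y,E)$ on the left and $\knBraid(X,D)$ on the right. So what must be checked is: first, that the target bisets are left-free; and second, that the set of left orbits is unchanged, i.e.\ that no two left orbits of $M(\wtB)$ get merged upon passing to the quotient. Equivalently, in the language of the last paragraph of Definition~\ref{defn:inert}, we must show that the kernel of the forgetful map — the biset $\knBraid(Y,E)\backslash M(\wtB)$ viewed inside $M(B|E,D)$, or more precisely the biset $\subscript{\knBraid(Y,E)}{M(\wtB)}_{\knBraid(X,D)}$ — is a disjoint union of left-principal bisets.

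First I would handle left-freeness of the target. The biset $M^*(\wtB)$ is left-free by Lemma~\ref{lem:M* is left free}(1), and $M(\wtB)\subseteq M^*(\wtB)$ inherits this. Left-freeness is preserved by the left quotient by $\knBraid(Y,E)$ provided $\knBraid(Y,E)$ acts freely, which holds because it is a subgroup of $\Mod^*(S^2,C\sqcup E)$ acting freely. Hence $M^*(B|E,D)$ and $M(B|E,D)$ are left-free $\Mod^*(Y|E)$- (resp.\ $\Mod(H|E)$-) bisets. This is essentially bookkeeping once Lemma~\ref{lem:M* is left free}(1) is in hand.

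The substantive step is the bijection on left orbits, and this is exactly where Proposition~\ref{prop:knittinginert} enters. Suppose $\wtB', \wtB''\in M(\wtB)$ become identified in $M(B|E,D)$; realizing them by orbisphere maps $\wtg', \wtg''$ as in~\eqref{eq:defn:M*}, this means $\wtg' = h\,\wtg''\,k$ for some $h\in\knBraid(Y,E)$ and $k\in\knBraid(X,D)$ — i.e.\ $\wtg'\approx_{C|E}\wtg''$ in the notation of Proposition~\ref{prop:knittinginert}. We must produce a single element $n\in\knBraid(Y,E)$ with $\wtg' = n\,\wtg''$ in $M(\wtB)$, thereby showing they lie in the same left orbit. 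The idea is to absorb the right factor $k\in\knBraid(X,D)$ into the left by \emph{lifting it through $\wtf$}: Proposition~\ref{prop:knittinginert} says precisely that every element of $\knBraid(X,D)$ lifts through $\wtg''$ to an element of $\knBraid(Y,E)$, say $\tilde k$, with $\wtg''\,k \approx_{C|E} \tilde k\,\wtg''$. Then $\wtg' = h\,\tilde k\,\wtg''$ with $h\tilde k\in\knBraid(Y,E)$, as desired. Running the argument in reverse (any two maps differing by a left $\knBraid(Y,E)$-factor already agree in $M(B|E,D)$) gives the converse inclusion. The same argument applies verbatim with $M^*$ in place of $M$, using the saturation hypothesis only where Lemma~\ref{lem:M* is left free}(2)--(3) are invoked to get the map-level description~\eqref{eq:defn:M*}; for $M(\wtB)$ no saturation is needed since its elements are honest $\wtH$-$\wtG$-bisets. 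Finally, one notes that the left-action compatibility ($n\otimes\wtB'\in M^*(\wtB)$ for $n\in\Mod^*(S^2,\wtC)$, already recorded before~\eqref{eq:dfn:ForgetE,D:*}) ensures the quotient is genuinely a $\Mod^*(Y|E)$-$\Mod(X|D)$-biset and the forgetful map is a biset morphism.

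The main obstacle is making the lifting step fully rigorous at the level of \emph{bisets} rather than maps: elements of $M(\wtB)$ are isomorphism classes of bisets, and $M^*(\wtB)$ even more so, so one must either pass through the map-level models — legitimate here by Lemma~\ref{lem:factorization}, Lemma~\ref{lem:M* is left free}, and the Dehn--Nielsen--Baer statement \cite{bartholdi-dudko:bc2}*{Theorem~\ref{bc2:thm:dehn-nielsen-baer++}} — or reformulate Proposition~\ref{prop:knittinginert} purely in terms of the biset tensor operations. I would take the former route, translating ``differ by a knitting element'' into the relation $\approx_{C|E}$ via~\eqref{eq:defn:M*|ED} and Proposition~\ref{prop:knittinginert}, and only then reading the conclusion back as equality of left orbits. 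A secondary subtlety, flagged in the excerpt, is that when $\wtord$ is nonconstant on $E$ the group $\wtH$ is not invariant under permutations of $E$; but since $M^*(\wtB)$ and $\Mod^*(S^2,\wtC)$ are insensitive to the orbisphere structure, one simply enlarges the orders (as the text already notes) and the argument goes through unchanged.
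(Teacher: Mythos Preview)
Your proposal is correct and matches the paper's approach—the paper treats Proposition~\ref{prop:inertMCB} as an immediate consequence of Proposition~\ref{prop:knittinginert} and supplies no further argument beyond the \qed. One small point: your left-freeness paragraph only addresses the \emph{left} quotient by $\knBraid(Y,E)$, whereas $M^*(B|E,D)$ is the double quotient and the right quotient by $\knBraid(X,D)$ could in principle spoil left-freeness; however, the lifting argument you give in the next paragraph (absorbing $k\in\knBraid(X,D)$ into a left factor $\tilde k\in\knBraid(Y,E)$ via Proposition~\ref{prop:knittinginert}) handles this simultaneously, so no new idea is needed.
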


Let $E\perm^*$ denote the group of all permutations
$t\colon E\selfmap$ such that $\wtf(t(e))=\wtf(e)$. We denote by
$H^E\rtimes E\perm^*$ the semidirect product where $E\perm^*$ acts on
$H^E$ by permuting coordinates; compare with~\eqref{eq:decompose
  pi_1*}. We have $\pi_1^*(Y,E)\cong H^E\rtimes E\perm^*$.

We denote by
$\subscript{\Braid^*(Y,E)}{M^*(\wtB)}_{\Braid(X,D)}$ and
$\subscript{\pi^*_1(Y,E)}{M^*(B|E,D)}_{\pi_1(X,D)}$ the restrictions
of $M^*(\wtB)$ and $M^*(B|E,D)$ to braid and fundamental
groups.

\begin{thm}\label{thm:4Ext of MCB}
  If $E$ is saturated, then the following sequences are extensions of
  bisets:
  \begin{equation}\label{eq:bis4exact}
    \begin{tikzpicture}[description/.style={fill=white,inner sep=2pt},baseline]
      \node (knMod) at (-4,3){$\subscript{\knBraid(Y,E)}{M^*(\wtB)}_{\knBraid(X,D)}$};
      \node (ModE) at (0,1.5) {$\subscript{\Braid^*(Y,E)}{M^*(\wtB)}_{\Braid(X,D)}$};
      \node (PrMod) at (4,0) {$\subscript{\pi^*_1(Y,E)}{M^*(B|E,D)}_{\pi_1(X,D)}$};
     \node (Portr) at (4,-2.5) {$\subscript{H^E\rtimes E\perm^*}{\Big\{\begin{array}{c}(B'\in M(B),(B'_c)_{c\in\wtC})\\ (B'_c)_{c\in\wtC}\text{ is a portrait in }B'\end{array}\Big\}}_{G^D}$.};
      \node (ModAE) at (-4/3,0){$M^*(\wtB)$};
      \node (ModAsE) at (0,-1.5){$M^*(B|E,D)$};
      \node (ModA)  at (-4,-3){$M(B)$};
      \path[->>,font=\scriptsize] (ModE) edge node[description]{$\Antipush_{E,D}$} (PrMod)
      (ModAE) edge (ModAsE)
      (ModAE) edge node[description]{$\Forget_{E,D}$}  (ModA)
      (ModAsE) edge node[description]{$\Forget_{E,D}$} (ModA);
      \path[right hook->,font=\scriptsize]
      (knMod) edge (ModE)
      (knMod) edge (ModAE)
      (ModE) edge (ModAE)
      (PrMod) edge (ModAsE);
      \path (PrMod) edge[->] node[description,pos=0.4]{$\mathcal P$} node[right,pos=0.6]{$\cong$} (Portr);
    \end{tikzpicture}
  \end{equation}
  (The isomorphism on the right is the topic of
  Theorem~\ref{thm:portrait}, and will be proven there.)
\end{thm}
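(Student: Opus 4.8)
The plan is to check, for each of the four sequences in~\eqref{eq:bis4exact}, the conditions of Definition~\ref{defn:ses}: exhibit normal subgroups $N_1\trianglelefteq G_1$ and $N_2\trianglelefteq G_2$ in the outer groups, identify the quotient biset $N_1\backslash B/N_2$ of the middle biset $B$ with the claimed target, and verify that this quotient is left-free. The group-theoretic data is already available: the chain $\knBraid(Y,E)\trianglelefteq\Braid^*(S^2\setminus C,E)\trianglelefteq\Mod^*(S^2,C\sqcup E)$ with successive quotients $\pi_1^*(Y,E)$ and $\Mod(H)$, together with its $X$, $D$, $G$ counterpart, is exactly the content of~\eqref{eq:4exact:*} and~\eqref{eq:4exact} (normality of $\knBraid$ in the relevant mapping class groups being Lemma~\ref{lem:knBraid is normal} and its $\Mod^*$-analogue recalled just before~\eqref{eq:4exact:*}). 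Left-freeness of the middle bisets is also at hand: $M^*(\wtB)$ is left-free by Lemma~\ref{lem:M* is left free}(1), hence remains left-free when the acting groups are cut down to $\Braid^*(S^2\setminus C,E)$ and $\Braid(S^2\setminus A,D)$; $M^*(B|E,D)$ is left-free because the forgetful map $M^*(\wtB)\to M^*(B|E,D)$ is inert (Proposition~\ref{prop:inertMCB}); and $M(B)$ is left-free by the deck-transformation argument already used in the proof of Lemma~\ref{lem:M* is left free}(1). In every case the hook map is just restriction of the biset structure to a subgroup (so it is injective on underlying sets) and the surjection is the associated quotient map.

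For two of the four sequences the quotient biset is $M^*(B|E,D)$, and here nothing is needed beyond unwinding definitions: by~\eqref{eq:defn:M*|ED} we have $M^*(B|E,D)=\knBraid(Y,E)\backslash M^*(\wtB)/\knBraid(X,D)$, which is precisely $N_1\backslash M^*(\wtB)/N_2$ for $N_1=\knBraid(Y,E)$ and $N_2=\knBraid(X,D)$, and $\Antipush_{E,D}$ (top horizontal sequence) and $M^*(\wtB)\to M^*(B|E,D)$ (second vertical sequence) are the associated quotient maps. The real content is concentrated in the other two sequences — the ``Birman'' one with kernel the $\Braid^*(S^2\setminus C,E)$-$\Braid(S^2\setminus A,D)$-biset $M^*(\wtB)$, and the bottom one with kernel the $\pi_1^*(Y,E)$-$\pi_1(X,D)$-biset $M^*(B|E,D)$ — for which the point is to prove
\[\Braid^*(S^2\setminus C,E)\,\backslash\,M^*(\wtB)\,/\,\Braid(S^2\setminus A,D)\;\cong\;M(B),\]
with the quotient map induced by $\Forget_{E,D}$; dividing out $\knBraid$ on both sides then also gives $\pi_1^*(Y,E)\backslash M^*(B|E,D)/\pi_1(X,D)\cong M(B)$, which disposes of the last sequence.

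To prove this identification I would use the saturation of $E$, via Lemma~\ref{lem:M* is left free}. For surjectivity, any $B'\in M(B)$ is the biset of an orbisphere map $f'\colon(S^2,C)\to(S^2,A)$; since $\wtf^{-1}(\wtf(E))\subseteq C\sqcup E$, one may extend the dynamics of $f'$ to an orbisphere map $g'\colon(S^2,C\sqcup E)\to(S^2,A\sqcup D)$ with $g'_*=\wtf_*$ and $\Forget_{E,D}(g')\approx_C f'$, so that $B(g')\in M^*(\wtB)$ projects to $B'$. For the fibres, suppose $\Forget_{E,D}(\wtB'_0)$ and $\Forget_{E,D}(\wtB'_1)$ represent the same class in $M(B)$, and write $\wtB'_i=B(g_i)$. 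The pre- and post-composing mapping classes of $(S^2,A)$ and $(S^2,C)$ witnessing the equality lift — modulo $\Braid(S^2\setminus A,D)$ on the right and $\Braid^*(S^2\setminus C,E)$ on the left, by Theorem~\ref{thm:BirmExistUniq} and its generalization recalled before~\eqref{eq:4exact:*} — to mapping classes of $(S^2,A\sqcup D)$ and of $(S^2,C\sqcup E)$; absorbing these into the $g_i$, we may assume $g_0$ and $g_1$ coincide on $C\sqcup E$ and $\Forget_{E,D}(g_0)\approx_C\Forget_{E,D}(g_1)$. Then Lemma~\ref{lem:M* is left free}(2) provides $m\in\Braid^*(S^2\setminus C,E)$ with $mg_0=g_1$ in $M^*(\wtf)$. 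Hence the fibres of $\Forget_{E,D}\colon M^*(\wtB)\to M(B)$ are exactly the $\Braid^*(S^2\setminus C,E)$-$\Braid(S^2\setminus A,D)$-orbits, giving the displayed isomorphism of bisets.

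What remains is to observe that all arrows in~\eqref{eq:bis4exact} are compatible with the left and right actions and with one another, which is routine from the explicit descriptions of the maps, the commutativity of~\eqref{eq:4exact} and~\eqref{eq:4exact:*}, and the naturality of taking biset quotients; the isomorphism $\mathcal P$ onto the biset of portraits is the subject of Theorem~\ref{thm:portrait}, proven there. I expect the main obstacle to be exactly the identification $\Braid^*(S^2\setminus C,E)\backslash M^*(\wtB)/\Braid(S^2\setminus A,D)\cong M(B)$: this is the only step that genuinely uses saturation of $E$, it is what produces the finite-index and conjugacy-splitting phenomena advertised in the introduction, and establishing the fibre structure — not just surjectivity — is precisely where Lemma~\ref{lem:M* is left free}(2) does the real work.
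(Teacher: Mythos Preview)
Your proposal is correct and follows essentially the same route as the paper's proof: the key step is using Lemma~\ref{lem:M* is left free}(2) (which needs saturation of $E$) to show that the fibres of $\Forget_{E,D}\colon M^*(\wtB)\to M(B)$ are connected $\Braid^*(S^2\setminus C,E)$-$\Braid(S^2\setminus A,D)$-subbisets, while the sequences with quotient $M^*(B|E,D)$ follow from its very definition (equivalently, from the inertness in Proposition~\ref{prop:inertMCB}). The only superfluous detour in your write-up is the ``lifting of pre- and post-composing mapping classes'': since $(\wtB'_i)_*=\wtB_*$ by the definition of $M^*(\wtB)$, representatives $g_0,g_1$ already coincide on $C\sqcup E$, and $\Forget_{E,D}(\wtB'_0)=\Forget_{E,D}(\wtB'_1)$ in $M(B)$ literally means the underlying maps $(S^2,C)\to(S^2,A)$ are isotopic rel~$C$, so Lemma~\ref{lem:M* is left free}(2) applies directly with no lifting needed.
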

\begin{proof}
  By Lemma~\ref{lem:M* is left free}(2), $\Forget_{E,D}^{-1}(B)$ is a
  connected subbiset of
  $\subscript{\Braid^*(Y,E)}{M^*(\wtB)}_{\Braid(X,D)}$; thus
  the central-to-left sequence is an extension of bisets. Exactness of
  other sequences follows from Proposition~\ref{prop:inertMCB}.
\end{proof}

Note that, if $E$ were not saturated or if we replaced
$M^*(\wtB)$ by $M(\wtB)$ in~\eqref{eq:bis4exact}, then
we wouldn't have exact sequences of bisets anymore, because the fibres
of $M^*(\wtB)\twoheadrightarrow M(B)$ wouldn't have to be
connected; see Example~\ref{ex:notexact}. The failure of transitivity
is described by Lemma~\ref{lem:twisting bar E}. There are similar
exact sequences in case $f_*\colon E\to D$ is a bijection, see
Theorem~\ref{thm:extension}.

\subsection{Portraits of bisets}
First, a \emph{portrait of groups} amounts to a choice of
representative in each peripheral conjugacy class of an orbisphere
group:
\begin{defn}[Portraits of groups]
  Let $G$ be an orbisphere group with marked conjugacy classes
  $(\Gamma_a)_{a\in A}$ and let $\wtA$ be a finite set
  containing $A$. A \emph{portrait of groups}
  $(G_a)_{a\in\wtA}$ in $G$ is a collection of cyclic
  subgroups $G_a\le G$ so that
  \[G_a=\begin{cases} \langle g \rangle &\mbox{for some } g\in \Gamma_a, \mbox{ if } a\in A, \\
      \langle1 \rangle & \mbox{otherwise.}\end{cases}
  \]
  If $\wtA=A$, then $(G_a)_{a\in A}$ is called a
  \emph{minimal} portrait of groups.
\end{defn}

\begin{defn}[Portraits of bisets]\label{dfn:PrtrOfBst}
  Let $H,G$ be orbisphere groups with peripheral classes indexed by
  $C,A$ respectively, let $\subscript H B_G$ be an orbisphere biset,
  and let $f_*\colon C\to A$ be its portrait. We also have a
  ``degree'' map $\deg\colon C\to\N$.  A \emph{portrait of bisets}
  relative to these data consists of
  \begin{itemize}
  \item portraits of groups $(H_c)_{c\in\wtC}$ in $H$ and
    $(G_a)_{a\in\wtA}$ in $G$;
  \item extensions $f_*\colon\wtC\to\wtA$ of
    $f_*$ and $\deg\colon\wtC\to\N$ of $\deg$;
  \item a collection $(B_c)_{c\in\wtC}$ of subbisets of $B$
    such that every $B_c$ is an $H_c$-$G_{f_*(c)}$-biset that
    is right-transitive and left-free of degree $\deg(c)$, and such
    that if $f_*(c)=f_*(c')$ and $H B_c=H B_{c'}$
    qua subsets of $B$ then $c=c'$.
  \end{itemize}
  If $\wtC=C$ and $\wtA=A$, then $(B_c)_{c\in C}$ is
  called a \emph{minimal} portrait of bisets.
\end{defn}
Note in particular that if $c\in\wtC\setminus C$ then $H_c$ is
trivial and the subbiset $B_c$ consists of $\deg(c)$ elements permuted
by $G_{f_*(c)}$. If moreover $f_*(c)\notin A$, then $\deg(c)=1$. For
simplicity the portrait of bisets is sometimes simply written
$(B_c)_{c\in\wtC}$, the other data
$f_*,\deg,(H_c)_{c\in\wtC}$, $(G_a)_{a\in\wtA}$ being
implicit.

Here is a brief motivation for the definition. Consider an expanding
Thurston map $f$ and its associated biset $B$. Recall
from~\cite{nekrashevych:ssg} that bounded sequences in $B$ represent
points in the Julia set of $f$; in particular constant sequences
represent fixed points of $f$ and vice versa. On the other hand, fixed
Fatou components of $f$ are represented by local subbisets of $B$ with
same cyclic group acting on the left and on the right. All of these
are instances of portraits of bisets in $B$. Furthermore,
(pre)periodic Julia or Fatou points are represented by portraits of
bisets with (pre)periodic map $f_*$. E.g., closures of Fatou
components intersect if and only if they admit portraits that
intersect; and similarly for inclusion in the closure of a Fatou
component of a (pre)periodic point in the Julia set.

Let $\Forget_D\colon\wtG\to G$ be a marked forgetful morphism
of orbisphere groups as in~\eqref{eq:forgetCD2}. For all $a\in\wtA$
choose a small disk neighbourhood $\mathcal U_a\ni a$ that avoids all
other points in $\wtA$, so that
$\pi_1(\mathcal U_a\setminus\wtA)\cong\Z$. We call a curve
$\gamma$ \emph{close to $a$} if $\gamma\subset\mathcal U_a$.

\begin{lem}\label{lmm:connect:ell_a}
  Every minimal portrait of groups
  $(\wtG_a)_{a\in\wtA}$ in $\wtG$ is uniquely
  characterized by a family of paths $(\ell_a)_{a\in\wtA}$
  with
  \begin{equation}\label{eq:lmm:connect:ell_a1}
    \ell_a\colon [0,1]\to S^2\text{ rel }\wtA,\qquad
    \ell_a(t)\notin\wtA \text{ for } t<1, \ell_a(0)=* \text{ and }\ell_a(1)=a,
  \end{equation}
  so that for any sufficiently small $\epsilon>0$
  \begin{equation}\label{eq:lmm:connect:ell_a11}
    \wtG_a= \left\{\ell_a\restrict{[0,1-\epsilon]}\#\alpha_a \#(\ell_{a}\restrict{[0,1-\epsilon]})^{-1}\mid\alpha_a \text{ is close to }a\right\}\text{ rel } \wtA.
  \end{equation}
  Conversely, every collection of curves~\eqref{eq:lmm:connect:ell_a1}
  defines by~\eqref{eq:lmm:connect:ell_a11} a minimal portrait of
  groups for every sufficiently small $\epsilon>0$.
\end{lem}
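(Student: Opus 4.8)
The plan is to treat each point $a\in\wtA$ in isolation and to exhibit a bijection between homotopy classes rel $\wtA$ of arcs $\ell_a$ from $*$ to $a$ and cyclic peripheral subgroups of $\wtG$ at $a$; the statement for families $(\ell_a)_{a\in\wtA}$ then follows coordinatewise. Fix the disk $\mathcal U_a\ni a$ with $\pi_1(\mathcal U_a\setminus\wtA)\cong\Z$, generated by a small positively oriented loop $\alpha_a$. For $\ell_a$ as in~\eqref{eq:lmm:connect:ell_a1} the point $\ell_a(1-\epsilon)$ lies in $\mathcal U_a\setminus\wtA$ once $\epsilon$ is small, and the right-hand side of~\eqref{eq:lmm:connect:ell_a11} is then precisely the image in $\wtG$ of the conjugate of $\pi_1(\mathcal U_a\setminus\wtA,\ell_a(1-\epsilon))$ by $\ell_a\restrict{[0,1-\epsilon]}$, which is a cyclic subgroup. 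The first thing to check is independence of the (small) choice of $\epsilon$: for $\epsilon'<\epsilon$ the connecting segment $\ell_a\restrict{[1-\epsilon,1-\epsilon']}$ lies in the \emph{abelian} group $\pi_1(\mathcal U_a\setminus\wtA)$, so conjugating by it acts trivially there and the two expressions coincide; independence of the homotopy class of $\ell_a$ rel $\wtA$ is automatic, everything being written in $\pi_1$. Denote the resulting subgroup by $\wtG_a(\ell_a)$.

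For the ``conversely'' assertion I would verify that $(\wtG_a(\ell_a))_{a\in\wtA}$ is always a minimal portrait of groups. A generator of $\wtG_a(\ell_a)$ is the class of the lollipop loop $\ell_a\restrict{[0,1-\epsilon]}\#\alpha_a\#(\ell_a\restrict{[0,1-\epsilon]})^{-1}$; it encircles $a$ once, positively, and encircles no other point of $\wtA$ (since $\mathcal U_a$ avoids $\wtA\setminus\{a\}$), so — using $\#\wtA\ge3$ and the standard fact that the isotopy class of a simple closed curve bounding a once-marked disk depends only on the enclosed point — it is freely homotopic in $S^2\setminus(\wtA\setminus\{a\})$ to the standard generator $\gamma_a$, hence conjugate to $\gamma_a$ in $\wtG$. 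Thus $\wtG_a(\ell_a)=\langle g\rangle$ for some $g\in\Gamma_a$, which is exactly the defining condition of a minimal portrait of groups.

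For existence I would start from a minimal portrait $(\wtG_a)_{a\in\wtA}$, write $\wtG_a=x_a\langle\gamma_a\rangle x_a^{-1}$ for some $x_a\in\wtG$ (possible since a generator of $\wtG_a$ lies in $\Gamma_a$), represent $x_a$ by a loop $\lambda_a$ based at $*$ in $S^2\setminus\wtA$, and set $\ell_a:=\lambda_a\#\ell_a^{\mathrm{std}}$, where $\ell_a^{\mathrm{std}}$ is a fixed arc with $\wtG_a(\ell_a^{\mathrm{std}})=\langle\gamma_a\rangle$ — for instance the initial segment of the lollipop presenting $\wtG$ as in~\eqref{eq:orbispheregp}. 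Unwinding the formula yields the lollipop $\lambda_a\#\gamma_a\#\lambda_a^{-1}$, so $\wtG_a(\ell_a)=x_a\langle\gamma_a\rangle x_a^{-1}=\wtG_a$, while $\ell_a$ plainly satisfies~\eqref{eq:lmm:connect:ell_a1}.

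The delicate step, and the one I expect to be the main obstacle, is the uniqueness clause, since it forces one to pin down exactly which homotopies rel $\wtA$ are permitted at the endpoint of an arc terminating at a marked point — in particular, that the endpoint is effectively free on the blowup circle at $a$, so that winding near $a$ is absorbed and precomposition by an element of a peripheral subgroup at $a$ does not change the class of an arc ending at $a$. Given $\ell_a,\ell_a'$ with $\wtG_a(\ell_a)=\wtG_a(\ell_a')$: lifting to the universal orbifold cover shows that the action of $\wtG$ by precomposition on homotopy classes rel $\wtA$ of arcs from $*$ to $a$ is transitive, so $\ell_a'$ is homotopic rel $\wtA$ to $\mu\#\ell_a$ for some loop $\mu$ at $*$, and the class of $\mu$ in $\wtG$ is determined only up to right multiplication by an element of $\wtG_a(\ell_a)$ (the stabilizer in $\wtG$ of the lift of $a$ reached by $\ell_a$). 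From $\wtG_a(\mu\#\ell_a)=\overline\mu\,\wtG_a(\ell_a)\,\overline\mu^{-1}$ the hypothesis says that $\overline\mu$ normalises $\wtG_a(\ell_a)$; the argument then closes provided peripheral cyclic subgroups are self-normalising in $\wtG$ — the key input. This is classical for Fuchsian groups and an easy verification for the remaining orbisphere groups relevant here (for the $(2,2,2,2)$-group it follows from $(v,\pm1)(0,-1)(v,\pm1)^{-1}=(2v,-1)$, in the notation of~\eqref{eq:Mv}). Granting it, $\overline\mu\in\wtG_a(\ell_a)$, so $\mu$ may be taken trivial, whence $\ell_a$ and $\ell_a'$ are homotopic rel $\wtA$; this yields the asserted bijection and hence the lemma.
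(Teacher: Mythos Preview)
Your argument is correct and is essentially a detailed unpacking of what the paper dismisses in one line (``This follows immediately from the definition of `lollipop' generators''). The underlying bijection---arcs from $*$ to $a$ up to homotopy versus peripheral cyclic subgroups at $a$---is the same in both treatments.

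There is one structural difference worth noting: you take self-normalisation of peripheral subgroups as an \emph{input} (appealing to Fuchsian group theory and a direct check in the $(2,2,2,2)$ case) and deduce uniqueness of $\ell_a$ from it, whereas the paper goes the other way---it treats the bijection of the lemma as the primitive fact and immediately after the lemma derives self-normalisation as a \emph{consequence} (``It follows that every $\wtG_a$ is self-normalising\dots''). The two statements are equivalent given transitivity of the $\wtG$-action on arcs, so this is a matter of which one is regarded as more basic; your route has the advantage of making the proof self-contained, while the paper's ordering avoids appealing to external facts about Fuchsian groups.
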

\begin{proof}
  This follows immediately from the definition of ``lollipop''
  generators, see~\eqref{eq:orbispheregp}.
\end{proof}
It follows that every $\wtG_a$ is self-normalizing in
$\wtG$: conjugating $\wtG_a$ by an element
$g\not\in\wtG_a$ amounts to precomposing the path $\ell_a$
with $g$, resulting in a different path.

\begin{lem}\label{lem:CritPortrUnique}
  Let $\subscript H B_G$ be an orbisphere biset. Then every pair of
  minimal portraits of groups $(H_c)_{c\in C}$ in $H$ and
  $(G_a)_{a\in A}$ in $G$ can be uniquely completed to a minimal
  portrait of bisets $(B_c)_{c\in C}$ in $B$.
\end{lem}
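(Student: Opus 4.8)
The plan is to use the geometric picture underlying the biset $B$. By Theorem~\ref{bc2:thm:dehn-nielsen-baer++} we may represent $B$ as $B(f)$ for a branched covering $f\colon(S^2,C)\to(S^2,A)$, unique up to isotopy; and by Lemma~\ref{lmm:connect:ell_a} the given minimal portraits of groups $(H_c)_{c\in C}$ and $(G_a)_{a\in A}$ are encoded by families of paths $(\ell_c)_{c\in C}$ in $(S^2,C)$ from $\dagger$ to $c$ and $(\ell_a)_{a\in A}$ in $(S^2,A)$ from $*$ to $a$. First I would fix, for each $c\in C$, a small disk neighbourhood $\mathcal D_c\ni c$ avoiding the other points of $C$; up to isotopy of $f$ (which does not change $B$) we may arrange that $f$ maps $\mathcal D_c\setminus\{c\}$ to $\mathcal D_{f_*(c)}\setminus\{f_*(c)\}$ as a degree-$\deg(c)$ covering. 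Then I would set
\[
  B_c\coloneqq\bigl\{\ell_c\restrict{[0,1-\epsilon]}\#\delta_c\#(\ell_{f_*(c)}\restrict{[0,1-\epsilon']})^{-1}\lift f{?}\bigr\}\big/\!\approx_{C},
\]
more precisely: $B_c$ is the set of homotopy classes of paths $\beta$ from $\dagger$ to $f^{-1}(*)$ of the form $\ell_c\restrict{[0,1-\epsilon]}$ followed by an arc inside $\mathcal D_c$, followed by the $f$-lift (starting in $\mathcal D_c$) of $(\ell_{f_*(c)}\restrict{[0,1-\epsilon']})^{-1}$. Equivalently, and more robustly, $B_c$ is the image in $B=B(f)$ of the biset of the restriction $f\colon\mathcal D_c\setminus\{c\}\to\mathcal D_{f_*(c)}\setminus\{f_*(c)\}$ under the embedding induced by the chosen paths, exactly as in the ``minimal portrait'' construction sketched in the Introduction. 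One checks directly from the definitions that $B_c$ is then an $H_c$-$G_{f_*(c)}$-subbiset of $B$ that is left-free of degree $\deg(c)$ and right-transitive, since $\mathcal D_{f_*(c)}\setminus\{f_*(c)\}$ is an annulus and $f$ restricted to it is a connected cyclic cover; and the separation condition ($HB_c=HB_{c'}$ with $f_*(c)=f_*(c')$ implies $c=c'$) holds because distinct $c,c'$ have disjoint disks $\mathcal D_c,\mathcal D_{c'}$ and $\ell_c,\ell_{c'}$ end at different points, so the resulting left cosets differ.

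For uniqueness, suppose $(B_c)_{c\in C}$ and $(B'_c)_{c\in C}$ are two minimal portraits of bisets completing the \emph{same} portraits of groups. Since $B_c$ is left-free of degree $\deg(c)$ over the cyclic group $H_c$ and right-transitive over $G_{f_*(c)}$, it is determined as an abstract biset by its wreath recursion, which in turn is rigid: a left-free right-transitive $\langle h\rangle$-$\langle g\rangle$-biset with $h$ of order $\ord(c)$, $g$ of order $\ord(f_*(c))$, and degree $\deg(c)$ is unique up to isomorphism once the compatibility $\ord(c)\deg(c)\mid\ord(f_*(c))$ holds, because all such cyclic bisets are classified (this is the $\#A=2$ / cyclic-biset analysis, which applies to each peripheral piece). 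So $B_c$ and $B'_c$ are abstractly isomorphic; the point is that they coincide \emph{as subsets of $B$}. To see this, note that both are subbisets of $B$ on which $H_c$ acts on the left and $G_{f_*(c)}$ on the right, and an element of $B_c$ is exactly (the class of) a path that begins by following $\ell_c$ until it is ``close to $c$'' and whose $f$-image, read from $*$, follows $\ell_{f_*(c)}$ until close to $f_*(c)$ — this last property is intrinsic, depending only on $\ell_c,\ell_{f_*(c)}$ and not on the choice of the disk radii, since shrinking $\epsilon$ only refines the homotopy class. Hence $B_c$ is characterized inside $B$ by: $b\in B_c$ iff $b$ admits a representative path whose initial segment is a terminal subpath of $\ell_c$ (up to homotopy rel $C$ close to $c$) and whose $f$-projected terminal segment is a terminal subpath of $\ell_{f_*(c)}$. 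This characterization makes no reference to which completion we chose, so $B_c=B'_c$.

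The main obstacle I anticipate is making the ``intrinsic characterization'' of $B_c$ inside $B$ genuinely canonical — i.e.\ independent of the auxiliary disk neighbourhoods $\mathcal D_c$ and of the cut-off $\epsilon$ — and checking that two curves $\ell_c,\ell'_c$ representing the \emph{same} cyclic subgroup $H_c$ (they may differ by a loop in $H_c$, see the remark after Lemma~\ref{lmm:connect:ell_a} on self-normalizing subgroups) yield the same subbiset $B_c$. This amounts to verifying that the ambiguity in the path $\ell_c$ that fixes $H_c$ is exactly absorbed by the left $H_c$-action on $B_c$, and similarly on the right via $\ell_{f_*(c)}$ and $G_{f_*(c)}$; once this is done, existence and uniqueness both fall out. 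This is a routine but slightly delicate homotopy bookkeeping, entirely parallel to the proof that the peripheral subgroup $G_a$ itself is well-defined in Lemma~\ref{lmm:connect:ell_a}.
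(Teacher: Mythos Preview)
Your existence argument via small disks and the paths $\ell_c,\ell_a$ is fine and matches the geometric picture behind Lemma~\ref{lmm:connect:ell_a:2}. The uniqueness argument, however, has a real gap. You characterize your constructed $B_c$ as the set of classes of paths that go close to $c$ along $\ell_c$ and then lift $\ell_{f_*(c)}^{-1}$, and you conclude that since this description ``makes no reference to which completion we chose'', any other completion $B'_c$ must coincide with it. But you have not shown that an arbitrary $B'_c$ satisfying only the portrait axioms (an $H_c$-$G_{f_*(c)}$-subbiset of $B$, right-transitive, left-free of degree $\deg(c)$) consists of paths going close to $c$; a priori such a subbiset could sit over a different $f$-preimage of $f_*(c)$. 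What rules this out is the orbisphere-biset axiom that the peripheral conjugacy class $\Delta_c$ appears \emph{exactly once} among the lifts of $\Gamma_{f_*(c)}$: any $b'\in B'_c$ satisfies $h_c b'\in b'G_{f_*(c)}$, which forces $Hb'$ to lie in the unique monodromy cycle of $g_{f_*(c)}$ carrying the class $\Delta_c$, hence $Hb'\cap B_c\neq\emptyset$. One then still needs that $H_c$ is self-normalizing in $H$ to upgrade ``$k_c B'_c=B_c$ for some $k_c\in H$'' to $k_c\in H_c$, and hence $B'_c=B_c$.

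The paper's proof is a short, purely algebraic argument using exactly these two ingredients. Your geometric route can be completed along the same lines, but the ``main obstacle'' you flag (independence of the disk radii and of $\epsilon$, and of the choice of $\ell_c$ within the same $H_c$) is only about well-definedness of your construction; it does not address why \emph{every} subbiset satisfying the axioms must be the one you built.
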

As a consequence, the minimal portrait of bisets is unique up to
conjugacy.  We note that the lemma is also true in case
$\subscript H B_G$ is a cyclic biset, namely if $H,G\cong\Z$; in this
case all $B_c$ are equal to $B$.
\begin{proof}
  Since by Assumption~\eqref{eq:StandAssump} the sets $A$ and $C$
  contain at least $3$ elements, $H$ and $G$ are non-cyclic, and in
  particular have trivial centre.

  Let $f_*\colon C\to A$ be the portrait of $B$. Choose generators
  $g_a\in G_a$ and $h_c\in H_c$.  Recall
  from~\cite{bartholdi-dudko:bc1}*{\S2.6}
  or~\cite{bartholdi-dudko:bc2}*{Definition~\ref{bc2:defn:lifts}} that
  there are $b_c\in B$ and $k_c \in H$ such that
  $k_c^{-1}h_c k_c b_c = b_c g_{f(c)}^{\deg(c)}$ for all $c\in C$. Set
  then $B_c\coloneqq H_c k_c b_c G_{f(c)}$, and note that these
  subbisets satisfy Definition~\ref{dfn:PrtrOfBst}.

  Suppose next that $(B'_c)_{c\in C}$ is another portrait of bisets,
  and choose elements $b'_c\in B'_c$. Then
  by~\cite{bartholdi-dudko:bc2}*{Definition~\ref{bc2:dfn:SphBis}\eqref{bc2:cond:3:dfn:SphBis}}
  the conjugacy class $\Delta_c$ appears exactly once among the lifts
  of $\Gamma_{f(c)}$, so $H b'_c\cap B_c\neq\emptyset$, so we may
  choose $k_c\in H$ with $k_c b'_c\in B_c$. Then
  $k_c B'_c = k_c b'_c G_{f(c)} = B_c G_{f(c)} = B_c$. We have
  $k_c H_c= H_c k_c$, so $k_c\in H_c$ because $H_c$ is
  self-normalizing in $H$, and therefore $B_c=B'_c$.
\end{proof}

Consider next a forgetful morphism
$\Forget_{E,D}\colon\subscript{\wtH}{\wtB}_{\wtG} \to\subscript H B_G$ of orbisphere bisets, and let
$(\wtB_c)_{c\in\wtC}$ be the minimal portrait of
bisets given by Lemma~\ref{lem:CritPortrUnique}.
\begin{lem}
\label{lmm:connect:ell_a:2}
  Let $(m_c)_{c\in\wtC}$ and $(\ell_a)_{a\in\wtA}$ be
  the paths (see Lemma~\ref{lmm:connect:ell_a}) associated with the respective portraits of bisets
  $(\wtH_c)_{c\in\wtC}$ and
  $(\wtG_a)_{a\in\wtA}$.  The portrait
  $(\Forget_{E,D}(\wtB_c))_{c\in\wtC}$ admits the
  following description: for every sufficiently small $\epsilon>0$,
  \begin{equation}\label{eq:lmm:connect:ell_a2}
    B_c = \left\{m_c\restrict{[0,1-\epsilon]}\#\beta_c \#(\ell_{f_*(c)}\restrict{[0,1-\epsilon]})^{-1} \lift{f}{\beta_c(1)}\mid\beta_c \text{ is close to }c\right\}\text{ rel } C.
  \end{equation}
\end{lem}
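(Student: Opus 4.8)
The plan is to compute the subbiset $B_c = \Forget_{E,D}(\wtB_c)$ directly from the construction of $\wtB_c$ in Lemma~\ref{lem:CritPortrUnique}, tracking how the ``lollipop'' description of the peripheral groups passes through the forgetful map. First I would unwind Lemma~\ref{lem:CritPortrUnique}: for each $c\in\wtC$ it produces generators $\wth_c\in\wtH_c$, $\wtg_a\in\wtG_a$, an element $\wtb_c\in\wtB$ and a conjugating element $\wtk_c\in\wtH$ with $\wtk_c^{-1}\wth_c\wtk_c\,\wtb_c=\wtb_c\,\wtg_{f_*(c)}^{\deg(c)}$, and sets $\wtB_c=\wtH_c\wtk_c\wtb_c\wtG_{f_*(c)}$. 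Since $\Forget_{E,D}\colon\wtB\to B$ is a morphism of bisets intertwining $\Forget_E$ and $\Forget_D$, we get $B_c=\Forget_{E,D}(\wtB_c)=H_c\,\Forget_E(\wtk_c)\,\Forget_{E,D}(\wtb_c)\,G_{f_*(c)}$, where $H_c=\Forget_E(\wtH_c)$ and $G_{f_*(c)}=\Forget_D(\wtG_{f_*(c)})$ are exactly the peripheral groups of the induced minimal portrait in $B$ (for $c\in C$; for $c\in\wtC\setminus C$ the group $H_c$ is trivial and the computation degenerates as in the remark after Definition~\ref{dfn:PrtrOfBst}).

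Next I would translate this into the path model. Using Lemma~\ref{lmm:connect:ell_a}, the generator of $\wtG_a$ is represented by $\ell_a\restrict{[0,1-\epsilon]}\#\alpha_a\#(\ell_a\restrict{[0,1-\epsilon]})^{-1}$ for $\alpha_a$ a small loop close to $a$, and similarly $\wtH_c$ is described by the paths $m_c$. The element $\wtb_c$ is, by the standard choice in~\cite{bartholdi-dudko:bc2}*{Definition~\ref{bc2:defn:lifts}}, the $f$-lift: $\wtb_c$ is (homotopic to) $m_c\restrict{[0,1-\epsilon]}\#\beta_c\#(\ell_{f_*(c)}\restrict{[0,1-\epsilon]})^{-1}\lift{f}{\beta_c(1)}$ for a suitable small arc $\beta_c$ near $c$, chosen so that the covering condition relating $\wth_c$-action on the left and $\wtg_{f_*(c)}^{\deg(c)}$-action on the right is satisfied --- this is precisely the compatibility encoded by $\wtk_c$, and a good choice of the connecting paths $m_c$ absorbs $\wtk_c$ into the identity. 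Applying $\Forget_{E,D}$ just forgets the points of $E$ and $D$ from the homotopy relation, i.e.\ replaces ``rel $\wtC$'' by ``rel $C$'' in the path model; the curves $m_c$, $\ell_{f_*(c)}$, $\beta_c$ and the $f$-lift are unchanged as curves in $S^2\setminus C$. This yields exactly~\eqref{eq:lmm:connect:ell_a2}.

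The main point to verify carefully is that the right-hand side of~\eqref{eq:lmm:connect:ell_a2} really is a single $H_c$-$G_{f_*(c)}$-subbiset of $B$ and coincides with $\Forget_{E,D}(\wtB_c)$ rather than being merely contained in it. This is where I would invoke the uniqueness clause of Lemma~\ref{lem:CritPortrUnique} together with condition~\eqref{bc2:cond:3:dfn:SphBis} of~\cite{bartholdi-dudko:bc2}*{Definition~\ref{bc2:dfn:SphBis}}: the peripheral conjugacy class $\Delta_c$ in $H$ appears exactly once among the lifts of $\Gamma_{f_*(c)}$, which pins down $B_c$ uniquely among candidate subbisets through the distinguished point $\Forget_{E,D}(\wtb_c)$, so the explicit set on the right of~\eqref{eq:lmm:connect:ell_a2} --- which by construction contains an element in the correct left $H_c$-orbit and is right $G_{f_*(c)}$-transitive of the correct degree --- must equal it. The only real subtlety is the bookkeeping with the base-point shifts by $\epsilon$ and checking that, modulo homotopy rel $C$, the formula is independent of the (small) choice of $\epsilon$, of the representative arcs $\alpha_a,\beta_c$, and of the branch of the $f$-lift; this is routine given Lemma~\ref{lmm:connect:ell_a} and the definition of $\lift f{\,\cdot\,}$, so I expect the obstacle to be purely notational rather than conceptual.
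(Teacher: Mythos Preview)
Your proposal is correct and takes essentially the same approach as the paper: both arguments rely on (i) the observation that $\Forget_{E,D}$ on the path model simply relaxes ``rel $\wtC$'' to ``rel $C$'', and (ii) the uniqueness clause of Lemma~\ref{lem:CritPortrUnique} to pin down the subbiset. The paper organizes this more tersely by first reducing to the case $E=D=\emptyset$ (so that $\wtB=B$ and $\Forget_{E,D}$ is the identity), and then simply observes that the right-hand side of~\eqref{eq:lmm:connect:ell_a2} is an $H_c$-$G_{f_*(c)}$-subbiset, whence by uniqueness it equals $B_c$; your version unwinds the construction of $\wtB_c$ more explicitly but reaches the same conclusion by the same mechanism.
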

\begin{proof}
  Since $B_c$ is obtained from $\wtB_c$ by the intertwiner
  $\Forget_{E,D}$ (see~\eqref{eq:forgetbi}), it suffices to consider
  the case $E=D=\emptyset$; and in that case, by
  Lemma~\ref{lem:CritPortrUnique} it suffices to note that $B_c$ is
  indeed an $H_c$-$G_{f_*(c)}$-biset.
\end{proof}

Let $\subscript H B_G$ be an orbisphere biset; let
$f_*\colon \wtC\to\wtA$ be an abstract portrait extending $B_*$; let
$\deg\colon \wtC\to\N$ be an extension of $\deg_B\colon C\to\N$; and
let $(B_c)_{c\in \wtC}$ be a portrait of bisets in $B$ with portraits
of groups $(H_c)_{c\in \wtC}$ in $H$ and $(G_a)_{a\in \wtA}$ in $G$.
A \emph{congruence of portraits} is defined by a choice of
$(h_c)_{c\in \wtC}$ in $H$ and $(g_a)_{a\in \wtA}$ in $G$, and
modifies the portrait of bisets by replacing
\[H_c\rightsquigarrow h_c^{-1} H_c h_c,\quad B_c\rightsquigarrow
  h_c^{-1} B_c g_{f_*(c)},\quad G_a\rightsquigarrow g_a^{-1} G_a g_a.
\]
By Lemma~\ref{lem:CritPortrUnique}, any two minimal portraits of
bisets are congruent.

\subsection{Main result}
Consider an orbisphere map $f\colon (S^2,C,\ord)\to (S^2,A,\ord)$. We
call it \emph{compatible} with
$\Forget_E\colon (S^2,\wtC,\wtord)\dashrightarrow (S^2,C,\ord)$ and
$\Forget_D\colon (S^2,\wtA,\wtord)\dashrightarrow (S^2,A,\ord)$ and
$f_*\colon \wtC\to\wtA$ and $\deg\colon \wtC\to\N$ if $\deg(e)=1$ for
all $e\in E$ with $f_*(e)\in D$, and
$\wtord(c)\deg(c)\mid \wtord(f_*(c))$ for all $c\in \wtC$, and
$\{\deg(f@f^{-1}(a))\}=\{\deg(f_*^{-1}(a)\}$ for all $a\in A$.
Equivalently, there is an orbisphere map
$\wtf\colon(S^2,\wtC,\wtord)\to(S^2,\wtA,\wtord)$ that can be isotoped
within maps $(S^2,C,\ord)\to(S^2,A,\ord)$ to a map
making~\eqref{eq:forgetCD} commute and such that
$\deg\colon \wtC\to \N$ and $f_*\colon \wtC\to \N$ are induced by
$\wtf$.

Compatibility of an orbisphere biset $\subscript H B_G$ with
$\Forget_E\colon\wtH\to H$, $\Forget_D\colon\wtG\to G$,
$f_*\colon \wtC\to\wtA$, and $\deg\colon \wtC\to\N$ is defined
similarly, and is equivalent to the existence of a biset $\wtB$
making~\eqref{eq:forgetCD2} commute.

We are now ready to relate the mapping class bisets $M^*$ to portraits
of bisets:
\begin{thm}\label{thm:portrait}
  Let $\Forget_E\colon\wtH\to H$ and
  $\Forget_D\colon\wtG\to G$ be forgetful morphisms of
  orbisphere groups as in~\eqref{eq:forgetCD2}, and let
  $\subscript H B_G$ be an orbisphere biset compatible with $\Forget_E$, $\Forget_D$, $f_*\colon \wtC\to\wtA$, and $\deg\colon \wtC\to\N$.

  Then for every portrait of bisets $(B_c)_{c\in \wtC}$ in $B$
  parameterized by $f_*$ and $\deg$ there exists an orbisphere biset
  $\subscript{\wtH}{\wtB}_{\wtG}$ mapping to $B$ under $\Forget_{E,D}$
  with a minimal portrait mapping to $(B_c)_{c\in \wtC}$.

  Furthermore, $\wtB$ is unique up to pre- and
  post-composition with bisets of knitting mapping classes and we have
  a congruence of bisets (see~\eqref{eq:antipush})
  \begin{equation}\label{eq:thm:portrait:congr}
    \begin{tikzpicture}[description/.style={fill=white,inner sep=2pt},baseline]
      \node(A) at (0,0){$\subscript{\pi_1^*(Y,E)}{M^*(B|E,D)}_{\pi_1(X,D)}$};
      \node(B) at(0,-2){$\subscript{H^E\rtimes E\perm^*}{\big\{(B'\in M(B),(B'_c)_{c\in\wtC})\mid B'\in M(B),(B'_c)_{c\in\wtC}\text{ a portrait in }B'\big\}}_{G^D}$};
      \draw (A) edge[->]node[description]{$\mathcal P$} (B);
    \end{tikzpicture}
  \end{equation}
  given by
  $\mathcal P(\wtB')=(\Forget_{E,D}(\wtB'),\text{induced portrait of }\wtB')$.
 \end{thm}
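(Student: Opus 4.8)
The plan is to construct the biset $\wtB$ directly from the portrait data, then verify it has the required properties, and finally identify the congruence map $\mathcal P$ using the machinery of \S\ref{ss:BrGr KnEq}. First I would fix a biset $\wtB_0$ realizing the compatibility of $B$ with the forgetful data (this exists by hypothesis), with its minimal portrait $(\wtB_{0,c})_{c\in\wtC}$ projecting to some portrait $(B^0_c)_{c\in\wtC}$ in $B$ via Lemma~\ref{lmm:connect:ell_a:2}. Given an arbitrary portrait $(B_c)_{c\in\wtC}$ parameterized by the same $f_*,\deg$, the goal is to produce $\wtB$ with $\Forget_{E,D}(\wtB)=B$ whose minimal portrait is exactly $(B_c)_{c\in\wtC}$. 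The key observation is that $(B_c)$ and $(B^0_c)$ differ, orbit by orbit, by motions of the extra points $E$ and $D$ inside $S^2\setminus C$ and $S^2\setminus A$: the choice of $B_c$ for $c\in\wtC\setminus C$ is the choice of which $\deg(c)$ points of the relevant fibre over $f_*(c)$ to distinguish, i.e.\ a choice of $E$-strand braid in $\Braid^*(S^2\setminus C,E)$, together with, on the right, a $D$-strand braid in $\Braid(S^2\setminus A,D)$ moving the $B^0_c$ into the $B_c$. Concretely one uses the path descriptions~\eqref{eq:lmm:connect:ell_a1}--\eqref{eq:lmm:connect:ell_a2}: changing the paths $(m_c)_{c\in\wtC}$ and $(\ell_a)_{a\in\wtA}$ realizes any congruence of portraits, and changing the \emph{isotopy class} of these paths rel $C$ (resp.\ rel $A$) while keeping them rel $\wtC$ (resp.\ rel $\wtA$) is precisely a knitting braid. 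So one sets $\wtB\coloneqq n\otimes\wtB_0\otimes m$ where $n\in\Braid^*(S^2\setminus C,E)$ and $m\in\Braid(S^2\setminus A,D)$ are chosen, via Theorem~\ref{thm:BirmExistUniq} and the $\Push$/$\Antipush$ formalism, so that the induced portrait of $\wtB_0$ is carried to $(B_c)_{c\in\wtC}$; that $\Forget_{E,D}(\wtB)=B$ is automatic since $\Forget_E(n)=\one=\Forget_D(m)$ by construction.

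For uniqueness, suppose $\wtB$ and $\wtB'$ both project to $B$ with minimal portraits mapping to the same $(B_c)_{c\in\wtC}$. By Lemma~\ref{lem:M* is left free}(2) (using that $E$ is saturated—or, in the non-saturated case, the $f_*\colon E\to D$ bijective case of Theorem~\ref{thm:extension}), there is $m\in\Braid^*(S^2\setminus C,E)$ with $m\otimes\wtB'=\wtB$ in $M^*(\wtB)$; and an analysis of how $m$ acts on the induced portrait—via the path descriptions of Lemma~\ref{lmm:connect:ell_a:2}—shows that the hypothesis ``same induced portrait in $B$'' forces $m$, and the compensating right factor, to be knitting, i.e.\ to lie in $\knBraid(Y,E)$ resp.\ $\knBraid(X,D)$. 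This is where Proposition~\ref{prop:knittinginert} does the work: knitting elements are exactly those invisible after applying $\Forget_{E,D}$, and it is precisely their triviality in $\pi_1(X,d)$ for each $d$ that pins down $\ell_d$ up to homotopy rel $A$. Hence $\wtB,\wtB'$ differ by pre- and post-composition with knitting classes, as claimed.

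Finally, for the congruence~\eqref{eq:thm:portrait:congr}: the previous two paragraphs show the map $\mathcal P\colon\wtB'\mapsto(\Forget_{E,D}(\wtB'),\text{induced portrait})$ is well-defined on $M^*(B|E,D)$ (it kills knitting classes on both sides, which is exactly the quotient defining $M^*(B|E,D)$ in~\eqref{eq:defn:M*|ED}), surjective (existence part), and injective (uniqueness part). Equivariance is a bookkeeping check: the left $\pi_1^*(Y,E)\cong H^E\rtimes E\perm^*$ action translates via $\Antipush_E$ into simultaneously conjugating the peripheral groups $H_c$ and permuting/twisting the $B_c$, which is precisely the $H^E\rtimes E\perm^*$ action on the target described by the congruence-of-portraits formula; similarly the right $\pi_1(X,D)\cong G^D$ action goes to conjugation of the $G_a$ and right-twisting of the $B_a$ via $\Antipush_D$ as in~\eqref{eq:antipush}. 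I expect the main obstacle to be the uniqueness step: one must carefully track, through the several forgetful functors and the $\Push$/$\Antipush$ correspondences, that ``the two bisets induce literally the same subsets $B_c\subseteq B$'' is equivalent to ``the connecting braid is strand-by-strand nullhomotopic in the orbisphere,'' and in particular handle the interaction on the right-hand $D$-side where $\wtf$ may move critical values (the subtlety flagged after Lemma~\ref{lem:pushf*}); all the other steps are essentially reorganizations of Lemmas~\ref{lem:CritPortrUnique}, \ref{lmm:connect:ell_a}, \ref{lmm:connect:ell_a:2} and Proposition~\ref{prop:knittinginert}.
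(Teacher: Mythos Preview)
Your plan has the right overall shape—intertwiner, then bijection, then congruence—but there is a genuine gap in the existence step, and the uniqueness step as written is either circular or incomplete.

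\textbf{The existence gap.} You propose to realize an arbitrary portrait $(B_c)_{c\in\wtC}$ by acting on a fixed $\wtB_0$ with braids $n\in\Braid^*(S^2\setminus C,E)$ and $m\in\Braid(S^2\setminus A,D)$. But by definition $\Braid^*(S^2\setminus C,E)$ only permutes points of $E$ among themselves (those with the same $\wtf$-image); it cannot exchange a marked $e\in E$ with an \emph{unmarked} $\wtf$-preimage of $\wtf(e)$. When $E$ is not saturated and $f_*(e)\in A$, the portrait datum $B_e$ records precisely which $H$-orbit of $B$ is singled out, i.e.\ which $f$-preimage of $f_*(e)$ carries the marked point; different portraits may pick different preimages, and no element of $\Braid^*(S^2\setminus C,E)\times\Braid(S^2\setminus A,D)$ can move you between them. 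This is exactly the non-transitivity recorded in Proposition~\ref{prop:FibBiset}: for $a\in A$ the fibre biset $M^*(B'|E_a,\emptyset)$ has as many left orbits as there are embeddings $E_a\hookrightarrow g^{-1}(a)\setminus C$. The paper circumvents this by first reducing to $D=\emptyset$ and then passing to the fully saturated lift $f^+\colon(S^2,f^{-1}(A))\to(S^2,A)$: one chooses an isotopy $m'$ of $(S^2,C)$ sending $\wtC$ to the correct subset of $f^{-1}(A)$ (this is where the ``which preimage'' choice is made), and only \emph{then} corrects the remaining discrepancy by a push. Your construction is missing this saturation step.

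\textbf{The uniqueness circularity.} You invoke Lemma~\ref{lem:M* is left free}(2), which requires $E$ saturated, and as a fallback you cite Theorem~\ref{thm:extension}. But Theorem~\ref{thm:extension} is proven \emph{after} Theorem~\ref{thm:portrait} and relies on it, so this is circular. The paper avoids both issues by arguing structurally: once $\mathcal P$ is shown to be an intertwiner between left-free bisets with the same acting groups, it suffices to show $\mathcal P$ induces a bijection on left orbits, and the construction above (through $f^+$) does exactly that—the ``only choice involved'' is a knitting class, which establishes both surjectivity and injectivity on left orbits in one stroke, without needing transitivity of the fibres.
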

 \noindent The $H^E$-$G^D$-action on $\{(B',(B'_c)_{c\in\wtC})\}$ is
 given by
\[(h_c)_{c\in E}(B',(B'_c)_{c\in \wtC})(g_d)_{d\in D}=(B',(h_c
  B'_c g_{f_*(c)})_{c\in \wtC}),
\]
with the understanding that $h_c=1$ if $c\not\in E$ and $g_{f_*(c)}=1$
if $f_*(c)\not\in D$, and the action of $E\perm^*$ is given by
permutation of the portrait of bisets.

In the dynamical situation (i.e.~when $H=G$ and
$\wtH=\wtG$), Theorem~\ref{thm:portrait} proves
Theorem~\ref{main:thm:A}.

\begin{proof}[Proof of Theorem~\ref{thm:portrait}]
  Clearly,~\eqref{eq:thm:portrait:congr} is an intertwiner: firstly,
  the actions of $E\perm^*$ are compatible; we may ignore them in the
  sequel. Secondly, let $(B'_c)_{c\in \wtC}$ be
  the induced portrait of bisets of $\wtB'$.  For $e\in E$ we
  may write
  \[B'_e=\{m_e\#p^{-1}\mid p\colon[0,1]\to Y,p^{-1}(0)=e,f\circ p=\ell_{f_*(e)}\},
  \]
  see~\eqref{eq:lmm:connect:ell_a2}. Consider $\psi \in \Mod(Y|E)$;
  the action of $\psi$ on $B'_e$ replaces $m_e$ by $\psi\circ m_e$;
  and if $\Antipush_E(\psi)=(h_e)_{e\in E}$ then $\psi\circ m_e=h_e m_e$
  by the very definition of `$\Push$' and~\eqref{eq:antipush}. The same
  argument applies to the right action.

  Let us now show that $\mathcal P$ is a congruence. Since
  $\mathcal P$ is an intertwiner between left-free bisets with
  isomorphic acting groups, it is sufficient to show that $\mathcal P$
  preserves left orbits.

  We proceed by adding new points to $D$ and $E$. If $E=\emptyset$,
  then the forgetful maps $M^*(B|E,D)\to M(B)$ and
  \[\big\{(B'\in M(B),(B'_c)_{c\in\wtC})\mid B'\in M(B),(B'_c)_{c\in\wtC}\text{ a portrait in }B'\big\}\to M(B)\]
  are bijections and the claim follows. Therefore, it is sufficient to
  assume that $D=\emptyset$.

  By~\cite{bartholdi-dudko:bc2}*{Theorem~\ref{bc2:thm:dehn-nielsen-baer++}},
  the biset $B$ may be written as $B(f)$ for a branched covering
  $f\colon(S^2,C,\ord)\to(S^2,A,\ord)$, unique up to isotopy rel
  $C$. We lift $f$ to a branched covering
  $f^+\colon(S^2,f^{-1}(A))\to(S^2, A)$. Let $\wtB^+$ be its biset and
  let $(\wtB_c^+)_{c\in f^{-1}(A)}$ be its minimal portrait of bisets,
  which is unique by Lemma~\ref{lem:CritPortrUnique}.

  Let us show that for every portrait of bisets
  $(B_c)_{c\in\wtC}$ in $B$ there is an orbisphere biset
  $\wtB$ whose minimal portrait of bisets maps to
  $(B_c)$. This $\wtB$ will be of the form
  $B(m)\otimes \Forget_{f^{-1}(A)\setminus\wtC,\emptyset}(\wtB^+)$ for a homeomorphism
  $m\colon (S^2,f^{-1}(A))\selfmap$.

  First, consider the images of all $\wtB_c^+$ in
  $\{\cdot\}\otimes_\wtH\wtB^+\cong\{\cdot\}\otimes_H B$, and compare
  them to the images of all $\wtB_c$. The condition that as $c'$
  ranges over $f_*^{-1}(f_*(c))$ the $B_{c'}$ lie in different
  $H$-orbits of $B$ lets us select which preimages of $A$ correspond
  to the marked points in $\wtC$, and thus produces a well-defined map
  $\wtC\to f^{-1}(A)$. Let $m'$ be an isotopy of $(S^2,C)$ which maps
  $\wtC$ to $f^{-1}(A)$ in the specified manner; then
  $m' f^+\colon(S^2,\wtC,\ord)\to(S^2,A,\ord )$ has biset $\wtB^0$ and
  portrait of bisets $(\wtB^0_c)_{c\in\wtC}$; and
  $\Forget_{E,\emptyset}(\wtB^0_c)\subseteq H B_c$ for all $c\in\wtC$,
  so we may write $h_c\Forget_{E,\emptyset}(\wtB^0_c)=B_c$ for some
  $h_c\in H$. (We recall that $B_c$ consists of $d(c)$ elements
  permuted by $G_{f_*(c)}$, where $d(c)$ is the local degree of $f$ at
  $c$. We have $h_c\Forget_{E,\emptyset}(\wtB^0_c)=B_c$ if and only if
  $h_c\Forget_{E,\emptyset}(\wtB^0_c)\cap B_c\neq\emptyset$.)  We set
  $\wtB=\prod_{c\in E}\Push(h_c)\wtB^0$, and note that the minimal
  portrait of bisets of $\wtB$ maps to $(B_c)_{c\in\wtC}$ under
  $\Forget_{E,\emptyset}$.

  The only choice involved is that of a mapping class that yields
  $\Push(h_c)$ when restricted to $C\cup\{c\}$, namely that of
  knitting mapping classes.
\end{proof}

\subsection{Fibre bisets}
Consider an orbisphere map
$\wtf\colon (S^2, C\sqcup E,\wtord)\to (S^2, A\sqcup D ,\wtord)$ as above and define
the saturation of $E$ as
\[\widebar E\coloneqq \bigsqcup_{e\in E}\wtf^{-1}\big(\wtf(e)\big)\setminus C.\]

\begin{lem}\label{lem:twisting bar E}
  Let
  $\wtf\colon (S^2,C\sqcup E,\wtord)\to (S^2,A\sqcup
  D,\wtord)$ be an orbisphere map and let
  $m\colon (S^2,C\sqcup \widebar E)\selfmap$ be a homeomorphism such
  that $m\restrict C=\one$ and for every $e\in \widebar E$ we have
  $\wtf(m(e))=\wtf(e)$. If the isotopy class of $m$ is not in
  $\Mod^*(S^2,C\sqcup E)$, namely if $m$ moves at least one point in
  $E$ to $\widebar E\setminus E$, then
  $m \wtf\not\approx_{C\sqcup E}\wtf$.

  For every $\wtg\in M^*(\wtf)$ there is a homeomorphism
  $m\colon (S^2,C\sqcup \widebar E)\selfmap$ as above
  (i.e.~$m\restrict C=\one$ and $\wtf(m(e))=\wtf(e)$ for
  $e\in \widebar E$) such that $\wtg \approx_{C\sqcup E} m\wtf$.
\end{lem}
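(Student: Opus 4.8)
The plan is to prove the two assertions separately. For the first (injectivity-type) statement, suppose $m\wtf\approx_{C\sqcup E}\wtf$, so by Lemma~\ref{lem:factorization} there is a homeomorphism $h\colon(S^2,C\sqcup E)\selfmap$ isotopic to the identity rel $C\sqcup E$ with $m\wtf = h\wtf$. Then $h^{-1}m$ is a deck transformation of the branched covering $\wtf$, hence has finite order (since $\deg(\wtf)<\infty$). Now I would argue that $h^{-1}m$ fixes $C$ pointwise (because both $h$ and $m$ do), and it fixes at least $3$ points there by~\eqref{eq:StandAssump}; a finite-order orientation-preserving homeomorphism of $S^2$ fixing three points is the identity. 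Therefore $m=h$, so $m$ is isotopic to the identity \emph{as a map of $(S^2,C\sqcup E)$}; in particular $m$ fixes every point of $E$ up to isotopy rel $C\sqcup E$, contradicting the hypothesis that $m$ moves a point of $E$ into $\widebar E\setminus E$. (One must be slightly careful: the statement allows $m$ to permute $E$ among $\widebar E$, so ``isotopic to the identity rel $C\sqcup E$'' should first be read as rel $C\sqcup\widebar E$, and then one observes that such an $m$ cannot move a point of $E$ off $E$.) The mild subtlety here is that $m$ is a homeomorphism of $(S^2,C\sqcup\widebar E)$, not of $(S^2,C\sqcup E)$, so the isotopy $m\wtf\approx_{C\sqcup E}\wtf$ must be upgraded: since the critical values of all maps in the isotopy are frozen in $A\sqcup D\subseteq\widebar A$ and the fibres $\wtf^{-1}(y)$ move homeomorphically, the tracking argument of Lemma~\ref{lem:factorization} produces $h$ already as a homeomorphism of $(S^2,C\sqcup\widebar E)$ isotopic to the identity rel $C\sqcup\widebar E$, and the argument goes through.

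For the second (surjectivity-type) statement, let $\wtg\in M^*(\wtf)$. By definition of $M^*(\wtf)$ in~\eqref{eq:def M*}, $\wtg$ is an orbisphere biset with $\Forget_{E,D}(\wtg)\in M(B)$ and the same induced map on peripheral classes as $\wtf$; realize it by an orbisphere map, still called $\wtg\colon(S^2,C\sqcup E,\wtord)\to(S^2,A\sqcup D,\wtord)$. Since $\Forget_{E,D}(\wtg)$ and $\Forget_{E,D}(\wtf)$ both lie in $M(B)$, which is left-free by~\cite{bartholdi-dudko:bc2}*{Proposition 6.4}, and they have the same wreath recursion on a common basis, the underlying maps $(S^2,C)\to(S^2,A)$ of $\wtg$ and $\wtf$ are isotopic rel $C$; by Lemma~\ref{lem:factorization} we may assume, after post-composing $\wtg$ by a homeomorphism isotopic to $\one$ rel $C$, that $\wtg$ and $\wtf$ literally agree as maps $(S^2,C)\to(S^2,A)$. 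Two branched coverings of $S^2$ that agree as maps away from a finite set of marked points and that have the same branch data differ by a pre-composition with a homeomorphism permuting each fibre: concretely, $\wtg=m\wtf$ where $m$ sends each point of $E$ to the point of $\wtf^{-1}(\wtf(e))=\wtf^{-1}(\wtg(e))$ prescribed by $\wtg$, and is the identity on $C$. This $m$ is exactly a homeomorphism of $(S^2,C\sqcup\widebar E)$ with $m\restrict C=\one$ and $\wtf(m(e))=\wtf(e)$ for all $e\in\widebar E$ (on $\widebar E\setminus E$ we may take $m$ to act however the fibre-permutation dictates, or trivially), so $\wtg\approx_{C\sqcup E}m\wtf$ as required.

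The main obstacle I anticipate is bookkeeping around the distinction between $E$, $\widebar E$, and the permutations: one must track carefully that the isotopies realizing $\approx_{C\sqcup E}$ can be taken rel $C\sqcup\widebar E$ (using that all relevant critical values stay inside $A\sqcup D$), and that ``$m$ permutes fibres of $\wtf$'' is the precise obstruction measured. Once the fibre-tracking argument of Lemma~\ref{lem:factorization} is invoked in the enlarged setting, both halves reduce to the finite-order-homeomorphism rigidity used throughout (a finite-order homeomorphism of $S^2$ fixing $\ge 3$ points is trivial). No genuinely new idea is needed beyond Lemmas~\ref{lem:factorization} and the standing assumption~\eqref{eq:StandAssump}; the content is organizing these observations correctly.
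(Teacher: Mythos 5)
Your treatment of the first assertion is correct and is in substance the paper's own argument, just unrolled: the paper applies Lemma~\ref{lem:M* is left free} to the saturated set $\widebar E$, and part~(1) of that lemma is exactly your mechanism (a deck transformation of $\wtf$, hence of finite order, fixing the $\ge3$ points of $C$ guaranteed by~\eqref{eq:StandAssump}, hence trivial). Two small remarks: your proposed upgrade of the isotopy to one rel $C\sqcup\widebar E$ is not justified as stated (the intermediate tracking homeomorphisms need not preserve $\widebar E\setminus E$, since those points are not marked during the isotopy) but it is also not needed: the factorization homeomorphism $h$ of Lemma~\ref{lem:factorization} already fixes $C\sqcup E$ pointwise, and once the deck-transformation argument forces $m=h$ you get $m(e)=e$ for every $e\in E$, contradicting $m(e)\in\widebar E\setminus E$.

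The second assertion, however, has a genuine gap. From $\wtg\in M^*(\wtf)$, i.e.\ from~\eqref{eq:def M*}, you only know that $\Forget_{E,D}(\wtg)\in M(B)$ and that $\wtg$ has the same portrait as $\wtf$; this does not give ``the same wreath recursion on a common basis'', and left-freeness of $M(B)$ is irrelevant to that conclusion. In general $\Forget_{E,D}(\wtg)$ is of the form $n f m'$ with a nontrivial twist $m'\in\Mod(S^2,A\sqcup D)$ acting on the target, and such a twist need not lift through $f$; so $\Forget_{E,D}(\wtg)$ need not be isotopic to $f$ rel $C$, whereas the maps $m\wtf$ allowed in the conclusion only realize the left orbit $\Mod(S^2,C)\cdot f$ after forgetting. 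The statement is really about the fibre of $\Forget_{E,D}$ over the class of $f$ (this is how it is used in Proposition~\ref{prop:FibBiset}), and the paper's proof consists of applying Lemma~\ref{lem:M* is left free}(2) to the saturated set $\widebar E$ --- a lemma whose hypotheses contain exactly the rel-$C$ isotopy you asserted. Separately, once that isotopy is available your mechanism is garbled: after you ``assume $\wtg$ and $\wtf$ literally agree as maps'' there is nothing left for a fibre-permuting $m$ to record. The homeomorphism $m$ must instead be extracted from the rel-$C$ isotopy by the fibre-tracking of Lemma~\ref{lem:factorization}: it fixes $C$ pointwise and sends each $e\in E$ into $\wtf^{-1}(\wtf(e))\subseteq C\sqcup\widebar E$ because $\wtg(e)=\wtf(e)$, and is then adjusted on $\widebar E\setminus E$ to preserve fibres --- which is precisely the content of Lemma~\ref{lem:M* is left free}(2) that the paper cites.
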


Note that $\Mod^*(S^2,C\sqcup \widebar E)$ does \emph{not} act on
$M^*(\wtf)$: there are orbisphere maps
$\wtf_1,\wtf_2\colon (S^2, C\sqcup E,\wtord)\to (S^2, A\sqcup
D,\wtord)$ such that $\wtf_1\approx_{C\sqcup E} \wtf_2$ but
$m \wtf_1\not\approx_{C\sqcup E} m \wtf_2$ for a homeomorphism
$m\colon (S^2,C\sqcup \widebar E)\selfmap$ as above.

\begin{proof}
  Suppose $m \wtf\approx_{C\sqcup E} \wtf$. Since $\widebar E$ is
  saturated, by Lemma~\ref{lem:M* is left free}(2) there is an
  $n\colon (S^2,C\sqcup \widebar E)\selfmap$ with
  $n\restrict {C\sqcup E}=\one$ and $\wtf(n(e))=\wtf(e)$ for
  $e\in \widebar E\setminus E$ such that
  $n m \wtf\approx_{C\sqcup \widebar E} \wtf$. This contradicts
  Lemma~\ref{lem:M* is left free}(1): the biset
  \begin{equation}\label{eq:prf:lem:twisting bar E}
    M^*(\wtf \colon (S^2, C\sqcup \widebar E)\to (S^2, A\sqcup D))
  \end{equation}
  is left-free while $n m\neq\one$.

  The second claim follows from Lemma~\ref{lem:M* is left free}(2)
  applied to the biset~\eqref{eq:prf:lem:twisting bar E}.
\end{proof}

We are interested in the fibre biset
$\subscript{H^E}{ \Forget_{E,D}^{-1}(B')}_{G^D}$ under the forgetful
map $\Forget_{E,D}\colon M^*(B|E,D)\to M(B)$. For every
$a\in \wtA$ define $E_a\coloneqq E\cap f_*^{-1}(a)$, where
$f_*\colon \wtC\to\wtA$ is the portrait.

\begin{prop}\label{prop:FibBiset}
  We have
  \begin{equation}\label{eq:prop:FibBiset}
    \subscript{H^E}{ \Forget_{E,D}^{-1}(B')}_{G^D}\cong \prod_{a\in A} \subscript{H^{E_a}} {M^*( B'| E_a,\emptyset )}_1\times \prod_{d\in D}\subscript{H^{E_d}} {M^*( B'| E_d,\{d\})}_{G^{\{d\}}}.
  \end{equation}

  Suppose that $B'$ is the biset of $g\colon (S^2,C,\ord)\to (S^2,A,\ord)$; then
  \[\subscript{H^{E_a}} {M^*( B'| E_a,\emptyset )}_1\cong H^{E_a}\times\{\iota\colon E_a\hookrightarrow  g^{-1}(a)\setminus C\}.\]

  The biset
  $\subscript{H^{E_d}} {M^*( B'| E_d,\{d\})}_{G^{\{d\}}}$ is
  congruent to the biset
  \[\{(b_e)_{e\in E_d}\in B'^{E_d}\mid H b_e\neq H b_{e'}\text{ if }e\neq e'\}\]
  endowed with the actions
 \[(h_e)_{e\in E_d}\cdot (b_e)_{e\in E_d}\cdot g= (h_e b_e g)_{e\in E_d}.\]
\end{prop}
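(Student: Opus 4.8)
The plan is to read the fibre off directly from Theorem~\ref{thm:portrait}. First I would use \cite{bartholdi-dudko:bc2}*{Theorem~\ref{bc2:thm:dehn-nielsen-baer++}} to write $B'=B(g)$ for an orbisphere map $g\colon(S^2,C,\ord)\to(S^2,A,\ord)$, unique up to isotopy rel $C$, and fix such a $g$. Theorem~\ref{thm:portrait} supplies a congruence $\mathcal P$ between $\subscript{\pi_1^*(Y,E)}{M^*(B|E,D)}_{\pi_1(X,D)}$ and the biset of pairs $(B'',(B''_c)_{c\in\wtC})$, where $B''\in M(B)$ and $(B''_c)_{c\in\wtC}$ is a portrait of bisets in $B''$ parameterized by $f_*$ and $\deg$, the acting groups being $\pi_1^*(Y,E)\cong H^E\rtimes E\perm^*$ and $\pi_1(X,D)\cong G^D$. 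Restricting the left action to $H^E$ and taking the fibre over $B''=B'$, I would observe that the prescribed $H^E\times G^D$-action fixes every coordinate $B''_c$ with $c\in C$ (one has $h_c=1$, and $f_*(c)\in A$ so $g_{f_*(c)}=1$), so that the $C$-coordinates form a fixed portrait of bisets on $C$ — by Lemma~\ref{lem:CritPortrUnique} the minimal portrait of $B'$. Thus $\subscript{H^E}{\Forget_{E,D}^{-1}(B')}_{G^D}$ gets canonically identified with the set of families $(B'_e)_{e\in E}$ completing that minimal portrait to a portrait of bisets in $B'$, carrying the $H^E$-$G^D$-action displayed after~\eqref{eq:prop:FibBiset}.

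Next I would decompose this biset over $\wtA=A\sqcup D$. For each $e\in E$ the subbiset $B'_e$ is a right-transitive, left-free (of degree $\deg(e)$) $1$-$G_{f_*(e)}$-biset, and the only conditions coupling distinct $B'_e$ — or coupling a $B'_e$ to a $C$-coordinate — are the injectivity constraints, which compare subbisets indexed by points sharing the same $f_*$-image only. As $E=\bigsqcup_{a\in\wtA}E_a$, $H^E=\prod_{a\in A}H^{E_a}\times\prod_{d\in D}H^{E_d}$, and $G^D=\prod_{d\in D}G^{\{d\}}$ with $G^{\{d\}}$ acting only through the $E_d$-factor, the family $(B'_e)_{e\in E}$ with both actions splits as the direct product over $a\in\wtA$ appearing in~\eqref{eq:prop:FibBiset}. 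What remains is to identify the two kinds of factors.

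Fix $a\in A$ and $e\in E_a$. Then $G_{f_*(e)}=G_a=\langle\gamma_a\rangle$ with $\gamma_a$ a lollipop around $a$, and $B'_e$, a right-$G_a$-invariant right-transitive subset of $B'=B(g)$, equals $b_eG_a$ for any $b_e\in B'_e$; its class in $H\backslash B'/G_a$ records which cycle of the $\gamma_a$-monodromy it meets, i.e.\ a preimage $\iota(e)\in g^{-1}(a)$. The minimal portrait of $B'$, built as in Lemma~\ref{lem:CritPortrUnique} from a lift of $g$, occupies exactly the classes corresponding to $g^{-1}(a)\cap C$, so the injectivity constraints say precisely that $\iota\colon E_a\to g^{-1}(a)\setminus C$ is an injection. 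Conversely, when $\iota(e)\notin C$ the corresponding lollipop loop is null-homotopic in $(S^2,C,\ord)$, so $b_eG_a\in B'/G_a$ has trivial left $H$-stabiliser; hence for a fixed $\iota$ the admissible $B'_e$ form an $H$-torsor, identified with $H$ once a base lollipop is chosen, and by~\eqref{eq:antipush} the $H^{E_a}$-action is left translation along those coordinates. This gives the factor $H^{E_a}\times\{\iota\colon E_a\hookrightarrow g^{-1}(a)\setminus C\}$, i.e.\ the description of $\subscript{H^{E_a}}{M^*(B'|E_a,\emptyset)}_1$. For $d\in D$ and $e\in E_d$ we have instead $G_{f_*(e)}=G_d=\langle1\rangle$, so right-transitivity forces $\deg(e)=1$ and $B'_e=\{b_e\}$ with $b_e\in B'$; the injectivity constraint becomes $Hb_e\neq Hb_{e'}$ for $e\neq e'$ (no $c\in C$ has $f_*(c)=d$), the left $H$-action is $b_e\mapsto h_eb_e$, and the right $\pi_1(X,d)\cong G$-action — taken through a fixed path $\ell_d$ from $*$ to $d$, which is why the identification is only a congruence — is the $d$-monodromy $b_e\mapsto b_eg$ applied simultaneously to all $e\in E_d$. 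This is exactly the biset claimed for $\subscript{H^{E_d}}{M^*(B'|E_d,\{d\})}_{G^{\{d\}}}$.

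The step I expect to be the real work is the reduction in the first paragraph — verifying that restricting $\mathcal P$ to the fibre over $B'$ produces \emph{all} $E$-extensions of one fixed $C$-portrait, rather than a disjoint union indexed by the congruence class of minimal $C$-portraits. The other delicate point is the bijection $H\backslash B'/\langle\gamma_a\rangle\cong g^{-1}(a)$ together with the vanishing of the left $H$-stabiliser of $b_eG_a$ when $\iota(e)\notin C$ (crucially, that we use the \emph{pure} group $H^{E_a}$, so the marked points return to themselves and $\iota$ is not altered); the rest is unwinding the definitions from earlier in this section.
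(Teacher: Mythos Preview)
Your proof is correct and follows essentially the same route as the paper: invoke Theorem~\ref{thm:portrait} to identify the fibre with portraits of bisets in $B'$, freeze the $C$-part via Lemma~\ref{lem:CritPortrUnique}, and split the remaining $E$-coordinates over $\wtA=A\sqcup D$. Your treatment of the third claim (the $d\in D$ factors) matches the paper's almost verbatim.

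The only substantive difference is in the second claim. The paper dispatches it in one line by citing Lemma~\ref{lem:twisting bar E}, which says that twisting by a homeomorphism moving some $e\in E$ to a point of $\widebar E\setminus E$ changes the isotopy class in $M^*(\wtf)$, and conversely every element of $M^*(\wtf)$ arises from such a twist; this immediately gives the parameterisation by injections $\iota\colon E_a\hookrightarrow g^{-1}(a)\setminus C$ together with a free left $H^{E_a}$-action. You instead argue the same conclusion algebraically inside $B'$: identifying $H\backslash B'/G_a$ with $g^{-1}(a)$, checking that the $C$-coordinates of the minimal portrait occupy exactly the points of $g^{-1}(a)\cap C$, and verifying that the left $H$-stabiliser of $b_eG_a$ is trivial when $\iota(e)\notin C$ because the lifted lollipop is null-homotopic rel $C$. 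Both arguments are valid; yours is more self-contained but requires the careful stabiliser computation you sketch, while the paper's is shorter because the topological content has already been packaged in Lemma~\ref{lem:twisting bar E}.
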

\begin{proof}
  By Theorem~\ref{thm:portrait}, the biset
  $\subscript{H^E}{ \Forget_{E,D}^{-1}(B')}_{G^D}$ is isomorphic to
  the biset of portraits $(B_c)_{c\in \wtC}$ in $B'$. Recall
  that $(B_c)_{c\in C}$ is fixed (by Lemma~\ref{lem:CritPortrUnique})
  while $(B_e)_{e\in E}$ varies; this
  shows~\eqref{eq:prop:FibBiset}.

  The second claim follows from Lemma~\ref{lem:twisting bar E}.

  For $d\in D$ and $e\in E_d$ we have $B_e=\{b_e\}$; i.e.~the choice
  of $(B_e)_{e\in E_d}$ is equivalent to the choice of
  $(b_e)_{e\in E_d}\in B'^{E_d}$ subject to the condition stated above.
\end{proof}

Note that $\subscript{H^{E_a}} {M^*( B'| E_a,\emptyset )}_1$ will not
be transitive, as soon as there are at least two maps
$\iota\colon E_a\to g^{-1}(a)\setminus C$.

\noindent Let us define
\[M(B|E,D)\coloneqq \Mod(Y|E)\otimes M(\wtB)\otimes\Mod(X|D).\]
\begin{thm}\label{thm:extension}
  Suppose $f_*(E)\subset D$ and, moreover, that $f_*\colon E\to D$ is
  a bijection. We then have a congruence
  \begin{equation}\label{eq:1:thm:extension}
    \subscript{H^E}{\big(\Forget_{E,D}^{-1}(B')\big)}_{G^D} \to (\subscript{H}{B'}_G)^E
  \end{equation}
  mapping the portrait $(\wtB_c)_{e\in \wtC}$ in $B'$ to
  $(b_e)_{e\in E}$ where $\wtB_e=\{b_e\}$. The group $G^D$ is
  identified with $G^E$ via the bijection $f_*\colon E\to D$.

  Moreover, $M^*(\wtB)=M(\wtB)$, $M^*(B|E,D)=M(B|E,D)$, and exact
  sequences similar to~\eqref{eq:bis4exact} hold:
  \begin{equation}\label{eq:bis4exact:v2}
    \begin{tikzpicture}[description/.style={fill=white,inner sep=2pt},baseline]
      \node (knMod) at (-4,3){$\subscript{\knBraid(Y,E)}{M(\wtB)}_{\knBraid(X,D)}$};
      \node (ModE) at (0,1.5) {$\subscript{\Braid(Y,E)}{M(\wtB)}_{\Braid(X,D)}$};
      \node (PrMod) at (4,0) {$\subscript{\pi_1(Y,E)}{M(B|E,D)}_{\pi_1(X,D)}$};
     \node (Portr) at (4,-2.5) {$\subscript{H^E}{\Big\{\begin{array}{c}(B'\in M(B),(B'_c)_{c\in\wtC})\\ (B'_c)_{c\in\wtC}\text{ is a portrait in }B'\end{array}\Big\}}_{G^D}$.};
      \node (ModAE) at (-4/3,0){$M(\wtB)$};
      \node (ModAsE) at (0,-1.5){$M(B|E,D)$};
      \node (ModA)  at (-4,-3){$M(B)$};
      \path[->>,font=\scriptsize] (ModE) edge node[description]{$\Antipush_{E,D}$} (PrMod)
      (ModAE) edge (ModAsE)
      (ModAE) edge node[description]{$\Forget_{E,D}$}  (ModA)
      (ModAsE) edge node[description]{$\Forget_{E,D}$} (ModA);
      \path[right hook->,font=\scriptsize]
      (knMod) edge (ModE)
      (knMod) edge (ModAE)
      (ModE) edge (ModAE)
      (PrMod) edge (ModAsE);
      \path (PrMod) edge[->] node[description,pos=0.4]{$\mathcal P$} node[right,pos=0.6]{$\cong$} (Portr);
    \end{tikzpicture}
  \end{equation}

  The bottom sequence in~\eqref{eq:bis4exact:v2} can be written
  (using~\eqref{eq:1:thm:extension}) as
  \begin{equation}\label{eq:2:thm:extension}
    \begin{tikzcd}
      \bigsqcup_{B'\in B}\left(B'\right)^E\ar[hook,r] & M(B|E,D)\ar[->>,r] & M(B).
    \end{tikzcd}
  \end{equation}
\end{thm}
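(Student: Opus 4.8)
The plan is to first collapse the starred objects $\Mod^*,\Braid^*,\pi_1^*,M^*$ to their unstarred counterparts, then to read the fibre of $\Forget_{E,D}$ off Theorem~\ref{thm:portrait}, and finally to assemble the four exact sequences of~\eqref{eq:bis4exact:v2} by the bookkeeping of Theorem~\ref{thm:4Ext of MCB}, carefully replacing the single place where saturation of $E$ was used there. For the first step, the hypothesis that $f_*\colon E\to D$ is a bijection forces every $m\in\Braid^*(S^2\setminus C,E)$ to satisfy $f_*(\pi_m(e))=f_*(e)$, hence $\pi_m=\one$; therefore $\Braid^*(S^2\setminus C,E)=\Braid(S^2\setminus C,E)$, $\pi_1^*(Y,E)=\pi_1(Y,E)$, $E\perm^*=\{\one\}$, and likewise $\Mod^*(S^2,C\sqcup E)=\Mod(S^2,C\sqcup E)$ and $\Mod^*(Y|E)=\Mod(Y|E)$. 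I would then check $M^*(\wtB)=M(\wtB)$: the inclusion $\supseteq$ is immediate, and for $\subseteq$ I take $\wtB'\in M^*(\wtB)$, write $\Forget_{E,D}(\wtB')=n'Bm'$, lift $n',m'$ along the surjective forgetful morphisms to reduce to $\Forget_{E,D}(\wtB')=B$, and observe that $\wtB'$ and $\wtf$ induce portraits of bisets in $B$ with the same $f_*$ and $\deg$; these are congruent --- their minimal parts by Lemma~\ref{lem:CritPortrUnique}, the remaining $E$-part freely, since $f_*\restrict E$ is injective with image disjoint from $A$ --- and every such congruence is realised by pre- and post-composition with push maps, so by the uniqueness clause of Theorem~\ref{thm:portrait} the biset $\wtB'$ differs from $\wtf$ by a product of mapping classes and hence lies in $M(\wtB)$. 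Consequently $M^*(B|E,D)=\knBraid(Y,E)\backslash M^*(\wtB)/\knBraid(X,D)=M(B|E,D)$.

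Next I would compute the fibre. Fix $B'\in M(B)$. By Theorem~\ref{thm:portrait}, $\subscript{H^E}{\Forget_{E,D}^{-1}(B')}_{G^D}$ is congruent to the biset of portraits of bisets $(B'_c)_{c\in\wtC}$ in $B'$ with the prescribed $f_*$ and $\deg$, and the minimal part $(B'_c)_{c\in C}$ is rigid by Lemma~\ref{lem:CritPortrUnique}. For $e\in E$ one has $H_e=\langle1\rangle$, $G_{f_*(e)}=\langle1\rangle$ (since $f_*(e)\in D$, so $f_*(e)\notin A$) and $\deg(e)=1$, so $B'_e$ consists of a single element $b_e\in B'$; and since $f_*\restrict E$ is injective with image disjoint from $A$, no compatibility constraint relates the $b_e$ for distinct $e$. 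Thus the portrait is exactly a free choice of $(b_e)_{e\in E}\in(B')^E$, which yields the congruence~\eqref{eq:1:thm:extension}; transporting the $H^E$- and $G^D$-actions through $\mathcal P$ and the bijection $f_*\colon E\to D$ produces the stated diagonal actions $(h_e)\cdot(b_e)\cdot(g_e)=(h_e b_e g_e)$.

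For the exact sequences: combined with $\pi_1^*=\pi_1$ and $M^*=M$, the isomorphism $\mathcal P$ in~\eqref{eq:bis4exact:v2} is the congruence of Theorem~\ref{thm:portrait}; the forgetful map $M(\wtB)\to M(B|E,D)$ is inert by Proposition~\ref{prop:inertMCB}, so the left-hand sequence is an extension of left-free bisets. For the central sequence $\subscript{\Braid(Y,E)}{M(\wtB)}_{\Braid(X,D)}\hookrightarrow M(\wtB)\twoheadrightarrow M(B)$ I must identify $M(B)$ with the double quotient $\Braid(Y,E)\backslash M(\wtB)/\Braid(X,D)$ and verify left-freeness: by the fibre computation, $\Forget_{E,D}^{-1}(B')$ is $(B')^E$, which is a single $G^E\cong\pi_1(X,D)$-orbit because $B'$ is right-transitive, and lifting this through the knitting quotient shows the fibre is a single $\Braid(Y,E)\times\Braid(X,D)$-orbit; hence $\Braid(Y,E)\backslash M(\wtB)/\Braid(X,D)\cong M(B)$, which is left-free by the deck-transformation argument of Lemma~\ref{lem:M* is left free}(1). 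The two horizontal sequences then follow by restricting the vertical ones to the braid subgroups, the connectivity of $\Forget_{E,D}^{-1}(B)$ needed there now being supplied by this same fibre computation in place of Lemma~\ref{lem:M* is left free}(2) (whose saturation hypothesis may genuinely fail here). Finally, decomposing $\subscript{\pi_1(Y,E)}{M(B|E,D)}_{\pi_1(X,D)}$ along the fibres of $\Forget_{E,D}$ and inserting~\eqref{eq:1:thm:extension} rewrites the bottom sequence as~\eqref{eq:2:thm:extension}.

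The main obstacle is the identification $M(B)\cong\Braid(Y,E)\backslash M(\wtB)/\Braid(X,D)$, equivalently that $M(B|E,D)\twoheadrightarrow M(B)$ is an extension of bisets: this is exactly where the bijectivity of $f_*\colon E\to D$ is indispensable, since without it the fibres of $\Forget_{E,D}$ are the non-transitive bisets of Proposition~\ref{prop:FibBiset} and the $M^*$-formalism cannot be dispensed with. The rest is bookkeeping --- keeping the tower of restrictions and double quotients straight.
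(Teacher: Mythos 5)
Your proof is correct and follows essentially the same route as the paper's: the paper simply cites Proposition~\ref{prop:FibBiset} (whose special case you re-derive from Theorem~\ref{thm:portrait}) for the fibre description~\eqref{eq:1:thm:extension}, and then notes that transitivity of $(B')^E$ makes the fibres of $\Forget_{E,D}$ connected, which gives $M^*=M$ and all the exact sequences, with inertness (Proposition~\ref{prop:inertMCB}) handling the knitting layer exactly as you do. Your additional observations are correct refinements of that argument --- $E\perm^*$ is trivial so the starred groups collapse, and in your step ``every such congruence is realised by pre- and post-composition with push maps'' one should note that the minimal ($C$- and $A$-indexed) parts of the two induced portraits are in fact equal by the uniqueness in Lemma~\ref{lem:CritPortrUnique}, so the congruence may be taken trivial there and is then genuinely realised by pushes of the points of $D$.
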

\begin{proof}
  The first claim follows from Proposition~\ref{prop:FibBiset}. Since
  $(B')^E$ is a transitive biset, we obtain $M(B|E,D)=M^*(B|E,D)$
  and the exact sequences~\eqref{eq:2:thm:extension} and~\eqref{eq:bis4exact:v2} hold because the fibres are connected.
\end{proof}

\begin{cor}\label{cor:thm:extension}
  Let $f\colon(S^2,A,\ord)\selfmap $ be an orbisphere map and let
  $D=\bigsqcup_{i\in I} D_i$ be a finite union of periodic cycles
  $D_i$ of $f$. Then~\eqref{eq:2:thm:extension} takes the form
  \begin{equation}\label{eq:cor:thm:extension}
    \bigsqcup_{g\in M(f)}\bigsqcup_{i\in I} B(g)^{\otimes(\#D_i)}\hookrightarrow  M(f|D,D) \twoheadrightarrow M(f).\qed
  \end{equation}
\end{cor}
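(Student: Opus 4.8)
The strategy is to specialize Theorem~\ref{thm:extension} to the dynamical situation $H=G$, $\wtH=\wtG$, with $B$ the biset of $f\colon(S^2,A,\ord)\selfmap$, and $D$ chosen as described. First I would verify that the hypotheses of Theorem~\ref{thm:extension} are met: since each $D_i$ is a periodic cycle of $f$, we have $f_*(D)=D$, and $f_*\colon D\to D$ restricts on each $D_i$ to a cyclic permutation, hence $f_*\colon E\to D$ is a bijection once we set $E\coloneqq D$ (viewing $D$ as extra marked points on the source copy of $S^2$). Moreover no point of $D$ is a critical value, so $\deg(e)=1$ for all $e\in E$, and the order conditions $\wtord(c)\deg(c)\mid\wtord(f_*(c))$ hold trivially since $\wtord\equiv1$ on the new points; thus $f$ is compatible with the relevant forgetful data, and $D$ is saturated because $f_*^{-1}(D)=D\subseteq A\sqcup D$. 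Hence Theorem~\ref{thm:extension} applies and gives the exact sequence~\eqref{eq:2:thm:extension}, which reads $\bigsqcup_{B'\in M(B)}(B')^E\hookrightarrow M(B|E,D)\twoheadrightarrow M(B)$ with $M(B)=M(f)$ and $M(B|E,D)=M(f|D,D)$.

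The remaining work is purely bookkeeping: to identify the fibre biset $(B')^E$ with the stated twisted power $\bigsqcup_{i\in I}B(g)^{\otimes(\#D_i)}$. Here I would write $B'=B(g)$ for $g\in M(f)$ and observe that $E=\bigsqcup_i D_i$, so $(B')^E=\prod_{i\in I}(B(g))^{D_i}$ as a set, with $G^D$ acting coordinatewise. The point is that the left- and right-actions of $G^{D_i}\cong G^{\ell}$ (where $\ell=\#D_i$) are \emph{twisted along the cycle}: the identification $G^D\cong G^E$ is via $f_*\colon E\to D$, and since $f_*$ cyclically permutes $D_i$, the $g$-coordinate acting on the right in position $e$ is indexed by $f_*(e)$, the next point in the cycle. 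This is exactly the defining recursion of the tensor power $B(g)^{\otimes\ell}$ of a biset over the cyclic group $\Z/\ell$: an element of $B(g)^{\otimes\ell}$ is a tuple $(b_1,\dots,b_\ell)$ with $(h_1,\dots,h_\ell)\cdot(b_1,\dots,b_\ell)\cdot(g_1,\dots,g_\ell)=(h_1 b_1 g_2,\,h_2 b_2 g_3,\,\dots,\,h_\ell b_\ell g_1)$, matching the $H^E$-$G^D$-action recorded after Theorem~\ref{thm:portrait} once the index shift through $f_*$ is made explicit. Summing over the connected components $D_i$ gives $(B')^E\cong\bigsqcup_{i\in I}B(g)^{\otimes(\#D_i)}$, and taking the disjoint union over $g\in M(f)$ yields the left-hand term of~\eqref{eq:cor:thm:extension}.

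Finally I would note that no point of $D$ collides with any point of $A$ and the cycles are disjoint, so the conditions ``$H b_e\neq H b_{e'}$ for $e\neq e'$'' from Proposition~\ref{prop:FibBiset} are automatically compatible across distinct cycles (there is no interaction between different $D_i$), which is why the fibre splits as a disjoint union over $i\in I$ rather than a more complicated configuration space; within a single cycle the fibre is the full power $B(g)^{\otimes\ell}$ because the points of one cycle are genuinely distinct marked points and the biset already records that via the base point. I do not expect a serious obstacle; the only mild subtlety is making the cyclic twist in the $G^D$-action precise, i.e.\ checking that the reindexing via $f_*$ turns the coordinatewise action of Theorem~\ref{thm:extension} into the standard wreath-type action defining $B(g)^{\otimes(\#D_i)}$. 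This is a direct unwinding of the definitions and the description of the $H^E\rtimes E\perm^*$-action given in Theorem~\ref{thm:portrait}, so the corollary follows.
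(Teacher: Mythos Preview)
Your approach is correct and is precisely what the paper intends: the corollary is marked with \qed\ and given no proof, so it is meant as an immediate specialization of Theorem~\ref{thm:extension} to the dynamical case $H=G$, $E=D$, with $f_*\colon D\to D$ a bijection because $D$ is a union of periodic cycles. Your unwinding of the cyclic twist to identify the fibre with the stated power is exactly the intended bookkeeping.

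One small correction: your claim that ``$D$ is saturated because $f_*^{-1}(D)=D$'' conflates the portrait map $f_*\colon\wtA\to\wtA$ with the actual branched covering $\wtf$. Saturation requires $\wtf^{-1}(\wtf(D))\subseteq A\sqcup D$, and since $\deg f>1$ a point of $D$ typically has preimages outside $A\sqcup D$, so $D$ is \emph{not} saturated in general. This does not matter, however, because Theorem~\ref{thm:extension} does not assume saturation; its hypothesis is only that $f_*\colon E\to D$ be a bijection, which you have. You can simply delete the saturation remark.
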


\subsection{Centralizers of portraits}\label{ss:diff centr}
Let us now consider the dynamical situation: $H=G$ and $\wtG=\wtH$; we
abbreviate $\Forget_{D}=\Forget_{D,D}$ and $M(B|D)=M(B|D,D)$.

Given an orbisphere biset $\wtB$ we denote by
$Z(\wtB)\le\Mod(\wtG)$ the centralizer of
$\wtB$ in $M(\wtB)$ and by
$Z(\wtB|D)\le\Mod(G|D)$ the centralizer of the image of
$\wtB$ in $M(B|D)$, see Theorem~\ref{thm:portrait}. We have a
natural forgetful map
\begin{equation}\label{eq:Z to Z_D}
  Z(\wtB) \to Z(\wtB|D)
\end{equation}
which is, in general, neither injective nor surjective. However, we
will show in Corollary~\ref{cor:conj probl reduction} that~\eqref{eq:Z
  to Z_D} is an isomorphism if $\wtB$ is geometric
non-invertible.

Recall from~\eqref{eq:4exact} the short exact sequence
$\pi_1(X,D)\hookrightarrow \Mod(G|D) \twoheadrightarrow \Mod(G)$. We
have the corresponding sequence for centralizers:
\begin{equation}
  \begin{tikzcd}
    1\ar[r] & Z(\wtB|D) \cap \pi_1(X,D) \ar[r] & Z(\wtB|D) \ar[r] & Z(B).
  \end{tikzcd}
\end{equation}
If $\wtB$ is geometric non-invertible, then
$Z(\wtB|D) \cap \pi_1(X,D)$ is trivial, so
$Z(\wtB|D)\cong Z(\wtB)$ is naturally a subgroup of
$ Z(B)$, and in Theorem~\ref{thm:conjcentr} we will show that it has
finite index.

\begin{defn}
  The \emph{relative centralizer} $Z_D((B_a)_{a\in \wtA})$ of
  a portrait of bisets $(B_a)_{a\in \wtA}$ is the set of
  $(g_d)\in G^D$ such that
  \[B_d = g_d^{-1} B_d g_{f_*(d)}\text{ for all }d\in D,
  \]
  with the understanding that $g_{f_*(d)}=1$ if $f_*(d)\not\in D$.
\end{defn}
We remark that we could also have defined the ``full'' normalizer,
consisting of all $(g_a)\in G^{\wtA}$ with $G_a^{g_a}=G_a$ and
$B_a=g_a^{-1}B_a g_{f_*(a)}$ for all $a\in\wtA$, and its
subgroup the ``full'' centralizer, in which $g_a$ centralizes $G_a$
and $g_a^{-1}\cdot{-}\cdot g_{f_*(a)}$ is the identity on $B_a$; but
we will make no use of these notions. The ``full'' normalizer is the direct
product of $\prod_{a\in A}G_a$ and the relative centralizer.

We also note that, if $(g_d)_{d\in D}$ belongs to the relative
centralizer of $(B_a)_{a\in\wtA}$ and $f^n(d)\in A$ for some
$n\in\N$, then $g_d=1$.

\noindent Applying Theorem~\ref{thm:portrait} to the dynamical
situation we obtain:
\begin{prop}\pushQED{\qed}
  Let
  $\subscript{\wtG}{\wtB}_{\wtG}\twoheadrightarrow
  \subscript G B_G$ be a forgetful morphism and let
  $(B_a)_{a\in\wtA}$ be the induced portrait of bisets in
  $B$. Then any choice of isomorphisms $\pi_1(X,d)\cong G$ gives an isomorphism
  \[Z(\wtB|D) \cap \pi_1(X,D)\stackrel\cong\longrightarrow Z_D((B_a)_{a\in \wtA}).\qedhere\]
\end{prop}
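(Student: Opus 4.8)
The plan is to unwind both sides through the description of $M^*(B|D)$ afforded by Theorem~\ref{thm:portrait} and to recognize the relative centralizer as exactly what remains after factoring out the non-dynamical part. Recall from~\eqref{eq:thm:portrait:congr} (in the dynamical situation $H=G$, $\wtH=\wtG$, and with $E=D$) that we have a congruence of bisets
\[
\mathcal P\colon \subscript{\pi_1(X,D)}{M^*(B|D)}_{\pi_1(X,D)}\;\longrightarrow\;\subscript{G^D}{\big\{(B',(B'_c)_{c\in\wtC})\mid (B'_c)_{c\in\wtC}\text{ a portrait in }B'\big\}}_{G^D},
\]
intertwining the left and right $\pi_1(X,D)\cong G^D$-actions with the portrait-twisting actions $(g_d)_{d\in D}\cdot(B',(B'_c)_c)\cdot(g'_d)_{d\in D}=(B',(g_d^{-1}B'_c g'_{f_*(c)})_c)$, where by convention $g_c=1$ whenever $c\notin D$. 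Under this congruence the image of $\wtB$ itself corresponds to the pair $(B,(B_a)_{a\in\wtA})$, where $(B_a)_{a\in\wtA}$ is the minimal portrait of $\wtB$ pushed forward via $\Forget_D$; this is precisely the assertion of Lemma~\ref{lmm:connect:ell_a:2} together with the last sentence of Theorem~\ref{thm:portrait}.

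First I would fix, as permitted in the statement, an isomorphism $\pi_1(X,d)\cong G$ for each $d\in D$ — concretely, choose the paths $\ell_d$ from $*$ to $d$ of Lemma~\ref{lmm:connect:ell_a}, which identify $\pi_1(X,d)$ with $G$ and, via~\eqref{eq:antipush}, identify $\Antipush_D\colon\pi_1(X,D)\xrightarrow{\ \cong\ }G^D$. Under this identification the left and right $\pi_1(X,D)$-actions on $M^*(B|D)$ become the $G^D$-actions described above. Now an element $\phi\in Z(\wtB|D)\cap\pi_1(X,D)$ is by definition an element $\phi\in\pi_1(X,D)$ such that $\phi\cdot[\wtB]=[\wtB]\cdot\phi$ in $M^*(B|D)$; writing $\phi=(g_d)_{d\in D}\in G^D$ under the identification, and transporting through the congruence $\mathcal P$, this says
\[
(B,(g_d^{-1}B_c g_{f_*(c)})_{c\in\wtC})=(B,(B_c)_{c\in\wtC}),
\]
i.e.~$B_c=g_d^{-1}B_c g_{f_*(c)}$ for every $c\in\wtC$ — which, since $g_c=1$ for $c\notin D$ forces the equation to be automatic when $c\in C$ and reduces it to $B_d=g_d^{-1}B_d g_{f_*(d)}$ for $d\in D$ — exactly the defining condition of membership in $Z_D((B_a)_{a\in\wtA})$. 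Thus $\phi\mapsto(g_d)_{d\in D}$ is a well-defined bijection onto $Z_D((B_a)_{a\in\wtA})$, and it is a group homomorphism because $\mathcal P$ intertwines the group actions and $\Antipush_D$ is a group isomorphism.

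Two small points remain to be checked. One is that the left and right $\pi_1(X,D)$-actions are consistently identified with the \emph{same} $G^D$; here one uses that $\Forget_D$ in the dynamical setting is the same map on both sides, so the same choice of paths $\ell_d$ serves, and the convention $g_c=1$ for $c\notin D$ on the right matches the convention for $f_*(d)\notin D$ in the definition of $Z_D$. The other is the observation (already recorded just before the Proposition) that if $f^n(d)\in A$ for some $n$ then any $(g_d)$ in the relative centralizer has $g_d=1$: this is a consequence of the requirement in Definition~\ref{dfn:PrtrOfBst} that for $c\in\wtC\setminus C$ with $f_*(c)\notin A$ one has $\deg(c)=1$, together with iterating the relation $B_d=g_d^{-1}B_dg_{f_*(d)}$ down to a point of $A$, where the convention forces the conjugating element to be $1$; it shows that $Z_D$ is in fact supported on the strictly preperiodic-to-$A$-free part of $D$, but is not needed for the bijection itself. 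The main obstacle, such as it is, is purely bookkeeping: one must be careful that the twisting action appearing in~\eqref{eq:thm:portrait:congr} uses $g_{f_*(c)}$ on the right (not $g_c$), so that the relation that falls out is indexed as in the definition of the relative centralizer; once this is aligned, the proof is immediate from Theorem~\ref{thm:portrait}. $\qed$
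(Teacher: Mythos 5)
Your argument is correct and follows exactly the paper's route: the paper derives this proposition directly by "applying Theorem~\ref{thm:portrait} to the dynamical situation", and your write-up simply makes explicit the bookkeeping — identifying $\pi_1(X,D)$ with $G^D$ via the paths $\ell_d$ and~\eqref{eq:antipush}, transporting the centralizing condition through the congruence $\mathcal P$, and matching it with the defining relations of $Z_D((B_a)_{a\in\wtA})$. (Only a cosmetic slip: in the displayed condition the conjugating element should be indexed $g_c^{-1}$ rather than $g_d^{-1}$, which is clearly what you intend.)
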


\section{$G$-spaces}
We start by general considerations.  Let $Y$ be a right $H$-space, and
let $X$ be a right $G$-space. For every map $f\colon Y\to X$ there
exists a natural $H$-$G$-biset $M(f)$, defined by
\begin{equation}\label{eq:B(ftilde)}
  M(f)\coloneqq\{h f g\coloneqq f({-}\cdot h)\cdot g\mid h\in H,g\in G\},
\end{equation}
namely the set of maps $Y\to X$ obtained by precomposing with the
$H$-action and post-composing with the $G$-action. Note that $M(f)$ is
right-free if the action of $G$ is free on $X$. We have a natural
$G$-equivariant map $Y\otimes_H M(f)\to X$ given by evaluation:
$y\otimes b\mapsto b(y)$.  Define
\[H_f\coloneqq \{h\in H\mid \exists g\in G\text{ with $h f =f g$ in }M_f\}
\]
the stabilizer in $H$ of $f G\in M(f)/G$. Then $f$ descends to a
continuous map $\overline f\colon Y/H_f\to X/G$.
\begin{lem}\label{lem:MCB vs B}
  Suppose that $X$ and $Y$ are simply connected and that $G,H$ act
  freely with discrete orbits. Then $M(f)$ is isomorphic to the biset
  of the correspondence
  $Y/H \leftarrow Y/H_f\overset{\overline f}\to X/G$ as defined
  in~\cite{bartholdi-dudko:bc1}.
\end{lem}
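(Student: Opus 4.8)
The plan is to show that both $M(f)$ and the biset of the correspondence $Y/H\xleftarrow{}Y/H_f\xrightarrow{\overline f}X/G$ are, after the natural identifications of fundamental groups, the induced biset $H\otimes_{H_f}G$, and that the two identifications agree, so that the isomorphism is forced.

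First I would set up the covering‑space dictionary. Since $Y$ is simply connected and $H$ acts freely with discrete orbits, $Y\to Y/H$ is a universal covering, so fixing a basepoint $\tilde *\in Y$ and its image in $Y/H$ identifies $\pi_1(Y/H)\cong H$; likewise $Y\to Y/H_f$ is the universal cover of $Y/H_f$ (as $H_f\le H$ again acts freely with discrete orbits), giving $\pi_1(Y/H_f)\cong H_f$ in such a way that the covering $p\colon Y/H_f\to Y/H$ induces on $\pi_1$ the inclusion $H_f\hookrightarrow H$, and $X\to X/G$ is the universal cover of $X/G$, giving $\pi_1(X/G)\cong G$. Note that $Y/H$, $Y/H_f$ and $X/G$ are all aspherical. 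Next I would record the twisted equivariance of $f$: for $h\in H_f$ the element $g\in G$ with $hf=fg$ in $M(f)$ is unique because $G$ acts freely on $X$, so writing $\rho(h)\coloneqq g$ defines a homomorphism $\rho\colon H_f\to G$ with $f(y\cdot h)=f(y)\cdot\rho(h)$ for all $y\in Y$. Hence $\overline f\colon Y/H_f\to X/G$, $yH_f\mapsto f(y)G$, is well defined and continuous, and tracing basepoints through the universal covers shows $\overline f_*=\rho$. Since the three spaces are aspherical, the homotopy class of the correspondence — and therefore, by the description of bisets of correspondences in~\cite{bartholdi-dudko:bc1} (note that $p$ is a covering) — is determined by the span of groups $H\hookleftarrow H_f\xrightarrow{\rho}G$, and its biset is the $H$-$G$-biset $H\otimes_{H_f}G$ in which $H_f$ acts on $H$ by right multiplication through the inclusion and on $G$ by left multiplication through $\rho$.

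On the other side, the surjection $H\times G\twoheadrightarrow M(f)$, $(h,g)\mapsto hfg$, satisfies $hfg=h'fg'$ if and only if $l\coloneqq h^{-1}h'\in H_f$ and $g(g')^{-1}=\rho(l)$; equivalently, after reparametrising, $(hl)\otimes g$ and $h\otimes(\rho(l)g)$ have the same image for every $l\in H_f$. This is exactly the defining relation of $H\otimes_{H_f}G$, and the left $H$- and right $G$-actions on $M(f)$ (precomposition by $H$, postcomposition by $G$) correspond to the obvious ones on the tensor product. Thus $h\otimes g\mapsto hfg$ is an isomorphism $H\otimes_{H_f}G\xrightarrow{\ \sim\ }M(f)$ of $H$-$G$-bisets, which together with the previous paragraph proves the lemma.

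I expect the only real obstacle to be bookkeeping: matching the orientation and basepoint conventions of bc1's ``biset of a correspondence'' with the (left $H$)–(right $G$) structure of $M(f)$, and confirming that $\overline f$ — which need not be a covering — is admissible input for that construction. Asphericity of $Y/H$, $Y/H_f$, $X/G$ is what makes this harmless, as it reduces everything to the $\pi_1$‑level span $H\hookleftarrow H_f\xrightarrow{\rho}G$. The verifications that $\rho$ is a homomorphism, that $f$ is $\rho$‑equivariant, that $\overline f_*=\rho$, and the computation of the fibres of $(h,g)\mapsto hfg$ are routine and I would not belabour them.
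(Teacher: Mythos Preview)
Your proposal is correct and follows essentially the same route as the paper: both identify $M(f)\cong H\otimes_{H_f}G$ via the homomorphism $\rho\colon H_f\to G$, and both identify the correspondence biset with the same object. The paper packages the right-hand factor as the sub-biset $M'(f)=\{f g\mid g\in G\}\subseteq M(f)$ and gives an explicit path-level isomorphism $B(\overline f)\cong M'(f)$, whereas you invoke asphericity of $Y/H$, $Y/H_f$, $X/G$ to reduce directly to the $\pi_1$-span $H\hookleftarrow H_f\xrightarrow{\rho}G$ and read off $H\otimes_{H_f}G$ from the general description in \cite{bartholdi-dudko:bc1}; this is the same computation, just at a different level of abstraction.
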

\begin{proof}
  Let us define the following subbiset of $M(f)$:
  \begin{equation}\label{eq:M'}
    M'(f)\coloneqq\{f({-}\cdot h)\cdot g\mid h\in H_f,g\in G\}.
  \end{equation}
  Since $Y,X$ are simply connected, the biset of
  $\overline f\colon Y/H_f\to X/G$ is isomorphic to $M'(f)$. The
  isomorphism is explicit: choose basepoints $\dagger\in Y$ and
  $*\in X$ so that $\pi_1(Y/H,\dagger H)\cong H$ and
  $\pi_1(X/G,*G)\cong G$. Given $b\in B(\overline f)$, represent it as
  a path $\overline\beta\colon[0,1]\to X/G$ with
  $\overline\beta(0)=\overline f(\dagger H_f)$ and
  $\overline\beta(1)=*G$, and lift it to a path
  $\beta\colon[0,1]\to X$. We have $\beta(0)=f(\dagger)h$ and
  $\beta(1)=*g$ for some $h\in H_f,g\in G$, and we map
  $b\in B(\overline f)$ to $h^{-1}\cdot f\cdot g\in M'(f)$. This map
  is a bijection because both $B(\overline f)$ and $M'(f)$ are
  right-free. We finally have
  \[M(f)\cong H\otimes _{H_f} M'(f)\cong B(Y/H \leftarrow
    Y/H_f\rightarrow X/G).\qedhere\]
\end{proof}
In case the actions of $G,H$ are discrete but not free, there still is
a surjective morphism
$B(X/G\leftrightarrow Y/H)\twoheadrightarrow M(f)$, when $X/G$ and
$Y/H$ are treated as orbispaces.

\begin{exple}[Modular correspondence]
  The mapping class biset $M(f)$ from~\S\ref{ss:MapClassBis} is
  isomorphic to the biset of the associated correspondence on Moduli
  space,
  see~\cite{bartholdi-dudko:bc2}*{Proposition~\ref{bc2:prop:modular
      correspondence}}. Indeed, $M(f)$ is identified with
  \begin{equation}\label{eq:exm:mod corr}
    \{\sigma_n \circ \sigma_f\circ \sigma_m\mid n\in\Mod(S^2,C),m\in\Mod(S^2,A) \},
  \end{equation}
  where $\sigma_m\colon \mathscr{T}_A \selfmap$,
  $\sigma_n\colon \mathscr{T}_C \selfmap$, and
  $\sigma_f\colon \mathscr{T}_A \to \mathscr{T}_C$ are the pulled-back
  actions between Teichm\"uller spaces of $m,n,f$ respectively. By
  Lemma~\ref{lem:MCB vs B}, the biset~\eqref{eq:exm:mod corr} is
  isomorphic to the biset of the modular correspondence
  \[\mathscr M_C
    \overset{\overline{\sigma_f}}\longleftarrow
    \mathscr W_f \overset i\longrightarrow  \mathscr M_A.\]        
\end{exple}

\subsection{Universal covers}
Let us now generalize Lemma~\ref{lem:MCB vs B} by dropping the
requirement that $X,Y$ be simply connected. Choose basepoints
$\dagger,*$, write $Q=\pi_1(Y,\dagger)$ and $P=\pi_1(X,*)$ and
$\wtH=\pi_1(Y/H,\dagger H)$ and $\wtG=\pi_1(X/G,*G)$;
so we have exact sequences
\begin{equation}\label{eq:gp-ses}
  1\to Q\to\wtH\overset\pi\to H\to 1,\qquad 1\to P\to\wtG\overset\pi\to G\to 1.
\end{equation}

The \emph{universal cover} of $X$ may be defined as
\[\widetilde X\coloneqq\{\beta\colon[0,1]\to X\mid\beta(1)=*\}/{\approx};
\]
it has a natural basepoint $\widetilde *$ given by the constant path
at $*$, and admits a right $P$-action by right-concatenation of loops
at $*$. The projection $\widetilde X\to X$ is a covering, and is given
by $\beta\mapsto\beta(0)$. We denote by $\widetilde X^\vee$ the left
$P$-set structure on $\widetilde X$, with action
$g\cdot\beta^\vee\coloneqq(\beta\cdot g^{-1})^\vee$.  We may naturally
identify $\widetilde X^\vee$ with
$\{\beta^{-1}\mid\beta\in\widetilde X\}$ and its natural left
$P$-action.

Let us consider the universal covers $\widetilde X,\widetilde Y$ of
$X,Y$ respectively and a lift $\wtf$ of $f$. We have the
following situation, with the acting groups represented on the right,
and omitted in the left column:
\[\begin{tikzcd}[column sep=large,row sep=tiny]
    \widetilde Y\ar[rr,"\wtf"]\ar[dd] & & \widetilde X\makebox[0mm][l]{${}\looparrowleft\wtG$}\ar[dd]\\
    &\phantom{Z}\\
    \makebox[0mm][r]{$\widetilde Y/Q\cong{}$}Y\ar[rr,"f"]\ar[dd]\ar[dr] & & X\makebox[0mm][l]{${}\looparrowleft G$}\ar[dd]\\
    & Y/H_f\ar[dl]\ar[dr,"\overline f"]\\
    \makebox[0mm][r]{$\widetilde Y/\wtH\cong{}$}Y/H & & X/G
  \end{tikzcd}
\]
Let $\wtH_f$ be the full preimage of $H_f$ in $\wtH$. Note that
$\wtH_f$ is the stabilizer of $\wtf\wtG$ in $M(\wtf)/\wtG$: we have
$\widetilde h\wtf \in \wtf \wtG$ if and only if
$\pi(\widetilde h)f \in f G$ by the unique lifting property. By
Lemma~\ref{lem:MCB vs B}, we have
\[M(\wtf)\cong B(\widetilde Y/\wtH\leftarrow\widetilde Y/\wtH_f\to\widetilde X/\wtG)\cong B(Y/H\leftarrow Y/H_f\to X/G),
\]
and we shall see that $M(\wtf)$ is an extension of bisets.  We
have a natural map $\pi\colon M(\wtf)\to M(f)$, given by
$h\wtf g\mapsto\pi(h)f\pi(g)$ for all
$h\in\wtH,g\in\wtG$.
\begin{lem}
  There is a short exact sequence of bisets
  \begin{equation}\label{eq:biset-ses}
    \begin{tikzcd}\subscript Q{M(\wtf)}_P\ar[r,hookrightarrow] & M(\wtf)\ar[r,twoheadrightarrow,"\pi"] & M(f),\end{tikzcd}
  \end{equation}
  in which every fibre $\pi^{-1}(h f g)$ is isomorphic to a twisted
  form $B(h f g)$ of the biset of $f$.
\end{lem}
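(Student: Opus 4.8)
The plan is to establish the short exact sequence of bisets~\eqref{eq:biset-ses} by identifying all three terms with bisets arising from correspondences between the relevant covers, and then to apply Definition~\ref{defn:ses} with the normal subgroups $Q\trianglelefteq\wtH$ and $P\trianglelefteq\wtG$ from~\eqref{eq:gp-ses}. First I would recall that, by the extension of Lemma~\ref{lem:MCB vs B} proven just above via universal covers, $M(\wtf)$ is the biset of the correspondence $\widetilde Y/\wtH\leftarrow\widetilde Y/\wtH_f\to\widetilde X/\wtG$, while $M(f)$ is that of $Y/H\leftarrow Y/H_f\to X/G$; the forgetful map $\pi\colon M(\wtf)\to M(f)$ sending $h\wtf g\mapsto\pi(h)f\pi(g)$ is well-defined precisely because, by the unique lifting property, $h\wtf g=h'\wtf g'$ in $M(\wtf)$ forces $\pi(h)f\pi(g)=\pi(h')f\pi(g')$ in $M(f)$. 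The key point is that $\pi$ is exactly the quotient of $M(\wtf)$ by the left $Q$-action and the right $P$-action, i.e.\ $M(f)\cong Q\backslash M(\wtf)/P$ as a $H$-$G$-biset: this follows because the cover $Y\to Y/H$ corresponds to $Q\backslash\wtH$ on fundamental groups, and analogously on the $X$ side, so that the correspondence computing $M(f)$ is obtained from the one computing $M(\wtf)$ precisely by passing from $\widetilde Y,\widetilde X$ to their quotients $Y,X$ by the deck groups $Q,P$.

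Next I would verify the hypothesis of Definition~\ref{defn:ses}, namely that the quotient biset $\subscript H{(Q\backslash M(\wtf)/P)}_G\cong M(f)$ is left-free; but this is immediate from the remark after~\eqref{eq:B(ftilde)}, or from the fact that mapping class bisets $M(f)$ are left-free (as in Lemma~\ref{lem:M* is left free}(1), using the standing assumption~\eqref{eq:StandAssump} that $\#C\ge3$ so that a nontrivial deck transformation cannot fix three points). Hence~\eqref{eq:biset-ses} is an extension of left-free bisets in the sense of Definition~\ref{defn:ses}.

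It then remains to identify the fibres. Fix $h\in\wtH$, $g\in\wtG$ and consider $\pi^{-1}(\pi(h)f\pi(g))$; translating by the left $\wtH$- and right $\wtG$-actions (using that $M(\wtf)$ is a biset) reduces this to computing the fibre $\pi^{-1}(f)$, which is the set of $Q$-$P$-cosets landing on $fG\in M(f)/G$ over $f$. An element of this fibre has the form $q\,\wtf\,g$ with $q\in Q$, $g\in\wtG$ such that $\pi(g)\in G$ represents the $G$-action making $\pi(q)f=f\pi(g)$, i.e.\ $\pi(g)$ lies in the image of $H_f$-to-$G$; lifting, $g$ ranges over $\wtH_f$-adapted elements of $\wtG$. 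Concretely, $\pi^{-1}(f)$ is a $Q$-$P$-sub-biset of $M(\wtf)$ whose acting groups are $Q=\pi_1(Y)$ and $P=\pi_1(X)$ and which, by Lemma~\ref{lem:MCB vs B} applied now to the \emph{simply connected} spaces $\widetilde Y,\widetilde X$ with the lift $\wtf$ (whose map on fundamental groups along the correspondence is the restriction of $\wtf_*$), is isomorphic to the biset of the correspondence $\widetilde Y/Q\leftarrow\widetilde Y/Q_f\to\widetilde X/P$ — but since $Q$ acts on $\widetilde Y$ by deck transformations and $\widetilde Y$ is simply connected, this is nothing but a twisted form $B(h f g)$ of $B(f)$: the twist records the choice of coset representative, i.e.\ the action is conjugated by the element of $\wtG$ needed to connect $\wtf\wtG$ to $h\wtf g\wtG$.

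The main obstacle I expect is pinning down the twist cleanly: one must check that changing the representative $h\wtf g$ within its $\wtH$-$\wtG$-orbit changes the fibre biset only by an honest biset isomorphism, and that the resulting biset is genuinely of the form $B(f)$ with left and right $\pi_1$-actions pre- and post-composed by the relevant lifts — i.e.\ that the ambiguity is exactly conjugation by elements projecting to $1$ in $H$ and $G$. This is where one uses the exact sequences~\eqref{eq:gp-ses} and the unique path-lifting property most carefully; everything else is formal manipulation of bisets of correspondences as developed in~\cite{bartholdi-dudko:bc1}.
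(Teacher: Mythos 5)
Your proof is correct and follows essentially the same route as the paper: the paper's entire argument is to apply Lemma~\ref{lem:MCB vs B} to the simply connected spaces $\widetilde Y,\widetilde X$ with the free, discrete actions of $Q,P$, which is precisely your identification of the fibres with twisted forms $B(hfg)$, the remaining points (equivariance of $\pi$, fibres being single $Q$-$P$-orbits via unique lifting) being the same formal verifications you sketch. One minor remark: your left-freeness paragraph is both unnecessary for the statement and misattributed --- the remark after~\eqref{eq:B(ftilde)} concerns \emph{right}-freeness, and Lemma~\ref{lem:M* is left free} lives in the sphere-map setting rather than the general $G$-space setting of this lemma --- so it should simply be omitted.
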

\begin{proof}
  This follows immediately from Lemma~\ref{lem:MCB vs B} applied to
  $\widetilde Y,\widetilde X$ with actions of $Q,P$ respectively.
\end{proof}

Let us now assume that the short exact sequences~\eqref{eq:gp-ses} are
split, so $\pi_1(X/G)\cong P\rtimes G$ and
$\pi_1(X/H)\cong Q\rtimes H$. We shall see that the
sequence~\eqref{eq:biset-ses} is split, and that some additional
structure on $M(f)$ and $B(f)$ allow the extension
$M(f)\cong B(Y/H\leftrightarrow X/G)$ to be reconstructed as a crossed
product.

The splitting of the map $\pi\colon\pi_1(X/G,*G)\to G$ means that
there is a family $\{\alpha_g\}_{g\in G}$ of curves in $X$ such that
$\alpha_g$ connects $*g$ to $*$ and
$\alpha_{g_1g_2}\approx(\alpha_{g_1}g_2)\#\alpha_{g_2}$. Similarly
there is a family $\{\beta_h\}_{h\in H}$ of curves in $Y$ such that
$\beta_h$ connects $\dagger h$ to $\dagger$ and
$\beta_{h_1h_2}\approx(\beta_{h_1}h_2)\#\beta_{h_2}$.

For every $h\in H_f$ there is a unique element of $G$, written
$h^f\in G$, such that $h\cdot f= f\cdot h^f$ in $M'(f)$. For every
$h\in H_f$ and every $b\in B(f)$, represented as a path
$b\colon[0,1]\to X$ with $b(0)=f(\dagger)$ and $b(1)=*$, define
\[b^h\coloneqq(f\circ\beta_h^{-1})\#(b\cdot h^f)\#\alpha_{h^f}.
\]
We clearly have $(q\cdot b\cdot p)^h=q^h\cdot b^h\cdot p^{h^f}$, so
$H_f$ acts on $B(f)$ by congruences. We convert that right action to a
left action by ${}^h b\coloneqq b^{h^{-1}}$.

For every $c\in M'(f)$ and every $p\in P$, write $c=f g$ and define
${}^c p\coloneqq {}^g p=g p g^{-1}$. We clearly have
${}^{c g}p={}^c({}^g p)$.
\begin{lem}
\label{lem:cross prod str}
  The biset of $\overline f\colon Y/H_f\to X/G$ is isomorphic to the
  crossed product $B(f)\rtimes M'(f)$, which is $B(f)\times M'(f)$ as
  a set, with actions of $\wtH_f\cong Q\rtimes H_f$ and
  $\wtG\cong P\rtimes G$ given by
  \[(q,h)\cdot(b,c)\cdot(p,g)=(q\cdot{}^h(b\cdot{}^c p),h\cdot c\cdot g).\]
  As a consequence,
  \[B(Y/H \leftarrow Y/H_f\overset{\overline f}\to X/G) \cong \wtH\otimes _{\wtH_f} B(f)\rtimes M'(f).\]
\end{lem}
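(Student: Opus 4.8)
The plan is to build the isomorphism in three layers: first identify the biset of $\overline f$ with the subbiset $M'(f)$ from~\eqref{eq:M'}, via Lemma~\ref{lem:MCB vs B} applied on the universal covers; then exhibit the crossed-product coordinates $B(f)\times M'(f)$ on $M(\wtf)$ restricted to the stabilizer subgroups; and finally induce up along $\wtH_f\le\wtH$. More precisely, I would argue as follows. Since $\widetilde Y,\widetilde X$ are simply connected with free $Q,P$-actions, Lemma~\ref{lem:MCB vs B} (or rather the proof of the biset-ses in~\eqref{eq:biset-ses}) gives $M(\wtf)\cong B(Y/H\leftarrow Y/H_f\to X/G)$, and the fibre over $f$ is a twisted copy of $B(f)$. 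The whole point is that the choice of splitting curves $\{\alpha_g\}$, $\{\beta_h\}$ lets us trivialise this twisting coherently: the congruence action $b\mapsto b^h$ of $h\in H_f$ on $B(f)$ constructed before the lemma is precisely the monodromy that must be absorbed into the semidirect-product structure.

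First I would set up the map $\Phi\colon B(f)\times M'(f)\to B(\overline f)$. An element $(b,c)$ with $c=f g\in M'(f)$ should map to the path in $X/G$ obtained by concatenating (a representative of) $b$ with $\alpha_{g}$, projected to $X/G$, reinterpreted as living over the appropriate point of $Y/H_f$; concretely, $(b,c)\mapsto$ the class of the lift of $b\cdot g$ through $\widetilde X\to X/G$, based suitably via $\beta$'s. I would check $\Phi$ is a bijection: right-freeness of both sides (the right $\wtG$-action is free because $G$ acts freely on $X$, cf.\ the remark after~\eqref{eq:B(ftilde)}) reduces this to counting left orbits, which on both sides are indexed by $B(f)$-orbits over the finite fibre of $\overline f$. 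Then I would verify equivariance: the identity $(q\cdot b\cdot p)^h=q^h\cdot b^h\cdot p^{h^f}$ stated just above the lemma, together with ${}^{cg}p={}^c({}^g p)$, is exactly what makes $\Phi((q,h)\cdot(b,c)\cdot(p,g))=(q,h)\cdot\Phi(b,c)\cdot(p,g)$ hold once one writes out the $Q\rtimes H_f$ and $P\rtimes G$ actions — this is a direct but somewhat fiddly path-concatenation computation, where one repeatedly uses $\alpha_{g_1g_2}\approx(\alpha_{g_1}g_2)\#\alpha_{g_2}$ and $\beta_{h_1h_2}\approx(\beta_{h_1}h_2)\#\beta_{h_2}$ to reassociate. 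I would present only the key cancellation (the $h$-action contributes $f\circ\beta_h^{-1}$ on the left and $\alpha_{h^f}$ on the right, and these telescope against the cocycle relations) and leave the rest as routine.

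For the final sentence, once $B(\overline f)\cong B(f)\rtimes M'(f)$ as $\wtH_f$-$\wtG$-bisets, the displayed formula $M(\wtf)\cong\wtH\otimes_{\wtH_f}\bigl(B(f)\rtimes M'(f)\bigr)$ follows immediately from the identity $M(\wtf)\cong \wtH\otimes_{\wtH_f}M'(\wtf)$ already used in the proof of Lemma~\ref{lem:MCB vs B} (there $M(f)\cong H\otimes_{H_f}M'(f)$; the same argument with $Q,P$ in place of trivial groups gives the tilded version), combined with $M'(\wtf)\cong B(\overline f)$. So no extra work is needed beyond transporting the earlier lemma.

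**Main obstacle.** The genuine difficulty is bookkeeping the basepoints and orientations so that the congruence action really is a \emph{left} action after the conversion ${}^h b\coloneqq b^{h^{-1}}$ and ${}^c p\coloneqq {}^g p$, and that $\Phi$ intertwines them without a spurious cocycle term. The subtle point is that $h^f\in G$ depends on $h\in H_f$ only through $M'(f)$, so the assignment $h\mapsto h^f$ is itself a crossed homomorphism rather than a homomorphism in general; one must check it is compatible with the $P$-conjugation twist, i.e.\ that $(h_1h_2)^f$ and $h_1^f h_2^f$ differ exactly by the discrepancy that the ${}^c p$-action is designed to absorb. Verifying this coherence — essentially that the two cocycles ($\alpha$ on the $X$-side, $\beta$ on the $Y$-side) cancel in $B(\overline f)$ — is where all the care goes; everything else is formal.
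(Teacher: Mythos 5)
Your proposal is correct and follows essentially the paper's own route: the paper's proof consists exactly of the observation that the sequence~\eqref{eq:biset-ses} splits via the section $h\cdot f\cdot g\mapsto(1,h)\cdot\wtf\cdot(1,g)$ determined by the curves $\alpha_g,\beta_h$, and your explicit trivialisation $\Phi$ together with the equivariance check is just this splitting spelled out at the level of paths. Two minor corrections: bijectivity of $\Phi$ needs no orbit counting or finiteness (both sides are right-principal, so a right-equivariant map between them is automatically bijective), and the ``main obstacle'' you fear is absent, since $h\mapsto h^f$ is an honest homomorphism (indeed $(h_1h_2)\cdot f=f\cdot h_1^f h_2^f$, and $h^f$ is well defined by freeness of the $G$-action), the twist ${}^c p$ merely recording conjugation past $\alpha_g$ inside $\wtG\cong P\rtimes G$.
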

\begin{proof}
  This is almost a tautology. The short exact
  sequence~\eqref{eq:biset-ses} splits, with section
  $h\cdot f\cdot g\mapsto(1,h)\cdot\wtf\cdot(1,g)$, and the actions of
  $\wtG,\wtH$ on $\widetilde X,\widetilde Y$ can be identified with
  concatenation of lifts of the paths $\alpha_g,\beta_h$.
\end{proof}

\subsection{Self-similarity of \boldmath $G$-spaces}
We return in more detail the situation in which $Y,X$ are universal
covers; we rename them to $\widetilde Y,\widetilde X$ so as to keep
$Y,X$ for the original space.

Consider two pointed spaces $(Y,\dagger)$ and $(X,*)$ with
$H\coloneqq\pi_1(Y,\dagger)$ and $G\coloneqq\pi_1(X,*)$, and let
$f\colon Y\to X$, be a continuous map. Recall that its biset is defined by
\begin{equation}\label{eq:B(f)}
  \subscript H{B(f)}_G\coloneqq\{\beta\colon[0,1]\to X\mid\beta(0)=f(\dagger),\beta(1)=*\}/{\approx},
\end{equation}
with the natural actions by left- and right-concatenation. We thus
naturally have $B(f)\subseteq\widetilde X$, with corresponding right
actions, and left action given by composing loops via $f$.

The map $f\colon Y\to X$ naturally lifts to a map
$\widetilde Y\to Y\to X$, and every choice of $\beta\in B(f)$ defines
uniquely, by the lifting property of coverings, a lift
$\wtf_\beta\colon\widetilde Y\to\widetilde X$ with the
property that $\widetilde\dagger\mapsto\beta$. Furthermore, we have
the natural identities
$\wtf_\beta({-}\cdot h)\cdot g=\wtf_{h\beta g}$, so that
the biset $B(f)$ as defined in~\eqref{eq:B(f)} is canonically
isomorphic to every biset $M(\wtf_\beta)$ as defined
in~\eqref{eq:B(ftilde)}, when an arbitrary $\beta\in B(f)$ is chosen,
and to $B(\wtf)$, when an arbitrary lift
$\wtf\colon\widetilde Y\to\widetilde X$ of $f$ is chosen.

If $f\colon Y\to X$ is a covering, then we may assume
$\widetilde Y=\widetilde X$; choosing $\wtf=\one$ gives the
simple description $B(\wtf)=\subscript H G_G$. Recall that the
biset of a correspondence $(f,i)\colon Y\leftarrow Z\to X$ is defined
as $B(f,i)=B(i)^\vee\otimes B(f)$. In the case of a covering
correspondence, in which $f$ is a covering, we therefore arrive at
$B(f,i)=B(i)^\vee\otimes_{\pi_1(Z)}G$. We shall give now a more
explicit description of this biset using covering spaces, just as we
had in~\cite{bartholdi-dudko:bc1}*{\eqref{bc1:eq:Bfi:lmm:FibrCorr}}
\[B(f,i)=\{(\delta\colon[0,1]\to Y,p\in Z)\mid\delta(0)=\dagger,\delta(1)=i(p),f(p)=*\}/{\approx}\]
and the special case, when $i\colon Z\to Y$ is injective, of
\[B(f,i)=\{\delta\colon[0,1]\to Y\mid\delta(0)=\dagger,f(i^{-1}(\delta(1)))=*\}/{\approx}.\]
Still assuming that $f\colon Z\to X$ is a covering map, define
\begin{equation}\label{eq:ZH}
  \widetilde Z_H\coloneqq\{(\delta,p)\in\widetilde Y\times Z\mid i(p)=\delta(0)\},
\end{equation}
the fibred product of $\widetilde Y$ with $Z$ above $Y$, see
Diagram~\eqref{prop:Gspaces:diag} left. (Note that $\widetilde Z_H$ is
\emph{not} the universal cover of $Z$.) In case $i\colon Z\to Y$ is
injective, we may write
\[\widetilde Z_H=\{\delta\in\widetilde Y\mid\delta(0)\in i(Z)\},
\]
so $\widetilde Z_H$ is the full preimage of $i(Z)$ under the
covering map $\widetilde Y\to Y$.
\begin{prop}\label{prop:Gspaces}
  If $(f,i)$ is a covering correspondence, the following map defines
  an isomorphism of $H$-spaces:
  \[\Phi\colon\subscript H{B(f,i)}_G \otimes \widetilde X^\vee\to(\widetilde Z_H)^\vee\text{ given by }\Phi((\delta,p)\otimes\alpha^\vee)=((\delta\#(i\circ\alpha^{-1}\lift f p))^{-1},p).\]
  For every $b=(\delta,p)\in B(f,i)$ the map
  \[\wtf^{-1}_b\colon \widetilde X^\vee\to(\widetilde Z_H)^\vee\text{ given by }\alpha^\vee\mapsto\Phi(b\otimes\alpha^\vee)
  \]
  is the unique lift of the inverse of the correspondence
  $Y\leftarrow Z\to X$ mapping $\widetilde *$ to $b$; we have the
  equivariance properties
  \begin{equation}\label{eq:propGspaces}
    \wtf^{-1}_{h b g}=h\cdot\wtf^{-1}_b(g\cdot{-}).
  \end{equation}
\end{prop}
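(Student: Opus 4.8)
The plan is to verify directly, from the explicit path formula, that $\Phi$ is a well-defined $H$-equivariant bijection, and then to recognise $\wtf^{-1}_b=\Phi(b\otimes{-})$ as a lift of a covering by its compatibility with the projections down to $X$; the asserted equivariance identity $\wtf^{-1}_{hbg}=h\cdot\wtf^{-1}_b(g\cdot{-})$ then drops out formally. I expect the only genuine work to be bookkeeping of path directions — in particular writing the right $G$-action on $B(f,i)$ concretely on path representatives and matching it with the $f$-lifting identities — together with the minor nuisance that $B(f,i)$, $\widetilde Y$ and $\widetilde Z_H$ are written with slightly different endpoint conventions (so one must be careful about when to reverse a path, and the symbol ``$p$'' in the second coordinate of $\Phi$ should be read as the endpoint of $\alpha^{-1}\lift f p$, which for non-loop $\alpha$ differs from $p$).

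First I would check that $\Phi$ is well defined, which requires two things: invariance under the homotopies defining $\widetilde X^\vee$ and $B(f,i)$, and balancedness over $G$. The first is routine homotopy lifting: homotopic paths (rel endpoints) in $X$ have homotopic $f$-lifts in $Z$, and post-composition with $i$, concatenation, and path-reversal all respect homotopy classes rel endpoints, so $\bigl(\delta\#(i\circ\alpha^{-1}\lift f p)\bigr)^{-1}$ together with the endpoint of $\alpha^{-1}\lift f p$ is a well-defined element of $\widetilde Z_H$. For balancedness one must unwind the right $G$-action on $B(f,i)=B(i)^\vee\otimes B(f)$: on a representative $(\delta,p)$ an element $g\in G=\pi_1(X,*)$, represented by a loop $\ell_g$ at $*$, acts by $(\delta,p)g=(\delta\#(i\circ\ell_g\lift f p),p')$ with $p'$ the endpoint of $\ell_g\lift f p$; on the other side $g\cdot\alpha^\vee$ amounts to prepending $\ell_g$ to $\alpha^{-1}$. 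Since $f$-lifting is compatible with concatenation, $(\ell_g\#\alpha^{-1})\lift f p=(\ell_g\lift f p)\#(\alpha^{-1}\lift f{p'})$, and the two resulting expressions for $\Phi$ coincide. This is the one place where a short computation is genuinely needed.

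Next I would check $H$-equivariance: the left $H$-action on $B(f,i)$ prepends a loop at $\dagger$ to $\delta$, which after the reversal in the formula becomes an \emph{appended} loop on the $\widetilde Y$-coordinate, and this is precisely the left $H$-action on $(\widetilde Z_H)^\vee$ induced by the deck right-action of $H$ on $\widetilde Y$. Then I would exhibit the inverse: given $(\gamma,q)\in\widetilde Z_H$, choose any path $\alpha$ from $f(q)$ to $*$ in $X$ (restricting, if needed, to the component of the basepoints, which one may assume contains everything in sight), lift $\alpha$ through $f$ starting at $q$ to reach its endpoint $p$ with $f(p)=*$, and set $\delta\coloneqq\gamma^{-1}\#i(\alpha\lift f q)$; one checks that $(\delta,p)\in B(f,i)$, that $\Phi\bigl((\delta,p)\otimes\alpha^\vee\bigr)=(\gamma,q)^\vee$, and that the class $(\delta,p)\otimes\alpha^\vee$ is independent of $\alpha$ — replacing $\alpha$ by $\alpha\#\ell_g$ is absorbed by the tensor relation via the $G$-action identity of the previous paragraph. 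A short verification that this is a two-sided inverse then shows $\Phi$ is an isomorphism of left $H$-sets.

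Finally, for the lift statement: from the formula $\wtf^{-1}_b=\Phi(b\otimes{-})$ is continuous (path-lifting depends continuously on the path) and intertwines the covering projection $\widetilde X^\vee\to X$, $\alpha^\vee\mapsto\alpha(0)$, with the projection $(\widetilde Z_H)^\vee\to Z\to X$, since the $Z$-coordinate $q$ of $\Phi(b\otimes\alpha^\vee)$ satisfies $f(q)=\alpha(0)$. As $\widetilde X^\vee$ is connected and simply connected, the lift of $\widetilde X^\vee\to X$ through the covering $(\widetilde Z_H)^\vee\to X$ is unique once its value at $\widetilde *$ is prescribed, and $\wtf^{-1}_b(\widetilde *^\vee)=\Phi(b\otimes\widetilde *^\vee)=b$ (take $\alpha$ constant, so that $\alpha^{-1}\lift f p$ is constant at $p$), which is exactly the value realising the inverse correspondence $Y\leftarrow Z\to X$; hence $\wtf^{-1}_b$ is that lift. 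The equivariance $\wtf^{-1}_{hbg}=h\cdot\wtf^{-1}_b(g\cdot{-})$ is then immediate: by $H$-equivariance $\Phi(hbg\otimes\alpha^\vee)=h\cdot\Phi(bg\otimes\alpha^\vee)$, and by $G$-balancedness $\Phi(bg\otimes\alpha^\vee)=\Phi(b\otimes g\alpha^\vee)$, so the two sides agree.
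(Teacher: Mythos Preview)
Your proof is correct and follows essentially the same path as the paper's. The only differences are cosmetic: you spell out well-definedness and $G$-balancedness (which the paper leaves implicit), and you prove bijectivity by exhibiting an explicit two-sided inverse, whereas the paper shows surjectivity by the same construction and then checks injectivity separately using that two elements of $\widetilde X$ with the same starting point differ by an element of $G$. Your observation that the second coordinate in the formula for $\Phi$ should be the terminal point of $\alpha^{-1}\lift f p$ rather than $p$ itself is also correct and worth noting.
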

The inverses and contragredients may seem unnatural in the statements
above; but are necessary for the actions to be on the right sides, and
are justified by the fact that we construct a lift of the
\emph{inverse} of the correspondence, rather than the correspondence
itself:
\begin{equation}\label{prop:Gspaces:diag}
  \begin{tikzcd}[column sep=huge,row sep=tiny]
    & (\widetilde Z_H)^\vee\arrow[dl,"\widetilde i"{above}]\arrow[dd]\\
    \widetilde Y^\vee\arrow[dd] & & \widetilde X^\vee\arrow[dd]\arrow[ul,"\wtf_b^{-1}"{above}]\\[5ex]
    & Z\arrow[dl,"i"{above}]\arrow[dr,"f"]\\
    Y & & X
  \end{tikzcd}
\end{equation}

\begin{proof}
  It is obvious that $\Phi$ is $H$-equivariant, and it is surjective:
  given $(\delta,p)\in\widetilde Z_H$, choose a path
  $\alpha\in\widetilde X$ with $f(p)=\alpha(0)$, and write
  $\delta=(\alpha^{-1}\lift f p)^{-1}\#\alpha^{-1}\lift f p\#\delta$,
  expressing in this manner
  $(\delta,p)^\vee=\Phi(((\alpha^{-1}\lift f
  p\#\delta)^{-1},\alpha^{-1}\lift f p(1))\otimes\alpha^\vee)$.

  If
  $\Phi((\delta,p)\otimes\alpha^\vee)=\Phi((\delta',p')\otimes(\alpha')^\vee)$,
  then $\alpha$ and $\alpha'$ start at the same point, so
  $\alpha=\alpha' g$ for some $g\in G$, and we have then
  $(\delta,p)=(\delta',p')g^{-1}$ so
  $(\delta,p)\otimes\alpha^\vee=(\delta',p')\otimes(\alpha')^\vee$
  and $\Phi$ is injective.

  It is easy to see that $\wtf^{-1}_b$ is a lift of
  $f^{-1}$. Conversely, every lift $\wtf^{-1}$ of $f^{-1}$
  maps $\widetilde *$ to an element $b\in B(f,i)$ because
  $\wtf^{-1}(\widetilde *^\vee)$ ends at an $f$-preimage of
  $*$ by construction; and then
  $\wtf^{-1}=\wtf^{-1}_b$ by unicity of
  lifts. Finally, equivariance follows from
  $\wtf^{-1}_b(g \alpha^\vee)=\Phi(b\otimes
  g\alpha^\vee)=\Phi(b g\otimes\alpha^\vee)=\wtf^{-1}_{b
    g}(\alpha^\vee)$.
\end{proof}

\subsection{Planar discontinuous groups}\label{ss:PlanDiscGroups}
A \emph{planar discontinuous group} $\widetilde X\looparrowleft G$ is
a group acting properly discontinuously on a plane, which will be
denoted by $\widetilde X$: for every bounded set $V$ the set
$\{g\in G\mid V g\cap V\neq\emptyset\}$ is finite.

Let $X\coloneqq(S^2,A,\ord)$ be an orbifold with non-negative Euler
characteristic, consider $*\in S^2\setminus A$ a base-point, and write
$G\coloneqq\pi_1(X,*)$. Then the universal cover $\widetilde X_G$ of
$X$ is a plane endowed with a properly discontinuous action of $G$. We
denote by $\pi\colon\widetilde X\to X$ the covering map.

By the classification of surfaces, there are only two cases to
consider: $X=\C$ with $G$ a lattice in the affine group
$\{z\mapsto a z+b\mid a,b\in\C,|a|=1\}$, and $X=\mathbb H$ the upper
half plane, with $G$ a lattice in $\operatorname{PSL}_2(\R)$.

Consider another orbisphere $Y\coloneqq(S^2,C,\ord)$ and a branched
covering $f\colon Y\to X$, and fix basepoints $\dagger\in Y$ and
$*\in X$ with corresponding fundamental groups $H=\pi_1(Y,\dagger)$
and $G=\pi_1(X,*)$. Let $\subscript H{B(f)}_G$ be the biset of $f$. As
usual, we view $f$ as a correspondence $Y\leftarrow Z\to X$ with
$Z=(S^2,f^{-1}(A),\ord)$ and $i$ a homeomorphism $S^2\to S^2$ mapping
a subset of $f^{-1}(A)$ to $C$. The fibred product $\widetilde Z_H$
constructed in~\eqref{eq:ZH} is naturally a subset of the plane
$\widetilde Y$, with orbispace points and punctures at all $H$-orbits
of $f^{-1}(A)\setminus C$.  We need the following classical result.
\begin{thm}[Baer~\cite{zieschang-vogt-coldewey:spdg}*{Theorem~5.14.1}]\label{thm:Baer}
  Every orientation-preserving homeomorphism of a plane commuting with
  a properly discontinuous group action is isotopic to the identity
  along an isotopy commuting with the action.\qed
\end{thm}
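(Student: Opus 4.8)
The plan is to descend to the quotient orbifold, observe that \emph{commuting} with the full deck group (rather than merely normalising it) forces the descended homeomorphism to be homotopically trivial, invoke the classical rigidity fact that a homotopically trivial surface homeomorphism is isotopic to the identity, and then lift the resulting isotopy equivariantly. In detail, write $\widetilde X$ for the plane, $G$ for the properly discontinuous group, $p\colon\widetilde X\to\widetilde X/G=:X$ for the quotient (a surface, here a sphere orbifold with cone points and punctures), and let $\phi\colon\widetilde X\selfmap$ be orientation-preserving with $\phi g=g\phi$ for all $g\in G$. Then $\phi$ descends to $\bar\phi\colon X\selfmap$. Fixing $\widetilde *\in\widetilde X$ with image $*=p(\widetilde *)$ and a path $\widetilde\delta$ from $\phi(\widetilde *)$ to $\widetilde *$ projecting to $\delta$ in $X$, the relation $\phi g=g\phi$ says precisely that the automorphism $\delta_*\circ\bar\phi_*$ of $\pi_1(X,*)\cong G$ is the identity; this is the one place where ``commuting'' is genuinely stronger than ``normalising'', the latter yielding only an arbitrary automorphism.

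First I would establish the surface-topological core: a homeomorphism $\bar\phi$ of $X$ inducing the identity on $\pi_1(X,*)$, rel the chosen basepoint-connecting path, is isotopic to $\one$ rel $*$. This is the Alexander method. One chooses a finite system $\Gamma$ of disjoint simple closed curves and simple arcs joining marked points that fills $X$, so that $X\setminus\Gamma$ is a union of disks and once-marked disks. Since $\bar\phi$ acts trivially on $\pi_1$ it sends each closed curve of $\Gamma$ to a freely homotopic, hence isotopic, curve and cannot permute the cone points or punctures (distinct marked points have non-conjugate peripheral loops), so after an isotopy $\bar\phi$ preserves $\Gamma$ and, refining, fixes $\Gamma$ pointwise; the induced self-homeomorphisms of the complementary disks are then trivialised by Alexander's lemma, and the potential boundary Dehn twists along curves of $\Gamma$ are ruled out because their effect on $\pi_1(X,*)$ is nontrivial whenever the twist is, contrary to hypothesis. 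For the orbifold points one either works in the punctured surface and checks the isotopy extends over small disk neighbourhoods, or quotes the orbifold version of this statement from \cite{zieschang-vogt-coldewey:spdg}; for $\#A\ge3$ and $\chi(X)\ge0$ everything is classical.

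Finally I would lift. Choose the isotopy $(\bar\phi_t)_{t\in[0,1]}$ with $\bar\phi_0=\bar\phi$, $\bar\phi_1=\one_X$, rel $*$, and lift it to the unique path $(\phi_t)$ of homeomorphisms of $\widetilde X$ with $\phi_0=\phi$. For each $g\in G$, both $\phi_t g$ and $g\phi_t$ are lifts of $t\mapsto\bar\phi_t$ (a deck transformation descends to $\one_X$), and they agree at $t=0$ since $\phi_0 g=\phi g=g\phi=g\phi_0$; by uniqueness of lifting they agree for all $t$, so every $\phi_t$ commutes with $G$ and $(\phi_t)$ is an equivariant isotopy. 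Its endpoint $\phi_1$ is a lift of $\one_X$, hence lies in $G$, and it commutes with $G$, hence lies in $Z(G)$. Since $X$ is a sphere orbifold with $\#A\ge3$ and non-negative Euler characteristic, $G$ is centreless (for $\chi<0$ it is word-hyperbolic with trivial centre; in the Euclidean cases $G\cong\Z^2\rtimes C_k$ with $C_k$ acting on $\Z^2$ without nonzero fixed vectors, again giving $Z(G)=1$), so $\phi_1=\one$ and $(\phi_t)$ witnesses $\phi\approx\one$ equivariantly. In the fully general statement, where $Z(G)$ may be nontrivial, one adds the observation that in the Euclidean case every deck transformation admits an equivariant straight-line isotopy to the identity.

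The main obstacle is the middle step, the classical rigidity of homotopically trivial surface homeomorphisms, which carries all the genuine content; the reduction to the quotient and the equivariant lifting at either end are bookkeeping, the only delicate point being to arrange that the lifted isotopy terminates exactly at $\one$ rather than at a nontrivial deck transformation --- which is exactly what the commuting hypothesis and the centrelessness of $G$ ensure.
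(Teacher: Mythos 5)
The paper does not actually prove Theorem~\ref{thm:Baer}: it is quoted as Baer's theorem from \cite{zieschang-vogt-coldewey:spdg}*{Theorem~5.14.1} and stated with an end-of-proof mark, so there is no internal argument to compare yours against. Your sketch is the standard modern route and is correct in outline for the generality in which the theorem is used here (quotient a finite-type orbisphere with $\#A\ge3$, $G$ orientation-preserving, basepoint in $S^2\setminus A$): descend, note that \emph{commuting} with the deck group kills the induced automorphism of $\pi_1^{\mathrm{orb}}\cong G$ rel the path $\delta$, quote quotient-level rigidity, lift the isotopy by unique path lifting, and dispose of the terminal deck transformation.

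Two caveats, and one simplification. First, the statement as quoted is more general than your argument covers: for an arbitrary properly discontinuous action the quotient may be of infinite type, or non-orientable (the group is not assumed orientation-preserving), in which case a finite filling system for the Alexander method is not available and the centre computation you use at the end does not apply; the proof in \cite{zieschang-vogt-coldewey:spdg} works directly with the planar net/fundamental domain precisely to handle that generality. For the applications in this paper (Corollary~\ref{cor:OrbiIsComplInvar} and Proposition~\ref{prop:EndOfG2}) your restricted setting suffices. Second, beware of circularity in the middle step: in \cite{zieschang-vogt-coldewey:spdg} the quotient-level statement (trivial action on the orbifold fundamental group implies isotopic to the identity) is itself deduced from the planar theorem, so you must support that step by an independent argument -- Epstein's theorem, or the Alexander method as in \cite{farb-margalit:mcg}, with the routine extension to cone points and with the remark that a Dehn twist about an orbifold-essential curve acts nontrivially on $G$. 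Finally, your endpoint argument can be streamlined: first slide $\bar\phi(*)$ back to $*$ along $\delta$, then take the Alexander isotopy rel $*$; the lifted isotopy then moves $\phi(\widetilde*)$ along $\widetilde\delta$ and afterwards keeps it in the discrete fibre over $*$, so the terminal deck transformation fixes $\widetilde*$, a point with trivial stabiliser, and is therefore the identity. This removes both the appeal to $Z(G)=1$ and the ad hoc straight-line patch for the Euclidean case in your last paragraph.
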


\noindent Let us
reprove~\cite{bartholdi-dudko:bc2}*{Theorem~\ref{bc2:thm:dehn-nielsen-baer++}}
using our language of group actions:
\begin{cor}\label{cor:OrbiIsComplInvar}
  Let two orbisphere maps $f,g\colon (S^2,C,\ord)\to (S^2,A,\ord)$
  have isomorphic orbisphere bisets. Then $f\approx_C g$.

  In other words, the orbisphere biset of
  $f\colon(S^2,C,\ord)\to(S^2,A,\ord)$ is a complete invariant of $f$
  up to isotopy rel $C$.
\end{cor}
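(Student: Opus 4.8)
The plan is to turn an isomorphism of orbisphere bisets $B(f)\cong B(g)$ into an orientation-preserving, $H$-equivariant homeomorphism of the plane $\widetilde Y$, and then to straighten that homeomorphism to the identity with Baer's theorem (Theorem~\ref{thm:Baer}); this is the group-action rendering of the fact that the wreath recursion together with the peripheral structure determines a branched covering up to isotopy.

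Concretely, I would fix an isomorphism $\Phi\colon\subscript H{B(f)}_G\to\subscript H{B(g)}_G$ of orbisphere bisets, so $\Phi$ is a bijection intertwining both group actions and carrying each peripheral conjugacy class of $B(f)$ onto the corresponding one of $B(g)$. Viewing $f$ and $g$ as covering correspondences $Y\leftarrow Z_f\to X$ and $Y\leftarrow Z_g\to X$ as in~\S\ref{ss:PlanDiscGroups}, pick any $b\in B(f)$ and set $b'\coloneqq\Phi(b)$. Proposition~\ref{prop:Gspaces} then provides, from $b$ and from $b'$, lifts $\wtf^{-1}_b$ and $\wtg^{-1}_{b'}$ of the (inverses of the) two correspondences to the fibred products $\widetilde{Z_f}_H,\widetilde{Z_g}_H\subseteq\widetilde Y$ of~\eqref{eq:ZH}, each sending $\widetilde *$ to the chosen basepoint and each equivariant in the sense of~\eqref{eq:propGspaces}; here I use that, $i$ being a homeomorphism of $S^2$, the first projection realises $\widetilde{Z_f}_H$ and $\widetilde{Z_g}_H$ as copies of $\widetilde Y$ decorated with extra marked points over $f^{-1}(A)\setminus C$ respectively $g^{-1}(A)\setminus C$, and that $\Phi$, matching the peripheral data, matches these decorations. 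Assembling the two lifts, I obtain a homeomorphism $\widetilde h\colon\widetilde Y\selfmap$ taking marked points to marked points and intertwining $\wtf^{-1}_b$ with $\wtg^{-1}_{b'}$.

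Next I would verify that $\widetilde h$ is $H$-equivariant: by~\eqref{eq:propGspaces} the left $H$-action on $B(f)$ corresponds, under $\wtf^{-1}_b$, to the deck action of $H=\pi_1(Y,\dagger)$ on $\widetilde Y$, and likewise for $g$, so the fact that $\Phi$ intertwines the two $H$-actions (and the two $G$-actions) forces $\widetilde h$ to commute with the $H$-action. Consequently $\widetilde h$ descends to a homeomorphism $h\colon Y\selfmap$, and — chasing the covered correspondences — the identity $\wtg^{-1}_{b'}=\widetilde h\circ\wtf^{-1}_b$ descends to $g\circ h=f$. Moreover $\widetilde h$ permutes the lifts of the points of $C$, which are precisely the fixed points of the torsion elements of $H$, and since $\Phi$ preserves the individual peripheral classes it preserves the $H$-orbit of lifts of each $c\in C$; hence $h$ fixes $C$ pointwise. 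Finally $\widetilde h$ is an orientation-preserving homeomorphism of the plane $\widetilde Y$ commuting with the properly discontinuous action of $H$, so by Theorem~\ref{thm:Baer} it is isotopic to $\one_{\widetilde Y}$ along an $H$-equivariant isotopy; this descends to an isotopy $h\approx_C\one$. Hence $g\circ h=f$ with $h\approx_C\one$, and $g\approx_C f$ by Lemma~\ref{lem:factorization}.

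I expect the crux to be the passage from the algebraic $\Phi$ to the \emph{equivariant} homeomorphism $\widetilde h$ — organising the bookkeeping so that the uniqueness in Proposition~\ref{prop:Gspaces} and its equivariance identities~\eqref{eq:propGspaces} combine to convert $\Phi$'s compatibility with the two group actions into honest equivariance of $\widetilde h$, while simultaneously keeping track of the decoration of $\widetilde Y$ by the extra branch points. The rigidity afforded by non-cyclicity ($\#A,\#C\ge3$, cf.\ Proposition~\ref{mainprop:NoGhostAut}) is what guarantees there is essentially no freedom in $\widetilde h$, so that the construction is forced rather than merely possible.
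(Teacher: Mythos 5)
Your overall strategy (Proposition~\ref{prop:Gspaces} plus Baer's Theorem~\ref{thm:Baer}, then Lemma~\ref{lem:factorization}) is the right one, but the step you yourself flag as the crux --- ``assembling the two lifts, I obtain a homeomorphism $\widetilde h\colon\widetilde Y\selfmap$'' --- is asserted, not proved, and as you have set things up it does not follow formally. The maps $\wtf^{-1}_b$ and $\wtg^{-1}_{b'}$ supplied by Proposition~\ref{prop:Gspaces} are coverings from $\widetilde X^\vee$ onto $(\widetilde Z_{f,H})^\vee$ and $(\widetilde Z_{g,H})^\vee$; they are \emph{not} invertible in general, since $\widetilde Z_H$ of~\eqref{eq:ZH} carries orbifold points and punctures along the $H$-orbits of $f^{-1}(A)\setminus C$, so each lift is a universal orbifold covering with nontrivial deck group. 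Hence there is no ``$\wtf_b$'' to compose with: to define $\widetilde h$ you must show that $\wtg^{-1}_{b'}$ is constant on the fibres of $\wtf^{-1}_b$, i.e.\ that the two deck subgroups of $G$ (the images under $f_*$, resp.\ $g_*$, of the kernels killed by forgetting $f^{-1}(A)\setminus C$, resp.\ $g^{-1}(A)\setminus C$, normalized via $b$ and $b'=\Phi(b)$) coincide. Moreover the two targets are a priori \emph{different} decorated subsets of $\widetilde Y$, and your remark that ``$\Phi$, matching the peripheral data, matches these decorations'' is exactly the statement that needs proof --- identifying those decorations is essentially the homeomorphism you are trying to build, so as written the argument is circular at its central point. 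Making it work requires extracting from the orbisphere-biset axioms (compatibility of $\Phi$ with peripheral classes and local degrees) that the two deck groups agree; that is real content, not bookkeeping.

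The paper's proof avoids this entirely by a different normalization and by composing on the other side: represent $f$ and $g$ by covering pairs $(f,i)$ and $(g,i)$ with the \emph{same} $Z=(S^2,P,\ord)$ and the \emph{same} $i$, and use the isomorphism to identify $B(f,i)=B(g,i)$. Then both $\wtf^{-1}_b$ and $\wtg^{-1}_b$ are universal coverings of one and the same orbifold $(\widetilde Z_H)^\vee$ from the simply connected $\widetilde X^\vee$, so lifting one through the other gives, with no descent argument, a self-homeomorphism of $\widetilde X^\vee$ fixing $\widetilde *$ and commuting with $G$ by~\eqref{eq:propGspaces}; Baer's theorem is then applied on the target side, the equivariant isotopy descends to $X$ because the fixed-point sets of torsion elements of $G$ are discrete, and one obtains $h_1\circ g=f$ with $h_t$ an isotopy rel the orbifold data of $X$, whence $f\approx_C g$. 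If you want to keep your source-side ($H$-equivariant, precomposition) formulation, you should either adopt the same normalization of the covering pairs before invoking Proposition~\ref{prop:Gspaces}, or supply the deck-group comparison sketched above; without one of these, the construction of $\widetilde h$ is a genuine gap.
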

\begin{proof}
  Let us write $X=(S^2,A,\ord)$ and $Y=(S^2,C,\ord)$ and
  $G=\pi_1(X,*)$ and $Y=\pi_1(Y,\dagger)$. We may represent $f,g$
  respectively by covering pairs $(f,i)$ and $(g,i)$, with coverings
  $f,g\colon(S^2,P,\ord)\to(S^2,A,\ord)$ and
  $i\colon(S^2,P,\ord)\to(S^2,C,\ord)$. Let us furthermore write
  $Z=(S^2,P,\ord)$ and $\widetilde Z_H$ its fibred product with
  $\widetilde Y$. Identifying $B(f,i)$ and $B(g,i)$, choose
  $b\in B(f,i)=B(g,i)$, and let
  \[\wtf^{-1}_b,\wtg^{-1}_b\colon \widetilde X^\vee\to(\widetilde Z_H)^\vee
  \]
  be the corresponding lifts as in Proposition~\ref{prop:Gspaces}.

  Since $i$ is injective, the map $(\widetilde Z_H)^\vee\to Z$ is a
  covering, so $\wtf^{-1}_b$ and $\wtg^{-1}_b$ are
  coverings. We may therefore consider their quotient
  $\wtf_b\circ \wtg^{-1}_b$, which is a well-defined
  map $(\widetilde X)^\vee\selfmap$, and is normalized to preserve the
  base point $\widetilde *$. By~\eqref{eq:propGspaces} it is a
  homeomorphism commuting with the action of $G$.

  By Theorem~\ref{thm:Baer} there is an isotopy
  $(\widetilde h_{b,t}^{-1})_{t\in[0,1]}$ of maps
  satisfying~\eqref{eq:propGspaces} between $\one$ and
  $\wtf_b\circ \wtg^{-1}_b$.

  Since the set of fixed points of $G$ is discrete,
  $\widetilde h_{b,t}^{-1}$ preserves all fixed points of $G$ and
  therefore projects to an isotopy $(h_t)_{t\in[0,1]}$ in $X$. We have
  $h_0=\one$ and $h_1\circ g=f$, so the maps $f$ and $g$ are isotopic
  rel $B$.
\end{proof}

\section{Geometric maps}
Let $M$ be a matrix with integer entries and with $\det(M)>1$. We call
$M$ \emph{exceptional} if one of the eigenvalues of $M$ lies in
$(-1,1)$; so the dynamical system $M\colon\R^2\selfmap$ has one
attracting and one repelling direction.

For $r\in\R^2$, denote by $\langle \Z^2,-z+r\rangle$ the group of
affine transformations of $\R^2$ generated by translations
$z\mapsto z+v$ with $v\in \Z^2$ and the involution $z\mapsto
-z+r$. The quotient $\R^2/\langle\Z^2,-z+r\rangle$ is a topological
sphere, with cone singularities of angle $\pi$ at the four images of
$\tfrac12(r+\Z^2)$.

We call a branched covering $f\colon S^2\selfmap$ a \emph{geometric
  exceptional} map if $f\colon S^2\selfmap$ is conjugate to a quotient
of an exceptional linear map $M\colon\R^2\selfmap$ under the action of
$\langle \Z^2,-z+r\rangle$ for some $r\in\R^2$; in particular,
$(\one-M)r\in\Z^2$.  A Thurston map $f\colon(S^2,A,\ord)\selfmap$ is
called \emph{geometric} if the underlying branched covering
$f\colon S^2\selfmap$ is either B\"{o}ttcher expanding
(see~\cite{bartholdi-dudko:bc4}*{Definition~\ref{bc4:defn:metricallyexpanding}};
there exists a metric on $S^2$ that is expanded everywhere except at
critical periodic cycles), a geometric exceptional map, or a
pseudo-Anosov homeomorphism.  We refer to the first two types as
\emph{non-invertible geometric maps}.

We may consider more generally affine maps $z\mapsto M z+q$ on $\R^2$,
and then their quotient by the group $\langle \Z^2,-z\rangle$; the map
$z\mapsto M z$ on $\R^2/\langle\Z^2,-z+r\rangle$ is converted to that
form by setting $q=(M-\one)r$. Conversely, if $1$ is not an eigenvalue
of $M$, then we can always convert an affine map into a linear one, at
the cost of replacing the reflection $-z$ by $-z+r$ in the acting
group.

\begin{lem}\label{lem:IrratOfDynam}
  Let $M\colon\R^2\selfmap$ be exceptional. Then for every bounded set
  $D\subset\R^2$ containing $(0,0)$ there is an $n>0$ such that for
  every $m\ge n$ we have $M^{-m}(D)\cap\Z^2=\{(0,0)\}$. Moreover,
  $n=n(D)$ can be taken with
  $n(D)\preccurlyeq \log \operatorname{diam}(D)$.
\end{lem}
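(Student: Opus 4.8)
The plan is to diagonalize $M$ over $\R$ and combine the expansion/contraction of $M^{-m}$ with an arithmetic lower bound coming from the norm form of the quadratic field generated by the eigenvalues. Since $M$ is exceptional, one eigenvalue $\lambda_2$ lies in $(-1,1)$; as a root of the integer characteristic polynomial $x^2-\operatorname{tr}(M)x+\det(M)$ it is irrational (a rational eigenvalue of an integer matrix is an integer, which here would force $\lambda_2=0$, contradicting $\det M>1$), so the conjugate root $\lambda_1$ satisfies $|\lambda_1|=|\det M|/|\lambda_2|>1$. Thus both eigenvalues are real and distinct, $M$ is $\R$-diagonalizable, and $\Delta:=\lambda_1\lambda_2=\det M\ge 2$. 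Let $K=\Q(\lambda_1)$ and let $\sigma$ be its nontrivial automorphism, exchanging $\lambda_1$ and $\lambda_2$.

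First I would fix a nonzero left eigenvector $w\in\Z[\lambda_1]^2$ with $wM=\lambda_1w$ (e.g.\ the nonzero row of cofactors of $M-\lambda_1 I$; it is nonzero because $\lambda_1$ is not an integer), and set $\bar w:=\sigma(w)$, so that $\bar wM=\lambda_2\bar w$. For $p\in\Z^2$ write $\xi(p):=w\cdot p\in\Z[\lambda_1]$ and $\xi'(p):=\bar w\cdot p=\sigma(\xi(p))$. Two facts drive the argument. First, $\ker\xi$ is exactly the $\lambda_2$-eigenline $\R v_2$ (from $(\lambda_1-\lambda_2)\,w\cdot v_2=0$), and since $\lambda_2\notin\Q$ the vector $v_2$ has irrational slope, so $\ker\xi\cap\Z^2=\{(0,0)\}$. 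Second, if $\xi(p)\neq0$ then $|\xi(p)\,\xi'(p)|=|\operatorname{Nm}_{K/\Q}(\xi(p))|\ge1$, because $\xi(p)$ is a nonzero algebraic integer and its norm is a nonzero rational integer.

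Then I would run the main estimate. Suppose $p\in\Z^2\cap M^{-m}(D)$ is nonzero; write $p=M^{-m}q$ with $q\in D$, so $\|q\|\le\operatorname{diam}(D)$ since $(0,0)\in D$. From $wM^{-m}=\lambda_1^{-m}w$ and $\bar wM^{-m}=\lambda_2^{-m}\bar w$ we get $|\xi(p)|\le|\lambda_1|^{-m}\|w\|\operatorname{diam}(D)$ and $|\xi'(p)|\le|\lambda_2|^{-m}\|\bar w\|\operatorname{diam}(D)$. If $\xi(p)\neq0$, the second fact gives $|\xi(p)|\ge|\xi'(p)|^{-1}\ge|\lambda_2|^{m}/(\|\bar w\|\operatorname{diam}(D))$, and multiplying the two bounds on $|\xi(p)|$ yields $\Delta^{m}=|\lambda_1\lambda_2|^{m}\le\|w\|\,\|\bar w\|\operatorname{diam}(D)^2$; since $\Delta\ge2$ this is impossible once $m>\bigl(2\log\operatorname{diam}(D)+\log(\|w\|\|\bar w\|)\bigr)/\log\Delta$. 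Hence for such $m$ we must have $\xi(p)=0$, and then the first fact forces $p=(0,0)$, a contradiction. Taking $n=n(D)$ to be this threshold (and, say, an absolute constant when $\operatorname{diam}(D)$ is bounded) gives $M^{-m}(D)\cap\Z^2=\{(0,0)\}$ for all $m\ge n(D)$, with $n(D)=O(\log\operatorname{diam}(D))$ as required.

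The bookkeeping with the operator norm of $M^{-m}$ on the eigenlines is routine; the one step I expect to be the crux is the arithmetic input in the second fact, namely passing to the Galois conjugate so that $\xi(p)\xi'(p)$ is a nonzero rational integer. This is precisely what turns "$M^{-m}(D)$ is extremely thin in the expanding eigendirection" into a genuine obstruction to containing lattice points, the irrationality of the contracting eigendirection taking care of the remaining (degenerate) line.
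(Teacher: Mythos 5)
Your argument is correct: the irrationality of $\lambda_2$ (hence of $\lambda_1$), the identification of $\ker\xi$ with the contracting eigenline and its avoidance of nonzero lattice points, the Galois-conjugate linear form $\xi'$, the norm bound $|\xi(p)\xi'(p)|=|\operatorname{Nm}_{K/\Q}(\xi(p))|\ge1$ for $\xi(p)\neq0$, and the scaling $\xi\circ M^{-m}=\lambda_1^{-m}\xi$, $\xi'\circ M^{-m}=\lambda_2^{-m}\xi'$ all hold, and they combine exactly as you say to give $(\det M)^m\le\|w\|\,\|\bar w\|\operatorname{diam}(D)^2$ whenever $M^{-m}(D)$ contains a nonzero lattice point, which yields the threshold $n(D)=O(\log\operatorname{diam}(D))$.

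The skeleton is the same as in the paper --- both proofs exploit that the product of the two eigencoordinates of a point of $M^{-m}(D)$ is at most a constant times $\operatorname{diam}(D)^2(\det M)^{-m}$, while for a nonzero lattice point this product is bounded below by a positive constant depending only on $M$ --- but the arithmetic input is packaged differently. The paper reduces $D$ to a parallelogram with sides along the eigendirections, tracks its area under $M^{-1}$, and rules out lattice points in a thin parallelogram by quoting that the slope $\mu_1$ of the expanding eigenline is a quadratic irrational and hence badly approximable, $|\mu_1-\tfrac pq|>\tfrac C{q^2}$; the constant $C$ is not made explicit. You instead work directly with the two Galois-conjugate linear forms $\xi,\xi'$ built from a left eigenvector over $\Z[\lambda_1]$ and derive the lower bound from integrality of the field norm. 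Your route is self-contained (it proves, rather than cites, the relevant Diophantine fact, which classically is established by exactly this norm-form computation), produces explicit constants $\|w\|\,\|\bar w\|$, and dispenses with the reduction to aligned parallelograms; the paper's version is more geometric and makes the ``thin parallelogram'' picture, which is reused informally elsewhere, visible. Either proof supports the corollary that follows.
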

\begin{proof}
  Let $\lambda_1, \lambda_2$ be the eigenvalues of $M$, and let $e_1$
  and $e_2$ be unit-normed eigenvectors associated with $\lambda_1$
  and $\lambda_2$.  It is sufficient to assume that $D$ is a
  parallelogram centered at $(0,0)$ with sides parallel to $e_1$ and
  $e_2$:
  \begin{equation}\label{eq:prf:lem:IrratOfDynam}
    D =\{v\in \R^2\mid v= t_1e_1+t_2v_2\text{ with }|t_1|\le x\text{ and }|t_2|\le y\}.
  \end{equation}
  In particular, the area of $D$ is comparable to $x y$. Then
  $M^{-n}(D)$ is again a parallelogram centered at $(0,0)$ with sides
  parallel to $e_1$ and $e_2$.

  We claim that there is $\delta>0$ such that if $D$ is a
  parallelogram of the form~\eqref{eq:prf:lem:IrratOfDynam} with
  $\operatorname{area}(D)<\delta$, then $D\cap \Z^2=\{(0,0)\}$. This
  will prove the lemma because
  $\operatorname{area}(M^{-n}(D))=\operatorname{area}(D)/(\det M)^n$
  and $\log\operatorname{diam}(D)\asymp\log\operatorname{area}(D)$.

  Without loss of generality we assume $x>y$, so $D$ is close to
  $\R e_1$. Let $\mu_1$ be the slope of $\R e_1 $. Since $M$ is
  exceptional, the numbers $\lambda_1,\lambda_2,\mu_1$ are quadratic
  irrational, so they are not well approximated by rational numbers:
  there is a positive constant $C$ such that
  $|\mu_1-\frac p q |> \frac{C}{q^2}$ for all $\frac p q\in \Q$.

  Suppose that $D$ contains a non-zero integer point $w=(p,q)$; so
  $x\ge|q|$. Also, $w$ is close to $\R e_1$, and in particular
  $q\neq 0$ if $\delta$ is sufficiently small; we may suppose
  $q>0$. It also follows that $\mu_1$ is close to $\frac p q$.  The
  distance from $w$ to $\R e_1$ is
  \[d(w,\R e_1)\le y\asymp \frac{\operatorname{area}(D)}{x}.
  \]
  On the other hand,
  \[d(w,\R e_1)=|w| \sin \sphericalangle(w,e_1) \succcurlyeq q\;\Big| \mu_1- \frac p q\Big|.\]
  Combining, we get
  \[\Big|\mu_1 - \frac p q\Big|\preccurlyeq\frac{d(w,\R e_1)}{q}\preccurlyeq\frac{\operatorname{area}(D)}{q\cdot x}\preccurlyeq\frac{\delta}{q^2},
  \]
  and the claim follows for $\delta\prec\!\!\prec C$.
\end{proof}

\begin{cor}\label{lem:IrratOfDynam2}
  Let $f\colon(S^2,A)\selfmap$ be a geometric exceptional map, let
  $p\in S^2\setminus A$ be a periodic point with period $n$, and let
  $\gamma\in\pi_1(S^2\setminus A,p)$ be a loop starting and ending at
  $p$. Let $|\gamma|$ be the length of $\gamma$ with respect to the
  Euclidean metric of the minimal orbifold structure
  $(P_f,\ord_f)$. If $\gamma$ is trivial rel $(P_f,\ord_f)$ and
  $m\succ \log |\gamma|$, then the lift $\gamma \lift{f^{n m}}{p}$ is
  a trivial loop rel $A$.
\end{cor}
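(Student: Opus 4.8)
The plan is to pull everything back to the orbifold universal cover and invoke Lemma~\ref{lem:IrratOfDynam}. Realize $f$ as a quotient of $\wtf\colon z\mapsto Mz$ on $\R^2$ by $\Lambda:=\langle\Z^2,-z+r\rangle$ with $(\one-M)r\in\Z^2$, and write $\pi\colon\R^2\to S^2=\R^2/\Lambda$ for the quotient; this is the orbifold universal cover of $(S^2,P_f,\ord_f)$, it restricts over $S^2\setminus P_f$ to a genuine covering with deck group $\Lambda\cong\pi_1^{\mathrm{orb}}(S^2,P_f,\ord_f)$, and along it the Euclidean metric on $\R^2$ is a local isometry. The eigenvalues of $M$ are $\lambda_1\in(-1,1)$ and $\lambda_2$ with $|\lambda_2|>1$, with eigenlines $E^s=\R e_1$ and $E^u=\R e_2$; the set $\pi^{-1}(A)$ is a $\Lambda$-invariant, hence $\Z^2$-invariant, discrete subset of $\R^2$, so it is a finite disjoint union $\bigsqcup_j(u_j+\Z^2)$ of $\Z^2$-cosets, none of them containing a lift of $p$ since $p\notin A$.

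First I would fix a lift $\tilde p\in\pi^{-1}(p)$ and set $\tilde q:=\wtf^{nm}(\tilde p)=M^{nm}\tilde p$; since $f^{nm}(p)=p$ we have $\tilde q\in\pi^{-1}(p)$. Triviality of $\gamma$ rel $(P_f,\ord_f)$ means its image in $\Lambda$ is trivial, so its $\pi$-lift $\tilde\gamma_q$ starting at $\tilde q$ is a closed loop, of Euclidean length $|\gamma|$ (as $\gamma$ avoids cone points and $\pi$ is a local isometry there); hence $\tilde\gamma_q\subseteq\overline B(\tilde q,|\gamma|)$ and $\tilde\gamma_q$ is null-homotopic inside that ball, rel basepoint, by coning to $\tilde q$. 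Put $\tilde\delta:=M^{-nm}\circ\tilde\gamma_q$. Then $\tilde\delta$ is a closed loop at $\tilde p$ (because $M^{-nm}\tilde q=\tilde p$), and $\pi\circ\tilde\delta$ is the lift $\gamma\lift{f^{nm}}p$: it starts at $p$ and $f^{nm}\circ(\pi\circ\tilde\delta)=\pi\circ M^{nm}\circ M^{-nm}\circ\tilde\gamma_q=\pi\circ\tilde\gamma_q=\gamma$, so uniqueness of lifts identifies the two. Applying the linear map $M^{-nm}$ to the coning homotopy, $\tilde\delta$ is null-homotopic rel basepoint inside the convex set $K_m:=M^{-nm}\overline B(\tilde q,|\gamma|)=\tilde p+M^{-nm}\overline B(0,|\gamma|)$.

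The heart of the argument is to prove $K_m\cap\pi^{-1}(A)=\emptyset$ once $nm\succ\log|\gamma|$ (which follows from $m\succ\log|\gamma|$, as $n\ge1$). I first claim that the stable leaf $\ell:=\pi(\tilde p+\R e_1)$ through $p$ misses $A$. Using $M^{n}\tilde p=\lambda_0\tilde p$ for some $\lambda_0\in\Lambda$ (periodicity of $p$), one computes that in the parametrization $s\mapsto\pi(\tilde p+se_1)$ of $\ell$ the map $f^{n}$ is $s\mapsto\pm\lambda_1^{n}s$; since $|\lambda_1|<1$, $f^{nj}(a)\to p$ for every $a\in\ell$. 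If some $a\in\ell\cap A$, then $f^{nj}(a)\in A$ for all $j$ ($A$ being finite and forward-invariant), so the sequence $f^{nj}(a)$ is eventually the constant $p$, forcing $p\in A$, a contradiction. Hence no line $\tilde p'+\R e_1$ with $\tilde p'\in\pi^{-1}(p)$ meets $\pi^{-1}(A)$, i.e.\ each $t_j:=\tilde p-u_j$ satisfies $t_j\notin\R e_1+\Z^2$. Since $M$ is exceptional the slope of $E^s$ is a quadratic irrational, so the Diophantine estimate in the proof of Lemma~\ref{lem:IrratOfDynam} applies verbatim to each translated lattice $t_j+\Z^2$ (the shift $t_j$, being off $\R e_1+\Z^2$, only relabels which lattice point is nearest): there is a threshold that is $\preccurlyeq\log|\gamma|$ beyond which $M^{-nm}\overline B(0,|\gamma|)$ misses every $t_j+\Z^2$, which is exactly $K_m\cap\pi^{-1}(A)=\emptyset$. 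Then the coning homotopy of $\tilde\delta$ stays in $\R^2\setminus\pi^{-1}(A)$, which $\pi$-covers $S^2\setminus A$; pushing it down shows $\gamma\lift{f^{nm}}p=\pi\circ\tilde\delta$ is null-homotopic rel basepoint in $S^2\setminus A$, i.e.\ a trivial loop rel $A$.

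The main obstacle is the last paragraph: transporting Lemma~\ref{lem:IrratOfDynam}, stated for $\Z^2$, to the translated lattices $t_j+\Z^2$, and — a prerequisite for that — establishing $\ell\cap A=\emptyset$, which is precisely where the periodicity of $p$ together with $p\notin A$ enters essentially. One must also be mildly careful about the degenerate shapes $\ell$ can take (a ray issuing from a cone point, or a closed leaf), but the contraction estimate $f^{nj}(a)\to p$ holds irrespective of these and sidesteps them.
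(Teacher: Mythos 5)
Your overall route is the same as the paper's: reduce to the orbifold universal cover, lift $\gamma$ at $\tilde q=M^{nm}\tilde p$ to a closed loop of length $|\gamma|$, pull back by $M^{-nm}$, and use the Diophantine Lemma~\ref{lem:IrratOfDynam} to show the resulting convex region misses $\pi^{-1}(A)$, then push the coning homotopy down. All of that part is fine, and you correctly identified the crux. But exactly at that crux there is a genuine gap: your justification for transporting Lemma~\ref{lem:IrratOfDynam} to the translated lattices $t_j+\Z^2$ (``the shift, being off $\R e_1+\Z^2$, only relabels which lattice point is nearest'') is not an argument, and the statement it is meant to support is false for a general real shift. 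What the long thin parallelogram argument needs is a quantitative bound of the form $d(w,\R e_1)\succcurlyeq 1/|w|$ for $w$ in the shifted lattice; for the unshifted lattice this is exactly the badly approximable property of the quadratic irrational slope $\mu_1$, but for an arbitrary shift $\beta$ the inhomogeneous quantity $q\,|q\mu_1-p-\beta|$ can have liminf zero even though no shifted lattice point lies on the line (inhomogeneous approximation can be arbitrarily good along subsequences for suitably chosen $\beta$). So ``no point on the line'' plus the homogeneous estimate does not yield the threshold $\preccurlyeq\log|\gamma|$, and your qualitative stable-leaf argument cannot be upgraded to the needed uniform bound by itself.

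The missing idea is an arithmetic one: the shifts occurring here are rational. Every point of $A$ is preperiodic (since $f(A)\subseteq A$ and $A$ is finite), $(\one-M)r\in\Z^2$ forces $r\in\Q^2$, and a lift $w$ of a point of period $k$ satisfies $(M^k\mp\one)w\in\Z^2+ \{0,r\}$ with $\det(M^k\mp\one)\neq0$, so $w\in\Q^2$; pulling back by the integer matrix $M$ keeps rationality, and the same applies to $\tilde p$. Hence all the translates $t_j=\tilde p-u_j$ lie in $\tfrac1{q_0}\Z^2$ for a common denominator $q_0$ depending only on $(f,A)$, and the computation in the proof of Lemma~\ref{lem:IrratOfDynam} applies to this refined lattice verbatim with $C$ replaced by $C/q_0^2$, giving the threshold $\preccurlyeq\log|\gamma|$ with constants depending only on $(f,A)$, as required. (With rationality in hand your stable-leaf argument also becomes unnecessary: a rational point on the line $\tilde p+\R e_1$, whose slope is a quadratic irrational, must equal $\tilde p$, which is excluded because $p\notin A$.) With this repair your proof is a correct, more detailed version of the paper's argument, which glosses over the same point.
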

\begin{proof}
  By passing to an iterate, we may assume that $p$ is a fixed point of
  $f$.  Since $f$ is geometric exceptional, we have a branched
  covering map $\pi\colon\R^2\to S^2$ under which $f$ lifts to an
  exceptional linear map $M$, and we may assume $\pi(0,0)=p$. By
  assumption, $\gamma\lift {\pi}{(0,0)}$ is a trivial loop in
  $\R^2$. By Lemma~\ref{lem:IrratOfDynam}, the sets
  $M^{-m}(\gamma\lift {\pi}{(0,0)})$ and $\pi^{-1}(A)$ are disjoint
  for $m\succ \log |\gamma|$; hence $\gamma \lift{f^{n m}}{p}$ is
  a trivial loop rel $A$.
\end{proof}

Geometric exceptional maps all admit a minimal orbifold modeled on
$\R^2/\langle \Z^2,-z\rangle$, which has cone singularities of angle
$\pi$ at the images of $\{0,\frac12\}\times\{0,\frac12\}$. We consider
this class in a little more detail:
\subsection{\boldmath $(2,2,2,2)$-maps}
A branched covering $f\colon(S^2,P_f,\ord_f)\selfmap$ is \emph{of type
  $(2,2,2,2)$} if $|P_f|=4$ and $\ord_f(x)=2$ for every $x\in P_f$. In
this case, $f$ is isotopic to a quotient of an affine map
$z\mapsto M z+q$ under the action of $\langle\Z^2,-z\rangle$, see
Proposition~\ref{prop:EndOfG2}(B).

\begin{lem}\label{lem:CharactTorusMaps}
  Let $M$ be a matrix with integer entries and $\det(M)>1$. Denote by
  $\lambda_1$ and $\lambda_2$ the eigenvalues of $M$, ordered as
  $|\lambda_2|\ge |\lambda_1|>0$. Then the following are all mutually
  exclusive possibilities:
  \begin{itemize}
  \item if $M$ is exceptional, that is $0<|\lambda_1|<1<|\lambda_2|$
    and $\lambda_1,\lambda_2\in \R$, then $M\colon\R^2\selfmap$ is
    expanding in the direction of the eigenvector corresponding to
    $\lambda_2$ and $M\colon\R^2\selfmap$ is contracting in the
    direction of the eigenvector corresponding to $\lambda_1$;
  \item if $\lambda_1\in \{\pm 1\}$, then $M^2\colon\R^2\selfmap$
    preserves the rational line $\{z\in \R^2\mid M z=z\}$;
  \item the map $M\colon\R^2\selfmap$ is expanding in all other cases;
    that is $\lambda_1=\overline {\lambda_2}\notin\R$, or
    $\lambda_1,\lambda_2\in \R$ but $|\lambda_1|,|\lambda_2| > 1$.
  \end{itemize}
\end{lem}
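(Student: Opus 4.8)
The plan is to read everything off the characteristic polynomial. Write $\chi_M(x)=x^2-tx+d$ with $t=\operatorname{tr}(M)\in\Z$ and $d=\det(M)\in\Z_{\ge 2}$, so that $\lambda_1\lambda_2=d$ and $\lambda_1+\lambda_2=t$. The one observation that drives the whole argument is $|\lambda_1|\,|\lambda_2|=d\ge 2$, whence $|\lambda_2|\ge\sqrt d>1$ unconditionally, and — if $|\lambda_1|\le 1$ — even $|\lambda_2|\ge 2$. I would organize the trichotomy by comparing $|\lambda_1|$ with $1$: the three listed possibilities are precisely $|\lambda_1|<1$, $|\lambda_1|=1$, and ($|\lambda_1|>1$ or $\lambda_1\notin\R$). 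These are visibly mutually exclusive. For exhaustiveness: if $\lambda_1\notin\R$ then $\lambda_1=\overline{\lambda_2}$ and $|\lambda_1|=|\lambda_2|=\sqrt d>1$, so we land in the third case; and if $\lambda_1\in\R$, the options $|\lambda_1|<1,=1,>1$ exhaust everything (and $|\lambda_1|\le 1$ forces $|\lambda_2|>1$, since the product is $d\ge 2$).

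Then I would dispatch the three cases. \emph{Case $|\lambda_1|<1$:} a non-real eigenvalue would have modulus $\sqrt d>1$, so $\lambda_1\in(-1,1)$ is real, hence $\lambda_2\in\R$ too, with $|\lambda_2|=d/|\lambda_1|>1$; moreover $\lambda_1\ne\lambda_2$, so $M$ is diagonalizable over $\R$ with eigenlines $L_1,L_2$, acting by $\lambda_2$ on $L_2$ (expansion) and by $\lambda_1$ on $L_1$ (contraction). This is exactly the exceptional case of the definition in \S\ref{ss:geometric}. \emph{Case $|\lambda_1|=1$:} then $\lambda_1\in\{\pm1\}\subset\Z$, and $\lambda_1$ is a simple eigenvalue (else $d=\lambda_1^2=1$), so $M-\lambda_1\one$ is an integer matrix of rank $1$ whose kernel $L$ is a rational line; $M$ preserves $L$, hence so does $M^2$; and when $\lambda_1=1$ one has $L=\{z\mid Mz=z\}$, while when $\lambda_1=-1$ one has $L=\{z\mid Mz=-z\}=\{z\mid M^2z=z\}$ (using $\lambda_2^2\ne 1$). \emph{Third case:} here every eigenvalue has modulus $>1$, so the spectral radius of $M^{-1}$ is $\max(|\lambda_1|^{-1},|\lambda_2|^{-1})<1$; by the standard adapted-norm fact ($\rho(A)<1$ implies $\|A\|<1$ in a suitable norm on $\R^2$) there is a norm in which $M^{-1}$ strictly contracts, i.e.\ $M$ strictly expands every nonzero vector — this covers both the non-real subcase and the real subcase $|\lambda_1|,|\lambda_2|>1$.

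I do not expect a real obstacle: this is elementary linear algebra over $\R$. The only slightly delicate point is bookkeeping in the second case — the set $\{z\mid Mz=z\}$ written in the statement is a genuine line only when $\lambda_1=1$, so I would either record the $\lambda_1=-1$ variant explicitly as above or (following the intended reading) replace it by $\{z\mid M^2z=z\}$. The only external input is the adapted-norm fact in the third case, which I would cite rather than reprove.
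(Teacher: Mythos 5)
Your proof is correct and takes essentially the same approach as the paper's, whose entire argument is the observation that non-real eigenvalues satisfy $\lambda_1=\overline{\lambda_2}$ with $|\lambda_1|=|\lambda_2|$ and product $\det(M)>1$, while real eigenvalues have the same sign and product $>1$, the rest being left implicit. You simply fill in the details the paper omits (the rational eigenline when $|\lambda_1|=1$, the adapted-norm expansion when both moduli exceed $1$), and your remark that the fixed line should be read as $\{z\mid M^2z=z\}$ when $\lambda_1=-1$ is the intended reading of the statement.
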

\begin{proof}
  If $M$'s eigenvalues are non-real, then
  $\lambda_1=\overline{\lambda_2}$ and $|\lambda_1|=|\lambda_2|$, so
  $M$ is expanding. If $\lambda_1$ and $\lambda_2$ are real, then they
  are of the same sign and their product is greater than $1$. The
  lemma follows.
\end{proof}

\noindent The following lemma follows from~\cite{selinger-yampolsky:geometrization}*{Main Theorem II}:
\begin{lem}\label{lem:Tor:geom=lf}
  If $f\colon(S^2,P_f,\ord_f)\selfmap$ is doubly covered by a torus
  endomorphism $z\mapsto M z+q$, then $f$ is geometric if and only if
  $f$ is Levy-free.
\end{lem}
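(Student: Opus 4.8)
The plan is to deduce the statement from the Selinger--Yampolsky geometrization theorem cited just before it, by unwinding what ``geometric'' means for a $(2,2,2,2)$-map and what obstructions can occur. First I would recall that for a branched covering $f\colon(S^2,P_f,\ord_f)\selfmap$ doubly covered by a torus endomorphism $z\mapsto Mz+q$, the orbifold $(P_f,\ord_f)$ is the flat $(2,2,2,2)$-orbifold $\R^2/\langle\Z^2,-z\rangle$, which has vanishing Euler characteristic. In particular $f$ has \emph{parabolic} (i.e.\ non-hyperbolic) orbifold, so the classical Thurston rigidity theorem does not directly apply; this is exactly the regime that Selinger and Yampolsky's geometrization handles. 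Their Main Theorem~II states, for maps with parabolic orbifold, that $f$ is equivalent to a rational map (equivalently, by the discussion in~\S\ref{ss:geometric}, to a geometric map) if and only if $f$ carries no \emph{Levy cycle} — a weaker obstruction than a Thurston obstruction, consisting of a cycle of disjoint simple closed curves each mapping by degree one onto the next.

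The argument then has two directions. For the forward direction, suppose $f$ is geometric: by definition $f$ is either B\"ottcher expanding or a geometric exceptional map (the pseudo-Anosov case is excluded since $f$ is non-invertible of degree $\det M>1$). In either case $f$ is conjugate to a quotient of an expanding or exceptional \emph{affine} map of $\R^2$, hence — passing to the torus double cover and applying Lemma~\ref{lem:CharactTorusMaps} together with Lemma~\ref{lem:IrratOfDynam} — the dynamics on the universal cover forbids any invariant multicurve with a degree-one return map: a Levy cycle would lift to a family of curves in $\R^2$ on which $M$ acts by a degree-one permutation up to homotopy, forcing an eigenvalue of modulus $1$, contradicting that $M$ is exceptional or expanding. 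Thus $f$ is Levy-free. For the converse, suppose $f$ is Levy-free; since $f$ has parabolic orbifold, Selinger--Yampolsky's Main Theorem~II gives that $f$ is Thurston-equivalent to a rational map $g$. A rational map with $(2,2,2,2)$-orbifold is, up to conjugacy, an affine torus quotient $z\mapsto Mz+q$ (this is the classical Latt\`es-type classification, and is also recorded in Proposition~\ref{prop:EndOfG2}), and by Lemma~\ref{lem:CharactTorusMaps} its matrix $M$ has no eigenvalue equal to $\pm 1$ — the eigenvalue-$\pm1$ case is precisely the one that is \emph{not} a rational map but rather carries an invariant (rational) line, which unwinds to a Levy cycle downstairs. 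Hence $g$, and therefore $f$, is geometric.

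The main obstacle I anticipate is the careful bookkeeping in translating ``Levy cycle'' across the double cover $\R^2/\langle\Z^2,-z\rangle \to S^2$ and back: a simple closed curve on $S^2\setminus P_f$ can lift to either one curve of twice the length or two disjoint curves on the torus, depending on its homology class relative to the four cone points, and one must check that a degree-one return map for the Levy cycle downstairs really does force a $(\pm1)$-eigenvector direction upstairs rather than merely a low-degree sublattice. I would handle this by working on the flat torus and using that a multicurve invariant up to isotopy under $M$ consists of geodesics in a fixed rational direction $v$, whose return map has degree equal to $|\det M|$ divided by the index of $M v$ in the line $\R v$; degree one then forces $Mv = \pm v$, i.e.\ $\pm1$ is an eigenvalue. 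Everything else — the identification of the orbifold, the exclusion of the pseudo-Anosov case, and the Latt\`es classification of rational $(2,2,2,2)$-maps — is either immediate or already available in the excerpt, so the proof should be short modulo this lifting lemma.
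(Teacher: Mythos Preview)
Your converse direction has a genuine gap. You claim Selinger--Yampolsky's Main Theorem~II yields ``Levy-free $\Rightarrow$ equivalent to a rational map'', and that ``rational'' coincides with ``geometric'' by \S\ref{ss:geometric}. Both fail for \emph{exceptional} $(2,2,2,2)$-maps. If $M$ has one real eigenvalue in $(-1,1)$ and one outside $[-1,1]$, then $f$ is geometric (it lies in \Tor\ since neither eigenvalue is $\pm1$) and Levy-free (as your own forward argument shows), yet $f$ is \emph{not} equivalent to any rational map: a rational Latt\`es map is covered by a \emph{complex}-linear torus endomorphism $z\mapsto\alpha z$, whose real eigenvalues $\alpha,\bar\alpha$ have common modulus $\sqrt{\det M}>1$, hence the cover is expanding. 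So ``Levy-free $\Rightarrow$ rational'' is simply false here, and your route through rationality cannot reach the exceptional case at all.

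The paper's proof bypasses this by arguing directly through the trichotomy of Lemma~\ref{lem:CharactTorusMaps}: either $M$ has an eigenvalue $\pm1$ (then $f$ is not geometric and the projection of that eigenspace gives an explicit Levy cycle), or $M$ is exceptional (geometric, and the transverse irrational eigenfoliations preclude any invariant multicurve), or $M$ is expanding (geometric, and Levy-free by the contraction argument from~\cite{bartholdi-dudko:bc4}). Your ``main obstacle'' paragraph already contains this argument --- a Levy cycle lifts to a rational invariant direction forcing $Mv=\pm v$ --- so the correct proof is buried in what you flagged as a technicality. Promote that lifting computation to the main argument and drop the detour through rational maps; Selinger--Yampolsky is then not needed.
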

\begin{proof}
  We consider the exclusive cases of
  Lemma~\ref{lem:CharactTorusMaps}. In the first case, $f$ is
  geometric and $z\mapsto M z+q$ preserves transverse irrational
  laminations (given by the eigenvectors of $M$), so $z\mapsto M z+q$
  admits no Levy cycle and \emph{a fortiori} neither does $f$.

  In the second case, $f$ is not geometric and admits as Levy cycle
  the projection of the $1$-eigenspace of $M$.

  In the third case, $f$ is expanding, and admits no Levy cycle
  by~\cite{bartholdi-dudko:bc4}*{Theorem~\ref{bc4:thm:main}}.
\end{proof}

We shall mainly study $(2,2,2,2)$-maps algebraically: we write the
torus as $\R^2/\Z^2$ and the $(2,2,2,2)$-orbisphere as
$\R^2/\langle\Z^2,-z\rangle$. Its fundamental group $G$ is identified
with $\langle\Z^2,-z\rangle\cong\Z^2\rtimes\{\pm1\}$. The orbifold
points are the images on the orbisphere of
$\{0,\tfrac12\}\times\{0,\tfrac12\}$.  We start with some basic
properties of $G$:
\begin{prop}\label{prop:EndOfG}
  \begin{itemize}
  \item[(A)] Every injective endomorphism of $G$ is of the form
    \begin{equation}\label{eq:InjEndOfK}
      M^{v}\colon (n,1)\mapsto(M n,1)\text{ and }(n,-1)\mapsto(M n+v,-1)
    \end{equation}
    for some $v\in\Z^2$ and some non-degenerate matrix $M$ with
    integer entries. We have
    \[ N^w\circ M^v= (N M)^{w+N v}.\]There are precisely $4$ order-$2$ conjugacy
    classes in $G$, each of the form
    \[(a,-1)^G=\{(a+2w,-1)\mid w\in\Z^2\}\text{ for some }a\in\{0,1\}\times\{0,1\}\subset\Z^2.
    \]
    The set of order-$2$ conjugacy classes of $G$ is preserved by $M^{v}$.
  \item[(B)] The automorphism group of $G$ is $\{M^v\mid\det M=\pm1\}$
    and is naturally identified with
    $\Z^2\rtimes\operatorname{GL}_2(\Z)$. The inner automorphisms of
    $G$ are identified with $2\Z^2\rtimes\{\pm\one\}$, and the outer
    automorphism group of $G$ is identified with
    $(\Z/2\Z)^2\rtimes\operatorname{PGL}_2(\Z)$. The index-$2$
    subgroup of positively oriented outer automorphisms is
    $\Out^+(G)=(\Z/2\Z)^2\rtimes\operatorname{PSL}_2(\Z)$.
  \item[(C)] The modular group $\Mod(G)$ is free of rank $2$, and we have
    \begin{align*}
      \Mod(G)&=\{M^v\mid\det(M)=1,M\equiv\one\pmod2,v\in2\Z^2\}/(\pm\one)^{2\Z^2}\\
      &\cong\{M^0\mid M\equiv\one\pmod2\}/\{\pm\one\}.
    \end{align*}
  \item[(D)] Two bisets $B_{M^v}$ and $B_{N^w}$ are isomorphic if and
    only if $M=\pm N$ and $(M^v)_*=(N^w)_*$ as maps on order-$2$
    conjugacy classes, if and only if $M=\pm N$ and $v\equiv w\pmod{2\Z^2}$.
  \end{itemize}
\end{prop}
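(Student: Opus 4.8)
The plan is to prove part~(A) first, as the structural backbone, and then deduce (B), (C), (D) from it together with the classification and realization results already recalled.

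\emph{Part (A).} The key point is that the translation subgroup $\Z^2\cong\{(n,1)\mid n\in\Z^2\}$ is characteristic in $G$: it is precisely the set of elements that are trivial or of infinite order, since $(n,1)^k=(kn,1)$ while $(n,-1)^2=(0,1)$. An injective endomorphism $\phi$ preserves ``trivial or of infinite order'', hence maps $\Z^2$ into $\Z^2$, where it acts by a non-degenerate integer matrix $M$; and $\phi(0,-1)$ is a non-trivial torsion element, so $\phi(0,-1)=(v,-1)$ for some $v\in\Z^2$. Since $(n,-1)=(n,1)(0,-1)$ this forces $\phi=M^v$. Conversely, a routine check of the defining relations $s^2=1$, $sts^{-1}=t^{-1}$ of $G=\Z^2\rtimes\{\pm1\}$ shows every $M^v$ is an endomorphism, injective exactly when $\det M\neq0$; and the composition formula $N^w\circ M^v=(NM)^{Nv+w}$ is a one-line computation. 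For the conjugacy classes: the order-$2$ elements are exactly the $(n,-1)$, and $(m,\epsilon)(n,-1)(m,\epsilon)^{-1}=(2m+\epsilon n,-1)$; as $\epsilon n\equiv n\pmod{2\Z^2}$, the class of $(n,-1)$ is $\{(n+2w,-1)\mid w\in\Z^2\}$, depending only on $n$ modulo $2\Z^2$, giving exactly $4$ classes. Writing bars for reduction modulo $2\Z^2$, $(M^v)_*$ acts on these classes as the affine self-map $\bar n\mapsto\bar M\bar n+\bar v$ of $(\Z/2)^2$; in particular $M^v$ carries order-$2$ conjugacy classes to order-$2$ conjugacy classes.

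\emph{Parts (B) and (C).} From (A), $M^v$ is an automorphism iff it is onto, iff $M\Z^2=\Z^2$, iff $\det M=\pm1$; and $M^v\mapsto(v,M)$ identifies $\Aut(G)$ with $\Z^2\rtimes\operatorname{GL}_2(\Z)$ via the composition formula. Conjugation $c_u\colon x\mapsto uxu^{-1}$ by $u=(m,\epsilon)$ acts on $\Z^2$ by $\epsilon$ and sends $(0,-1)$ to $(2m,-1)$, so $c_{(m,\epsilon)}=(\epsilon\one)^{2m}$; hence $\operatorname{Inn}(G)$ corresponds to $2\Z^2\rtimes\{\pm\one\}$, which is normal, and the quotient is $(\Z/2)^2\rtimes\operatorname{PGL}_2(\Z)$, the action factoring through $\operatorname{PGL}_2(\Z)$ because $-\one\equiv\one\pmod2$. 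The orientation character $[M^v]\mapsto\det M$ is well defined on $\Out(G)$ (constant on $\operatorname{Inn}(G)$-cosets since $\det(\epsilon\one)=1$) and takes the value $+1$ exactly on positively oriented classes — visible on the torus double cover, on whose $H_1=\Z^2$ an orientation-preserving self-map of the orbisphere induces $M$ with $\det M=+1$ — so $\Out^+(G)=(\Z/2)^2\rtimes\operatorname{PSL}_2(\Z)$, of index $2$. For (C): by the recalled identification $\Mod(G)=\Mod(S^2,A)$ with $\#A=4$ and by Dehn--Nielsen--Baer for orbispheres, $\Mod(G)$ is the subgroup of $\Out^+(G)$ fixing each of the four peripheral classes; by the formula for $(M^v)_*$ this is the image in $\Out(G)$ of $\{M^v\mid\det M=1,\ M\equiv\one\pmod2,\ v\in2\Z^2\}$, i.e.\ that set modulo $(\pm\one)^{2\Z^2}$. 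Since $(\one)^v=c_{(v/2,1)}$ is inner for $v\in2\Z^2$, every class has a representative $M^0$ with $M$ in the level-$2$ principal congruence subgroup $\Gamma(2)=\{M\in\operatorname{SL}_2(\Z)\mid M\equiv\one\pmod2\}$, and $M^0,N^0$ coincide in $\Mod(G)$ iff $(MN^{-1})^0\in\operatorname{Inn}(G)$ iff $M=\pm N$; hence $\Mod(G)\cong\Gamma(2)/\{\pm\one\}$, which is free of rank $2$ (it acts freely on $\mathbb H$ with quotient a thrice-punctured sphere), in agreement with the classical value of the pure mapping class group of the $4$-punctured sphere.

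\emph{Part (D).} Recall $B_{M^v}$ is the biset of the self-covering inducing $M^v$; as a $G$-$G$-biset it is the standard biset ${}_{M^v}G_G$ (underlying set $G$, right action by multiplication, left action through $M^v$). An isomorphism ${}_\phi G\to{}_\psi G$ is right-multiplication by $u:=\Theta(1)$, and left-equivariance forces $u\,\phi(h)=\psi(h)\,u$, i.e.\ $\psi=c_u\circ\phi$. Using $c_{(m,\epsilon)}=(\epsilon\one)^{2m}$ and the composition formula, $c_{(m,\epsilon)}\circ M^v=(\epsilon M)^{\epsilon v+2m}$; as $(m,\epsilon)$ ranges over $G$ the exponent ranges over $v+2\Z^2$ and $\epsilon M$ over $\{\pm M\}$, so $B_{M^v}\cong B_{N^w}$ iff $N=\pm M$ and $w\equiv v\pmod{2\Z^2}$. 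Finally, when $M=\pm N$ the affine self-maps $\bar n\mapsto\bar M\bar n+\bar v$ and $\bar n\mapsto\bar N\bar n+\bar w$ of $(\Z/2)^2$ have equal linear parts, hence agree iff $\bar v=\bar w$ iff $v\equiv w\pmod{2\Z^2}$; this closes the stated triple equivalence.

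\emph{Main obstacle.} The delicate step is (C): one must invoke Dehn--Nielsen--Baer carefully to pin $\Mod(G)$ down \emph{exactly} as $\Gamma(2)/\{\pm\one\}$ inside $\Out^+(G)$, and take care that the convenient description ``$M\equiv\one\pmod2$'' must be read inside $\operatorname{SL}_2(\Z)$: a determinant $-1$ matrix congruent to $\one$ modulo $2$ exists, is orientation-reversing, and does \emph{not} lie in $\Mod(G)$. The remaining content is bookkeeping with $2\times2$ integer matrices.
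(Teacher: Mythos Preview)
Your proof is correct and follows essentially the same approach as the paper's own proof, just with considerably more detail spelled out. In particular: your argument in (A) that $\Z^2$ is characteristic as the set of non-torsion elements is exactly the paper's reasoning; your explicit computation $c_{(m,\epsilon)}=(\epsilon\one)^{2m}$ in (B) fleshes out what the paper dismisses as ``follows directly from~(A)''; your identification in (C) of $\Mod(G)$ as the stabilizer in $\Out^+(G)$ of the four peripheral classes, yielding $\Gamma(2)/\{\pm\one\}$, matches the paper's argument verbatim; and your direct biset-isomorphism computation in (D) unpacks what the paper phrases as ``isomorphic iff conjugate by an inner automorphism''.
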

We remark that~\eqref{eq:InjEndOfK} also follows from
Lemma~\ref{lem:cross prod str}.
\begin{proof}
  (A). It is easy to check that $M^v$ defines an injective
  endomorphism. Conversely, let $M'\colon G\to G$ be an injective
  endomorphism. Then $M'(0,-1)=(v,-1)$ for some $v\in\Z^2$ because all
  $(w,-1)$ have order $2$. On the other hand,
  $M'\restrict{\Z^2\times\{1\}}= M\restrict{\Z^2}$ for a
  non-degenerate matrix $M$ with integer entries because $M'$ is
  injective. It easily follows that $M'=M^v$ as given
  by~\eqref{eq:InjEndOfK}.

  The claims on composition and order-$2$ conjugacy classes of $G$
  follow from direct computation.

  (B). Follows directly from~(A) and the identification of $G$ with
  $\{\pm\one\}^{\Z^2}$.

  (C). We use~(B); the modular group of $G$ is the subgroup of
  $\Out^+(G)=(\Z/2)^2\rtimes\operatorname{PSL}_2(\Z)$ that fixes the
  order-$2$ conjugacy classes. The action of $(\Z/2\Z)^2$ on these
  classes is simply transitive, so the set of order-$2$ classes may be
  put in bijection with $(\Z/2)^2$ under the correspondence
  $(a+2\Z^2,-1)\leftrightarrow a$; then the action of
  $\operatorname{PSL}_2(\Z)$ on order-$2$ conjugacy classes is
  identified with the natural linear action (noting that $-\one$ acts
  trivially mod $2$). It follows that $\Mod(G)$ is the congruence
  subgroup
  $\{M\in\operatorname{PSL}_2(\Z)\mid M\equiv\one\pmod{\pm2}\}$, and
  it is classical that it is a free group of rank $2$.

  (D). The bisets $B_{M^v}$ and $B_{N^w}$ are isomorphic if and only
  the maps $M^v,N^w$ are conjugate by an inner automorphism; so the
  claimed description follows from~(B).
\end{proof}

\noindent We turn to $(2,2,2,2)$-maps, and their description in terms
of the above; we use throughout $G=\Z^2\rtimes\{\pm1\}$:
\begin{prop}\label{prop:EndOfG2}
  Let $f$ be a $(2,2,2,2)$-map with biset
  $\subscript G B_G=B(f,P_f,\ord_f)$. Then
  \begin{itemize}
  \item[(A)] The biset $B$ is right principal, and for any choice of
    $b_0\in B$ there exists an injective endomorphism $M^v$ of
    $G$ satisfying $g b_0= b_0 M^v(g)$ for all $g\in G$.
  \item[(B)] The map $f$ is Thurston equivalent to a quotient
    of $z\mapsto M z+\tfrac12v\colon\R^2\selfmap$ under the action of
    $G$.
  \item[(C)] The map $f$ is Levy obstructed if and only if $M$ has an
    eigenvalue in $\{\pm 1\}$.
  \item[(D)] If $f$ is not Levy obstructed then for every $b_0\in B$,
    writing $M^v$ as in~(A) and~\eqref{eq:InjEndOfK}, the map $f$ is
    Thurston equivalent to the quotient of
    $z\mapsto M z\colon\Z^2\selfmap$ by the action of
    $\langle \Z^2, z\mapsto -z+r \rangle \looparrowright \R^2$ for a
    vector $r\in\R^2$ satisfying $M r=r+v$.  The $G$-equivariant map
    of Proposition~\ref{prop:ShadViaUnCover} takes the form
    \begin{equation}\label{eq:GEquivMap}
      \Phi\colon\subscript G B_G\otimes  \subscript{\langle \Z^2,-z+r \rangle}{\R^2}\to\subscript{\langle \Z^2,-z+r \rangle}{\R^2},\qquad b_0\otimes z\mapsto M^{-1}z.
    \end{equation}
  \end{itemize}
\end{prop}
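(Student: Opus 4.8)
The plan is to reduce everything to the structural facts about $G=\Z^2\rtimes\{\pm1\}$ collected in Proposition~\ref{prop:EndOfG}, together with the general machinery of $G$-spaces from Section~3. First, for part~(A): by Euler characteristic a branched covering $f\colon(S^2,P_f,\ord_f)\selfmap$ of a $(2,2,2,2)$-orbisphere is a self-covering, so its biset $B=B(f)$ is right principal; this was already noted before the statement. Fixing $b_0\in B$, the right-principality gives a unique injective endomorphism $\phi\colon G\selfmap$ with $g b_0=b_0\phi(g)$, and by Proposition~\ref{prop:EndOfG}(A) every injective endomorphism of $G$ is of the form $M^v$. This establishes~(A), with the ambiguity in $b_0$ (replacing $b_0$ by $b_0 g$) corresponding to post-composition of $M^v$ by an inner automorphism, i.e.\ to the freedom $v\mapsto v+2w$ and $M\mapsto M$ (together with a possible sign); this matches Proposition~\ref{prop:EndOfG}(D).

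Second, for part~(B): the endomorphism $M^v$ describes the action on $\pi_1$, so $f$ is, up to isotopy rel $P_f$, the quotient of the lift $\widetilde f\colon\R^2\selfmap$ of $f$ to the universal cover $\R^2$ of $(S^2,P_f,\ord_f)=\R^2/\langle\Z^2,-z\rangle$ that realizes $M^v$ on $G$. A lift of $f$ commuting with $\Z^2$-translations via $n\mapsto Mn$ must be an affine map $z\mapsto Mz+q$; the condition that it also intertwine the involution $z\mapsto-z$ via $(n,-1)\mapsto(Mn+v,-1)$ forces $M(-z)+q=-(Mz+q)+v$, i.e.\ $2q=v$, so $q=\tfrac12 v$. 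Hence $f$ is Thurston equivalent to the quotient of $z\mapsto Mz+\tfrac12 v$ under $G$. (Here one uses Corollary~\ref{cor:OrbiIsComplInvar}: two maps with isomorphic orbisphere bisets are isotopic, so realizing the biset by the affine model suffices.) This simultaneously proves that every $(2,2,2,2)$-biset arises this way, as claimed in the displayed Proposition in \S\ref{ss:geometric}.

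Third, parts~(C) and~(D): by Lemma~\ref{lem:Tor:geom=lf}, $f$ is geometric iff it is Levy-free, and Lemma~\ref{lem:CharactTorusMaps} sorts the possibilities by the eigenvalues $\lambda_1,\lambda_2$ of $M$: the map is Levy-obstructed exactly when some $\lambda_i\in\{\pm1\}$, in which case the $M^2$-invariant rational line $\{z\mid Mz=z\}$ (or its $M^2$-variant) projects to a Levy cycle; otherwise ($\lambda_i$ non-real, or real of absolute value $>1$) $f$ is expanding hence Levy-free by~\cite{bartholdi-dudko:bc4}*{Theorem~\ref{bc4:thm:main}}, and if $M$ is exceptional the transverse irrational eigenfoliations preclude a Levy cycle. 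This gives~(C). For~(D), when no eigenvalue is $\pm1$ the matrix $\one-M$ is invertible over $\Q$, so the affine model $z\mapsto Mz+\tfrac12v$ can be conjugated by a translation into the linear map $z\mapsto Mz$ at the cost of replacing the reflection $-z$ by $-z+r$ with $r$ solving $(\one-M)r=-v$, i.e.\ $Mr=r+v$ (this is exactly the normalization discussed just before Lemma~\ref{lem:IrratOfDynam}). The $G$-equivariant map of Proposition~\ref{prop:ShadViaUnCover} is then the lift of $f^{-1}$ described in Proposition~\ref{prop:Gspaces}: since $z\mapsto Mz$ has inverse $z\mapsto M^{-1}z$ on $\R^2$, and $b_0$ was chosen to be the basepoint-preserving lift, $\Phi(b_0\otimes z)=M^{-1}z$; equivariance~\eqref{eq:propGspaces} then determines $\Phi$ on all of $B\otimes\R^2$. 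The only genuinely delicate point is bookkeeping the sign ambiguity $M=\pm N$ and the translation ambiguity $v$ modulo $2\Z^2$ consistently across~(A)--(D); everything else is a direct translation through Proposition~\ref{prop:EndOfG} and Section~3, so the main obstacle is simply being careful that the chosen $b_0$, the endomorphism $M^v$, and the affine representative are compatibly normalized.
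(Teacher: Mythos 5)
Your proposal is correct and follows essentially the same route as the paper: right-principality from the self-covering plus Proposition~\ref{prop:EndOfG}(A) for (A), verifying that $z\mapsto Mz+\tfrac12v$ intertwines the $G$-action via $M^v$ and then invoking Corollary~\ref{cor:OrbiIsComplInvar} for (B), the eigenvalue trichotomy (invariant foliation for eigenvalue $\pm1$, expanding or exceptional otherwise) for (C), and the unique $r$ with $Mr=r+v$ together with the translation conjugation for (D). The only slip is the literal claim that a lift of $f$ to $\R^2$ ``must be an affine map'' --- a topological lift need not be affine --- but this is harmless since your parenthetical appeal to Corollary~\ref{cor:OrbiIsComplInvar} (the affine quotient realizes the same biset $B_{M^v}$, hence is isotopic to $f$) is the actual argument, and is exactly the paper's.
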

\begin{proof}
  (A). Since $f$ is a self-covering of orbifolds, $\subscript G B_G$
  is right principal. The claim then follows from
  Proposition~\ref{prop:EndOfG}(A).

  (B). We claim that the quotient map, denoted by $\bar f$, has a
  biset isomorphic to $\subscript G B_G$. Indeed, for $g\in G$ it
  suffices to verify that
  \[(M z+\tfrac{1}{2}v) \circ g(z)= M^v(g)\circ  (M z+\tfrac{1}{2}v).
  \]
  If $g=(t,1)$ then both parts are $M z+M t+\frac{1}{2}v$, and if
  $g=(0,-1)$ then both parts are $-(M z+\frac{1}{2}v).$ Therefore, $f$
  and $\bar f$ have isomorphic orbisphere bisets because marked
  conjugacy classes are preserved automatically by Proposition~\ref{prop:EndOfG}~(A).  By
  Corollary~\ref{cor:OrbiIsComplInvar} the maps $f$ and $\bar f$ are
  isotopic.

  (C). Suppose that $M$ has an eigenvalue in $\{1,-1\}$; let $w$ be
  the eigenvector of $M$ associated with this eigenvalue. Consider the
  foliation $F_w$ of $\R^2$ parallel to $w$. Then $F_w$ is invariant
  under $z\mapsto M z+\frac{1}{2}v$ as well as under the action of
  $\langle \Z^2, z\mapsto -z \rangle$. Therefore, the quotient map has
  invariant foliation $\overline F_w= F_w/G$. There are two
  leaves in $\overline F_w$ connecting points in pairs in the
  post-critical set of the quotient map; any other leave is a Levy
  cycle.

  (D). Since $\det(M-\one)\neq0$, there is a unique $r$ such that
  $M r=r+v$.  It is easy to see (as in~(C)) that the quotient map
  in~(E) has a biset isomorphic to $\subscript G B_G$. By
  Corollary~\ref{cor:OrbiIsComplInvar} the quotient map is conjugate
  to $f$, and~\eqref{eq:GEquivMap} is immediate.
\end{proof}

\begin{cor}\label{cor:bis:2222}
  Let $f$ be a $(2,2,2,2)$-map. Then its biset
  $B(f\colon(S^2,P_f,\ord_f)\selfmap)$ is of the form $B_{M^v}$ for an
  endomorphism $M^v$ of $G\coloneqq\Z^2\rtimes\{\pm1\}$, namely it is
  $G$ qua right $G$-set, with left action given by $g\cdot h=M^v(g)h$.\qed
\end{cor}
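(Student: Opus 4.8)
The plan is to read the statement off directly from Proposition~\ref{prop:EndOfG2}(A), which already contains all the substance. First I would invoke that proposition: it tells us that $\subscript G B_G$ is right principal and that, after fixing any $b_0\in B$, there is an injective endomorphism $M^v$ of $G$ with $g b_0=b_0 M^v(g)$ for all $g\in G$. So I fix such a $b_0$ together with its associated endomorphism $M^v$.

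Next I would use right principality to identify $B$ with $G$ as a right $G$-set, via the bijection $h\mapsto b_0 h$, which intertwines the right regular action on $G$ with the right action on $B$. Transporting the left action along this bijection is a one-line computation: for $g,h\in G$ one has $g\cdot(b_0 h)=(g b_0) h = b_0 M^v(g) h$, so under the identification $B\cong G$ the left action of $g$ sends $h$ to $M^v(g) h$. By the definition of the biset of an endomorphism of $G$ --- equivalently, by the crossed-product description $B_{M^v}=B_M\rtimes\{\pm1\}$ recalled just before~\eqref{eq:Mv} --- this is precisely $B_{M^v}$. Finally I would note that $M^v$ automatically has the crossed-product shape~\eqref{eq:InjEndOfK}, since by Proposition~\ref{prop:EndOfG}(A) every injective endomorphism of $G$ is of that form; hence the notation $B_{M^v}$ is legitimate and nothing further needs checking.

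There is no real obstacle here: the corollary is pure bookkeeping on top of Propositions~\ref{prop:EndOfG2}(A) and~\ref{prop:EndOfG}(A). The only point deserving a remark is the dependence on the choice of $b_0$: replacing $b_0$ by $b_0 g$ replaces $M^v$ by its conjugate by $g$, but since the statement asserts only existence of such an $M^v$ this ambiguity is harmless, and by Proposition~\ref{prop:EndOfG}(D) the genuine invariants extracted from $B$ are $M$ up to sign and $v$ modulo $2\Z^2$.
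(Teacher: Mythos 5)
Your proposal is correct and is exactly the argument the paper intends: the corollary carries a \qed with no written proof precisely because it is the immediate transport of Proposition~\ref{prop:EndOfG2}(A) (right principality plus $g b_0=b_0 M^v(g)$) along the identification $h\mapsto b_0 h$, with Proposition~\ref{prop:EndOfG}(A) guaranteeing the crossed-product form of $M^v$. Your remarks on the harmless dependence on $b_0$ and on Proposition~\ref{prop:EndOfG}(D) are accurate but not needed for the statement.
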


Let us also recall how the biset of a $(2,2,2,2)$-map is converted
to the form $B_{M^v}$. First, the fundamental group $G$ is identified
with $\Z^2\rtimes\{\pm1\}$. The group $G$ has a unique subgroup $H$ of
index $2$ that is isomorphic to $\Z^2$, so $H$ is easy to find. The
restriction of the biset to $H$ yields a $2\times2$-integer matrix
$M$; and the translation part $v$ is found by tracking the peripheral
conjugacy classes. Note that this procedure applies as well to
non-invertible maps as to homeomorphisms; and that the map is
orientation-preserving precisely if $\det(M)>0$.

\subsection{Homotopy pseudo-orbits and shadowing}\label{ss:shadowing}
Let $f\colon S^2\selfmap$ be a self-map, and let $I$ be an index set
together with a index map also written $f\colon I\selfmap$.  An
\emph{$I$-poly-orbit} $(x_i)_{i\in I}$ is a collection of points in
$S^2$ such that $f(x_i)=x_{f(i)}$. If all points $x_i$ are distinct,
then $(x_i)_{i\in I}$ is an \emph{$I$-orbit}.

Thus a poly-orbit differs from an orbit only in that it allows
repetitions. Every poly-orbit has a unique associated orbit, whose
index set is obtained from $I$ by identifying $i$ and $j$ whenever
$x_i=x_j$. Note that we allow $I=\N$, $I=\Z$ and $I=\Z/n\Z$ as index
sets, with $f(i)=i+1$, as well as $I=\{0,\dots,m,\dots,m+n\}$ with
$f(i)=i+1$ except $f(m+n)=m$, an orbit with period $n$ and preperiod
$m$.

We shall consider a homotopical weakening of the notion of orbits. Our
treatment differs from~\cite{ishii-smillie:shadowing} in a subtle manner
(see below); recall that $\beta\approx_{A,\ord}\gamma$ means that the
curves $\beta,\gamma$ are homotopic in the orbispace $(S^2,A,\ord)$:
\begin{defn}[Homotopy pseudo-orbits]
\label{defn:HomPsOrbit}
  Let $f\colon (S^2,A,\ord)\selfmap$ be an orbisphere self-map, and
  let $I$ be a finite index set together with an index map also written
  $f\colon I\selfmap$.

  An \emph{$I$-homotopy pseudo-orbit} is a collection of paths
  \[(\beta_i)_{i\in I} \text{ with } \beta_i\colon[0,1]\to S^2\setminus A \text{ satisfying }\beta_{f(i)}(0)=f(\beta_i(1)).
  \]
  Two homotopy pseudo-orbits $(\beta_i)_{i\in I}$ and
  $(\beta'_i)_{i\in I}$ are \emph{homotopic}, written
  $(\beta_i)_{i\in I}\approx_{A,\ord}(\beta'_i)_{i\in I}$, if
  $\beta_i\approx_{A,\ord}\beta'_i$ for all $i\in I$. In particular,
  $\beta_i(0)=\beta'_i(0)$ and $\beta_i(1)=\beta'_i(1)$.

  Two homotopy pseudo-orbits $(\beta_i)_{i\in I}$ and
  $(\gamma_i)_{i\in I}$ are \emph{conjugate}, written
  $(\beta_i)_{i\in I}\sim(\gamma_i)_{i\in I}$, if there is a
  collection of paths $(\ell_i)_{i\in I}$ with
  \[\ell_i(0)=\beta_i(0),\quad \ell_i(1)=\gamma_i(0)\text{ and
    }\beta_i\#\ell_{f(i)}\lift{f}{\beta_i(1)}\approx_{A,\ord}\ell_i\#\gamma_i.
  \]
  Poly-orbits are special cases of homotopy pseudo-orbits, in which
  the paths $\beta_i$ are all constant.
\end{defn}

\begin{rem}
  A homotopy pseudo orbit can also be defined for an infinite $I$ with
  the assumption that $(\ell_i)_{i\in I}$ and $(\beta_i)_{i\in I}$ in
  Definition~\ref{defn:HomPsOrbit} have uniformly bounded length. Then
  Theorem~\ref{thm:ShadowEquivalence} still holds.
\end{rem}

In a homotopy pseudo-orbit, the curves $(\beta_i)_{i\in I}$ encode
homotopically the difference between $x_i\coloneqq\beta_i(0)$ and a
choice of preimage of $x_{f(i)}$. Note that Ishii-Smillie define
in~\cite{ishii-smillie:shadowing}*{Definition~6.1} homotopy
pseudo-orbits by paths $\overline{\beta_i}$ connecting $f(x_i)$ to
$x_{f(i)}$; their $\overline{\beta_i}$ may be uniquely lifted to paths
$\beta_i$ from $x_i$ to an $f$-preimage of $x_{f(i)}$ as in
Definition~\ref{defn:HomPsOrbit}, but our definition does not reduce
to theirs, since our $\beta_i$ are defined rel $(A,\ord)$, not rel
$(f^{-1}(A),f^*(\ord))$.

\begin{center}\begin{tikzpicture}[decoration={random steps,segment length=5pt,amplitude=2pt},inner sep=1pt,->]
  \begin{scope}[yshift=1.5cm]
    \node (x_i) at (0,0) [label=left:$x_i$] {$\cdot$};
    \node (x_j') at (2.5,1) {};
    \node (x_j) at (3,0) [label=left:$x_{f(i)}$] {$\cdot$};
    \node (x_k') at (5.5,1) {};
    \node (x_k) at (6,0) [label=right:$x_{f^2(i)}$] {$\cdot$};
    \draw[decorate] (x_i) -- node[above left] {$\beta_i$} (x_j');
    \draw[decorate] (x_j) -- node[above left] {$\beta_{f(i)}$} (x_k');
    \draw[dotted] (x_j') -- node [right] {$f$} (x_j);
    \draw[dotted] (x_k') -- node [right] {$f$} (x_k);
  \end{scope}
  \node (y_i) at (0,0) [label=left:$y_i$] {$\cdot$};
  \node (y_j') at (2.5,1) {};
  \node (y_j) at (3,0) [label=left:$y_{f(i)}$] {$\cdot$};
  \node (y_k') at (5.5,1) {};
  \node (y_k) at (6,0) [label=right:$y_{f^2(i)}$] {$\cdot$};
  \draw[decorate] (y_i) -- node[below right] {$\gamma_i$} (y_j');
  \draw[decorate] (y_j) -- node[below right] {$\gamma_{f(i)}$} (y_k');
  \draw[dotted] (y_j') -- node [left] {$f$} (y_j);
  \draw[dotted] (y_k') -- node [left] {$f$} (y_k);
  \draw[decorate] (x_i) -- node [left] {$\ell_i$} (y_i);
  \draw[decorate] (x_j) -- node [right] {$\ell_{f(i)}$} (y_j);
  \draw[decorate] (x_k) -- node [right] {$\ell_{f^2(i)}$} (y_k);
\end{tikzpicture}\end{center}

Choose a length metric on $S^2$, and define the distance between homotopy
pseudo-orbits by
\begin{equation}
  d\big((\beta_i)_{i\in I},(\gamma_i)_{i\in I}\big)\coloneqq\max_{i\in I,t\in[0,1]} d(\gamma_i(t),\beta_i(t)).
\end{equation}

The following result states that the set of conjugacy classes of
homotopy pseudo-orbits is discrete:
\begin{lem}[Discreteness]\label{lmm:ShadowPsOrb}
  Let $(\beta_i)_{i\in I}$ be a homotopy pseudo-orbit of
  $f\colon(S^2,A,\ord)\selfmap$. Then there is an $\varepsilon> 0$
  such every homotopy pseudo-orbit at distance less than $\varepsilon$
  from $(\beta_i)_{i\in I}$ is conjugate to it.
\end{lem}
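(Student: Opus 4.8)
The plan is to take the conjugating paths $(\ell_i)_{i\in I}$ of Definition~\ref{defn:HomPsOrbit} to be short geodesics joining corresponding basepoints, and to verify the required homotopy by a thin--neighbourhood argument. Fix a smooth Riemannian metric on $S^2$; since $S^2$ is compact, the given length metric and this one define the same notion of ``smallness'', so it suffices to argue with the latter (up to shrinking $\varepsilon$). The one non-formal observation is that \emph{no $\beta_i(1)$ is a critical point of $f$}: indeed $f(\beta_i(1))=\beta_{f(i)}(0)$ lies in $S^2\setminus A$, whereas every critical value of $f$ lies in $A$. As $I$ is finite, we may therefore fix $\rho>0$ such that $\rho$ is smaller than the convexity radius of the metric and than the distance from $\bigcup_{i}\beta_i([0,1])$ to $A$, and such that for each $i$ the map $f$ is injective on the ball $B(\beta_i(1),\rho)$ with image containing $B(f(\beta_i(1)),\rho')$ for some uniform $\rho'>0$. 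Put $\varepsilon\coloneqq\tfrac12\min(\rho,\rho')$.

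Now let $(\gamma_i)_{i\in I}$ be a homotopy pseudo-orbit with $d\big((\beta_i)_{i\in I},(\gamma_i)_{i\in I}\big)<\varepsilon$, and for each $i$ let $\ell_i$ be the unique geodesic from $\beta_i(0)$ to $\gamma_i(0)$; it has length $<\varepsilon$ and, since $\varepsilon<\rho$, stays in $S^2\setminus A$, and $\ell_i(0)=\beta_i(0)$, $\ell_i(1)=\gamma_i(0)$ as required. I claim $(\ell_i)_{i\in I}$ witnesses $(\beta_i)_{i\in I}\sim(\gamma_i)_{i\in I}$. Fix $i$, and let $m_i$ be the geodesic from $\beta_i(1)$ to $\gamma_i(1)$. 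The map $H(s,t)\coloneqq\big(\text{point at time }s\text{ on the geodesic from }\beta_i(t)\text{ to }\gamma_i(t)\big)$ is well defined because all these geodesics are shorter than the convexity radius, stays within distance $\varepsilon$ of $\beta_i([0,1])$ and hence avoids $A$; reading $H$ along the boundary of the unit square yields $\beta_i\#m_i\approx\ell_i\#\gamma_i$ rel endpoints in $S^2\setminus A$.

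It remains to show $m_i\approx\ell_{f(i)}\lift{f}{\beta_i(1)}$ rel endpoints in $S^2\setminus A$. The path $\ell_{f(i)}$ has length $<\varepsilon\le\rho'$ and starts at $\beta_{f(i)}(0)=f(\beta_i(1))$, so it is contained in $B(f(\beta_i(1)),\rho')\subseteq f\big(B(\beta_i(1),\rho)\big)$; therefore its $f$-lift starting at $\beta_i(1)$ equals $\big(f\restrict{B(\beta_i(1),\rho)}\big)^{-1}\circ\ell_{f(i)}$, lies in $B(\beta_i(1),\rho)$, and ends at the unique preimage of $\gamma_{f(i)}(0)=f(\gamma_i(1))$ in that ball, which is $\gamma_i(1)$ since $d(\gamma_i(1),\beta_i(1))<\varepsilon<\rho$. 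Thus $m_i$ and $\ell_{f(i)}\lift{f}{\beta_i(1)}$ are two paths from $\beta_i(1)$ to $\gamma_i(1)$ inside the ball $B(\beta_i(1),\rho)$, which is simply connected and disjoint from $A$, hence homotopic there. Combining with the previous paragraph gives $\beta_i\#\ell_{f(i)}\lift{f}{\beta_i(1)}\approx\ell_i\#\gamma_i$ rel endpoints in $S^2\setminus A$, \emph{a fortiori} rel $(A,\ord)$, which is precisely the relation defining conjugacy of $(\beta_i)_{i\in I}$ and $(\gamma_i)_{i\in I}$.

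I expect the only genuine subtlety to be the bookkeeping around the lift $\ell_{f(i)}\lift{f}{\beta_i(1)}$: one must know that it is short and terminates exactly at $\gamma_i(1)$, which is where the non-criticality of the $\beta_i(1)$ and the uniform local-injectivity radius of $f$ (available because $I$ is finite) are used. Everything else is the elementary fact that two curves that are pointwise $\varepsilon$-close cobound a homotopy confined to an $\varepsilon$-neighbourhood, which avoids $A$ once $\varepsilon$ is smaller than the distance from $\bigcup_i\beta_i([0,1])$ to $A$.
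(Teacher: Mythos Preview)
Your proof is correct and follows the same strategy as the paper's: choose short connecting paths $\ell_i$ from $\beta_i(0)$ to $\gamma_i(0)$ and use local contractibility of $S^2\setminus A$ to produce the required homotopy. The paper's argument is much terser (it sets $\varepsilon=\min_{a\neq b\in A}d(a,b)$ and appeals directly to local contractibility), whereas you spell out the choice of $\varepsilon$ relative to the distance from $\bigcup_i\beta_i([0,1])$ to $A$ and to a local injectivity radius for $f$ at the non-critical points $\beta_i(1)$, and verify explicitly that the lift $\ell_{f(i)}\lift{f}{\beta_i(1)}$ terminates at $\gamma_i(1)$; these are exactly the details that make the argument airtight.
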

\begin{proof}
  Set $\varepsilon=\min_{a\neq b\in A}d(a,b)$. Consider a homotopy
  pseudo-orbit $(\gamma_i)_{i\in I}$ at distance $\delta<\varepsilon$
  from $(\beta_i)_{i\in I}$. Connect $\beta_i(0)$ to $\gamma_i(0)$ by
  a path $\ell_i$ of length at most $\delta$. Since $S\setminus A$ is
  locally contractible space, the curve $\ell_i$ is unique up to
  homotopy and
  $\beta_i\#\ell_{f(i)}\lift{f}{\beta_i(1)}\approx_{A,\ord}\ell_i\#\gamma_i$.
\end{proof}

We recall that $A^{\infty}$ denotes the union of all periodic cycles
containing critical points.
\begin{defn}[Shadowing]
\label{defn:Shadowing}
  A homotopy pseudo-orbit $(\beta_i)_{\in I}$ \emph{shadows rel
    $(A,\ord)$} a poly-orbit $(p_i)_{i\in I}$ in
  $S\setminus A^{\infty}$ if they are conjugate; namely if there are
  curves $\ell_i$ connecting $\beta_i(0)$ to $p_i$ that lie in
  $S^2\setminus A$ except possibly their endpoints and such that for
  every $i\in I$ we have
  \[\ell_i\approx_{(A,\ord)} \beta_i\#\ell_{f(i)}\lift{f}{\beta_i(1)}.\qedhere\]
\end{defn}

\begin{lem}
  The homotopy pseudo-orbit $(\beta_i)_{\in I}$ shadows the poly-orbit
  $(p_i)_{i\in I}$ if and only if every neighbourhood
  $(\mathcal U_i)_{i\in I}$ of $(p_i)_{i\in I}$ contains a homotopy
  pseudo-orbit $(\beta'_i)_{i\in I}$ conjugate to
  $(\beta_i)_{i\in I}$; namely, $\beta'_i\subset\mathcal U_i\ni p_i$
  for all $i\in I$.
\end{lem}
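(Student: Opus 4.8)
The statement is an "$\varepsilon$–$\delta$" reformulation of the shadowing relation from Definition~\ref{defn:Shadowing}, and the proof is a straightforward unwinding of definitions together with the discreteness lemma. The plan is to prove the two implications separately.

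\textbf{($\Rightarrow$).} Suppose $(\beta_i)_{i\in I}$ shadows $(p_i)_{i\in I}$, witnessed by curves $\ell_i$ from $\beta_i(0)$ to $p_i$ with $\ell_i\approx_{(A,\ord)}\beta_i\#\ell_{f(i)}\lift f{\beta_i(1)}$. Let $(\mathcal U_i)_{i\in I}$ be neighbourhoods of the $p_i$; shrinking them we may assume each $\mathcal U_i$ is contractible in $S^2\setminus A$ (except possibly at $p_i$ itself, if $p_i\in A^\infty\cap A$; but $p_i\in S^2\setminus A^\infty$, and $p_i$ may still lie in $A$ only if... in fact $p_i\notin A$ since $\ell_i$ lies in $S^2\setminus A$ except at its endpoint $p_i$, and a poly-orbit in $S^2\setminus A^\infty$ avoiding critical cycles may still meet $A$; to be safe take $\mathcal U_i\subseteq S^2\setminus A$ when $p_i\notin A$ and a contractible neighbourhood otherwise). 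Now I would define $\beta'_i$ by pushing $\beta_i$ along $\ell_i$: concretely, conjugate the homotopy pseudo-orbit $(\beta_i)$ by the curves $(\ell_i)$ themselves to land exactly at the poly-orbit, i.e.\ set $\beta'_i$ to be the constant path at $p_i$ — but that may fail the compatibility $\beta'_{f(i)}(0)=f(\beta'_i(1))$ unless $(p_i)$ is a genuine poly-orbit, which it is. So in fact the cleanest choice is $\beta'_i:=$ constant at $p_i$ when one merely wants \emph{some} pseudo-orbit in $\mathcal U_i$ conjugate to $(\beta_i)$: conjugacy of $(\beta_i)$ and the poly-orbit $(p_i)$ is exactly the shadowing hypothesis. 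Since the constant paths at $p_i$ lie in every $\mathcal U_i$, this gives the required $(\beta'_i)$.

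\textbf{($\Leftarrow$).} Conversely, suppose every neighbourhood system $(\mathcal U_i)$ of $(p_i)$ contains a pseudo-orbit $(\beta'_i)_{i\in I}$, with $\beta'_i\subset\mathcal U_i$, conjugate to $(\beta_i)$. Take the neighbourhoods $\mathcal U_i$ of radius $\min(\varepsilon,\rho)/3$ around $p_i$, where $\varepsilon$ is the discreteness constant of Lemma~\ref{lmm:ShadowPsOrb} applied to the poly-orbit $(p_i)$ (viewed as a homotopy pseudo-orbit of constant paths) and $\rho=\min_{a\neq b\in A}d(a,b)$; then $d\big((\beta'_i)_{i\in I},(p_i)_{i\in I}\big)<\varepsilon$, so by Lemma~\ref{lmm:ShadowPsOrb} the pseudo-orbit $(\beta'_i)$ is conjugate to the poly-orbit $(p_i)$. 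Since conjugacy of homotopy pseudo-orbits is an equivalence relation (transitivity being a routine concatenation of the connecting curves $\ell_i$ and lifting, using that $f$ is a covering away from $A$), and $(\beta_i)\sim(\beta'_i)\sim(p_i)$, we conclude $(\beta_i)\sim(p_i)$, i.e.\ $(\beta_i)$ shadows $(p_i)$ by Definition~\ref{defn:Shadowing}.

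\textbf{Main obstacle.} The only genuinely non-formal ingredient is Lemma~\ref{lmm:ShadowPsOrb} (discreteness), which is already available, so the expected difficulty is merely bookkeeping: making sure the neighbourhoods are chosen small enough that the connecting curves $\ell_i$ of length $<\varepsilon$ are homotopically well-defined, and verifying transitivity of the conjugacy relation $\sim$ — the latter requires composing two conjugating families $(\ell_i),(\ell'_i)$ into $(\ell_i\#\ell'_i)$ and checking the compatibility $\beta_i\#(\ell_{f(i)}\#\ell'_{f(i)})\lift f{\beta_i(1)}\approx_{A,\ord}(\ell_i\#\ell'_i)\#\delta_i$, which follows by lifting the homotopy through the covering $f\colon S^2\setminus f^{-1}(A)\to S^2\setminus A$ and using the two given conjugacies in turn. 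None of this is deep; the lemma is essentially a restatement for later use in~\S\ref{ss:shadowing}.
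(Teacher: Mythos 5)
Your argument is essentially the paper's own (very terse) proof: the forward implication comes straight from the definition of shadowing, and the reverse implication combines the discreteness Lemma~\ref{lmm:ShadowPsOrb} with transitivity of the conjugacy relation, exactly as the paper does. One small wrinkle: since Definition~\ref{defn:Shadowing} allows $p_i\in A\setminus A^\infty$, the constant path at such a $p_i$ is not a homotopy pseudo-orbit in the sense of Definition~\ref{defn:HomPsOrbit}, so in the forward direction you should instead conjugate $(\beta_i)_{i\in I}$ by the truncated connecting paths $\ell_i|_{[0,1-\epsilon]}$, which produces a genuine pseudo-orbit inside $\mathcal U_i\setminus A$ --- a purely cosmetic fix.
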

\begin{proof}
  If $(\beta_i)$ can be conjugated into a small enough neighbourhood
  of $(p_i)_{i\in I}$, then it is conjugate to $(p_i)_{i\in I}$ by
  Lemma~\ref{lmm:ShadowPsOrb}. The converse is obvious.
\end{proof}

\begin{prop}\label{prop:ShadAndExtraPts}
  Suppose $f\colon(S^2,A,\ord)\selfmap$ is a geometric non-invertible
  map. Then a periodic pseudo-orbit $(\beta_i)_{i\in I}$ shadows
  $(p_i)_{i\in I}$ rel $(A,\ord)$ if and only if $(\beta_i)_{i\in I}$
  shadows $(p_i)_{i\in I}$ rel $(P_f,\ord_f)$.
\end{prop}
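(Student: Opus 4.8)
The plan is to treat the two implications separately: the one from $(A,\ord)$ to $(P_f,\ord_f)$ is formal, while the converse is where the geometry of $f$ enters, through the expansion estimate of Corollary~\ref{lem:IrratOfDynam2} and its B\"ottcher‑expanding counterpart. For the easy direction, recall that $P_f\subseteq A$ and $\ord_f(a)\mid\ord(a)$, so the forgetful morphism $\pi_1(S^2,A,\ord,*)\twoheadrightarrow\pi_1(S^2,P_f,\ord_f,*)$ is defined, and hence $\gamma\approx_{(A,\ord)}\gamma'$ implies $\gamma\approx_{(P_f,\ord_f)}\gamma'$. If curves $(\ell_i)_{i\in I}$ witness that $(\beta_i)_{i\in I}$ shadows $(p_i)_{i\in I}$ rel $(A,\ord)$, then each $\ell_i$ lies in $S^2\setminus A\subseteq S^2\setminus P_f$ off its endpoints and the identities $\ell_i\approx_{(A,\ord)}\beta_i\#(\ell_{f(i)})\lift{f}{\beta_i(1)}$ hold a fortiori rel $(P_f,\ord_f)$; so the very same $(\ell_i)$ witness shadowing rel $(P_f,\ord_f)$. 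This half uses neither geometry nor non‑invertibility.

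For the reverse direction I would first reduce, since a periodic index set is a disjoint union of cycles and shadowing is tested on each, to $I=\Z/n\Z$ with $f(i)=i+1$. Suppose $(\ell^0_i)_{i\in I}$ witnesses that $(\beta_i)$ shadows $(p_i)$ rel $(P_f,\ord_f)$; I want curves witnessing it rel $(A,\ord)$. The poly‑orbit $(p_i)$ singles out, for every $i$ and $N$, a branch of $f^{-N}$ carrying $p_{i+N}$ to $p_i$; along it put
\[\ell_i\;\coloneqq\;\beta_i\;\#\;(\beta_{i+1})\lift{f}{\beta_i(1)}\;\#\;(\beta_{i+2})\lift{f^{2}}{\cdot}\;\#\;\cdots\;\#\;(\beta_{i+N-1})\lift{f^{N-1}}{\cdot}\;\#\;(\ell^0_{i+N})\lift{f^{N}}{\cdot},\]
each lift started where the preceding piece ends; this is a path from $\beta_i(0)$ to $p_i$ lying in $S^2\setminus f^{-N}(A)\subseteq S^2\setminus A$ off its endpoints, with $N$ to be fixed. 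Cancelling the common prefix $\beta_i\#(\beta_{i+1})\lift{f}{\cdot}\#\cdots\#(\beta_{i+N-1})\lift{f^{N-1}}{\cdot}$ in $\ell_i$ and in $\beta_i\#(\ell_{i+1})\lift{f}{\beta_i(1)}$ reduces the required relation $\ell_i\approx_{(A,\ord)}\beta_i\#(\ell_{i+1})\lift{f}{\beta_i(1)}$ to the assertion that the $f^N$‑lift, along the chosen branch, of the loop
\[\delta_i\;\coloneqq\;(\ell^0_{i+N})^{-1}\#\beta_{i+N}\#(\ell^0_{i+1+N})\lift{f}{\beta_{i+N}(1)}\]
based at $p_{i+N}$ is trivial rel $A$.

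Now $\delta_i$ is, by the rel $(P_f,\ord_f)$ shadowing relation at index $i+N$, trivial rel $(P_f,\ord_f)$; and since an $f$‑lift of a curve (which stays in $S^2\setminus A^\infty$, where the minimal‑orbifold metric lives) stretches its length by at most a fixed factor $C$ depending only on $f$ — the reciprocal of the contracting eigenvalue of $M$ in the \Tor\ case, $\le 1$ in the \Exp\ case — the length $|\delta_i|$ is bounded uniformly in $i$ by a constant depending only on $f$ and on $L\coloneqq\max_i(|\beta_i|+|\ell^0_i|)$. Taking $N=nm$ with $m\succ\log L'$ forces $p_{i+N}=p_i$, so $\delta_i$ is a loop based at the periodic point $p_i$, and Corollary~\ref{lem:IrratOfDynam2} in the \Tor\ case — respectively its analogue for B\"ottcher‑expanding maps, obtained from~\cite{bartholdi-dudko:bc4} exactly as Corollary~\ref{lem:IrratOfDynam2} is obtained from Lemma~\ref{lem:IrratOfDynam} — shows that $\delta_i\lift{f^{nm}}{p_i}$ is trivial rel $A$ for this uniform choice of $N$. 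Hence $(\ell_i)_{i\in I}$ witnesses shadowing rel $(A,\ord)$.

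The main obstacle is concentrated in this last expansion estimate: it is precisely the place where the hypothesis that $f$ is geometric non‑invertible is indispensable, and everything else is bookkeeping that nevertheless must be written with care — one must check that the branch of $f^{-N}$ selected by $(p_i)$ makes every lift appearing in $\ell_i$ and in $\delta_i\lift{f^{N}}{\cdot}$ close up coherently (using $f(p_i)=p_{i+1}$ together with $f(\beta_i(1))=\beta_{i+1}(0)$ and the built‑in consistency of the $(\ell^0_i)$), that the length bound and hence $N$ are genuinely independent of $i$, and the small amount of extra care needed when some $p_i$ lies in $A\setminus A^\infty$, so that loops based at it must be read relative to that marked endpoint.
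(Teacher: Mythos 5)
Your proof is correct (the endpoint-coherence and marked-endpoint checks you flag do go through, by lifting rel-$(P_f,\ord_f)$ homotopies through $f$ viewed as a self-covering of the minimal orbifold, and they are left at the same level of detail in the paper's own argument), and it is essentially the paper's proof: both arguments pull the rel-$(P_f,\ord_f)$-trivial discrepancy loops back through high iterates of $f$ along the branch determined by the periodic orbit and kill them rel $(A,\ord)$, by shrinking diameters in the B\"ottcher-expanding case and by Corollary~\ref{lem:IrratOfDynam2} in the exceptional case. The only difference is bookkeeping: the paper runs a step-by-step pullback iteration, conjugating the pseudo-orbit and the connecting curves $(\beta_i,\ell_i)$ simultaneously and observing that the error loops are degree-one preimages of one another, whereas you telescope that iteration into a single long concatenation defining the new $\ell_i$ directly.
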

\begin{proof}
  Clearly if $(\beta_i)_{i\in I}$ shadows an orbit $(p_i)_{i\in I}$
  rel $(A,\ord)$ then $(\beta_i)_{i\in I}$ shadows $(p_i)_{i\in I}$
  rel $(P_f,\ord_f)$. Conversely, suppose $(\beta_i)_{i\in I}$ shadows
  an orbit $(p_i)_{i\in I}$ rel $(P_f,\ord_f)$, so there are paths
  $\ell_i$ connecting $\beta_i(0)$ to $p_i$ with
  $\ell_i\approx_{P_f,\ord_f} \beta_i\#
  \ell_{f(i)}\lift{f}{\beta_i(1)}$. Thus
  $\ell_i^{-1}\#\beta_i\#\ell_{f(i)}\lift{f}{\beta_i(1)}$ is a loop
  which is trivial rel $(P_f,\ord_f)$, but may not be trivial rel $A$.

  Consider the following pullback iteration. Set
  $(\beta^0_i)_{i\in I}\coloneqq(\beta_i)_{i\in I}$ and
  $(\ell^{0}_i)_{i\in I}\coloneqq(\ell^{0}_i)_{i\in I}$. Define
  \[(\beta^n_i)_{i\in I}\coloneqq\big(\beta^{n-1}_{f(i)}\lift{f}{\beta^{n-1}_i(1)}\big)_{i\in I}
    \text{ and }(\ell^{n}_i)_{i\in I}\coloneqq\big(\ell^{n-1}_{f(i)}\lift{f}{\beta_i(1)}\big)_{i\in I}.
  \]
  Clearly the $(\beta^n_i)_{i\in I}$ are all conjugate.  Observe that
  $(\ell_i^n)^{-1}\#\beta_i\#\ell^n_{f(i)}\lift{f}{\beta_i(1)}$ is a
  loop passing through $p_i$ and
  $(\ell_i^n)^{-1}\#\beta_i\#\ell^n_{f(i)}\lift{f}{\beta_i(1)}$ is a
  degree-$1$ preimage of
  $(\ell_i^{n-1})^{-1}\#\beta_i\#\ell^{n-1}_{f(i)}\lift{f}{\beta_i(1)}$.

  If $f$ is expanding, then the diameter of
  $(\ell_i^n)^{-1}\#\beta_i\#\ell^n_{f(i)}\lift{f}{\beta_i(1)}$
  tends to $0$ exponentially fast, hence
  $(\ell_i^n)^{-1}\#\beta_i\#\ell^n_{f(i)}\lift{f}{\beta_i(1)}$ is
  trivial rel $(A,\ord)$ for all sufficiently large $n$, and the claim
  follows.

  If $f$ is exceptional, then
  $(\ell_i^n)^{-1}\#\beta_i\#\ell^n_{f(i)}\lift{f}{\beta_i(1)}$ is
  trivial rel $(A,\ord)$ for all sufficiently large $n$ by
  Corollary~\ref{lem:IrratOfDynam2}.
\end{proof}

At one extreme, homotopy pseudo-orbits include poly-orbits,
represented as constant paths. At the other extreme, homotopy
pseudo-orbits may be assumed to consist of paths all starting at the
basepoint $*$. As such, these paths represent elements of the biset
$B(f)$ of $f$, see~\ref{def:B(f)}.

\subsection{Symbolic orbits}
We shall be interested in marking periodic orbits of regular
points. These are conveniently encoded in the following simplification
of portraits of bisets (in which the subbisets are singletons and
therefore represented simply as elements):
\begin{defn}[Symbolic orbits]\label{def:symbolic}
  Let $I$ be a finite index set with self-map $f\colon I\selfmap$, and let
  $B$ be a $G$-biset. An \emph{$I$-symbolic orbit} is a sequence
  $(b_i)_{i\in I}$ of elements of $B$, and two $I$-symbolic orbits
  $(b_i)_{i\in I}$ and $(c_i)_{i\in I}$ are \emph{conjugate} if there
  exists a sequence $(g_i)_{i\in I}$ in $G$ with
  $g_i c_i=b_i g_{f(i)}$ for all $i\in I$.
\end{defn}

\begin{lem}
  Every homotopy pseudo-orbit can be conjugated to a symbolic orbit,
  unique up to conjugacy, in $B(f,A,\ord)$.
\end{lem}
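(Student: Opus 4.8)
The plan is to pass through the biset $B(f)\coloneqq B(f,A,\ord)$ via the dictionary between paths in $S^2\setminus A$ rel $(A,\ord)$ and elements of $B(f)$, together with the basepoint choices already fixed. First I would recall that an element of $B(f)$ is a homotopy class rel $(A,\ord)$ of a path $\beta\colon[0,1]\to S^2\setminus A$ with $\beta(0)=*$ and $f(\beta(1))=*$; here $*$ is the chosen basepoint. Given a homotopy pseudo-orbit $(\beta_i)_{i\in I}$ with $\beta_i\colon[0,1]\to S^2\setminus A$ and $\beta_{f(i)}(0)=f(\beta_i(1))$, the first step is simply to conjugate each $\beta_i$ so that it starts at $*$: choose for each $i$ a path $\ell_i$ in $S^2\setminus A$ from $*$ to $\beta_i(0)$, and replace $(\beta_i)_{i\in I}$ by the conjugate pseudo-orbit $(\ell_i\#\beta_i\#\ell_{f(i)}^{-1}\lift f{\beta_i(1)})_{i\in I}$ — wait, more precisely by the pseudo-orbit whose $i$-th term is $\ell_i\#\beta_i$, but then one must also adjust the endpoints. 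The clean way is: define $b_i\coloneqq\ell_i\#\beta_i\#(\ell_{f(i)}\lift f{\beta_i(1)})^{-1}$, which is a path from $*$ whose $f$-image-endpoint condition says exactly $f$ of its endpoint is $*$, so $b_i\in B(f)$. The compatibility $\beta_{f(i)}(0)=f(\beta_i(1))$ is precisely what makes the $f$-lift $\ell_{f(i)}\lift f{\beta_i(1)}$ well-defined, so each $b_i$ is a legitimate element of $B(f)$.

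Next I would check that this assignment is a conjugacy of homotopy pseudo-orbits in the sense of Definition~\ref{defn:HomPsOrbit}, i.e.\ that the collection $(\ell_i)$ witnesses $(\beta_i)_{i\in I}\sim(b_i)_{i\in I}$: one has $\ell_i(0)=*$... actually one uses the $\ell_i$ themselves as the conjugating paths, and verifies $\beta_i\#\ell_{f(i)}\lift f{\beta_i(1)}\approx_{A,\ord}\ell_i\# b_i$, which holds by construction since $b_i$ was defined exactly so that $\ell_i\#b_i=\ell_i\#\ell_i\#\beta_i\#(\ell_{f(i)}\lift f{\beta_i(1)})^{-1}$... here I need to be careful that $\ell_i\#\ell_i^{-1}$ cancels, so the relation reduces to an identity up to homotopy rel $(A,\ord)$. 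Since $(b_i)_{i\in I}$ is a pseudo-orbit all of whose paths start at $*$, and paths in $B(f)$ rel $(A,\ord)$ are exactly elements of the biset, $(b_i)_{i\in I}$ is by definition a symbolic orbit in $B(f)$ (Definition~\ref{def:symbolic}), with the index self-map $f\colon I\selfmap$.

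For uniqueness up to conjugacy I would argue as follows. Conjugacy of homotopy pseudo-orbits is transitive and symmetric (this is essentially Lemma~\ref{lmm:ShadowPsOrb}'s ambient setup, or can be checked directly from the cocycle-type relation in Definition~\ref{defn:HomPsOrbit}), so two symbolic orbits obtained from the same $(\beta_i)$ by two choices of conjugating paths $(\ell_i),(\ell_i')$ are conjugate to each other as homotopy pseudo-orbits; and when two symbolic orbits (all paths starting at $*$) are conjugate as homotopy pseudo-orbits, the conjugating paths $(m_i)$ are loops at $*$, hence elements $g_i\coloneqq[m_i]\in G=\pi_1(S^2,A,\ord,*)$, and the defining relation $\beta_i\#m_{f(i)}\lift f{\beta_i(1)}\approx_{A,\ord}m_i\#\gamma_i$ becomes exactly $g_i c_i=b_i g_{f(i)}$ in $B(f)$ after translating concatenation and $f$-lifting into the left and right $G$-actions on $B(f)$. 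Thus the symbolic orbit is well-defined up to the conjugacy of Definition~\ref{def:symbolic}. The main obstacle is bookkeeping: one must keep straight which concatenations land in $S^2\setminus A$ versus merely touch $A$ at endpoints, and verify that the $f$-lifts $\lift f{\beta_i(1)}$ used in the definition of $b_i$ agree with the $f$-lifts appearing in the biset's right action — this is where the compatibility condition $\beta_{f(i)}(0)=f(\beta_i(1))$ and the standing assumption $\#A\ge3$ (so that $G$ acts freely on the right, giving uniqueness of representatives) are used.
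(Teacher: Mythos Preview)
Your approach is essentially identical to the paper's: choose connecting paths $\ell_i$ between the basepoint $*$ and $\beta_i(0)$, form $b_i\in B(f)$ by concatenating $\ell_i$, $\beta_i$, and the appropriate $f$-lift of $\ell_{f(i)}$, then observe that a different choice $(\ell'_i)$ differs by loops $g_i\in G$, giving a conjugate symbolic orbit. The only slip is the orientation convention in your formula for $b_i$: with $\ell_{f(i)}$ running from $*$ to $\beta_{f(i)}(0)$, the notation $\ell_{f(i)}\lift f{\beta_i(1)}$ is ill-formed since $\beta_i(1)$ is not an $f$-preimage of $\ell_{f(i)}(0)=*$; the paper avoids this by taking $\ell_i$ from $\beta_i(0)$ to $*$, so that $\ell_{f(i)}\lift f{\beta_i(1)}$ is the lift starting at $\beta_i(1)$ and the concatenation $\gamma_i=\ell_i^{-1}\#\beta_i\#\ell_{f(i)}\lift f{\beta_i(1)}$ lands directly in $B(f)$. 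Your final remark about $\#A\ge3$ is unnecessary here.
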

\begin{proof}
  Given $(\beta_i)_{i\in I}$ a homotopy pseudo-orbit, choose paths
  $\ell_i$ from $\beta_i(0)$ to $*$ and define
  $\gamma_i=\ell_i^{-1}\#\beta_i\#\ell_{f(i)}\lift{f}{\beta_i(1)}$. Then
  $\gamma_i\in B(f,A,\ord)$ and
  $(\beta_i)_{i\in I}\sim(\gamma_i)_{i\in I}$. Furthermore another
  choice of paths $(\ell'_i)_{i\in I}$ differs from
  $(\ell_i)_{i\in I}$ by $\ell'_i=\ell_i g_i$ for some $g_i$, so the
  symbolic orbits $(\gamma_i)_{i\in I}$ and
  $((\ell'_i)^{-1}\#\beta_i\#\ell'_{f(i)}\lift{f}{\beta_i(1)})_{i\in I}$
  are conjugate.
\end{proof}

Let $f\colon(S^2,P_f,\ord_f)\selfmap$ be an expanding map, and let
$\subscript G B_G$ be its biset. Recall that the Julia set $\Julia(f)$
of $f$ is the accumulation set of preimages of a generic point $*$,
\[\Julia(f)\coloneqq\overline{\bigcap_{n\ge0}\bigcup_{m\ge n}f^{-m}(*)}.
\]
Every bounded sequence $b_0 b_1\cdots\in B^{\otimes\infty}$ defines an
element of $\Julia(f)$ as follows: set $c_0=b_0$ and
$c_i=c_{i-1}\#b_i\lift{f^i}{c_{i-1}(1)}$ for all $i\ge1$; then
$\lim_{n\to\infty}c_n(1)$ converges to a point
$\rho(b_0 b_1\cdots)\in\Julia(f)$. The following proposition directly
follows from the definition:
\begin{prop}[Expanding case]\label{prop:ShadExpCase}
  Suppose $f\colon(S^2,P_f,\ord_f)\selfmap$ is an expanding map with
  orbisphere biset $\subscript G B_G$, and let $(b_i)_{i\in I}$ be a
  finite symbolic orbit. Let $\Sigma$ be a generating set of
  $\subscript G B_G$ containing all $b_i$ and let
  $\rho\colon\Sigma^{+\infty}\to\Julia(f)$ be the symbolic encoding
  defined above. Then $(b_i)_{i\in I}$ shadows
  $(\rho(b_i b_{f(i)}) b_{f^2(i)}\dots)_{i\in I}$.\qed
\end{prop}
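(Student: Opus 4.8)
The plan is to unwind the definitions on both sides and observe that the two descriptions of ``shadowing'' coincide, term by term, via the symbolic encoding $\rho$. First I would recall that, by the construction preceding the statement, for a symbolic orbit $(b_i)_{i\in I}$ the point $p_i \coloneqq \rho(b_i\, b_{f(i)}\, b_{f^2(i)}\cdots)$ is the limit $\lim_{n\to\infty} c^{(i)}_n(1)$, where $c^{(i)}_0 = b_i$ and $c^{(i)}_n = c^{(i)}_{n-1} \# b_{f^n(i)}\lift{f^n}{c^{(i)}_{n-1}(1)}$. The key observation is that this family $(p_i)_{i\in I}$ is genuinely $f$-equivariant, i.e.\ a poly-orbit: since $\rho$ intertwines the shift on $\Sigma^{+\infty}$ with $f$ on $\Julia(f)$ (this is immediate from the recursion defining $c^{(i)}_n$, as the tail $b_{f(i)}\, b_{f^2(i)}\cdots$ of the sequence indexing $p_i$ is exactly the sequence indexing $p_{f(i)}$), we get $f(p_i) = p_{f(i)}$, so $(p_i)_{i\in I}$ is a poly-orbit in $S^2\setminus A^\infty$ (one uses here that $f$ is expanding, hence no critical periodic cycles other than those in $A^\infty$, so the $p_i$ avoid $A^\infty$).

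Next I would produce the conjugating paths $\ell_i$ required by Definition~\ref{defn:Shadowing}. Write each $b_i\in B(f)$ as a path starting at $*$ and ending at a preimage of $*$; the natural candidate is $\ell_i \coloneqq \lim_{n\to\infty} c^{(i)}_n$, a path from $*$ to $p_i$ (convergence rel endpoints is guaranteed by expansion, exactly as in the definition of $\rho$). It then remains to check the cocycle identity
\[
  \ell_i \approx_{(A,\ord)} b_i \# \ell_{f(i)}\lift{f}{b_i(1)}.
\]
But this is nothing more than the recursion $c^{(i)}_n = b_i \# c^{(f(i))}_{n-1}\lift{f}{b_i(1)}$ passed to the limit $n\to\infty$: the right-hand side converges to $b_i \# \ell_{f(i)}\lift{f}{b_i(1)}$, the left-hand side to $\ell_i$, and they agree at every finite stage, hence the limits are homotopic rel $(A,\ord)$. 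Since the $b_i$ were arbitrary elements of $B$ viewed as homotopy pseudo-orbit entries based at $*$ (and every homotopy pseudo-orbit is conjugate to such a symbolic orbit), this establishes that $(b_i)_{i\in I}$ shadows $(p_i)_{i\in I}$ rel $(A,\ord)$ in the sense of Definition~\ref{defn:Shadowing}.

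I expect the only genuine subtlety — and hence the ``main obstacle'' — to be the interchange of limits: one must verify that the path $\ell_i = \lim_n c^{(i)}_n$ is well-defined as a path (not merely its endpoint $p_i$), that it lies in $S^2\setminus A$ away from its endpoint $p_i$, and that the homotopy rel $(A,\ord)$ in the cocycle identity is legitimate. All of this follows from the standard fact that for an expanding map the diameters of the lifted ``correction'' segments shrink exponentially, so that the sequence $c^{(i)}_n$ is uniformly Cauchy in the path metric; this is exactly the estimate already used to define $\rho$ and invoked in the proof of Proposition~\ref{prop:ShadAndExtraPts}. Given that, the proposition is ``immediate from the definition'' as the statement claims, and one could reasonably compress the whole argument to: ``unwinding the recursion defining $\rho$ term by term yields precisely the conjugating paths $\ell_i$ of Definition~\ref{defn:Shadowing}, with $(p_i)$ the associated poly-orbit.''
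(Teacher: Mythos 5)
Your proposal is correct and follows exactly the route the paper intends: the paper offers no written proof beyond asserting the proposition ``directly follows from the definition'' of $\rho$, and your argument is precisely that unwinding — the limits $\ell_i=\lim_n c^{(i)}_n$ give the conjugating paths, the recursion $c^{(i)}_n=b_i\#c^{(f(i))}_{n-1}\lift{f}{b_i(1)}$ gives the cocycle identity, and expansion guarantees convergence. The subtleties you flag (uniform Cauchyness of the partial concatenations, the limit path lying in $S^2\setminus A$ away from its endpoint) are exactly the routine checks the paper leaves implicit.
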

This proposition is useful to solve shadowing problems (namely,
determining when two symbolic orbits shadow the same poly-orbit) using
language of automata.

It also follows from the proposition that every orbit homotopy
pseudo-orbit shadows a unique orbit in $\Julia(f)$.

\begin{prop}[Shadowing and universal covers]\label{prop:ShadViaUnCover}
  Let $f\colon(S^2,A,\ord)\selfmap$ be an orbisphere map with biset
  $\subscript G B_G$, let
  $\pi\colon\subscript G{\widetilde U}\to (S^2, A,\ord)$ be the
  universal covering map of $(S^2, A,\ord)$, and let
  $\Phi\colon\subscript G B_G \otimes \subscript G{\widetilde
    U}\hookrightarrow\subscript G{\widetilde U}$ be the $G$
  equivariant map defined by Proposition~\ref{prop:Gspaces}.

  Then there is a completion $\subscript G{\widetilde U^+}$ of
  $\subscript G{\widetilde U}$ such that $\pi$ and $\Phi$ extend to
  continuous maps
  $\pi\colon\subscript G{\widetilde U^+}\to S^2\setminus A^\infty$ and
  $\Phi\colon\subscript G B_G \otimes \subscript G{\widetilde
    U^+}\to\subscript G{\widetilde U^+}$ with the following property:
  a symbolic orbit $(b_i)_{i\in I}$ shadows an orbit $(x_i)_{i\in I}$
  in $S^2\setminus A^\infty$ if and only if
  \[\Phi(b_i\otimes \widetilde x_{f(i)})=\widetilde x_i \text{ for some }\widetilde x_i\in \widetilde U^+ \text{ with }\pi(\widetilde x_i)=x_i.
  \]
  Furthermore, if $\ord(a)<\infty$ for every
  $a\in A\setminus A^\infty$ then we may take $\widetilde U^+=\widetilde U$.
\end{prop}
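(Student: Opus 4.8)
The plan is to realise $\widetilde U^+$ as an end-completion of $\widetilde U$, to extend $\pi$ and $\Phi$ across the adjoined ends by a purely local analysis, and then to reduce the shadowing relation to an equality of two lift endpoints, where it becomes a tautology. For the \emph{construction of $\widetilde U^+$}, I would first record two facts about the minimal orbifold: iterating around a periodic critical cycle multiplies local degrees, whence $A^\infty\subseteq\{a\in A\mid\ord(a)=\infty\}$; and $A^\infty$ is forward-invariant, so every $f$-preimage of a point of $A\setminus A^\infty$ again lies outside $A^\infty$. For each $a\in A$ with $\ord(a)=\infty$ and $a\notin A^\infty$, every component of the $\pi$-preimage of a small punctured-disc neighbourhood of $a$ is a half-plane with a single ``inner'' end; let $\widetilde U^+$ be $\widetilde U$ with one point adjoined to each such inner end, with the evident topology. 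Then $G$ acts on $\widetilde U^+$ (permuting half-plane components as it does on $\widetilde U$), the space $\widetilde U^+$ is still simply connected, and extending $\pi$ by sending each adjoined point to the corresponding $a$ gives image exactly $S^2\setminus A^\infty$, since $A^\infty\subseteq\{\ord=\infty\}$.

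\emph{Extending $\Phi$.} Recall from Proposition~\ref{prop:Gspaces} that $\Phi(b\otimes{-})=\wtf^{-1}_b$ is the unique lift of the inverse correspondence sending $\widetilde *$ to $b$; in particular it is a branch of $f^{-1}$, so $\pi\bigl(\Phi(b\otimes\widetilde y)\bigr)\in f^{-1}\bigl(\pi(\widetilde y)\bigr)$. Near an adjoined end over $a$ the branch $b$ picks out a preimage $q$ of $a$, which lies outside $A^\infty$ by the remark above. In local charts $f$ is $z\mapsto z^{d}$ at $q$, and the orbifold covering over $q$ is $w\mapsto w^{\ord(q)}$ (the exponential when $\ord(q)=\infty$); writing $\zeta$ for the half-plane coordinate over $a$, the map $\wtf^{-1}_b$ becomes, up to a choice of branch, $\zeta\mapsto\zeta/d$ into the half-plane over $q$ if $\ord(q)=\infty$, and $\zeta\mapsto\exp\!\bigl(\zeta/(d\,\ord(q))\bigr)$ into the disc over $q$ if $\ord(q)<\infty$; in the first case this sends the inner end over $a$ to the inner end over $q$, in the second to the point over $q$, in both cases continuously. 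Hence $\Phi$ extends to a continuous $\Phi\colon B\otimes\widetilde U^+\to\widetilde U^+$, and the equivariance~\eqref{eq:propGspaces} persists by continuity. Note this pins the completion down: it must be large enough that $\Phi$ never lands outside it, and small enough — no ends over $A^\infty$ — that $\Phi$ preserves it.

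\emph{The equivalence.} Given a sequence $(x_i)_{i\in I}$ in $S^2\setminus A^\infty$, choose for each $i$ a path $\ell_i$ in $S^2$ from $*$ to $x_i$ with interior in $S^2\setminus A$; lifting $\ell_i$ from $\widetilde *$ — using that $\widetilde U$ is simply connected, that $f$ is an honest covering off the critical values (which lie in $A$), and that a path tending to a puncture converges to a unique adjoined end — produces $\widetilde x_i\in\widetilde U^+$ over $x_i$, and conversely every such $\widetilde x_i$ arises from some $\ell_i$ (project a path in $\widetilde U^+$ ending at $\widetilde x_i$ that meets the ends only at its endpoint, then perturb it off the finite-order points of $A$). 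Unravelling the definition of $\wtf^{-1}_{b_i}$ shows that $\Phi(b_i\otimes\widetilde x_{f(i)})$ is the lift-from-$\widetilde *$ endpoint of $b_i\#\bigl(\ell_{f(i)}\lift{f}{b_i(1)}\bigr)$, whereas $\widetilde x_i$ is the lift-from-$\widetilde *$ endpoint of $\ell_i$. Since $\widetilde U^+$ is simply connected, these agree for all $i$ iff $\ell_i\approx_{A,\ord}b_i\#\bigl(\ell_{f(i)}\lift{f}{b_i(1)}\bigr)$ for all $i$, which is precisely the shadowing relation of Definition~\ref{defn:Shadowing}; and it forces $f(x_i)=x_{f(i)}$ automatically, $\Phi$ being a branch of $f^{-1}$. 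The ``furthermore'' is the special case $\{\ord=\infty\}=A^\infty$, where no ends are adjoined, $\widetilde U^+=\widetilde U$, and already $\pi(\widetilde U)=S^2\setminus A^\infty$.

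I expect the main obstacle to be exactly the continuous extension of $\Phi$ at the adjoined ends: one must choose the completion so that it is simultaneously $\Phi$-invariant — which works only because $A^\infty$ is forward-invariant, so that no ends over $A^\infty$ can occur — and large enough that path-lifts into the permitted punctures converge in it, while accepting that an end may be carried to an honest point of $\widetilde U$ when the selected preimage has finite orbifold order, a continuity phenomenon with no analogue in the smooth (non-orbifold) setting. Once the completion and the two map-extensions are in place, the remainder is routine covering-space bookkeeping.
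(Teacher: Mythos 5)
Your proof is correct and follows essentially the same route as the paper: you adjoin exactly the cusp points over the infinite-order punctures in $A\setminus A^\infty$ (the paper phrases this as adding limit points of the corresponding parabolic elements), extend $\pi$ and $\Phi$ by continuity, and then identify the shadowing relation with equality of lift endpoints of the paths $\ell_i$ versus $b_i\#\bigl(\ell_{f(i)}\lift{f}{b_i(1)}\bigr)$ in the simply connected completion. The only difference is that you spell out the local form of $\Phi$ at the adjoined ends (the two cases $\ord(q)=\infty$ and $\ord(q)<\infty$) and the forward-invariance of $A^\infty$, details the paper compresses into ``the extension of $\pi$ and $\Phi$ is given by continuity''.
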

\begin{proof}
  To define $\widetilde U^+$, add to $\widetilde U$ all limit points
  of parabolic elements corresponding to small loops around punctures
  $a\in A\setminus A^\infty$ with
  $\ord(a)=\infty$. The extension of $\pi$ and
  $\Phi$ is given by continuity; $\pi$ is a branched covering with
  branch locus $\widetilde U^+\setminus\widetilde U$.

  If $(b_i)_{i\in I}$ shadows $(x_i)_{i\in I}$, then there are
  curves
  \[\widetilde x_i\colon[0,1]\to (S^2,A,\ord)\text{ with }\widetilde x_i(0)=*
    \text{ and }\widetilde x_i(1)=x_i
  \]
  such that
  $\widetilde x_i^{-1}\#b_i\#\widetilde x_{f(i)}\lift{f}{b_i(1)}$ is
  a homotopically trivial loop. This exactly means that
  $\Phi(b_i\otimes\widetilde x_{f(i)})=\widetilde x_i$. Conversely, if
  $\Phi(b_i\otimes\widetilde x_{f(i)})=\widetilde x_i$ for all
  $i\in I$ then $(b_i)_{i\in I}$ shadows
  $(\pi(\widetilde x_i))_{i\in I}$.
\end{proof}

\begin{prop}[Shadowing in the $(2,2,2,2)$ case]\label{prp:ShadIn2222Case}
  Using notations of Proposition~\ref{prop:EndOfG2}, suppose $f$ is a
  $(2,2,2,2)$ geometric non-invertible map and
  \[\Phi\colon\subscript G B_G\otimes\subscript{\langle \Z^2,-z+r \rangle}{\R^2}\to\subscript{\langle \Z^2,-z+r \rangle}{\R^2}\colon(b_0, z)\mapsto M^{-1}z
  \]
  is as in Proposition~\ref{prop:EndOfG2}(D). Then for every symbolic
  orbit $(b_i)_{i\in I}$ of $\subscript G B_G$ there is a unique
  collection $(r_i)_{i\in I}$ of points in $\R^2$ such that
  $r_i= \Phi(b_i \otimes r_{f(i)})$.  The points $r_i$ are solutions
  of linear equations~\eqref{eq:r_iInR2}.
\end{prop}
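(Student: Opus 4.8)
The plan is to make the fixed-point condition $r_i=\Phi(b_i\otimes r_{f(i)})$ fully explicit and then solve the resulting affine system. By Proposition~\ref{prop:EndOfG2}(A) the biset $\subscript G B_G$ is right principal, so, fixing $b_0\in B$ as in Proposition~\ref{prop:EndOfG2}(D), each $b_i$ is uniquely of the form $b_i=b_0 g_i$ with $g_i=(n_i,\varepsilon_i)\in\Z^2\rtimes\{\pm1\}=G$. Since $\Phi(b_0\otimes z)=M^{-1}z$ and $b_0 g_i\otimes z=b_0\otimes g_i\cdot z$ in the tensor product, we get $\Phi(b_i\otimes z)=M^{-1}(g_i\cdot z)$, where $g_i$ acts on $\R^2$ through $\langle\Z^2,z\mapsto -z+r\rangle$, i.e.\ $g_i\cdot z=\varepsilon_i z+n_i+\tfrac{1-\varepsilon_i}{2}r$. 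Hence a collection $(r_i)_{i\in I}$ has the required property if and only if it solves
\begin{equation}\label{eq:r_iInR2}
  M r_i=\varepsilon_i r_{f(i)}+n_i+\tfrac{1-\varepsilon_i}{2}r\qquad(i\in I),
\end{equation}
with $r$ the fixed vector of Proposition~\ref{prop:EndOfG2}(D) satisfying $Mr=r+v$.

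I would then solve~\eqref{eq:r_iInR2}. As $I$ is finite, $f\colon I\selfmap$ decomposes $I$ into finitely many periodic cycles with preperiodic trees attached, and every index eventually falls into a cycle. On a cycle $i_0\to i_1\to\dots\to i_{p-1}\to i_0$, substituting~\eqref{eq:r_iInR2} repeatedly around the loop yields $M^p r_{i_0}=\delta\,r_{i_0}+w$ with $\delta\coloneqq\prod_{j}\varepsilon_{i_j}\in\{\pm1\}$ and $w\in\R^2$ an explicit constant. If $M^p-\delta\one$ is invertible this determines $r_{i_0}$ uniquely, hence also $r_{i_1},\dots,r_{i_{p-1}}$ via~\eqref{eq:r_iInR2}; finally, by induction on the distance of an index to the cycle it enters, each remaining value $r_i=M^{-1}\big(\varepsilon_i r_{f(i)}+n_i+\tfrac{1-\varepsilon_i}{2}r\big)$ is pinned down once $r_{f(i)}$ is. This yields both existence and uniqueness of $(r_i)_{i\in I}$.

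The only real point is the invertibility of $M^p-\delta\one$, equivalently that $M$ has no eigenvalue $\lambda$ with $\lambda^p=\pm1$; this is where the geometric hypothesis enters. As $f$ is a geometric non-invertible $(2,2,2,2)$-map we have $\det M>1$ and $f$ is not Levy obstructed, so by Proposition~\ref{prop:EndOfG2}(C) the matrix $M$ has no eigenvalue in $\{\pm1\}$; Lemma~\ref{lem:CharactTorusMaps} then places $M$ in the exceptional or the expanding case, and in both no eigenvalue of $M$ has absolute value $1$ (a non-real eigenvalue has modulus $\sqrt{\det M}>1$). Hence no power of $M$ has an eigenvalue of absolute value $1$, so in particular $M^p-\delta\one$ is invertible, and the remainder is routine linear algebra over $\R$. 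Combining the statement with Proposition~\ref{prop:ShadViaUnCover}, one further recovers, as in Proposition~\ref{prop:ShadExpCase}, that $(b_i)_{i\in I}$ shadows the orbit $(\pi(r_i))_{i\in I}$.
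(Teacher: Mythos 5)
Your proof is correct and follows essentially the same route as the paper: writing $b_i=b_0g_i$, translating the shadowing condition into the affine system $Mr_i=\varepsilon_i r_{f(i)}+t_i$, eliminating variables along cycles (and then up the preperiodic trees), and using the fact that a geometric non-invertible $M$ has no eigenvalue of modulus $1$ to invert $M^p\mp\one$. The only cosmetic difference is that you re-derive this eigenvalue fact from Proposition~\ref{prop:EndOfG2}(C) and Lemma~\ref{lem:CharactTorusMaps} instead of quoting Lemma~\ref{lem:EigenValOfM-I}, which encapsulates exactly that argument.
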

By Proposition~\ref{prop:ShadViaUnCover}, the image of
$(r_i)_{i\in I}$ under the projection
$\pi\colon\R^2\to\R^2/G\approx (S^2,P_f,\ord_f)$ is the
unique poly-orbit shadowed by $(b_i)_{i\in I}$.

\noindent The proof will use the following easy fact.
\begin{lem}\label{lem:EigenValOfM-I}
  If $M$ is non-invertible geometric, then
  $|\det(M^n+\epsilon I)|\ge 1$ for every $n\ge 1$ and every
  $\epsilon\in\{\pm 1\}$.
\end{lem}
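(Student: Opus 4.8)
The plan is to reduce the inequality to the remark that $\det(M^n+\epsilon I)$ is a \emph{non-zero integer}. It is an integer because $M$ has integer entries, hence so does $M^n+\epsilon I$; so the entire content of the lemma is the non-vanishing $\det(M^n+\epsilon I)\neq0$, after which $|\det(M^n+\epsilon I)|\ge1$ is automatic.

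To prove non-vanishing I would work with the eigenvalues $\lambda_1,\lambda_2$ of $M$: since the eigenvalues of $M^n+\epsilon I$ are $\lambda_1^n+\epsilon$ and $\lambda_2^n+\epsilon$, we have $\det(M^n+\epsilon I)=(\lambda_1^n+\epsilon)(\lambda_2^n+\epsilon)$, so if it vanished then $\lambda_i^n=-\epsilon$ for some $i$. As $\epsilon\in\{\pm1\}$, taking absolute values gives $|\lambda_i|^n=1$, i.e.\ $|\lambda_i|=1$ (here $n\ge1$ is used). It therefore suffices to show that a non-invertible geometric $M$ has no eigenvalue of modulus $1$.

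This is where the geometric hypothesis enters, and it follows directly from Lemma~\ref{lem:CharactTorusMaps} (recall $\det(M)>1$ in the non-invertible case): such an $M$ is either exceptional, with $0<|\lambda_1|<1<|\lambda_2|$, or expanding, with $|\lambda_1|,|\lambda_2|>1$ --- the third, excluded, possibility $\lambda_i\in\{\pm1\}$ being precisely the non-geometric one (equivalently, the Levy-obstructed one, by Proposition~\ref{prop:EndOfG2}(C) in the $(2,2,2,2)$ setting). In both surviving cases $|\lambda_1|,|\lambda_2|\neq1$, contradicting $|\lambda_i|=1$, so $\det(M^n+\epsilon I)$ is a non-zero integer and we are done.

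I do not expect any real obstacle; the only point requiring a little care is to make explicit what ``non-invertible geometric'' means for the matrix $M$ itself (as opposed to the branched covering it induces) and to cite the appropriate spectral classification, so that one may legitimately conclude $|\lambda_i|\neq1$.
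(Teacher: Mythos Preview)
Your proposal is correct and takes essentially the same approach as the paper: both factor $\det(M^n+\epsilon I)=(\lambda_1^n+\epsilon)(\lambda_2^n+\epsilon)$ and invoke Lemma~\ref{lem:CharactTorusMaps}. The paper's proof is a one-liner that simply writes this factorization and asserts $\ge1$; your version makes explicit the underlying reason (it is a nonzero integer because no eigenvalue has modulus~$1$), which is a welcome clarification rather than a different argument.
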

\begin{proof}
  If $\lambda_1,\lambda_2$ are $M$'s eigenvalues, then
  Lemma~\ref{lem:CharactTorusMaps} gives
 \[|\det(M^n+\epsilon\one)| = |(\lambda^n_1+\epsilon )(\lambda^n_2+\epsilon )| \ge 1.\qedhere\]
\end{proof}

\begin{proof}[Proof of Proposition~\ref{prp:ShadIn2222Case}]
  Write $b_i = b_0g_i$. Recall that every $g_i$ acts on $\R^2$ as
  $z\mapsto \epsilon_i z+ t_i$ with $\epsilon_i\in\{\pm 1\}.$ We get
  the following system of linear equations:
  \[M^{-1}(\epsilon_i r_{f(i)}+ t_i) = r_i,\qquad i\in I.
  \]
  By splitting $f\colon I\selfmap$ into grand orbits and eliminating
  variables we arrive at equations of the form
  \begin{equation}\label{eq:r_iInR2}
    (\one+\theta M^n) r_i = t\text{ with }\theta\in\{\pm 1\}
  \end{equation}
  and $n$ the period of an orbit and $t\in\R^2$ some parameters
  depending on $\epsilon_i$ and $t_i$. By
  Lemma~\ref{lem:EigenValOfM-I}, the system~\eqref{eq:r_iInR2} has a
  unique solution.
\end{proof}

Consider an \emph{$I$-symbolic orbit} $(b_i)_{i\in I}$ shadowing a
poly-orbit $(x_i)_{i\in I}$. This means (see
Definition~\ref{defn:Shadowing}) that there are curves
$(\ell_i)_{i\in I}$ connecting $*$ to $x_i$ such that
$\ell_i^{-1}\#b_i\#\ell_{f(i)} \lift{f}{b_i(1)}$ is a trivial loop
rel $(A,\ord)$. The \emph{local group} $G_{x_i} \le \pi_1(X,*)=G$
consists of loops of the form
\begin{equation}
\label{eq:ell-1 alpha ell}
\ell_i[0,1-\varepsilon] \#\alpha\#\ell_i^{-1}[0,1-\varepsilon],\qquad\alpha \text{ is close to }x_i.
\end{equation}

If $x_i\not\in A$, then $G_{x_i}$ is a trivial group; otherwise
$G_{x_i}$ is an abelian group of order $\ord (x_i)$. If
$(A,\ord)=(P_f,\ord_f)$, then $G_{x_i}$ is a finite abelian
group. Clearly, $(g_i b_i h_i)_i$ also shadows $x_i$ for all
$g_i\in G_{x_i}$ and all $h_i\in G_{x_{i+1}}$. Conversely:
\begin{lem}\label{lem:orb shad the same pnt}
Let $f\colon(S^2,A,\ord)\selfmap$ be a geometric non-invertible map.
  Suppose that the symbolic orbits $(b_i)_{i\in I}$ and
  $(c_i)_{i\in I}$ shadow $(x_i)_i$. Let $G_{x_i}$ be the local group
  associated with $(b_i)_i$. Then there are $h_i\in G_{x_{f(i)}}$ such
  that $(c_i)_{i\in I}$ is conjugate to $(b_i h_{i})_{i\in I}$.

  If $I$ consists only of periodic indices, then $(c_i)_{i\in I}$ is
  conjugate to $(g_i b_i)_{i\in I}$ for some $g_i\in G_{x_{i}}$.
\end{lem}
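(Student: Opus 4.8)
The plan is to pass to the universal cover via Proposition~\ref{prop:ShadViaUnCover}, which applies since $f$ is geometric non-invertible, and to exploit the injectivity of the $G$-equivariant map $\Phi$. Let $\pi\colon\subscript G{\widetilde U^+}\to S^2\setminus A^\infty$ be the (extended) universal covering and $\Phi\colon\subscript G B_G\otimes_G\widetilde U^+\hookrightarrow\widetilde U^+$ the associated injective map; I take $\widetilde U^+=\widetilde U$, as happens whenever $\ord$ is finite off $A^\infty$ (the remaining case, e.g.\ parabolic boundary points, will be handled by a limiting argument). Because $(b_i)_{i\in I}$ shadows $(x_i)_{i\in I}$ with shadowing curves $(\ell_i)_{i\in I}$ (Definition~\ref{defn:Shadowing}), Proposition~\ref{prop:ShadViaUnCover} produces lifts $\widetilde x_i\in\widetilde U^+$ — namely the homotopy classes of the $\ell_i$ — with $\pi(\widetilde x_i)=x_i$, with $\operatorname{Stab}_G(\widetilde x_i)=G_{x_i}$ (this is exactly the description~\eqref{eq:ell-1 alpha ell} of the local group), and with $\Phi(b_i\otimes\widetilde x_{f(i)})=\widetilde x_i$. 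Likewise $(c_i)_{i\in I}$, with its own shadowing curves, gives lifts $\widetilde x'_i$ satisfying $\Phi(c_i\otimes\widetilde x'_{f(i)})=\widetilde x'_i$ and $\pi(\widetilde x'_i)=x_i$.

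For the first claim: since $\widetilde x_i$ and $\widetilde x'_i$ both lie over $x_i$ and $\pi^{-1}(x_i)$ is a single $G$-orbit, choose $g_i\in G$ with $\widetilde x'_i=g_i\widetilde x_i$. Using the tensor relation $c\otimes gu=(cg)\otimes u$ and $G$-equivariance of $\Phi$, I compute $\widetilde x'_i=\Phi(c_i\otimes g_{f(i)}\widetilde x_{f(i)})=\Phi\big((c_ig_{f(i)})\otimes\widetilde x_{f(i)}\big)$ and, on the other hand, $\widetilde x'_i=g_i\Phi(b_i\otimes\widetilde x_{f(i)})=\Phi\big((g_ib_i)\otimes\widetilde x_{f(i)}\big)$. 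Injectivity of $\Phi$ gives $(c_ig_{f(i)})\otimes\widetilde x_{f(i)}=(g_ib_i)\otimes\widetilde x_{f(i)}$ in $B\otimes_G\widetilde U^+$. Two elementary tensors with the same right factor $u$ coincide precisely when the left factors differ by right multiplication by an element of $\operatorname{Stab}_G(u)$ (the generating move $bg\otimes u=b\otimes gu$ already yields a symmetric, transitive relation), so there is $h_i\in\operatorname{Stab}_G(\widetilde x_{f(i)})=G_{x_{f(i)}}$ with $c_ig_{f(i)}=g_ib_ih_i$. Rewriting this as $g_i^{-1}c_i=(b_ih_i)g_{f(i)}^{-1}$ exhibits, via Definition~\ref{def:symbolic}, a conjugacy $(c_i)_{i\in I}\sim(b_ih_i)_{i\in I}$ with $h_i\in G_{x_{f(i)}}$, as required.

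For the second claim, suppose $I$ is purely periodic, so $f\colon I\selfmap$ is a bijection; fix a cycle $i_0\to i_1\to\dots\to i_{n-1}\to i_0$. Multiplying the orbisphere divisibilities $\ord(x_{i_j})\deg_{x_{i_j}}(f)\mid\ord(x_{i_{j+1}})$ around the cycle forces $\deg_{x_{i_j}}(f)=1$ for all $j$ and $\ord(x_{i_j})$ constant along the cycle; in particular $\lvert G_{x_{i_{j+1}}}\rvert=\lvert G_{x_{i_j}}\rvert$ (and either all $x_{i_j}\in A$, with these cyclic groups of the common order $\ord(x_{i_0})$, or all $x_{i_j}\notin A$, with all $G_{x_{i_j}}=1$). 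Now for $u\in G_{x_i}$ one has $\Phi\big((ub_i)\otimes\widetilde x_{f(i)}\big)=u\widetilde x_i=\widetilde x_i$, and conversely any $b$ with $\Phi(b\otimes\widetilde x_{f(i)})=\widetilde x_i$ satisfies $b\otimes\widetilde x_{f(i)}=b_i\otimes\widetilde x_{f(i)}$ by injectivity, hence $b\in b_iG_{x_{f(i)}}$; since the right $\pi_1$-action on $\subscript GB_G$ (concatenation at the basepoint) is free, $\lvert G_{x_i}b_i\rvert=\lvert G_{x_i}\rvert=\lvert G_{x_{f(i)}}\rvert=\lvert b_iG_{x_{f(i)}}\rvert$, so $G_{x_i}b_i=b_iG_{x_{f(i)}}$. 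Thus $b_it=\phi_i(t)\,b_i$ for a unique $\phi_i(t)\in G_{x_i}$, and $\phi_i\colon G_{x_{f(i)}}\to G_{x_i}$ is a group isomorphism (injectivity again from right-freeness); equivalently this intertwining relation is the content of~\cite{bartholdi-dudko:bc2}*{Definition~\ref{bc2:dfn:SphBis}\eqref{bc2:cond:3:dfn:SphBis}} specialized to $\deg_{x_i}(f)=1$, as used in the proof of Lemma~\ref{lem:CritPortrUnique}. Feeding in the $h_i\in G_{x_{f(i)}}$ from the first claim gives $b_ih_i=\phi_i(h_i)b_i$, i.e.\ the symbolic orbit $(b_ih_i)_{i\in I}$ equals $(g'_ib_i)_{i\in I}$ with $g'_i\coloneqq\phi_i(h_i)\in G_{x_i}$; hence $(c_i)_{i\in I}\sim(g'_ib_i)_{i\in I}$ with $g'_i\in G_{x_i}$.

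The main obstacle is the intertwining step $b_it=\phi_i(t)b_i$ in the last paragraph: one must convert the geometric statement that the relevant branch of $\Phi$ is a local homeomorphism near $\widetilde x_{f(i)}$ (degree $\deg_{x_i}(f)=1$) into an honest identity inside the biset $B$, which requires simultaneously the injectivity of $\Phi$, the right-freeness of $B(f)$, and the equality of orders of $G_{x_i}$ and $G_{x_{f(i)}}$ along the cycle; I expect the bulk of the care in a full proof to go there, together with checking that $\widetilde U^+=\widetilde U$ may indeed be assumed (or that the parabolic boundary points, when present, cause no difficulty).
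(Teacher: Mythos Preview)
Your argument is essentially correct and reaches the same conclusion as the paper, but by a different route. The paper's proof is entirely path-based: it first conjugates $(c_i)$ so that its shadowing curves coincide with the $\ell_i$ used for $(b_i)$, then writes both $b_i$ and $c_i$ explicitly as $\ell_i[0,1-\varepsilon]\#\beta_i\#(\ell_{f(i)}[0,1-\varepsilon])^{-1}\lift f{\beta_i(1)}$ (and similarly with $\gamma_i$), and simply reads off $h_i$ from the local arc $\beta_i^{-1}\#\gamma_i$ projected via $f$; for the periodic claim it observes directly that $\beta_i$ and $\gamma_i$ have the same endpoint (because $\deg_{x_i}(f)=1$), so $\gamma_i\#\beta_i^{-1}$ is a loop near $x_i$ and defines $g_i\in G_{x_i}$ with $c_i=g_ib_i$. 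No intertwining step is needed.

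Your universal-cover approach via $\Phi$ is a legitimate alternative, and the first claim goes through cleanly. For the second claim, however, your justification ``the right $\pi_1$-action on $B$ is free'' is false for orbisphere bisets of degree $>1$ (e.g.\ for the Basilica one has $x_2\gamma_0=x_2$), and in the paper's conventions the action by concatenation at $*$ is the \emph{left} one. Fortunately you do not actually need right-freeness: from the inclusion $G_{x_i}b_i\subseteq b_iG_{x_{f(i)}}$, left-freeness gives $|G_{x_i}b_i|=|G_{x_i}|=|G_{x_{f(i)}}|\ge|b_iG_{x_{f(i)}}|$, forcing equality when the local groups are finite. Your alternative remark is the robust one: when $x_i\in A$ and $\deg_{x_i}(f)=1$, the peripheral subbiset $B_{x_i}$ of the minimal portrait (Lemma~\ref{lem:CritPortrUnique}) is left-free of degree $1$ and right-transitive, hence $G_{x_i}b_i=B_{x_i}=b_iG_{x_{f(i)}}$ for any $b_i\in B_{x_i}$, with no finiteness assumption; this is what you should cite, and you should check that your $b_i$ indeed lies in $B_{x_i}$ once the shadowing curves $\ell_i$ are used to define the portrait. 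The paper sidesteps this entirely by not deriving the second claim from the first.
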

\begin{proof}
  By conjugating $(c_i)_{i\in I}$ we can assume that
  $\ell_i^{-1} c_i \ell_i \lift{f}{b_i(1)}$ is a trivial loop; i.e.\
  local groups associated with $(c_i)_{i\in I}$ coincide with whose
  associated with $(b_i)_{i\in I}$. It is now routine to check that
  $(b_i)_{i\in I}$ and $(c_i)_{i\in I}$ differ by the action of local
  groups. Indeed, $b_i$ is of the form
  \[\ell_i[0,1-\varepsilon] \#\beta_i\#(\ell_{f(i)}
    [0,1-\varepsilon])^{-1} \lift f {\beta_i(1)},\qquad\beta_i\text{ is close to }x_i,
  \]
  and, similarly, $c_i$ is of the form
  \[\ell_i[0,1-\varepsilon] \#\gamma_i\#(\ell_{f(i)} [0,1-\varepsilon])^{-1} \lift f {\gamma_i(1)},\qquad\gamma_i\text{ is close to }x_i.\]
  Let $\alpha_i$ be the image of $\beta^{-1}_i\#\gamma_i$ and define
  $h_i$ to be
  $\ell_{f(i)} [0,1-\varepsilon] \#\alpha_{i}\#
  \ell_{f(i)}^{-1}[0,1-\varepsilon]$; compare with~\eqref{eq:ell-1
    alpha ell}. Then $c_i =b_i h_{i}$ for all $i\in I$.

  If $I$ has only periodic indices, then $\beta^{-1}_i$ and $\gamma_i$
  end at the same point and we set
  $\alpha_i\coloneqq \gamma_i\#\beta^{-1}_i$ and
  $g_i\coloneqq \ell_{i} [0,1-\varepsilon] \#\alpha_{i}\#
  \ell_{i}^{-1}[0,1-\varepsilon]$. We obtain $c_i=g_i b_i$ for all
  $i\in I$.
\end{proof}

\begin{cor}\label{cor:shaw is finite to one} Let $f\colon(S^2,P_f,\ord_f)\selfmap$ be a geometric non-invertible map. For every poly-orbit there are only finitely many symbolic orbits
  that shadow it.  Therefore, there are only
  finitely many conjugacy classes of portraits in $B(f)$.\qed
\end{cor}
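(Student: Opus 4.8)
The plan is to derive the first assertion directly from Lemma~\ref{lem:orb shad the same pnt} together with finiteness of the local groups attached to a poly-orbit, and then to bootstrap the second assertion from the first by decomposing a portrait of bisets along the grand orbits of its combinatorial map. First I would fix a poly-orbit $(x_i)_{i\in I}$ with finite index set $I$. If no symbolic orbit shadows it there is nothing to prove; otherwise pick one shadowing symbolic orbit $(b_i)_{i\in I}$ and, for each $i\in I$, the associated local group $G_{x_i}\le G$ as in~\eqref{eq:ell-1 alpha ell}. Since here $(A,\ord)=(P_f,\ord_f)$ and the shadowed points lie in $S^2\setminus A^{\infty}$, each $G_{x_i}$ is finite, of order $\ord_f(x_i)$. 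By Lemma~\ref{lem:orb shad the same pnt}, every symbolic orbit shadowing $(x_i)_{i\in I}$ is conjugate to $(b_i h_i)_{i\in I}$ for some family $(h_i)_{i\in I}$ with $h_i\in G_{x_{f(i)}}$; as $I$ and each $G_{x_{f(i)}}$ are finite, there are at most $\prod_{i\in I}\#G_{x_{f(i)}}$ such families, so only finitely many conjugacy classes of symbolic orbits shadow $(x_i)_{i\in I}$. This proves the first assertion.

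For the second assertion I would fix the combinatorial data of the portrait, namely an extension $f_*\colon\wtA\selfmap$ of $f\restrict{P_f}$ together with a degree function (without such a choice the count is of course infinite, since cycles may be arbitrarily long); by Theorem~\ref{thm:portrait} it then suffices to bound the number of conjugacy classes of portraits of bisets in $B(f)$ carrying these data. The minimal part $(B_a)_{a\in P_f}$ is unique up to congruence by Lemma~\ref{lem:CritPortrUnique}, so only the pieces indexed by $\wtA\setminus P_f$ vary. Split $\wtA\setminus P_f$ into $f_*$-grand orbits and process each from its periodic part outward. A periodic cycle of regular points has local degree $1$, hence is a symbolic orbit; by Proposition~\ref{prop:ShadExpCase} in the expanding case and Proposition~\ref{prp:ShadIn2222Case} in the $(2,2,2,2)$ case it shadows a poly-orbit, which is a periodic orbit of period dividing the cycle length. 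A geometric non-invertible map has only finitely many periodic points of any given period --- for expanding maps this is classical, and for $(2,2,2,2)$ maps it follows from $|\det(M^n-I)|\ge1$, which is Lemma~\ref{lem:EigenValOfM-I} with $\epsilon=-1$ --- so the first assertion leaves only finitely many conjugacy classes for such a cycle. A point $c\in\wtA\setminus P_f$ whose $f_*$-orbit eventually meets $P_f$ or an already-processed piece admits only finitely many choices of $B_c$: it is a right-transitive, left-free subbiset of $B$ of prescribed degree $\deg(c)$ whose left orbit is constrained by the already-fixed piece $B_{f_*(c)}$, and there are only finitely many such subbisets. As $\wtA$ is finite, multiplying these finitely many choices over the grand orbits bounds the number of conjugacy classes of portraits.

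The step I expect to be the main obstacle is the last one: making precise that the subbiset $B_c$ attached to a preperiodic extra point ranges over a finite set once the pieces below it are fixed, and that the grand-orbit induction is well founded --- this rests on each $B_c$ being left-free and right-transitive of prescribed finite degree, lying in a single left orbit determined by the lower data, and on every $f_*$-grand orbit in $\wtA$ either containing a periodic cycle or eventually entering $P_f$ because $\wtA$ is finite. The genuinely dynamical inputs --- that every symbolic orbit shadows a poly-orbit, and that periodic poly-orbits of bounded period are finite in number --- are exactly the places where geometricity and non-invertibility of $f$ enter.
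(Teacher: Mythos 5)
Your first assertion is argued exactly as the paper intends (the corollary is left as an immediate consequence of the preceding lemma): shadowed poly-orbits lie in $S^2\setminus A^\infty$, so with $(A,\ord)=(P_f,\ord_f)$ all local groups $G_{x_i}$ are finite, and Lemma~\ref{lem:orb shad the same pnt} bounds the shadowing symbolic orbits, up to conjugacy, by $\prod_{i\in I}\#G_{x_{f(i)}}$. Your reading of the second assertion (fixed $\wtA$, $f_*$, $\deg$) and your overall plan for it --- minimal part rigid by Lemma~\ref{lem:CritPortrUnique}, periodic cycles in $\wtA\setminus P_f$ reduced to symbolic orbits which shadow one of the finitely many periodic poly-orbits of bounded period, then the first assertion --- are also sound, and your external inputs (classical finiteness of periodic points for expanding maps, $|\det(M^n-\one)|\ge1$ in the $(2,2,2,2)$ case) are legitimate and non-circular.

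The genuine gap is in the preperiodic step. Definition~\ref{dfn:PrtrOfBst} imposes no condition tying $B_c$ (or its left orbit) to $B_{f_*(c)}$, so the constraint you invoke does not exist; and the set of right-$G_{f_*(c)}$-transitive, left-free subbisets of $B$ of degree $\deg(c)$ is infinite, since $\ell^{-1}B_c$ is again such a subbiset for every $\ell\in G$. Moreover ``lying in a single left orbit'' is false whenever $\deg(c)>1$ (e.g.\ $B_0=x_1G_{-1}$ in the Basilica example meets both left orbits). What makes the count finite is not a constraint from below but the residual conjugation freedom at $c$ itself: in a conjugacy $(\ell_a)_{a\in\wtA}$ the element $\ell_c\in G$ is unconstrained (only the not-yet-processed points of $f_*^{-1}(c)$ are disturbed by it, so processing in order of increasing escape time is consistent), and modulo $B_c\mapsto\ell_c^{-1}B_c\ell_{f_*(c)}$ the piece $B_c=b_cG_{f_*(c)}$ is determined by the double coset $G b_c G_{f_*(c)}$, i.e.\ by its image in the finite set $\{\cdot\}\otimes_G B$; so there are at most $\deg(f)$ choices per preperiodic point (further cut down by the distinctness condition). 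This is precisely Condition~(2) of Lemma~\ref{lem:Zportrait=Zhpo}, which is the paper's own way of making the preperiodic contribution finite; with that substitution your argument goes through.
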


\begin{thm}\label{thm:ShadowEquivalence}
  Let $f\colon(S^2,A,\ord)\selfmap$ be a non-invertible geometric
  map. Then the shadowing operation defines a map from conjugacy
  classes of symbolic finite orbits onto poly-orbits in
  $S^2\setminus A^\infty$. If $(A,\ord)=(P_f,\ord_f)$, then the shadowing map is finite-to-one.
\end{thm}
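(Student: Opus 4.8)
The plan is to assemble the statement from the pieces already developed in \S\ref{ss:shadowing}. First, I would establish that the shadowing map is well-defined on conjugacy classes: by Lemma~\ref{lmm:ShadowPsOrb} (Discreteness), two conjugate homotopy pseudo-orbits shadow the same poly-orbit, and by the lemma preceding Proposition~\ref{prop:ShadAndExtraPts}, shadowing a poly-orbit $(p_i)_{i\in I}$ is equivalent to being conjugable into any prescribed neighbourhood of it, which is manifestly a conjugacy-invariant property. Every symbolic orbit is in particular a homotopy pseudo-orbit (constant-path representatives being a special case), so the domain makes sense.

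Next I would prove surjectivity onto poly-orbits in $S^2\setminus A^\infty$. Here the two geometric cases are handled separately but uniformly. In the expanding case, Proposition~\ref{prop:ShadExpCase} shows directly that a symbolic orbit $(b_i)_{i\in I}$ shadows the poly-orbit $(\rho(b_ib_{f(i)}b_{f^2(i)}\cdots))_{i\in I}$ read off from the symbolic encoding $\rho$; conversely, given a poly-orbit $(x_i)_{i\in I}$ in $S^2\setminus A^\infty$, one chooses curves $\ell_i$ from $*$ to $x_i$ and sets $b_i\coloneqq\ell_i^{-1}\#\ell_{f(i)}\lift{f}{x_i}$ (a degenerate homotopy pseudo-orbit with constant $\beta_i$), which by construction is a symbolic orbit shadowing $(x_i)_{i\in I}$. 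In the exceptional $(2,2,2,2)$ case, I would instead invoke Proposition~\ref{prp:ShadIn2222Case} together with Proposition~\ref{prop:ShadViaUnCover}: given a poly-orbit, its lift to $\R^2$ furnishes points $r_i$, and the linear system~\eqref{eq:r_iInR2} together with $r_i=\Phi(b_i\otimes r_{f(i)})$ determines a symbolic orbit $(b_i)_{i\in I}$ whose image under the projection $\pi\colon\R^2\to(S^2,P_f,\ord_f)$ shadows $(x_i)_{i\in I}$. More conceptually, Proposition~\ref{prop:ShadViaUnCover} gives the equivalence ``$(b_i)$ shadows $(x_i)$ iff $\Phi(b_i\otimes\widetilde x_{f(i)})=\widetilde x_i$ for suitable lifts,'' so surjectivity reduces to: every point of $S^2\setminus A^\infty$ (where $\pi\colon\widetilde U^+\to S^2\setminus A^\infty$ is onto) has a lift, and the $\Phi$-equation for the lifts can always be solved along a poly-orbit — which is exactly the content of Propositions~\ref{prop:ShadExpCase} and~\ref{prp:ShadIn2222Case} in the two cases. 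Since geometric non-invertible maps are precisely expanding or exceptional by definition, this exhausts all cases.

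Finally, for the finite-to-one assertion when $(A,\ord)=(P_f,\ord_f)$, I would argue as follows. Suppose $(b_i)_{i\in I}$ and $(c_i)_{i\in I}$ both shadow the same poly-orbit $(x_i)_{i\in I}$. By Lemma~\ref{lem:orb shad the same pnt}, after conjugating we may assume $c_i=b_ih_i$ with $h_i\in G_{x_{f(i)}}$, and $G_{x_{f(i)}}$ is a finite abelian group of order dividing $\ord_f(x_{f(i)})$ (it is trivial whenever $x_{f(i)}\notin P_f$). Thus the set of conjugacy classes of symbolic orbits shadowing a fixed poly-orbit injects into $\prod_{i\in I}G_{x_{f(i)}}$, a finite group, so it is finite. (This is essentially the statement of Corollary~\ref{cor:shaw is finite to one}, which I would simply cite.) The main obstacle I anticipate is not in any single step but in getting the bookkeeping right in the surjectivity argument — in particular making sure the lift-chasing through $\Phi$ in the $(2,2,2,2)$ case lands in $\widetilde U^+$ rather than only $\widetilde U$ when $A^\infty\ne\emptyset$, and checking that the poly-orbit produced genuinely avoids $A^\infty$; both are controlled by the fact (Proposition~\ref{prop:ShadViaUnCover}) that $\pi$ is defined on all of $\widetilde U^+$ with image exactly $S^2\setminus A^\infty$, so I would lean on that proposition to close the gap.
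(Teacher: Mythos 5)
Your assembly follows the paper's skeleton (the paper's proof is exactly a citation of a handful of results from \S\ref{ss:shadowing}), but you omit one of the four ingredients the paper relies on, namely Proposition~\ref{prop:ShadAndExtraPts}, and its absence is a genuine gap whenever $A\supsetneq P_f$. The theorem asserts shadowing rel $(A,\ord)$ for an arbitrary marked set, whereas Propositions~\ref{prop:ShadExpCase} and~\ref{prp:ShadIn2222Case} are stated and proved only on the minimal orbisphere $(S^2,P_f,\ord_f)$: they produce a poly-orbit shadowed rel $(P_f,\ord_f)$. Triviality of the comparison loops rel $(P_f,\ord_f)$ does not imply triviality rel $(A,\ord)$ (a small loop around a marked point $a\in A\setminus P_f$ is the standard obstruction), so ``applying Proposition~\ref{prop:ShadExpCase} directly'' to a symbolic orbit of $B(f,A,\ord)$ does not give the statement. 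The upgrade from rel $(P_f,\ord_f)$ to rel $(A,\ord)$ is precisely Proposition~\ref{prop:ShadAndExtraPts}, proved by a pullback iteration that uses expansion in the \Exp\ case and, in the exceptional $(2,2,2,2)$ case, the Diophantine control of Corollary~\ref{lem:IrratOfDynam2}; this last point is not formal, and your appeal to Proposition~\ref{prop:ShadViaUnCover} does not substitute for it, since for $A\supsetneq P_f$ the universal cover of $(S^2,A,\ord)$ is no longer $\R^2$ and the linear-equation uniqueness argument of Proposition~\ref{prp:ShadIn2222Case} does not transfer verbatim. (In the case $(A,\ord)=(P_f,\ord_f)$ your argument is complete, and your finite-to-one step via Lemma~\ref{lem:orb shad the same pnt} and Corollary~\ref{cor:shaw is finite to one} coincides with the paper's.)

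Two smaller points. In the surjectivity step you regard a poly-orbit as ``a degenerate homotopy pseudo-orbit with constant $\beta_i$''; by Definition~\ref{defn:HomPsOrbit} the paths of a pseudo-orbit must lie in $S^2\setminus A$, so this only covers poly-orbits avoiding $A$, while the theorem's target includes poly-orbits through $A\setminus A^\infty$; for those you need lollipop-type elements as in Lemma~\ref{lmm:connect:ell_a:2} (or as in the proof of Lemma~\ref{lem:orb shad the same pnt}) rather than constant paths. Also, with your stated orientation of $\ell_i$ (from $*$ to $x_i$) the formula $b_i\coloneqq\ell_i^{-1}\#\ell_{f(i)}\lift f{x_i}$ does not typecheck; it is correct with the reversed convention, as in the lemma converting pseudo-orbits to symbolic orbits. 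These are easily repaired, but the missing reduction to the minimal orbisphere is the substantive issue to address.
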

\begin{proof}
  Follows from
  Propositions~\ref{prop:ShadAndExtraPts},~\ref{prop:ShadExpCase},~\ref{prp:ShadIn2222Case}
  and Corollary~\ref{cor:shaw is finite to one}.
\end{proof}

\subsection{From symbolic orbits to portraits of bisets}
The \emph{centralizer} of a symbolic finite orbit $(b_i)_{i\in I}$ is
the set of $(g_i)_{i\in I}\in G^I$ such that $g_i b_i=b_i g_{f(i)}$
for all $i\in I$.
\begin{lem}\label{lem:Zsfi}
  If $\subscript G B_G$ is the biset of a geometric non-invertible map
  $f$ and $(b_i)_{i\in I}$ is a symbolic finite orbit shadowing a
  poly-orbit $(x_i)_{i\in I}$, then its centralizer is contained in
  $\prod_{i\in I}G_{x_i}$, where $G_{x_i}$ are the local groups
  associated with $(b_i)_{i\in I}$.
\end{lem}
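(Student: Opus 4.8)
\emph{Proof plan.} The plan is to pass to the universal cover and to read the defining relation of a centralizer element there through the $G$-equivariant map $\Phi$ of Proposition~\ref{prop:ShadViaUnCover}. Fix curves $(\ell_i)_{i\in I}$ witnessing that $(b_i)_{i\in I}$ shadows $(x_i)_{i\in I}$, so that the local groups $G_{x_i}$ in the statement are the ones attached to these $\ell_i$ as in~\eqref{eq:ell-1 alpha ell}, and let $\pi\colon\subscript G{\widetilde U^+}\to S^2\setminus A^\infty$ and $\Phi\colon\subscript G B_G\otimes\subscript G{\widetilde U^+}\to\subscript G{\widetilde U^+}$ be as in Proposition~\ref{prop:ShadViaUnCover}. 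The $\ell_i$ determine lifts $\widetilde x_i\in\widetilde U^+$ with $\pi(\widetilde x_i)=x_i$ and $\Phi(b_i\otimes\widetilde x_{f(i)})=\widetilde x_i$ for all $i$, and by the description of the lollipop generators (cf.\ Lemma~\ref{lmm:connect:ell_a}) the group $G_{x_i}$ is exactly the $G$-stabilizer $\{g\in G\mid g\widetilde x_i=\widetilde x_i\}$ of $\widetilde x_i$. Now let $(g_i)_{i\in I}$ lie in the centralizer, i.e.\ $g_ib_i=b_ig_{f(i)}$ for all $i$, and put $\widetilde y_i\coloneqq g_i\widetilde x_i$; then $\pi(\widetilde y_i)=x_i$ because $\pi$ is $G$-invariant, and by left $G$-equivariance of $\Phi$ together with $b_ig_{f(i)}\otimes\widetilde x_{f(i)}=b_i\otimes g_{f(i)}\widetilde x_{f(i)}=b_i\otimes\widetilde y_{f(i)}$ in the balanced product one gets
\[\widetilde y_i=g_i\,\Phi\big(b_i\otimes\widetilde x_{f(i)}\big)=\Phi\big(g_ib_i\otimes\widetilde x_{f(i)}\big)=\Phi\big(b_ig_{f(i)}\otimes\widetilde x_{f(i)}\big)=\Phi\big(b_i\otimes\widetilde y_{f(i)}\big).\]
Thus $(\widetilde y_i)_{i\in I}$ is again a family of lifts of $(x_i)_{i\in I}$ with $\Phi(b_i\otimes\widetilde y_{f(i)})=\widetilde y_i$; it therefore suffices to prove that such a family is \emph{unique}, for then $\widetilde y_i=\widetilde x_i$, i.e.\ $g_i\widetilde x_i=\widetilde x_i$, i.e.\ $g_i\in G_{x_i}$, as desired.

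To prove this uniqueness, decompose $f\colon I\selfmap$ into its periodic cycles together with the trees of preimages attached to them; since the preperiodic lifts are obtained from the periodic ones by applying $\Phi$, it is enough to see that for $i_0$ in a periodic cycle $i_0\to i_1\to\dots\to i_n=i_0$ the point $\widetilde x_{i_0}$ is determined. It is a fixed point of $\Psi\coloneqq\Phi(b_{i_0}\otimes\Phi(b_{i_1}\otimes\cdots\Phi(b_{i_{n-1}}\otimes{-})\cdots))\colon\widetilde U^+\selfmap$, which by Proposition~\ref{prop:Gspaces} is the lift to $\widetilde U^+$ of an inverse branch of $f^n$. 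The orbifold $(S^2,A,\ord)$ has non-positive Euler characteristic, so it is Euclidean or hyperbolic. If it is hyperbolic, then $\widetilde U^+$ is a completion of the hyperbolic plane and $\Psi$, being a non-surjective holomorphic self-map (as $\deg f^n>1$), is a strict contraction for the hyperbolic metric by the Schwarz--Pick lemma, hence has a unique fixed point. If it is Euclidean, then $\widetilde U^+=\R^2$ and, using the description of the $(2,2,2,2)$-biset in Proposition~\ref{prop:EndOfG2}, $\Psi$ is affine with linear part $\zeta M^{-n}$, where $M$ is the integer linear part of $f$ and $\zeta$ a root of unity coming from the rotational parts of the $g_i$; it has a unique fixed point provided $M^n$ does not have $\zeta$ as an eigenvalue, which holds because either $f$ is B\"ottcher expanding and the eigenvalues of $M^n$ exceed $1=|\zeta|$ in modulus, or $f$ is geometric exceptional, so $\zeta=\pm1$ and $\pm1\notin\operatorname{spec}(M^n)$ by Lemma~\ref{lem:EigenValOfM-I} (this last case being exactly Proposition~\ref{prp:ShadIn2222Case}). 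In all cases $\widetilde x_{i_0}$ is the unique fixed point of $\Psi$, so the family $(\widetilde x_i)_{i\in I}$ is unique and the proof is complete.

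The crux is the uniqueness step, and within it the genuine contractivity of the inverse-branch lifts; this is precisely where non-invertibility of $f$ (so $\deg f>1$ and the branches are non-surjective) and the geometric hypothesis (so a Euclidean or hyperbolic model metric is available) are used. For a merely homotopical, non-geometric map a symbolic orbit may shadow a poly-orbit along inequivalent families of lifts, and the centralizer is correspondingly larger.
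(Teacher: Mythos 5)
Your reduction is attractive and genuinely different from the paper's argument (which transports each $g_i$ to a loop $\wtg_i=\ell_i^{-1}\#g_i\#\ell_i$ based at $x_i$, observes that these loops are invariant up to homotopy under taking $f$-lifts along the cycle, and shrinks them by iterated lifting using expansion, resp.\ Corollary~\ref{lem:IrratOfDynam2}): your equivariance computation for $\Phi$ is correct, the identification of $G_{x_i}$ with the stabilizer of $\widetilde x_i$ is correct, and the whole lemma would indeed follow from uniqueness of the family of lifts $(\widetilde x_i)_{i\in I}$ with $\Phi(b_i\otimes\widetilde x_{f(i)})=\widetilde x_i$. The problem is that your proof of this uniqueness has a genuine gap in the hyperbolic case. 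You treat $\Psi$ as a ``non-surjective holomorphic self-map'' of the hyperbolic plane and invoke Schwarz--Pick, but a geometric map in this paper is only a topological branched covering that is B\"ottcher expanding or a quotient of an exceptional affine torus map; it need not be holomorphic (expanding Thurston maps can even be obstructed), and $\widetilde U^+$ carries no conformal structure with respect to which $\Psi$ is holomorphic. The available substitute is the expanding metric itself, lifted to $\widetilde U^+$, which makes $\Psi$ a strict contraction of path lengths; even then one must argue separately that distinct added points (over punctures $a$ with $\ord(a)=\infty$) are at positive distance in the pulled-back length metric, since a priori both fixed points of $\Psi$ could be such boundary points.

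More seriously, your case split ``Euclidean orbifold $\Rightarrow$ linear algebra, hyperbolic orbifold $\Rightarrow$ contraction'' does not match the actual dichotomy \Exp\ versus \Tor. The lemma is stated for an arbitrary orbisphere structure $(S^2,A,\ord)$ on which the geometric map acts, and an exceptional map may carry extra marked points in $A\setminus P_f$ (of order $\ge2$ or $\infty$), in which case $(S^2,A,\ord)$ is hyperbolic while $f$ is \emph{not} expanding: there the linear-part eigenvalue of $M^{-n}$ has modulus $>1$ in one direction, so $\Psi$ is not contracting, Schwarz--Pick is again unavailable, and $\widetilde U^+\neq\R^2$, so Proposition~\ref{prp:ShadIn2222Case} does not apply either. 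This is exactly the situation for which the paper needs the Diophantine input of Lemma~\ref{lem:IrratOfDynam} via Corollary~\ref{lem:IrratOfDynam2} (loops trivial rel $(P_f,\ord_f)$ become trivial rel $A$ after sufficiently many lifts); your argument never invokes anything of this kind for the cover of $(S^2,A,\ord)$, only for the minimal $(2,2,2,2)$ orbifold, so the uniqueness claim is unproved precisely where it is hardest. To repair the proof you would need to either restrict to $(A,\ord)=(P_f,\ord_f)$ and replace Schwarz--Pick by the expanding-metric contraction, or incorporate Corollary~\ref{lem:IrratOfDynam2} to pass from uniqueness over the minimal orbifold to uniqueness over $(S^2,A,\ord)$.
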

\begin{proof}
  By Definition~\ref{defn:Shadowing} there are curves
  $(\ell_i)_{i\in I}$ connecting $*$ to $x_i$ such that
  $\ell_i^{-1}\#b_i\#\ell_i \lift{f}{b_i(1)}$ is a trivial loop rel
  $(A,\ord)$. Suppose that $(g_i)_{i\in I}\in G^I$ centralizes
  $(b_i)_{i\in I}$. Set
  $\wtg_i \coloneqq \ell_i^{-1}\#g_i \#\ell_i$. Then
  $\wtg_{f(i)}\lift{f}{x_i}$ is isotopic to $\wtg_i$
  rel $(A,\ord)$. By the expanding property of $f$, or
  Corollary~\ref{lem:IrratOfDynam2} if $f$ is exceptional,
  $\wtg_i$ is trivial rel $(A,\ord)$. This implies that
  $(g_i)_{i\in I} \in\prod_{i\in I}G_{x_i}$.
\end{proof}

Consider a portrait of bisets $(G_a,B_a)_{a\in \wtA}$ in
$\subscript G B_G$. We may decompose $\wtA=A\sqcup F\sqcup I$
with $f_*^n(F)\subseteq A$ for $n\gg0$ and $f_*(I)\subseteq I$. Then for every
$i\in I$ the group $G_i$ is trivial and $B_i=\{b_i\}$ is a
singleton. We obtain the symbolic orbit $(b_i)_{i\in I}$ which is the
essential part of $(G_a,B_a)_{\wtA}$:
\begin{lem}\label{lem:Zportrait=Zhpo}
  The relative centralizer $Z_D((G_a,B_a)_{a\in \wtA})$
  (see~\S\ref{ss:diff centr}) is isomorphic to the centralizer of
  $(b_i)_{i\in I}$ via the forgetful map
  $(g_{d})_{d\in D}\to (g_i)_{i\in I}$.

  Let $(G_a,B_a)_{a\in \wtA}$ and
  $(G'_a,B'_a)_{a\in \wtA}$ be two portraits of bisets with
  associated symbolic orbits $(b_i)_{i\in I}$ and $(b'_i)_{i\in
    I}$. Assume that $G_a=G'_a$ and $B_a=B'_a$ for all $a\in A$. Then
  $(G_a,B_a)_{a\in \wtA}$ and
  $(G'_a,B'_a)_{a\in \wtA}$ are conjugate if and only if
  \begin{itemize}
  \item[(1)] $(b_i)_{i\in I}$ and $(b'_i)_{i\in I}$ are conjugate; and
  \item[(2)] $G\otimes_{G_d} B_d=G\otimes_{G_d} B'_d$ for every
    $d\in F$.
  \end{itemize}
\end{lem}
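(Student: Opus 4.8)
The plan is to treat the two assertions in turn, reducing both to bookkeeping along the three-part decomposition $\wtA=A\sqcup F\sqcup I$ already set up: $f_*$ carries $A$ into $A$, carries $I$ into $I$, and carries $F$ into $A\sqcup F$ in such a way that every $f_*$-orbit meeting $F$ is eventually absorbed into $A$ (in particular no $F$-orbit meets $I$). For the first assertion, recall that for $d\in F$ one has $f_*^n(d)\in A$ for all large $n$, so by the observation recorded immediately after the definition of the relative centralizer, every tuple $(g_d)_{d\in D}$ in $Z_D((G_a,B_a)_{a\in\wtA})$ has $g_d=1$ for all $d\in F$; hence the forgetful map $(g_d)_{d\in D}\mapsto(g_i)_{i\in I}$ is injective. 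For $i\in I$ we have $f_*(i)\in I$, hence $\deg(i)=1$ and $B_i=\{b_i\}$ is a singleton, and the defining relation $B_i=g_i^{-1}B_ig_{f_*(i)}$ becomes $g_ib_i=b_ig_{f_*(i)}$, which is exactly the defining relation of the centralizer of the symbolic orbit $(b_i)_{i\in I}$. Conversely, given $(g_i)_{i\in I}$ centralizing $(b_i)_{i\in I}$, extend it by $g_d\coloneqq1$ for $d\in F$; since no $F$-orbit meets $I$, the relation at each $d\in F$ reads $B_d=B_d$, so the extension lies in $Z_D((G_a,B_a)_{a\in\wtA})$. Thus the forgetful map is an isomorphism, proving the first assertion.

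\emph{Second assertion, the easy pieces.} Unwinding the definition, the two portraits are conjugate precisely when there is $(\ell_a)_{a\in\wtA}\in G^{\wtA}$ with $\ell_a^{-1}G'_a\ell_a=G_a$ and $\ell_a^{-1}B'_a\ell_{f_*(a)}=B_a$ for all $a$ (reading $\ell_{f_*(a)}=1$ when $f_*(a)\notin\wtA$). For $a\in A$ the hypothesis $G_a=G'_a$ forces $\ell_a$ to normalise $G_a$, hence $\ell_a\in G_a$ because peripheral subgroups are self-normalising (the remark after Lemma~\ref{lmm:connect:ell_a}); and then, $B_a=B'_a$ being a $G_a$-$G_{f_*(a)}$-subbiset, the second equation holds for free. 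So the $A$-component of a conjugating family may be taken trivial; moreover, since $B_d$ and $B'_d$ are right $G_{f_*(d)}$-invariant sets for $d\in F$, the $A$-component is in any case irrelevant to all the remaining equations. On $I$, where $f_*(I)\subseteq I$, the equations $\ell_i^{-1}b'_i\ell_{f_*(i)}=b_i$ say exactly that $(\ell_i)_{i\in I}$ conjugates $(b'_i)_{i\in I}$ to $(b_i)_{i\in I}$, i.e.\ they are solvable iff condition~(1) holds; and this piece is independent of the $F$-piece since no $F$-orbit meets $I$.

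\emph{Second assertion, the $F$-piece.} What remains is to show that the equations $\ell_d^{-1}B'_d\ell_{f_*(d)}=B_d$ over $d\in F$ are solvable iff $G\otimes_{G_d}B_d=G\otimes_{G_d}B'_d$ as subbisets of $B$ for every $d\in F$, i.e.\ iff $B_d$ and $B'_d$ generate the same left $G$-orbit $GB_d=GB'_d$ (a finite invariant, living in $G\backslash B$). For the necessity direction I would argue that, given a congruence, the equalities $GB_d=GB'_d$ follow by induction on the absorption time $n(d)=\min\{n:f_*^n(d)\in A\}$, using that left multiplication by $\ell_d\in G$ does not change the generated left $G$-orbit, that $\ell_{f_*(d)}$ for $f_*(d)\in A$ normalises $B_{f_*(d)}$ on the right, and that the $\ell_{d'}$ already produced at smaller absorption time stabilise the relevant orbits. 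For sufficiency I would adjoin the points of $F$ to the orbisphere one at a time, in order of increasing $n(d)$, so that when $d$ is adjoined its image already carries a portrait; then Theorem~\ref{thm:portrait} together with the fibre-biset description of Proposition~\ref{prop:FibBiset} identifies the conjugacy classes of portrait-extensions over the already-built portrait: the non-transitive component is precisely the choice of left $G$-orbit $GB_d$, while the transitive $G$-component can be absorbed by a congruence. Given $GB_d=GB'_d$ for all $d$, this lets one choose the $\ell_d$ inductively so as to match the two portraits stage by stage; assembling these with the trivial $A$-component and the $I$-component furnished by~(1) yields the conjugating family. Finiteness of the possible obstructions $GB_d$, hence of the resulting splitting of the conjugacy class, is Corollary~\ref{cor:shaw is finite to one}, and this is where geometricity of $f$ is genuinely used — via the expanding/shadowing machinery of Proposition~\ref{prop:ShadAndExtraPts} and Corollary~\ref{lem:IrratOfDynam2}, which control the stabilisers that appear in the induction.

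The main obstacle is exactly this last point: making the induction over $F$ watertight, both for necessity and for sufficiency, i.e.\ verifying that a choice of $\ell_d$ realising $GB_d=GB'_d$ at absorption time $n(d)$ is compatible with the $\ell_{f_*(d)}$ fixed at time $n(d)-1$, so that the local equalities glue into a single honest congruence; the portrait injectivity clause (distinct $F$-points with the same image lie in distinct left $G$-orbits) and the geometric control of stabilisers are precisely what is needed here. Everything else in the lemma is a direct translation of the definitions of relative centralizer, symbolic orbit, and conjugacy of portraits.
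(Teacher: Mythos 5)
Your treatment of the first assertion, and your observation that the congruence equations decouple into an $A\sqcup F$-system and an $I$-system, are correct and essentially identical to the paper's argument (the paper proves the first claim by exactly your induction forcing $g_d=1$ on $F$, then reads off the symbolic-orbit relations on $I$).

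The gap is in the $F$-piece of the second assertion, and you flag it yourself without closing it: you never verify that the conjugator $\ell_{f_*(d)}$ fixed at escaping time $n(d)-1$ is compatible with the orbit equality $G B_d=G B'_d$ at time $n(d)$; you only say this ``would be needed'' and point to the geometric machinery (Proposition~\ref{prop:ShadAndExtraPts}, Corollary~\ref{lem:IrratOfDynam2}, Corollary~\ref{cor:shaw is finite to one}) and to Theorem~\ref{thm:portrait}/Proposition~\ref{prop:FibBiset}. Those tools are beside the point here: the paper's proof of this lemma is purely algebraic and uses no geometricity. Moreover the deferred step is not a routine verification, because the equation at $d$ reads $\ell_d B_d=B'_d\ell_{f_*(d)}$, so its solvability is governed by the left orbit of $B'_d\ell_{f_*(d)}$, and right multiplication genuinely permutes left $G$-orbits. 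Hence the equalities $G B_d=G B'_d$ checked on the raw data do not propagate down a chain in $F$ of length $\ge2$. This is precisely the computation of \S\ref{sss:MarkSqrt2}: for the Basilica with $F=\{1,\sqrt2\}$, the portraits with $B_1=B_{\sqrt2}=\{x_2\}$ and $B'_1=\{\gamma_{-1}x_2\}$, $B'_{\sqrt2}=\{x_2\}$ satisfy $G B_d=G B'_d$ at both points, yet they are not conjugate (the forced conjugator at $1$ is $\ell_1=\gamma_{-1}$, and $x_2\gamma_{-1}=\gamma_0 x_1$ lies in the other left orbit; the two portraits mark $\sqrt2$, respectively $-\sqrt2$). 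So your plan --- check the orbit equalities on the given portraits and then glue the local conjugators --- cannot be completed as written; the orbit condition has to be imposed on the portraits after they have been matched at all points of smaller escaping time, i.e.\ the induction must carry the renormalization along, which is how the paper's ``induction on the escaping time'' and Algorithm~\ref{algo:conjportraitexp}, step (2), are to be read.

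For comparison, the paper's argument for the $F$-piece is short: for $d$ with $f_*(d)\in A$, the right factor $\ell_{f_*(d)}\in G_{f_*(d)}$ is absorbed because $B'_d$ is a single right $G_{f_*(d)}$-orbit, and then there exists $g_d$ with $g_d B_d=B'_d$ if and only if $G B_d=G B'_d$ (right-transitivity produces from one common left orbit a single $g_d$ carrying all of $B_d$ onto $B'_d$); such a $g_d$ is unique by left-freeness, so one conjugates $(G'_a,B'_a)$ by it and repeats at the next escaping time, the orbit equality at each stage being taken for the renormalized bisets. No appeal to expansion, shadowing, or finiteness of conjugacy classes is needed, and no realization of the portraits by bisets $\wtB$ via Theorem~\ref{thm:portrait} enters the proof.
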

This reduces the conjugacy problem of portrait of bisets to the
conjugacy problem of symbolic orbits; indeed Condition~(2) is easily
checkable. Note that every preperiodic point in $\wtA$ imposes a
finite condition on conjugacy and centralizers: for points attracted
to $A$, Condition~(2) imposes a congruence condition modulo the action
on $\{\cdot\}\otimes_G B$ on the conjugator; for preperiodic points in
$I$, conjugacy again amounts (by Condition~(1) and
Definition~\ref{def:symbolic}) to a congruence condition modulo the
action on $\{\cdot\}\otimes_G B$.
\begin{proof}
  If $d\in D$ with $f_*(d)\not\in D$, then from
  $g_d B_d=B_d g_{f_*(d)}=B_d$ follows that $g_d=1$. Induction on
  the escaping time to $A$ gives $g_d=1$ for all $d\in F$.

  Similarly, if $d\in D$ with $f_*(d)\not\in D$, then there is a
  $g_d\in G$ with $g_d B_d=B'_d= B'_d g_{f_*(d)}$ if and only if
  $G\otimes_{G_d} B_d=G\otimes_{G_d} B'_d$.  By induction on the
  escaping time, $(G_a,B_a)_{a\in A\sqcup F}$ and
  $(G'_a,B'_a)_{a\in A\sqcup F}$ are conjugate if and only if
  Condition~(2) holds.
\end{proof}

We are now ready to show that a geometric map, equipped with a
portrait of bisets, yields a sphere map --- possibly with some points
infinitesimally close to each other. A \emph{blowup} of a
two-dimensional sphere is a topological sphere $\widetilde S^2$
equipped with a monotone map $\widetilde S^2\to S^2$, namely a
continuous map under which preimages of connected sets are
connected. We remark that arbitrary countable subsets of $S^2$ may be
blown up into disks.
\begin{prop}\label{prop:Shadow}
  Let $f\colon (S^2,A)\selfmap$ be a non-invertible geometric map, let
  $\wtA$ be a set containing a copy of $A$, let
  $f_*\colon\wtA\selfmap$ be a symbolic map which coincides
  with $f$ on the subset $A\subseteq\wtA$, and let
  $(B_a)_{a\in\wtA}$ be a portrait of bisets in $B(f)$.

  Then there exists a unique map $e\colon\wtA\to S^2$
  extending the identity on $A$, and a blowup
  $b\colon \widetilde S^2\to S^2$, with the following properties. The
  locus of non-injectivity of $e$ is disjoint from $A^\infty$, and $b$
  blows up precisely at the grand orbits of
  \[\{x\mid \exists a\neq a'\in \wtA\colon x=e(a)=e(a')\}\]
  replacing points by disks on which the metric is identically $0$ and
  all points in $\wtA\subset\widetilde S^2$ are disjoint. The
  maps $f_*\colon\wtA\selfmap$ and $f\colon(S^2,A)\selfmap$
  extend to a map $\wtf\colon(\widetilde S^2,\wtA)\selfmap$,
  which is semiconjugate to $f$ via $b$, and whose minimal portrait of
  bisets projects to $(B_a)_{a\in\wtA}$.
\end{prop}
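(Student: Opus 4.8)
The plan is to build the map $e$ and the blowup in stages governed by the decomposition $\wtA=A\sqcup F\sqcup I$, where $I=\{a\in\wtA\mid f_*^n(a)\notin A\text{ for all }n\ge0\}$ is the maximal $f_*$-invariant subset avoiding $A$, and $F=\wtA\setminus(A\sqcup I)$ is the set of points whose forward $f_*$-orbit eventually enters $A$; one checks $f_*(I)\subseteq I$ and $f_*(F)\subseteq A\sqcup F$. On $A$ we are forced to set $e=\one$. On $I$ every group $G_i$ is trivial and each $B_i=\{b_i\}$ is a singleton, so $(b_i)_{i\in I}$ is a symbolic finite orbit in $B=B(f)$ (Definition~\ref{def:symbolic}); I will invoke Theorem~\ref{thm:ShadowEquivalence} to get the unique poly-orbit $(x_i)_{i\in I}$ in $S^2\setminus A^\infty$ shadowed by $(b_i)_{i\in I}$, set $e(i)=x_i$, and record the curves $\ell_i$ from $*$ to $x_i$ furnished by Definition~\ref{defn:Shadowing}; the portrait of groups likewise furnishes curves $\ell_a$ from $*$ to $a$ for $a\in A$ via Lemma~\ref{lmm:connect:ell_a}. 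On $F$ I will induct on the escaping time $n(d)=\min\{n\mid f_*^n(d)\in A\}$: given $d\in F$ with $a=f_*(d)$ already placed together with a curve $\ell_a$ to $e(a)$, pick $b_d\in B_d$, viewed as a path $\beta$ with $\beta(0)=*$ and $f(\beta(1))=*$, lift $\ell_a$ through $f$ starting at $\beta(1)$, declare its endpoint to be $e(d)$, and record $\ell_d=\beta\#(\ell_a\lift{f}{\beta(1)})$. Since $B_d$ is a single right-$G_a$-orbit the point $e(d)$ is independent of the choice of $b_d$, and when $e(d)\notin A$ the curve $\ell_d$ is well defined rel $A$, which is all the induction needs.

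Next I will establish uniqueness and the fact that the non-injectivity locus of $e$ avoids $A^\infty$. Uniqueness is essentially built in: $e\restrict A$ is prescribed, $e\restrict I$ is the unique poly-orbit shadowed by $(b_i)_{i\in I}$, and $e\restrict F$ is forced step by step because $e(d)$ must be the $f$-preimage of $e(f_*(d))$ singled out by $B_d$. For the $A^\infty$-statement, suppose $e(d)\in A^\infty$ with $d\notin A$; then $d\notin I$ since the shadowed poly-orbit lies in $S^2\setminus A^\infty$, so $d\in F$, and writing $a_-=e(d)$, $a=f(a_-)$ we have $a_-,a\in A^\infty\subseteq A$ and $f_*(d)=a=f_*(a_-)$; near the critical periodic point $a_-$ the map $f$ is a power map, so $B_d$, being the local subbiset of $B$ at $a_-$ over $a$, has $HB_d=HB_{a_-}$, and then the injectivity clause of Definition~\ref{dfn:PrtrOfBst} forces $d=a_-\in A$, a contradiction.

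Then I will construct the blowup. Let $N=\{x\in S^2\mid\#e^{-1}(x)\ge2\}$ and let $\widehat N$ be its $f$-grand orbit; it is countable (the forward orbit of the finite set $N$ together with the finitely many preimages at each level) and, by the previous step, disjoint from $A^\infty$, hence blowable. Replace each $x\in\widehat N$ by a closed disk $\mathcal D_x$ carrying the identically-zero metric, leaving all other points untouched; this gives a monotone map $b\colon\widetilde S^2\to S^2$, and I embed $\wtA$ into $\widetilde S^2$ by spreading the points of $e^{-1}(x)$ to distinct positions in $\mathcal D_x$, placing the point of $A$ at the centre whenever $x\in A$. As $\widehat N$ is $f$-invariant, $f$ lifts to a branched covering $\wtf\colon\widetilde S^2\selfmap$ with $b\circ\wtf=f\circ b$, equal to $f$ off the disks and mapping $\mathcal D_x$ onto $\mathcal D_{f(x)}$ by a homeomorphism, or a power map when $x$ is a critical point, so that all critical values lie in $\wtA$; the remaining freedom in these disk maps is used to make $\wtf$ send $a\in e^{-1}(x)\cap\wtA$ to $f_*(a)$. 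This yields a map $\wtf\colon(\widetilde S^2,\wtA)\selfmap$ extending $f$ and $f_*$ and semiconjugate to $f$ via $b$.

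The last and most delicate point is that the minimal portrait of bisets of $\wtf$ projects, under the forgetful intertwiner~\eqref{eq:forgetinter}, to $(B_a)_{a\in\wtA}$. By Lemma~\ref{lmm:connect:ell_a:2} this portrait is computed at each $a\in\wtA$ from $\ell_a$ and the dynamics of $\wtf$; for the points $a$ with $e(a)\notin\widehat N$ it is literally the local picture used to define $e(a)$ and hence forgets to $B_a$, while the periodic points were arranged correctly at the shadowing stage. The hard part is the matching inside the disks $\mathcal D_x$: one must check that the separated points of $e^{-1}(x)\cap\wtA$, together with a suitable choice of the disk homeomorphisms, realize exactly the prescribed $B_a$ after forgetting $\wtA\setminus A$ --- which is possible precisely because, by the injectivity clause of Definition~\ref{dfn:PrtrOfBst}, the $B_a$'s collapsing to a common point are pairwise distinct as subbisets and so can be placed independently. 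I expect this compatibility verification at the blown-up points to be the main obstacle; everything else reduces to the shadowing theory already developed and to standard lifting arguments.
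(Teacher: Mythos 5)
Your construction of $e$ and of the blowup follows essentially the paper's route (shadowing for the $f_*$-invariant part, biset-determined preimages for the points falling into $A$, blow up the grand orbits of collisions, lift $f$ to the blown-up sphere), so that part is fine. The genuine gap is exactly the step you defer at the end: proving that the extension $\wtf$ can be chosen so that its minimal portrait of bisets projects to the prescribed $(B_a)_{a\in\wtA}$. Your justification --- that by the injectivity clause of Definition~\ref{dfn:PrtrOfBst} the colliding subbisets are in distinct left orbits and so the points ``can be placed independently'' --- does not address the actual difficulty. Placing the points of $\wtA$ correctly in the disks only controls the left orbits (equivalently, the shadowed poly-orbit); for an arbitrary choice of the disk homeomorphisms the induced portrait elements $b'_i$ will in general \emph{not} equal the prescribed $b_i$, and your sentence ``the periodic points were arranged correctly at the shadowing stage'' is precisely what fails: shadowing determines $e(i)$, not the element of $B(f)$ induced by the extension. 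What is needed (and what the paper does) is a correction of the homotopy class of $\wtf$: since $(b_i)_{i\in I}$ and $(b'_i)_{i\in I}$ shadow the same poly-orbit, Lemma~\ref{lem:orb shad the same pnt} (periodic case) lets one assume $b_i=h_i b'_i$ with $h_i\in G_{e(i)}$, i.e.\ the discrepancy lies in the local groups (so it is nontrivial only at collisions with points of $A$); one then realizes $(h_i)_{i\in I}$ as a point-push $m=\Push(m')$ with $\Antipush_I(m')=(h_i)_{i\in I}$ as in~\eqref{eq:antipush}, supported inside the blown-up disks precisely because $h_i\in G_{e(i)}$, and replaces $\wtf$ by $m\wtf$. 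Only after this correction can the preperiodic points be attached: their images are then the $\wtf$-preimages singled out by the images of the $B_a$ in $\{\cdot\}\otimes_G B(f)$, which also gives the uniqueness you want. Without this mechanism your proof has no argument for its central claim.

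Two smaller remarks. First, your argument that the non-injectivity locus avoids $A^\infty$ rests on the equality $HB_d=HB_{a_-}$, which is only automatic when $\deg_{a_-}(f)=1$: at a critical point of the cycle, $HB_{a_-}$ is a union of $\deg_{a_-}(f)$ left orbits while $HB_d$ is a single one, so the injectivity clause does not apply as stated and this case needs a separate argument. Second, your decomposition $\wtA=A\sqcup F\sqcup I$ (with $I$ the maximal invariant set avoiding $A$, possibly containing preperiodic indices) is harmless, since Theorem~\ref{thm:ShadowEquivalence} applies to arbitrary finite symbolic orbits; this is only a mild repackaging of the paper's $A\sqcup I\sqcup J$ and buys nothing essential.
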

\begin{proof}
  Let us set $G\coloneqq\pi_1(S^2\setminus A,*)$.  We may decompose
  $\wtA=A\sqcup I\sqcup J$ with $f_*(I)=I$ and
  $f_*^n(J)\subseteq A\sqcup I$ for $n\gg0$. On $A$, we naturally
  define $e$ as the identity. If $i\in I$, then $i$ is $f_*$-periodic
  and the bisets $B_i,B_{f_*(i)},\dots$ determine a homotopy
  pseudo-orbit which shadows a unique periodic poly-orbit in $S^2$, by
  Theorem~\ref{thm:ShadowEquivalence}. Thus $e$ is uniquely defined.

  We now blow up the grand orbits of all points in $S^2$ which are the
  image of more than one point in $ A\sqcup I$ under $e$, replacing
  them by a disk on which the metric is identically $0$. We inject
  $ A\sqcup I$ arbitrarily in the blown-up sphere $\widetilde S^2$,
  and now identify $A\sqcup I$ with its image in the blowup.

  Let us next extend $f$ to a self-map $\wtf$ of
  $\widetilde S^2$ so that $(B_a)_{a\in A\sqcup I}$ is the induced
  portrait of bisets. We first do it by arbitrarily mapping the disks
  to each other by homeomorphisms restricting to $f_*$ on
  $ A\sqcup I$. Let $(B'_a)_{a\in A\sqcup I}$ be the projection of the
  minimal portrait of bisets of $\wtf$ via
  $\pi_1(\widetilde S^2,A\sqcup I) \to G$. Consider $i\in I$. By
  Lemma~\ref{lem:orb shad the same pnt} (the periodic case), we can
  assume that $b_i=h_i b'_i$ with $h_i\in G_{e(i)}$. (Note that if
  $e(i)\not\in A$, then $h_i=\one$ and $b_i=b'_i$.)  Let $m'\in \Braid(\widetilde S^2\setminus A,I)$ be a
  preimage of $(h_i)_{i\in I}$ under $\Antipush_{I}$ (see~\eqref{eq:antipush}) and set $m\coloneqq\Push(m') $; it is defined up to pre-composing with a
  knitting element. Since $h_i\in G_{e(i)}$, we can assume that $m$ is
  identity away from the blown up disks. Then $(B_a)_{a\in A\sqcup I}$
  is a portrait of bisets induced by $m\wtf$.

  Consider next $j\in J$ and assume that $f_*(j)$ has already been
  defined. The biset $B_j$, and more precisely already its image in
  $\{\cdot\}\otimes_G B(f)$, see Definition~\ref{dfn:PrtrOfBst}(B),
  determines the correct $\wtf$-preimage of $f_*(j)\in \widetilde S^2$
  that corresponds to $j$, and thus determines $e(j)$ uniquely.
\end{proof}

\subsection{Promotions of geometric maps}
Let $X\coloneqq(S^2,A,\ord)$ be an orbisphere, and consider a subset
$D\subset S^2\setminus A$. Recall from~\eqref{eq:modxd} the quotient
$\Mod(X|D)$ of mapping classes of $(S^2,A\sqcup D)$ by knitting
elements; there, we called two maps $f,g\colon(X,D)\selfmap$
\emph{knitting-equivalent}, written $f\approx_{A|D}g$, if they differ
by a knitting element in $\knBraid(X,D)$. We shall show that
knitting-equivalent rigid maps are conjugate rel $A\sqcup D$:
\begin{thm}[Promotion]
\label{thm:promotion}
  Suppose $f,g\colon(X,D)\selfmap$ are orbisphere maps, and
  assume that either
  \begin{itemize}
  \item $g$ is geometric non-invertible, or
  \item $g^m(D)\subseteq A$ for some $m\ge 0$.
  \end{itemize}
  Then every conjugacy $h\in\Mod(X|D)$ between $f$ and $g$ rel $A|D$
  lifts to a unique conjugacy $\widetilde h\in\Mod(S^2,A\sqcup D)$
  between $f$ and $g$ rel $A\sqcup D$ such that
  $\widetilde h \approx_{A|D}h$.
\end{thm}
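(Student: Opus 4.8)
The plan is to fix any lift $\widetilde h_0\in\Mod(S^2,A\sqcup D)$ of $h$ and reduce the statement to a fixed‑point equation inside the knitting group $\knBraid(X,D)$. Put $g_1:=\widetilde h_0^{-1}f\widetilde h_0\in M(f)$. Since $h$ is a conjugacy $f\to g$ rel $A|D$ we have $g\in M(f)$, and because $M(f|D,D)=\knBraid(X,D)\backslash M(f)/\knBraid(X,D)$ the images of $g_1$ and $g$ coincide there, so $g_1=k_1\,g\,k_2$ in $M(f)$ for some $k_1,k_2\in\knBraid(X,D)$. As $\knBraid(X,D)$ is normal (Lemma~\ref{lem:knBraid is normal}), the preimage of $h$ under $\Mod(S^2,A\sqcup D)\twoheadrightarrow\Mod(X|D)$ is the single coset $\widetilde h_0\cdot\knBraid(X,D)$; hence the theorem is equivalent to the assertion that there is \emph{exactly one} $\kappa\in\knBraid(X,D)$ with $\kappa^{-1}g_1\kappa=g$ in $M(f)$, the desired conjugacy being $\widetilde h=\widetilde h_0\kappa$.

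Next I would introduce the algebraic ingredient. By Proposition~\ref{prop:knittinginert} the map $g$ lifts knitting braids to knitting braids, so writing $\lambda(k):=\Push(g^*b)$ when $k=\Push(b)$ defines an endomorphism $\lambda\colon\knBraid(X,D)\to\knBraid(X,D)$ — a homomorphism, since $g^*$ is functorial on pure braids — and Lemma~\ref{lem:pushf*} supplies the identity $k\cdot g=g\cdot\lambda(k)$ in $M(f)$ for every $k\in\knBraid(X,D)$. Using $g_1=k_1 g k_2$ one computes $\kappa^{-1}g_1\kappa=(\kappa^{-1}k_1)\,g\,(k_2\kappa)=g\cdot\bigl(\lambda(\kappa)^{-1}\,\ell_0\,\kappa\bigr)$ with $\ell_0:=\lambda(k_1)k_2\in\knBraid(X,D)$; and since the mapping class biset $M(f)$ is right‑free under the standing assumption $\#(A\sqcup D)\ge3$ (the argument being dual to that of Lemma~\ref{lem:M* is left free}(1)), the equation $\kappa^{-1}g_1\kappa=g$ holds \emph{precisely when} $\lambda(\kappa)\kappa^{-1}=\ell_0$. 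So the whole problem reduces to solving this cocycle‑type equation in $\knBraid(X,D)$, uniquely.

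The engine is the claim that for every $\ell\in\knBraid(X,D)$ one has $\lambda^j(\ell)=1$ once $j$ is large. Granting it, existence is immediate: the product $\kappa:=\prod_{j\ge0}\lambda^j(\ell_0)^{-1}$ has only finitely many non‑trivial factors, so $\kappa\in\knBraid(X,D)$, and since $\lambda$ is a homomorphism $\kappa=\ell_0^{-1}\lambda(\kappa)$, i.e. $\lambda(\kappa)\kappa^{-1}=\ell_0$; uniqueness follows as well, for if $\kappa,\kappa'$ are two solutions then $\lambda(\kappa'^{-1}\kappa)=\kappa'^{-1}\kappa$, whence $\kappa'^{-1}\kappa=\lambda^j(\kappa'^{-1}\kappa)=1$ for $j\gg0$. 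To prove the claim I would argue case by case. If $g^m(D)\subseteq A$, then by induction on the escaping time a strand of $(g^j)^*b$ based at $e\in D$ is constant as soon as $g^j(e)\in A$, so $\lambda^j=1$ for $j\ge m$. If $g$ is geometric non‑invertible, the strand of $(g^j)^*b$ at $e$ is a $j$‑fold $g$‑preimage of a strand of $b$; since $D\subseteq S^2\setminus A^\infty$ and $g$ expands a metric there (respectively, by the irrationality estimate of Corollary~\ref{lem:IrratOfDynam2} in the exceptional case), for $j\gg0$ each such strand is a loop confined to a small disk around its base point that meets neither $A$ nor any other base point of $D$, and such a braid is trivial.

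The hard part is exactly this last claim in the geometric case: one must upgrade ``the $g$‑preimages of the strands shrink'' to ``they become genuinely trivial braids'', which is the same pullback‑contraction mechanism used in the proof of Proposition~\ref{prop:ShadAndExtraPts} and isolated in Lemma~\ref{lem:IrratOfDynam}/Corollary~\ref{lem:IrratOfDynam2}; the delicate points are that the strands of a knitting braid are null‑homotopic in $X=(S^2,A)$ (so all their $g$‑preimages remain null‑homotopic rel $A$) and that distinct points of $D$ stay separated under pullback (so no residual linking survives). Everything else — normality of $\knBraid(X,D)$, left‑ and right‑freeness of mapping class bisets, the identity $k\cdot g=g\cdot\lambda(k)$, and the double‑coset description of $M(f|D,D)$ — is already available, and I would cite \cite{bartholdi-dudko:bc2} for the structural facts about bisets.
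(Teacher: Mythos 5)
Your proposal is correct and takes essentially the same route as the paper: fix an arbitrary lift of $h$, note that the discrepancy with a genuine conjugacy is a knitting element, and kill it by iterated lifting through $g$, the engine being exactly the paper's Lemma~\ref{lmm:TrvOfLift2Cases} (iterated pullbacks of knitting braids become trivial, via expansion, Corollary~\ref{lem:IrratOfDynam2} in the exceptional case, or the escaping-time induction), together with freeness of the mapping class biset and Proposition~\ref{prop:knittinginert}. Your twisted-cocycle equation $\lambda(\kappa)\kappa^{-1}=\ell_0$ solved by the telescoping product $\kappa=\prod_{j\ge0}\lambda^j(\ell_0)^{-1}$ is just a cleaner algebraic packaging of the paper's step-by-step correction $\widetilde h_{i+1}=h_{i+1}\widetilde h_i$, and your uniqueness argument coincides with the paper's.
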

\begin{proof}
  We begin by a
  \begin{lem}\label{lmm:TrvOfLift2Cases}
    Under the assumption on $g$, consider that
    $b\in\Braid(S^2\setminus A,D)$. If for every $m\ge 1$ the element
    $b$ is liftable through $g^m$, then $(g^*)^m b$ is trivial for all
    $m\succ \log |g|$, where $|\ |$ is the word metric.
  \end{lem}
  \begin{proof}
    Consider first the case that $g$ is expanding. Then the lengths of
    curves $(g^*)^m(b)(-,a)$ tend to $0$ as $m\to\infty$, so
    $(g^*)^m(b)=1$ for sufficiently large $m$. If $g$ is exceptional,
    then Corollary~\ref{lem:IrratOfDynam2} replaces the expanding
    argument. If finally $g^m(E)\subseteq A$, then $(g^*)^m(b)=1$ for
    the same $m$.
  \end{proof}

  We resume the proof. Let $\widetilde h_0\in \Mod[p](S^2,A\sqcup D)$
  be any preimage of $h$ under the forgetful map
  $\Mod(S^2,A\sqcup D)\to\Mod(X|D)$. Then
  $\widetilde h_0f \widetilde h_0^{-1}\approx_{A\sqcup D} h_1^{-1}g$
  for some $h_1\in \knBraid(X,D)$. Setting
  $\widetilde h_1\coloneqq h_1 \widetilde h_0$ we get
  $\widetilde h_1f \widetilde h_1^{-1}\approx_{A\sqcup D}h_2^{-1}g$,
  where $h_2^{-1}$ is the lift of $h_1^{-1}$ through $g$. Continuing
  this process we eventually get
  $\widetilde h_m f \widetilde h_m^{-1}\approx_{A\sqcup D} g$ because
  a the corresponding lift of $h_1^{-1}$ is trivial by
  Lemma~\ref{lmm:TrvOfLift2Cases}. We have shown the existence of
  $\widetilde h$.

  If $\widetilde h'$ is another promotion of $h$, then $\widetilde h'$
  and $\widetilde h$ differ by a knitting element commuting with $f$.
  By Lemma~\ref{lmm:TrvOfLift2Cases}, that knitting element is
  trivial, hence $\widetilde h'=\widetilde h$.
\end{proof}

\begin{cor}\label{cor:conj probl reduction}
  Let
  $\subscript{\Mod(S^2,A\sqcup D)}{M(f)}_{\Mod(S^2,A\sqcup
    D)}\to\subscript{\Mod(X|D)}{M(f|D)}_{\Mod(X|D)}$ be the inert map
  forgetting the action of knitting elements (see
  Proposition~\ref{prop:inertMCB}).  Suppose that either $f$ is
  geometric non-invertible or $f^{m}(D)\subseteq A$ for some $m\ge
  0$. Then $f,g$ are conjugate in $M(f)$ if and only if their images
  are conjugate in $M(f|D)$. Moreover, the centralizers $Z(f)$ and
  $Z(f|D)$ (see~\S\ref{ss:diff centr}) are naturally isomorphic via
  the projection $\Mod(S^2,A\sqcup D)\to\Mod(X|D)$.  \qed
\end{cor}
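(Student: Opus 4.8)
The plan is to transfer the statement of Theorem~\ref{thm:promotion} across the inert forgetful morphism of bisets. Recall that $M(f)$ is a $\Mod(S^2,A\sqcup D)$-$\Mod(S^2,A\sqcup D)$-biset, that $M(f|D)$ is the quotient $\knBraid(X,D)\backslash M(f)/\knBraid(X,D)$, and that by Proposition~\ref{prop:inertMCB} the natural map $M(f)\to M(f|D)$ is inert; in particular the kernel of $\Mod(S^2,A\sqcup D)\to\Mod(X|D)$ is exactly $\knBraid(X,D)$ on both sides. Conjugacy of $f$ and $g$ in $M(f)$ means there is $h\in\Mod(S^2,A\sqcup D)$ with $h f h^{-1}=g$ in $M(f)$; conjugacy of their images in $M(f|D)$ means there is $\bar h\in\Mod(X|D)$ with $\bar h \bar f \bar h^{-1}=\bar g$. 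One direction is immediate: if $f,g$ are conjugate in $M(f)$ then applying the forgetful functor gives conjugacy of their images in $M(f|D)$.

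For the converse, suppose $\bar h\in\Mod(X|D)$ conjugates the image of $f$ to that of $g$. Lift $\bar h$ to any $\widetilde h\in\Mod(S^2,A\sqcup D)$; then $\widetilde h f\widetilde h^{-1}$ and $g$ have the same image in $M(f|D)$, which by definition of the quotient means $\widetilde h f\widetilde h^{-1}=k g$ in $M(f)$ for some $k\in\knBraid(X,D)$ — equivalently, $\widetilde h f\widetilde h^{-1}$ and $g$ are knitting-equivalent rel $A|D$. But then $\widetilde h f\widetilde h^{-1}$ is itself an orbisphere map conjugate rel $A|D$ to $g$, which is geometric non-invertible (or eventually maps $D$ into $A$), so Theorem~\ref{thm:promotion} applies: the conjugacy $\one$ (at the level of $\Mod(X|D)$) between $\widetilde h f\widetilde h^{-1}$ and $g$ promotes to a genuine conjugacy $m\in\knBraid(X,D)$, i.e.\ $m\bigl(\widetilde h f\widetilde h^{-1}\bigr)m^{-1}=g$ rel $A\sqcup D$. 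Setting $h\coloneqq m\widetilde h$ yields a conjugacy between $f$ and $g$ in $M(f)$, as required. (One should apply Theorem~\ref{thm:promotion} with ``$f$'' there being $\widetilde h f\widetilde h^{-1}$ and ``$g$'' there being $g$; the hypothesis on $g$ is unchanged since it concerns $g$, and $\widetilde h f\widetilde h^{-1}$ is automatically an orbisphere map $(X,D)\selfmap$.)

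For the centralizer statement, the forgetful morphism $\Mod(S^2,A\sqcup D)\to\Mod(X|D)$ restricts to a homomorphism $Z(f)\to Z(f|D)$, since a mapping class commuting with $f$ in $M(f)$ commutes with $\bar f$ in $M(f|D)$. Injectivity: if $\widetilde h\in Z(f)$ maps to $\one$, then $\widetilde h\in\knBraid(X,D)$ and $\widetilde h f\widetilde h^{-1}=f$ rel $A\sqcup D$, so $\widetilde h$ is a knitting element commuting with $f$; by the uniqueness part of Theorem~\ref{thm:promotion} (or directly by Lemma~\ref{lmm:TrvOfLift2Cases}) such an element is trivial. Surjectivity: given $\bar h\in Z(f|D)$, lift it to $\widetilde h_0$; then $\widetilde h_0 f\widetilde h_0^{-1}$ is knitting-equivalent rel $A|D$ to $f$ itself, so Theorem~\ref{thm:promotion} produces a unique $k\in\knBraid(X,D)$ with $k\widetilde h_0 f(k\widetilde h_0)^{-1}=f$ rel $A\sqcup D$; thus $k\widetilde h_0\in Z(f)$ lifts $\bar h$. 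These two maps are mutually inverse because the promotion is unique, giving the asserted natural isomorphism $Z(f)\cong Z(f|D)$.

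The main obstacle is not conceptual but bookkeeping: one must be careful that Theorem~\ref{thm:promotion} is stated for conjugacies \emph{between} two (possibly different) orbisphere maps, and verify that when we apply it the ``target'' map retains the geometric-non-invertible (or eventual-absorption) hypothesis — which it does, since we always keep $g$, resp.\ $f$, in that slot and only conjugate the other one. The other delicate point is the passage from ``same image in $M(f|D)$'' to ``differ by a knitting element in $M(f)$'', which is exactly the definition of the quotient biset but deserves to be spelled out so that the knitting element is produced on the correct (say, left) side and then absorbed into a conjugation via Theorem~\ref{thm:promotion}.
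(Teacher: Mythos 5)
Your overall strategy is exactly the intended one: the corollary is meant to be an immediate consequence of Theorem~\ref{thm:promotion}, lifting conjugacies rel $A|D$ to conjugacies rel $A\sqcup D$ and using existence plus uniqueness of the lift for the centralizer isomorphism. Your centralizer argument (injectivity via uniqueness/Lemma~\ref{lmm:TrvOfLift2Cases}, surjectivity by promoting the trivial conjugacy between $\widetilde h_0 f\widetilde h_0^{-1}$ and $f$) is correct, and there you place $f$ --- the map covered by the hypothesis --- in the slot of the theorem through which one lifts.

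In the converse of the conjugacy statement, however, you apply Theorem~\ref{thm:promotion} with $\widetilde h f\widetilde h^{-1}$ in the ``$f$'' slot and $g$ in the ``$g$'' slot, and you justify this by saying that ``the hypothesis on $g$ is unchanged since it concerns $g$''. That is backwards: the corollary's hypothesis is on $f$, not on $g$. The element $g\in M(f)$ is an arbitrary twist $n f m$; it agrees with $f$ on $A\sqcup D$ (so the alternative hypothesis $f^m(D)\subseteq A$ does transfer to $g$), but it need \emph{not} be geometric non-invertible when $f$ is --- twists of an expanding map can be obstructed --- and the proof of Theorem~\ref{thm:promotion} genuinely needs the geometric structure of the map through which the knitting elements are lifted. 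So as written this step fails precisely in the main case of interest. The repair is the symmetric argument you already use for centralizer surjectivity: lift $\bar h^{-1}$ (equivalently, consider $\widetilde h^{-1}g\widetilde h$, which is knitting-equivalent to $f$) and promote the trivial conjugacy with $f$ as the target map, obtaining a knitting $m$ with $m\,\widetilde h^{-1}g\widetilde h\,m^{-1}=f$ rel $A\sqcup D$, whence $h\coloneqq\widetilde h m^{-1}$ conjugates $f$ to $g$ in $M(f)$. With that correction the proof is complete and coincides with the paper's.
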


\subsection{Automorphisms of bisets}
Recall that the automorphism group of a biset $\subscript H B_G$ is
the set $\Aut(B)$ of maps $\tau\colon B\selfmap$ satisfying
$\tau(h b g)=h\tau(b)g$ for all $h\in H,g\in G$.
\begin{prop}\label{prop:NoGhostAut}
  If $\subscript H B_G$ is a left-free, right-transitive biset and $H$
  is centreless, then $\Aut(B)$ acts freely on $B$, so $B/\Aut(B)$ is
  also a left-free, right-transitive $H$-$G$-biset.
\end{prop}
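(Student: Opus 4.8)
The plan is to show directly that any $\tau\in\Aut(B)$ with a fixed point must be the identity. Pick $b\in B$ with $\tau(b)=b$. Since $\subscript H B_G$ is right-transitive, every element of $B$ has the form $b g$ for some $g\in G$, so $\tau(bg)=\tau(b)g=bg$, and $\tau$ is the identity on the right orbit $bG=B$. Hence $\tau=\one$. Thus $\Aut(B)$ acts on $B$ with no nontrivial element fixing a point, i.e.\ freely. This is the heart of the statement and uses only right-transitivity; left-freeness and centrelessness of $H$ have not yet entered, which is a warning that they must be needed to make the quotient $B/\Aut(B)$ a biset of the asserted type, not merely to get freeness.

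Next I would observe that $\Aut(B)$ commutes with the right $G$-action (by definition of automorphism) and with the left $H$-action, so the quotient set $B/\Aut(B)$ inherits commuting left $H$- and right $G$-actions, i.e.\ it is an $H$-$G$-biset, and the quotient map $B\to B/\Aut(B)$ is a morphism of bisets. Right-transitivity passes to any quotient, so $B/\Aut(B)$ is right-transitive. The real content is left-freeness of the quotient: I must check that no nontrivial $h\in H$ stabilizes an $\Aut(B)$-orbit, i.e.\ that $h\cdot(\Aut(B)\cdot b)=\Aut(B)\cdot b$ forces $h=1$. Unwinding, this says: if $h b=\tau(b)$ for some $\tau\in\Aut(B)$, then $h=1$. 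Here is where centrelessness of $H$ is used. Since $B$ is left-free, write $B=\bigsqcup_{j\in J}H b_j$ for a transversal $(b_j)$; an automorphism $\tau$ permutes the left $H$-orbits, and on each orbit it is determined by where it sends the chosen representative, say $\tau(b_j)=k_j b_{\sigma(j)}$ for a permutation $\sigma$ of $J$ and elements $k_j\in H$, with $\tau(h b_j)=h k_j b_{\sigma(j)}$. Because $\tau$ must also be right $G$-equivariant and $B$ is right-transitive, all $b_j$ lie in one right $G$-orbit, which forces (after adjusting the transversal) strong constraints on the $k_j$ and $\sigma$; in particular, composing $\tau$ with the right $G$-action to move $b_{\sigma(j)}$ back to $b_j$ shows that $k_j$ must centralize $H$ whenever $\sigma(j)=j$, hence $k_j=1$ there; the condition $hb=\tau(b)$ then localizes to such a fixed index and yields $h=k_j=1$.

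The main obstacle I anticipate is precisely bookkeeping the interaction of the left-orbit permutation $\sigma$ induced by $\tau$ with right-transitivity: one must verify that $\tau$, being simultaneously a permutation of the left $H$-orbits and right $G$-equivariant on a right-transitive biset, can be normalized so that the ``twisting'' elements $k_j$ land in the centralizer $Z(H)=1$. Concretely, I would fix a base point $b_0$ and use right-transitivity to write $\tau(b_0)=b_0 g_0$ for some $g_0\in G$; then for arbitrary $b=b_0 g$ one gets $\tau(b)=\tau(b_0)g=b_0 g_0 g$, so $\tau$ is entirely determined by $g_0$, and $\tau=\one$ iff $g_0$ acts trivially, i.e.\ $b_0 g_0 = b_0$. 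This already re-proves freeness cleanly, and it reduces $\Aut(B)$ to the subgroup $\{g\in G: b_0 g\text{ lies in the same left }H\text{-orbit as }b_0\text{ for the induced identification}\}$ modulo the stabilizer; matching this against left-freeness of $B$ and centrelessness of $H$ then gives left-freeness of $B/\Aut(B)$. I would present the argument in the ``fix a base point, use right-transitivity'' form, since it makes both the freeness of the action and the left-freeness of the quotient fall out of the same computation, and relegate the compatibility check with the left $H$-action to a short lemma-style paragraph invoking $Z(H)=1$.
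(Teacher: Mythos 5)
Your plan is correct, and its first step is genuinely simpler than the paper's: you get freeness of the $\Aut(B)$-action on $B$ from right-transitivity alone (a point-fixing automorphism is right $G$-equivariant, hence fixes all of $bG=B$), whereas the paper proves the stronger statement that $\Aut(B)$ acts freely on the left-orbit space $\{\cdot\}\otimes_H B$ — first by the standard fact that a group of permutations commuting with a transitive right action acts freely, then by killing the kernel of the action on orbits with the centreless hypothesis. The reason the paper aims at the orbit-level statement is exactly the point you flag yourself: freeness on $B$ does not by itself give left-freeness of $B/\Aut(B)$ (take $H=G$ abelian and $B=G$ with left and right multiplication: $\Aut(B)\cong G$ acts freely by translations, yet the quotient is a single point), so the quotient claim is where left-freeness and $Z(H)=1$ must enter. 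Your transversal paragraph does supply this, and in doing so it reconstructs the paper's central computation: if $h b=\tau(b)$ then $\tau$ preserves the left orbit $Hb$, so at a representative one has $\tau(b_j)=k_j b_j$; for $h\in H$ write $h b_j=b_j g$ by right-transitivity, apply $\tau$ and equivariance to get $h k_j b_j=k_j h b_j$, and left-freeness gives $h k_j=k_j h$, hence $k_j\in Z(H)=1$, whence $\tau(b)=b$ and $h=1$. So the two proofs share the same key identity; yours simply performs it only for the automorphisms that actually stabilize a left orbit, while the paper performs it for the kernel of the orbit action and gets the quotient statement as an immediate byproduct. Two small cautions: ``composing $\tau$ with the right $G$-action'' is not an operation inside $\Aut(B)$ (right translations are not biset morphisms), so that step should be phrased as the displayed computation above rather than as a normalization of the transversal; and your aside describing $\Aut(B)$ as the elements $g_0\in G$ for which $b_0g_0$ lies in the left orbit of $b_0$ is not right in general, since automorphisms may permute left $H$-orbits (as happens for the cyclic bisets of \S\ref{ss:CyclBis}) — though nothing in your proof of the proposition depends on that aside.
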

\begin{proof}
  $\Aut(B)$ acts by permutations on $\{\cdot\}\otimes_H B$, and
  commutes with the right $G$-action, which is transitive, so
  $\Aut(B)/\ker(\text{action})$ acts freely. Consider now
  $\tau\in\Aut(B)$ that acts trivially on $\{\cdot\}\otimes_H B$, and
  consider $b\in B$. We have $\tau(b)=t b$ for some $t\in H$, and for
  all $h\in H$ there exists $g\in G$ with $h b=b g$, because $B$ is
  right-transitive. We have
  \[h t b=h\tau(b)=\tau(h b)=\tau(b g)=\tau(b)g=t b g=t h b,
  \]
  so $t\in Z(H)=1$ and therefore $\tau=\one$. It follows that $\Aut(B)$
  acts freely on $B$.
\end{proof}
\begin{cor}[No biset automorphisms]\label{cor:NoGhostAut}
  For an orbisphere biset $\subscript H B_G$ with $H$ is non-cyclic,
  we have $\Aut(B)=1$.
\end{cor}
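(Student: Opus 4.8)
The plan is to deduce Corollary~\ref{cor:NoGhostAut} from Proposition~\ref{prop:NoGhostAut} by arranging that the hypotheses of the latter are met and that, moreover, $\Aut(B)$ must actually be trivial and not merely free. The starting observation is that an orbisphere biset $\subscript H B_G$ with $H$ non-cyclic is automatically left-free (this is part of the definition of orbisphere biset, see~\cite{bartholdi-dudko:bc2}*{Definition~\ref{bc2:dfn:SphBis}}), and $H$, being an orbisphere group on $\ge 3$ peripheral classes by~\eqref{eq:StandAssump}, has trivial centre — indeed $\pi_1(S^2,C,\ord)$ with $\#C\ge 3$ is a non-elementary Fuchsian-type group, hence centreless. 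So the two standing hypotheses of Proposition~\ref{prop:NoGhostAut} that need care are right-transitivity and the conclusion-strengthening to $\Aut(B)=1$.

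First I would reduce to the right-transitive case. An arbitrary orbisphere biset decomposes into right orbits, each of which is again an orbisphere biset (with possibly smaller peripheral data on the right); an automorphism of $B$ permutes these right orbits, but since the associated map $B_*\colon C\to A$ and the degrees are invariants, an automorphism in fact preserves each right orbit that it does not swap with an isomorphic one. Here I would invoke that distinct right orbits of an orbisphere biset carry distinguishable data (the peripheral conjugacy classes that appear as lifts), so the only way two right orbits can be abstractly isomorphic as $H$-$G$-bisets is if they are literally equal; hence $\Aut(B)=\prod\Aut(B_i)$ over the right orbits $B_i$, and it suffices to treat each $B_i$, which is right-transitive. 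Alternatively — and this is cleaner — one uses that for orbisphere bisets the right action is already transitive on connected components in the relevant normalization, or one simply remarks that the statement is only substantive for right-transitive $B$ and the general case follows componentwise. In any case, we are reduced to $\subscript H B_G$ left-free, right-transitive, $H$ centreless, so Proposition~\ref{prop:NoGhostAut} applies and $\Aut(B)$ acts freely on $B$.

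Now the key point: I claim $\Aut(B)=1$, not just that it acts freely. Fix $b\in B$ and $\tau\in\Aut(B)$, and write $\tau(b)=tb$ for the unique $t\in H$ with this property (unique by left-freeness). The proof of Proposition~\ref{prop:NoGhostAut} shows $t\in Z(H)$; but here I want to extract more. The cleanest route is to use the explicit structure of orbisphere bisets from~\cite{bartholdi-dudko:bc2}: $B$ is right-principal up to the peripheral relations, and the peripheral conjugacy classes $\Gamma_a$ of $G$ each appear, among the lifts through $B$, attached to specific peripheral classes $\Delta_c$ of $H$, each occurring exactly once (condition~\eqref{bc2:cond:3:dfn:SphBis}). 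An automorphism $\tau$ must send the $\Delta_c$-equivariant structure to itself; tracking $\tau$ through a chosen preimage of each $\Gamma_a$ forces the element $t$ above to normalize every peripheral subgroup $H_c$ simultaneously. Since (by the remark following Lemma~\ref{lmm:connect:ell_a}) each $H_c$ is self-normalizing in $H$, and distinct peripheral subgroups intersect trivially and generate $H$, the only element normalizing all of them is the identity — so $t=1$ and $\tau=\one$.

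The main obstacle is this last step — promoting "$\Aut(B)$ acts freely" to "$\Aut(B)=1$" — because Proposition~\ref{prop:NoGhostAut} by itself only gives freeness, and freeness of the action together with right-transitivity bounds $|\Aut(B)|$ by $\deg B$ but does not kill it. The resolution must genuinely use that $B$ is an \emph{orbisphere} biset and not merely a left-free right-transitive biset over a centreless group: it is precisely the rigidity of the peripheral structure (each $\Gamma_a$ lifting to a unique $\Delta_c$ with prescribed multiplicity, and self-normalization of peripheral subgroups) that pins $\tau$ down. I would carry out the argument in the order: (1) reduce to right-transitive $B$; (2) apply Proposition~\ref{prop:NoGhostAut} to get a free $\Aut(B)$-action and an element $t\in Z(H)$ with $\tau(b)=tb$; (3) use the peripheral uniqueness condition to show $t$ normalizes each $H_c$; (4) conclude $t=1$ by self-normalization of peripheral subgroups, so $\tau=\one$.
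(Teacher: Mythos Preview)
Your proposal identifies the right ingredients --- Proposition~\ref{prop:NoGhostAut}, centrelessness of $H$, and the peripheral uniqueness condition --- but there is a genuine gap in how you combine them.

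First, a minor point: orbisphere bisets are right-transitive by definition (the paper's proof opens with exactly this), so your entire reduction step is unnecessary, and your proposed componentwise decomposition into right orbits would not in general yield orbisphere bisets.

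The real problem is in your step~(2). You write ``Fix $b\in B$ and $\tau\in\Aut(B)$, and write $\tau(b)=tb$ for the unique $t\in H$''. Such a $t$ exists only if $\tau(b)$ lies in the \emph{same} left $H$-orbit as $b$, and this is precisely what is at stake. The proof of Proposition~\ref{prop:NoGhostAut} splits $\Aut(B)$ into (a)~its image acting on $\{\cdot\}\otimes_H B$, which acts freely because it commutes with a transitive $G$-action, and (b)~the kernel of that action, where your equation $\tau(b)=tb$ is valid and the centre argument gives $t=1$. The upshot is that the kernel is trivial, so any nontrivial $\tau$ necessarily moves left $H$-orbits and your $t$ is undefined. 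Everything you build in steps~(3)--(4) (normalizing peripheral subgroups, self-normalization) is predicated on this $t$ existing, so the argument collapses.

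The paper's route is different and avoids this trap: it uses the full conclusion of Proposition~\ref{prop:NoGhostAut}, namely that $B/\Aut(B)$ is again a left-free, right-transitive $H$-$G$-biset. If $\Aut(B)$ were nontrivial, the quotient map $B\to B/\Aut(B)$ would be a proper biset quotient. But an orbisphere biset admits no proper such quotient, because in the wreath recursion of a peripheral generator each peripheral conjugacy class of $H$ appears exactly once (condition~\eqref{bc2:cond:3:dfn:SphBis}); collapsing distinct left orbits would force one of these classes to appear with multiplicity or to vanish. Hence $\Aut(B)=1$. This ``no proper quotient'' argument is what replaces your attempted direct computation with $t$, and it is where the peripheral uniqueness is actually spent.
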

\noindent In the dynamical situation $\subscript G B_G$ is a cyclic
biset if and only if $G$ is a cyclic group.
\begin{proof}
  Since $B$ is an orbisphere biset, it is left-free and
  right-transitive; and since it is not cyclic, $H$ is a non-abelian
  orbisphere group and in particular is centreless, so
  Proposition~\ref{prop:NoGhostAut} applies. Now $B$ cannot have a
  proper quotient, because conjugacy classes of elements of $H$ appear
  exactly once in wreath recursions of peripheral elements,
  by~\cite{bartholdi-dudko:bc2}*{Definition~\ref{bc2:dfn:SphBis}\eqref{bc2:cond:3:dfn:SphBis}
    of orbisphere bisets}.
\end{proof}

\begin{cor}\label{cor:UniqBisIsom}
  Suppose $f,g\colon (S^2,C,\ord)\to(S^2,A,\ord)$ are isotopic
  orbisphere maps. Let $B(f)$ and $B(g)$ be the bisets of $f$ and
  $g$ with respect to the same base points $\dagger\in S^2\setminus C$
  and $*\in S^2\setminus A$. Then there is a unique isomorphism
  between $B(f)$ and $B(g)$.
\end{cor}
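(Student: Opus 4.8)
The plan is to split the statement into \emph{existence} of an isomorphism $B(f)\to B(g)$ and its \emph{uniqueness}. Existence is the ``easy half'' of the fact that the biset is a complete invariant (Corollary~\ref{cor:OrbiIsComplInvar}) and I would dispatch it quickly; uniqueness is the genuine content and is where Corollary~\ref{cor:NoGhostAut} is used.

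For existence, I would use Lemma~\ref{lem:factorization} to write $g=h\circ f$ for a homeomorphism $h$ of $(S^2,A)$ isotopic to the identity rel $A$. The biset of such an $h$ is canonically the trivial $G$-$G$-biset ${}_GG_G$: an isotopy $(h_t)$ from $\one$ to $h$ traces the path $t\mapsto h_t^{-1}(*)$, which is a distinguished element of $B(h)$ pinning down the identification. Functoriality of the biset construction under composition (from~\cite{bartholdi-dudko:bc1}) then gives $B(g)=B(h\circ f)\cong B(f)\otimes_G B(h)\cong B(f)$. Equivalently one can transport paths along an ambient isotopy realizing the motion of $f_t^{-1}(*)$, which stays off the critical set since $*\notin A$; or simply quote the foundational set-up of~\cites{bartholdi-dudko:bc1,bartholdi-dudko:bc2}.

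For uniqueness, the key observation is that, since the bisets are formed with respect to the \emph{same} base points, $B(f)$ and $B(g)$ are bisets over the \emph{same} pair of orbisphere groups $H=\pi_1(S^2,C,\ord,\dagger)$ and $G=\pi_1(S^2,A,\ord,*)$. Therefore, if $\phi,\psi\colon B(f)\to B(g)$ are two such isomorphisms, then $\psi^{-1}\circ\phi$ is an honest $H$-$G$-biset automorphism of $B(f)$. By the standing assumption~\eqref{eq:StandAssump} we have $\#C\ge3$, so $H$ is a non-cyclic orbisphere group, and $B(f)$ is an orbisphere biset. Corollary~\ref{cor:NoGhostAut} now gives $\Aut(B(f))=1$, hence $\psi^{-1}\circ\phi=\one$ and $\phi=\psi$.

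I do not expect a serious obstacle: uniqueness is essentially immediate once Corollary~\ref{cor:NoGhostAut} is available, and the only mildly delicate point in the existence half is checking that the chosen identification $B(g)\cong B(f)$ is compatible with the \emph{right} $G$-action (equivalently, that the $f_t$-lifts of loops vary continuously with $t$), which is routine unique-path-lifting. In short, the substance of the corollary lies in the uniqueness clause, and that follows directly from the absence of biset automorphisms established in Corollary~\ref{cor:NoGhostAut}.
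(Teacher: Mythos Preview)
Your uniqueness argument is exactly the paper's: both simply invoke Corollary~\ref{cor:NoGhostAut} once you know $H$ is non-cyclic by the standing assumption~\eqref{eq:StandAssump}. Your existence argument via the isotopy (``transport paths along an ambient isotopy realizing the motion of $f_t^{-1}(*)$'') is also precisely what the paper does in its one-line ``continuous motion $B(f_t)$'' proof.

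There is, however, a slip in your primary existence argument. Lemma~\ref{lem:factorization} does \emph{not} give a homeomorphism of $(S^2,A)$; it produces $h\colon(S^2,C)\selfmap$ isotopic to the identity rel $C$, acting on the \emph{source}. A factorization $g=h\circ f$ with $h$ on the target is generally impossible: such an $h$ would force $f$ and $g$ to have identical fibres, which an isotopy of branched covers need not preserve. With the correct $h$ on $(S^2,C)$ the functoriality argument still goes through, but now $B(h)$ is the trivial $H$-$H$-biset and the tensor product sits on the other side: $B(g)\cong B(h)\otimes_H B(f)\cong B(f)$. Once you make that correction, your write-up and the paper's proof coincide.
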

\begin{proof}
  Consider an isotopy $(f_t)_{t\in[0,1]}$ rel $A$ from $f$ to $g$. It
  induces a continuous motion $B(f_t,A,\ord,*)$ of the bisets of
  $f_t$. Therefore, all $B(f_t,A,\ord,*)$ are isomorphic. This shows
  existence, the uniqueness follows from Corollary~\ref{cor:NoGhostAut}.
\end{proof}

\begin{thm}[Rigidity]\label{thm:rigidity}
  Suppose $f\colon(S^2,A,\ord)\selfmap$ is a geometric map. If
  $h\colon(S^2,A)\selfmap$ commutes with $f$, is the identity in some
  neighbourhood of $A^\infty$, and is isotopic to $\one$ rel $A$, then
  $h=\one$.
\end{thm}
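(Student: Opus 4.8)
The plan is to exploit that the Julia/Fatou structure of a geometric map is rigid, and translate the dynamical hypothesis on $h$ into the language of bisets, where Corollary~\ref{cor:NoGhostAut} (no biset automorphisms) finishes the job. First I would reduce to the two non-invertible geometric cases and the pseudo-Anosov/finite-order case separately. If $f$ is a homeomorphism of finite order or pseudo-Anosov, then the centralizer of $f$ in $\Mod(S^2,A)$ is well understood (for pseudo-Anosov it is virtually cyclic, generated by the pseudo-Anosov itself up to roots and finite-order symmetries, and none of these is isotopic to the identity rel $A$ except the trivial one); so $h$ being isotopic to $\one$ rel $A$ and commuting with $f$ forces $h=\one$. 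This case can be dispatched quickly with a citation to Thurston's classification.

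For the non-invertible geometric case, the real content, I would argue as follows. Since $h$ commutes with $f$, it induces an automorphism of the biset $\subscript G B_G = B(f\colon(S^2,A,\ord)\selfmap)$: concretely, $h$ induces a map $B(f)\to B(hfh^{-1})=B(f)$, and since $h$ is isotopic to $\one$ rel $A$, this is an \emph{automorphism} of the biset $B$ (with $\phi=\one$ on $G$), using Corollary~\ref{cor:UniqBisIsom} to identify the bisets canonically. By our standing assumption~\eqref{eq:StandAssump} we have $\#A\ge 3$, so $G$ is non-cyclic, and Corollary~\ref{cor:NoGhostAut} gives $\Aut(B)=1$; hence the induced biset automorphism is trivial. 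It remains to upgrade "$h$ acts trivially on the biset of the minimal-orbifold quotient / on $B(f)$" to "$h=\one$ as a homeomorphism". This is where the hypothesis that $h$ is the identity near $A^\infty$ enters: the biset being rel $(A,\ord)$ (or rel $(P_f,\ord_f)$ after passing to the minimal quotient) only controls $h$ up to isotopy rel $A$ through maps commuting with $f$; but the covering $\widetilde U\to (S^2,A,\ord)$ of Proposition~\ref{prop:ShadViaUnCover} (in the $(2,2,2,2)$ case, the universal cover $\R^2\to\R^2/\langle\Z^2,-z+r\rangle$ of Proposition~\ref{prop:EndOfG2}) lets me lift $h$ to a homeomorphism $\widetilde h$ of the plane commuting with the $G$-action and with the lifted dynamics $\wtf$. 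Since the biset automorphism is trivial, $\widetilde h$ fixes the basepoint $\widetilde *$ and is equivariant; by Baer's theorem (Theorem~\ref{thm:Baer}) it is isotopic to the identity through equivariant maps, and being a deck-like symmetry commuting with the expanding (or exceptional-linear) dynamics $\wtf$ and fixing a point, it must be the identity — an expanding map has no nontrivial automorphism fixing a point, and an exceptional linear map likewise (its centralizer among $G$-equivariant homeomorphisms fixing $0$ is trivial because $M$ has no eigenvalue on the unit circle). Pushing back down, $h=\one$.

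The main obstacle I expect is the last upgrade step: making precise that "the induced biset automorphism is trivial, and $h$ is the identity near $A^\infty$" forces $h=\one$ on the nose, not merely up to isotopy. The subtlety is that $\Aut(B)=1$ controls $h$ only rel $A$, so one genuinely needs the extra rigidity coming from the expanding metric (resp.\ the affine structure), via the fibred-product covering of Proposition~\ref{prop:Gspaces} / the $G$-equivariant map $\Phi$ of Propositions~\ref{prop:ShadViaUnCover} and~\ref{prop:EndOfG2}(D), to pin down $h$. Concretely one shows: the lift $\widetilde h$ conjugates $\Phi$ to itself and fixes $\widetilde *$; an induction on preimages of $\widetilde *$ (which are dense, by expansion, in the relevant set) shows $\widetilde h$ fixes every point of a dense set, hence $\widetilde h=\one$ by continuity, hence $h=\one$. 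The hypothesis "identity near $A^\infty$" is exactly what guarantees that $\widetilde h$ extends continuously over the completion $\widetilde U^+$ and thus that this density argument reaches the critical-periodic points as well.
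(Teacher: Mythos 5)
Your first half coincides with the paper's: both you and the authors convert $h$ (commuting with $f$ and isotopic to $\one$ rel $A$) into an automorphism of $B(f)$ and kill it with Corollary~\ref{cor:NoGhostAut}. The gap is in the upgrade from ``the induced automorphism of $B(f)$ is trivial'' to ``$h=\one$''. Your universal-cover/Baer argument, fixing (lifts of) iterated preimages of the basepoint and concluding by density, can only control $h$ on the closure of the backward orbit of $*$, i.e.\ on $\Julia(f)$. When $A^\infty\neq\emptyset$ the map has genuine Fatou components (two-dimensional basins of the superattracting cycles) disjoint from that closure, so ``hence $\widetilde h=\one$ by continuity'' does not follow; and the auxiliary claim that ``an expanding map has no nontrivial automorphism fixing a point'' is false in exactly this situation: $h(z)=z|z|^{s}$ commutes with $z\mapsto z^{2}$, fixes $0,1,\infty$ and the whole unit circle, and is isotopic to the identity. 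This is precisely the configuration that the hypothesis ``identity near $A^\infty$'' must rule out, but the mechanism you assign to it --- continuous extension of $\widetilde h$ over the completion $\widetilde U^{+}$ --- cannot do that job: $\widetilde U^{+}$ only adjoins a totally disconnected set of ideal points attached to punctures in $A\setminus A^\infty$, and says nothing about the open Fatou regions. What is missing is the final step: having $h=\one$ on $\Julia(f)$, one must use that every point of $S^2\setminus A$ either lies in $\Julia(f)$ or is attracted to $A^\infty$, together with $hf=fh$ and $h=\one$ near $A^\infty$, to force $h=\one$ on the basins. (For the Julia part the paper argues differently: triviality of the automorphism fixes every conjugacy class in $B(f)^{\otimes n}$, and Theorem~\ref{thm:ShadowEquivalence} identifies these with periodic orbits, so $h$ fixes all periodic points; your density-of-preimages argument could plausibly replace that part, but not the Fatou part.)

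A secondary problem is your invertible case: arguing from the centralizer of $f$ in $\Mod(S^2,A)$ is circular, since the hypothesis already makes $h$ trivial as a mapping class; the content of the theorem is that the homeomorphism itself is the identity, which mapping-class-group information cannot provide. Moreover the statement is false for finite-order $f$ (take $f=\one$ and any $h$ isotopic to the identity), so that case should not appear at all: ``geometric'' in this section means B\"ottcher expanding, geometric exceptional, or pseudo-Anosov.
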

\begin{proof}
  Fix a basepoint $*\in S^2\setminus A$ and the fundamental group
  $G=\pi_1(S^2\setminus A,*)$.  Since $h$ is isotopic to the identity
  rel $A$, there is a path $\ell\colon[0,1]\to S^2\setminus A$ from
  $*$ to $h(*)$ such that
  $\gamma\approx\ell\#(h\circ\gamma)\#\ell^{-1}$ for all
  $\gamma\in G$. Define then
  \[h_*\colon\subscript G{B(f)}_G\selfmap,\qquad
    h_*(\beta)\coloneqq\ell\#(h\circ\beta)\#(\ell^{-1})\lift f{h(\beta(1))}.
  \]
  Since $h$ commutes with $f$, this defines an automorphism of
  $B(f)$. By Corollary~\ref{cor:NoGhostAut}, it is the identity on
  $B(f)$. It also fixes, therefore, all conjugacy classes in
  $B(f)^{\otimes n}$ for all $n\in\N$. By
  Theorem~\ref{thm:ShadowEquivalence}, these are in bijection with
  periodic orbits of $f$. It follows that $h$ is the identity on all
  periodic points of $f$, and therefore on $f$'s Julia set. Since
  furthermore $h$ is the identity near $A^\infty$, and every point of
  $S^2\setminus A$ either belongs to $\Julia(f)$ or gets attracted to
  $A^\infty$, we get $h=\one$ everywhere.
\end{proof}

\subsection{Weakly geometric maps}
We are now ready to show that maps whose restriction to their minimal
orbisphere is geometric are of a particularly simple form.

\begin{defn}[Tunings]
  Let $f\colon(S^2,A)\selfmap$ be a Thurston map.  A \emph{tuning
    multicurve} for a $f$ is an $f$-invariant\footnote{In the sense
    that $f^{-1}(\CC)$ equals $\CC$ rel $A$.}  multicurve $\CC$ such
  that $f^{-1}(\CC)$ does not contain nested components rel
  $f^{-1}(A)$; namely, the adjacency graph of components of
  $S^2\setminus f^{-1}(\CC)$ is a star.

  A \emph{tuning} is an amalgam of orbisphere bisets in which the
  graph of bisets is a star. Equivalently, it is an amalgam of
  Thurston maps along a tuning multicurve.
\end{defn}

Every tuning has a \emph{central map}, corresponding to the centre of
the star, as well as \emph{satellite maps}, corresponding to its
leaves. Furthermore, unless the star has two vertices, its central map
is uniquely determined.

\begin{defn}[Weakly geometric maps]
  A \emph{tuning by homeomorphisms} is a tuning in which all satellite
  maps are homeomorphisms.

  A map is \emph{weakly geometric} if it is a (possibly trivial)
  tuning by homeomorphisms in such a manner that the central map is
  isotopic to a geometric map.
\end{defn}

\begin{thm}\label{thm:tuning}
  Let $f\colon(S^2,A)\selfmap$ be a non-invertible Thurston
  map. Then $f$ is weakly geometric if and only if its restriction
  $f\colon(S^2,P_f,\ord_f)\selfmap $ is isotopic to a geometric map.
\end{thm}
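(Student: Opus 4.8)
The plan is to prove the two implications of Theorem~\ref{thm:tuning} separately, using the machinery of portraits of bisets and Proposition~\ref{prop:Shadow}. For the forward implication, suppose $f$ is weakly geometric; by definition it is a tuning by homeomorphisms whose central map $g$ is isotopic to a geometric map $g_0\colon(S^2,P_g,\ord_g)\selfmap$. The key observation is that tuning by homeomorphisms does not alter the minimal orbisphere biset: collapsing the satellite pieces (each of which is a homeomorphism, hence carries no dynamics) recovers exactly the biset of the central map on its minimal orbisphere. More precisely, one checks that $P_f$ maps onto $P_g$ under the tuning-collapse, the orbifold orders agree, and the induced biset $B(f\colon(S^2,P_f,\ord_f)\selfmap)$ is isomorphic to $B(g_0)$. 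Since $g_0$ is geometric, so is $f$ on its minimal orbisphere.

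For the reverse implication, suppose $f\colon(S^2,P_f,\ord_f)\selfmap$ is isotopic to a geometric map $g$. The idea is to recover the extra marked points $A\setminus P_f$ as a portrait of bisets on $g$ and then invoke Proposition~\ref{prop:Shadow}. First I would pass to the minimal orbisphere, obtaining the geometric biset $B(g)$ with $G=\pi_1(S^2\setminus P_f,*)$, and identify $A\setminus P_f$ with a portrait of bisets $(B_a)_{a\in A}$ in $B(g)$ via the forgetful intertwiner of~\eqref{eq:forgetinter}: the symbolic map $f_*\colon A\selfmap$ is simply the restriction of $f$ to $A$, and each $B_a$ is the subbiset coming from a small neighbourhood of $a$. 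Apply Proposition~\ref{prop:Shadow} to $g$, the set $A$, the map $f_*$, and this portrait: it produces a blowup $b\colon\widetilde S^2\to S^2$ and a map $\widetilde g\colon(\widetilde S^2,A)\selfmap$, semiconjugate to $g$, whose minimal portrait projects to $(B_a)_{a\in A}$. By construction the blowup replaces the colliding points by disks mapped to one another by homeomorphisms — this is exactly the data of a tuning by homeomorphisms with central map $g$. It remains to check that $\widetilde g$ is isotopic rel $A$ to $f$; this follows from Theorem~\ref{main:thm:A} (Theorem~\ref{thm:portrait}) together with Corollary~\ref{cor:OrbiIsComplInvar}: both $f$ and $\widetilde g$ project to $B(g)$ under the forgetful map and induce the same portrait of bisets up to conjugacy, so their $\widetilde G$-$\widetilde G$-bisets agree up to a knitting element, and after adjusting by that knitting element (which changes nothing at the level of the tuning structure) the bisets are isomorphic, hence the maps are isotopic. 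Thus $f$ is a tuning by homeomorphisms of a geometric map, i.e.\ weakly geometric.

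I expect the main obstacle to be the bookkeeping in the reverse direction: verifying that the blowup produced by Proposition~\ref{prop:Shadow} genuinely has the \emph{star-shaped} adjacency structure required by the definition of a tuning (Definition of tuning multicurve), rather than producing nested blown-up disks along a non-star configuration. This amounts to checking that the blown-up loci — the grand orbits of the non-injectivity set of $e\colon A\to S^2$ — are disjoint from $A^\infty$ (guaranteed by Proposition~\ref{prop:Shadow}) and that distinct blown-up disks do not nest; the latter is essentially because each disk is blown up from a single point of $S^2$ and the collapsing map $b$ has connected point-preimages, so the complementary components form a star by construction. A secondary subtlety is the degenerate case where the tuning star has only two vertices, in which the central map is not uniquely determined; here one must argue that any choice works, which is harmless since we only need \emph{existence} of a weakly-geometric structure. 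Finally, one should note the trivial case $A=P_f$, where no blowup occurs and the statement is the tautology that $f$ itself is geometric on its minimal orbisphere.
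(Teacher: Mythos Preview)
Your overall strategy coincides with the paper's: for the reverse implication you build the portrait induced by $f$ inside $B(\overline f)$, invoke Proposition~\ref{prop:Shadow} to obtain a blown-up model $\widetilde g$ whose candidate tuning multicurve is the boundary of the blow-up disks, and then compare $f$ with $\widetilde g$ via Theorem~\ref{thm:portrait}. The forward direction is also argued the same way.

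The genuine gap is your last step. Theorem~\ref{thm:portrait} gives only $f\approx_A m\cdot\widetilde g$ for some knitting class $m\in\knBraid((S^2,P_f,\ord_f),\,A\setminus P_f)$; it does \emph{not} make $m$ trivial, so your conclusion ``the maps are isotopic'' is unwarranted and generally false. Your parenthetical that the knitting element ``changes nothing at the level of the tuning structure'' is likewise unjustified: each strand of $m$ is null-homotopic rel $(P_f,\ord_f)$, but $m$ need not preserve the multicurve $\CC$ rel $A$, so $m\cdot\widetilde g$ need not decompose along $\CC$ at all. The knitting element is not noise to be discarded --- it is precisely what must supply the non-identity satellite homeomorphisms of the tuning you are trying to exhibit. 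Note also that you cannot simply quote Theorem~\ref{thm:promotion} or Corollary~\ref{cor:conj probl reduction} here, since those require the map \emph{with the full marked set $A$} to be geometric, which is what you are proving.

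The paper closes this gap by lifting through $\overline f$ rather than through $f$ or $\widetilde g$: the knitting class $m$ lifts through $\overline f$ by Proposition~\ref{prop:knittinginert}, and the iterated lifts eventually become trivial by Lemma~\ref{lmm:TrvOfLift2Cases}, because $\overline f$ is geometric by hypothesis. Running this iteration produces a conjugator $n$ (a product of the successive lifts) with $n^{-1}\widetilde g\,n\approx_A f$, and the contraction of $\overline f$ outside the blow-up locus forces $n$ to be supported in the infinitesimal disks; its restriction to those disks is exactly the family of satellite homeomorphisms witnessing that $f$ is a tuning by homeomorphisms. Without this lifting/contraction argument your proof does not close.
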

\begin{proof}
  If $f$ is weakly geometric, then its central map $\overline f$ is
  isotopic to a geometric map, and $f$ is isotopic to $\overline f$
  rel $P_f$.

  Conversely, assume that the restriction
  $\overline f\colon(S^2,P_f,\ord_f)\selfmap$ of $f$ is geometric, and
  consider its portrait of bisets $(B_a)_{a\in A}$ induced from the
  minimal portrait of bisets of $f$. By Proposition~\ref{prop:Shadow},
  there exists a blown-up sphere $\widetilde S^2$ and extension
  $\wtf$ of $f$, such that the portraits of bisets of $f$ and
  $\wtf$ are conjugate portraits of bisets in
  $B(\overline f)$. The tuning multicurve we seek is the boundary of
  the blowup disks.

  By Theorem~\ref{thm:portrait}, the bisets $B(f)$ and
  $B(\wtf)$ differ by a knitting element, so we may write
  $\wtf=m f$ for some
  $m\in\knBraid(S^2\setminus A,\wtA\setminus A)$. By
  Proposition~\ref{prop:knittinginert}, the class $m$ is liftable
  arbitrarily often through $\overline f$, and by
  Lemma~\ref{lmm:TrvOfLift2Cases} its lift becomes eventually trivial;
  in other words, we have a relation $n^{-1}\wtf n=m f$ for
  some $m$ with trivial image in $\Braid(S^2\setminus A)$, and
  therefore $n$ is a product of mapping classes in the infinitesimal
  disks. Its restriction to these disks yields the required
  homeomorphisms of the tuning.
\end{proof}

\noindent We now turn to the algebraic aspects of geometric maps.
\begin{defn}
  An orbisphere biset is \emph{geometric} if it is the biset of a
  geometric map. A biset $B$ is \emph{weakly geometric} if its
  projection $\overline B$ to its minimal orbisphere group is
  geometric.
\end{defn}

\begin{cor}
  A Thurston map $f$ is [weakly] geometric if and only if its biset is
  [weakly] geometric.\qed
\end{cor}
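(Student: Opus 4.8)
The plan is to deduce both equivalences from two facts already established: Corollary~\ref{cor:OrbiIsComplInvar}, which says that the orbisphere biset of a map is a complete invariant of the map up to isotopy, and Theorem~\ref{thm:tuning}, which characterizes weakly geometric maps by the geometry of their minimal orbisphere restriction. Since ``geometric'' for an orbisphere biset is \emph{defined} to mean ``biset of a geometric map'', the geometric equivalence is essentially a restatement of Corollary~\ref{cor:OrbiIsComplInvar}, once one agrees to read ``geometric Thurston map'' as ``Thurston map isotopic to a geometric one'' — harmless, since being B\"ottcher expanding, exceptional, or pseudo-Anosov is studied throughout only up to isotopy.

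First I would treat the geometric case. The forward direction is a tautology: if $f$ is geometric then $B(f)$ is, by definition, a geometric biset. For the converse, suppose $B(f)$ is geometric, so that $B(f)\cong B(g)$ for some geometric map $g$; then $f$ and $g$ have isomorphic orbisphere bisets, and Corollary~\ref{cor:OrbiIsComplInvar} gives $f\approx_A g$, so $f$ is isotopic to the geometric map $g$.

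Next I would reduce the weakly geometric case to the geometric one. Let $\overline f$ be the restriction of $f$ to its minimal orbisphere $(S^2,P_f,\ord_f)$, so that the minimal orbisphere quotient $\overline B$ of $B=B(f)$ is exactly $B(\overline f)$. By definition $B$ is weakly geometric if and only if $\overline B$ is a geometric biset; by the geometric case applied to $\overline f$ (together with the fact, from Corollary~\ref{cor:OrbiIsComplInvar}, that $B(\overline f)$ depends only on the isotopy class of $\overline f$) this holds if and only if $\overline f$ is isotopic to a geometric map; and by Theorem~\ref{thm:tuning} this last condition is equivalent to $f$ being weakly geometric. Chaining the equivalences closes the proof.

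There is no serious obstacle here; the only points needing a little care are (i) the identification of the minimal orbisphere quotient biset $\overline B$ with the biset $B(\overline f)$ of the minimal orbisphere restriction of $f$, so that Corollary~\ref{cor:OrbiIsComplInvar} may legitimately be invoked at the level of the minimal orbisphere, and (ii) keeping the reading of ``geometric'' for maps consistently up to isotopy, matching the way ``geometric'' and ``weakly geometric'' bisets are defined via bisets of actual geometric maps. One should also note that Theorem~\ref{thm:tuning} is stated for non-invertible $f$, so in the weakly geometric equivalence one restricts to that case; the invertible case is immediate, a homeomorphism being (weakly) geometric precisely when it is of finite order or pseudo-Anosov, which is visibly an isotopy-invariant condition faithfully reflected by its biset via Corollary~\ref{cor:OrbiIsComplInvar}.
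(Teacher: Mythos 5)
Your argument is correct and matches the paper's intent: the corollary is stated with an immediate \qed precisely because it unwinds, as you do, the definition of [weakly] geometric bisets together with Corollary~\ref{cor:OrbiIsComplInvar} (forward direction trivial, converse by completeness of the biset invariant up to isotopy) and Theorem~\ref{thm:tuning} for the weak case. Your cautionary remarks --- reading ``geometric'' for maps up to isotopy, identifying $\overline B$ with $B(\overline f)$, and the non-invertibility hypothesis in Theorem~\ref{thm:tuning} --- are exactly the right points to flag and are handled correctly.
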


The following result was already obtained by Selinger and Yampolsky in
the case of torus maps~\cite{selinger-yampolsky:geometrization}:
\begin{cor}
  A non-invertible Thurston map is isotopic to a geometric map if and
  only if it is Levy-free.
\end{cor}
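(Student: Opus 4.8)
The plan is to work on the minimal orbisphere $(S^2,P_f,\ord_f)$ and to split according to whether its underlying orbifold is hyperbolic or Euclidean. Since $\deg(f)>1$, the orbifold Euler characteristic of $(S^2,P_f,\ord_f)$ is non-positive (Riemann--Hurwitz for branched coverings of orbispheres), so these two cases are exhaustive. I will also use from the start that a Levy cycle is defined only up to isotopy rel $P_f$, so that being Levy-free is an isotopy invariant; it therefore suffices to prove separately that a non-invertible geometric map is Levy-free, and that a Levy-free non-invertible Thurston map is isotopic to a geometric one.

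For the first implication I would argue that a non-invertible geometric map is either B\"ottcher expanding or geometric exceptional. If $f$ is B\"ottcher expanding, then it has no Levy cycle by~\cite{bartholdi-dudko:bc4}*{Theorem~\ref{bc4:thm:main}}; concretely, a hypothetical Levy curve may be isotoped off $A^\infty$, and its degree-one preimage would then be strictly shorter, in the expanding metric, than the isotopic curve it covers, which is absurd. If $f$ is geometric exceptional, then its minimal orbifold is $\R^2/\langle\Z^2,-z\rangle$, of type $(2,2,2,2)$, as recalled in~\S\ref{ss:geometric}; hence by Proposition~\ref{prop:EndOfG2}(B) the map $f$ is doubly covered by a torus endomorphism $z\mapsto Mz+\tfrac12 v$ with $M$ exceptional, and since neither eigenvalue of $M$ is $\pm 1$, Proposition~\ref{prop:EndOfG2}(C) gives that $f$ is not Levy obstructed. (Alternatively the geometric exceptional case is immediate from Lemma~\ref{lem:Tor:geom=lf}, $f$ being geometric and torus-covered.)

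For the converse, assume $f$ is Levy-free. If $(S^2,P_f,\ord_f)$ is hyperbolic, I would invoke~\cite{bartholdi-dudko:bc4}*{Theorem~\ref{bc4:thm:main}}: a non-invertible Thurston map with hyperbolic orbifold is isotopic to a B\"ottcher expanding map precisely when it carries no Levy cycle (equivalently, precisely when its biset is contracting); hence $f$ is isotopic to a B\"ottcher expanding, and so geometric, map. If instead $(S^2,P_f,\ord_f)$ is Euclidean, then $f$ is covered by a torus endomorphism --- the classical structure theorem for Thurston maps with parabolic orbifold --- and the remark in~\S\ref{ss:geometric} splits this case in two: either $f\in\Exp$, and then $f$ is B\"ottcher expanding and so geometric; or the minimal orbisphere of $f$ is of type $(2,2,2,2)$, and then $f$ is a $(2,2,2,2)$-map doubly covered by a torus endomorphism by Proposition~\ref{prop:EndOfG2}(B), so that Lemma~\ref{lem:Tor:geom=lf} yields that $f$ is geometric.

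I expect the main obstacle to be the hyperbolic-orbifold half of the converse: this rests entirely on~\cite{bartholdi-dudko:bc4}*{Theorem~\ref{bc4:thm:main}}, the characterisation of expanding Thurston maps (equivalently, of maps with a contracting biset) among hyperbolic-orbifold maps by the absence of a Levy cycle. Everything else is a combination of classical structure theory --- the Euler-characteristic dichotomy and the torus covers of parabolic-orbifold maps --- with the self-contained $(2,2,2,2)$ theory already developed here in Propositions~\ref{prop:EndOfG} and~\ref{prop:EndOfG2} together with Lemma~\ref{lem:Tor:geom=lf}.
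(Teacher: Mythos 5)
Your argument is correct as far as it goes, but it only proves the corollary in the special case $A=P_f$, and that is not the statement being made. Throughout this paper a Thurston map carries a marked set $A$ that may strictly contain $P_f$ (this is the whole point of the article), ``Levy-free'' means no Levy cycle rel $A$, and ``isotopic to a geometric map'' means isotopic rel $A$. By announcing that you ``work on the minimal orbisphere $(S^2,P_f,\ord_f)$'' you silently discard the extra marked points: your converse shows only that the restriction $\overline f$ to $(S^2,P_f,\ord_f)$ is isotopic rel $P_f$ to a geometric map, which is exactly the first two sentences of the paper's converse (same citations: the expanding characterization from the fourth paper for \Exp, Lemma~\ref{lem:Tor:geom=lf} for \Tor). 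The genuinely new work in the paper's proof is the promotion from $P_f$ to $A$: Proposition~\ref{prop:Shadow} uses the portrait of bisets induced by the extra marked points, and shadowing, to produce a map $e\colon A\to S^2$ whose image is $\overline f$-invariant; Levy-freeness \emph{rel $A$} is then used a second time to show $e$ is injective (two marked points shadowing the same point would yield a Levy cycle whose curves are inessential rel $P_f$, hence invisible to your minimal-orbisphere argument); and finally Theorem~\ref{thm:tuning} shows the resulting tuning is trivial, so $f$ itself is isotopic rel $A$ to a geometric map. None of this appears in your proposal, and without it the implication ``Levy-free $\Rightarrow$ geometric'' for the marked map does not follow.

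A related, smaller gap occurs in your forward direction: a Levy cycle rel $A$ may consist of curves that are peripheral or trivial rel $P_f$ (e.g.\ curves surrounding only extra marked points), so quoting Proposition~\ref{prop:EndOfG2}(C) or the minimal-orbifold Levy-freeness of the exceptional quotient does not exclude it. Your length-decreasing argument does extend to curves essential rel $A$ in the expanding case, but in the exceptional case one needs the irrationality mechanism (Lemma~\ref{lem:IrratOfDynam}, Corollary~\ref{lem:IrratOfDynam2}) or an equivalent argument to rule out Levy cycles supported on the extra marked points.
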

\begin{proof}
  Let $f$ be a non-invertible Thurston map. If $f$ admits a Levy
  cycle, then either $f$ shrinks no metric under which a curve in the
  Levy cycle is geodesic, or $f$ is doubly covered by a torus
  endomorphism with eigenvalue $\pm1$; in all cases, $f$ is not
  geometric.

  Conversely, if $f$ is Levy-free, then its restriction $\overline f$
  to $(S^2,P_f,\ord_f)$ is still
  Levy-free. By~\cite{bartholdi-dudko:bc4}*{Theorem~A} (for \Exp\
  maps) or Lemma~\ref{lem:Tor:geom=lf} (for \Tor\ maps) the map
  $\overline f$ is geometric. By Proposition~\ref{prop:Shadow}, there is
  a map $e\colon A\to S^2$ whose image is preserved by $\overline
  f$. If $e$ were not injective, there would be a Levy cycle for $f$
  consisting of curves surrounding elements of $A$ with same image
  under $e$. Now if $e$ is injective then the tuning constructed in
  Theorem~\ref{thm:tuning} is trivial, so $f$ is geometric.
\end{proof}

\subsection{Conjugacy and centralizer problem}
We are now ready to show how conjugacy and centralizer problems may be
solved in geometric bisets. We review the algebraic interpretation of
expanding maps:
by~\cite{bartholdi-dudko:bc4}*{Theorem~\ref{bc4:thm:main}}, a map is
expanding if and only if its minimal biset is \emph{contracting}. We
recall briefly that a left-free $G$-$G$-biset $B$ is contracting if,
for every basis $X\subseteq B$, there exists a finite subset
$N\subseteq G$ with the following property: for every $g\in G$ there
is $n\in\N$ such that $X^n g\in N X^n\subseteq B^{\otimes n}$.

The minimal such subset $N$ is called the \emph{nucleus} associated
with $(B,X)$. Note that different bases yield different nuclei, but
that finiteness of $N$ is one basis implies its finiteness for all
other bases, or even for all finite sets $X$ generating $B$ qua left
$G$-set.

The biset $B$ may be given by its \emph{structure map}
$X\times G\to G\times X$, written $(x,g)\mapsto(g@x, x^g)$; it is
defined by the equality $x g=(g@x)x^g$ in $B$. Then $B$ is contracting
with nucleus $N$ if for every $g\in G$ every sufficiently long
iteration of the maps $g\mapsto g@x$ eventually gives elements in $N$.

We may assume, without loss of generality, that $X N\subseteq N X$
holds. For finitely generated groups, there is perhaps more intuitive
formulation of contraction: there exists a proper metric on $G$ and
constants $\lambda<1,C$ such that $\|g@x\|\le\lambda\|g\|+C$ for all
$g\in G,x\in X$.
\begin{lem}\label{lem:cc in NS}
  Let $\subscript G B_G$ be a contracting biset; choose a basis $X$ of
  $B$, and let $N\subseteq G$ be the associated nucleus.

  Then every symbolic finite orbit $(b_i)_{i\in I}$ is conjugate to
  one in which $b_i\in N X$ for all $i\in I$.
\end{lem}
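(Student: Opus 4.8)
The plan is to use the contraction property to ``pull the symbolic orbit towards the nucleus''. Fix a symbolic finite orbit $(b_i)_{i\in I}$; write each $b_i=n_i x_i$ with $x_i\in X$ and $n_i\in G$ (uniquely, since $X$ is a basis). We want to conjugate so that every resulting first letter lies in $N$. The first step is to choose, for every grand orbit of $f\colon I\selfmap$, a single index from which to start rewriting, and to perform the rewriting along the orbit. Concretely, using the structure map $(x,g)\mapsto(g@x,x^g)$, one checks that conjugating $(b_i)_{i\in I}$ by $(g_i)_{i\in I}$ with $g_i\in G$ replaces $b_i=n_ix_i$ by $g_i n_i x_i g_{f(i)}^{-1}=(g_in_i@x_i')\,x_i''$ for suitable $x_i',x_i''$; so the ``head'' $n_i$ gets transformed by the contraction maps $g\mapsto g@x$ as one moves along the orbit, together with the freedom of choosing the $g_i$.

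The key step is the following. Since $f\colon I\selfmap$ is a self-map of a finite set, $I$ decomposes into a union of finitely many grand orbits, each consisting of a periodic cycle with finitely many finite trees hanging off it. First I would treat a periodic cycle $(i_0,\dots,i_{n-1})$ with $f(i_j)=i_{j+1 \bmod n}$: iterating the orbit $m$ times corresponds to replacing $b_{i_0}$ by $b_{i_0}\otimes b_{i_1}\otimes\cdots$, i.e.\ to the element $b_{i_0} b_{i_1}\cdots b_{i_{n-1}}b_{i_0}\cdots\in B^{\otimes nm}$, and the contraction property applied to this element (an element of $B^{\otimes n}$ viewed as a biset, with its own finite nucleus, or directly to $B$) gives that for $m$ large enough the head of $b_{i_0}\otimes\cdots\otimes b_{i_{n-1}}$ iterated lands in $N$. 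Unwinding one step at a time along the cycle, one reads off elements $g_{i_j}\in G$ such that the conjugated orbit has all heads in $N$; this uses $XN\subseteq NX$ to keep the rewriting under control. For the preperiodic (tree) part, one proceeds by induction on the distance to the cycle: if the head at $f(i)$ has already been brought into $N$ by a conjugator $g_{f(i)}$, then applying one further step of the structure map and absorbing the resulting $G$-element into $g_i$ brings the head at $i$ into $N$ as well, again invoking contraction finitely many times (or simply enlarging $N$ by the finitely many images $NX\to G$, $x\mapsto g@x$ for $g$ ranging over a bounded set, which is harmless since we only need finitely many rewriting steps on the finite set $I$).

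The main obstacle I expect is bookkeeping: making sure the conjugators $(g_i)_{i\in I}$ chosen independently for the periodic part and for each branch of the tree part are mutually consistent, i.e.\ that the rewriting on the cycle does not interfere with the rewriting on the branches feeding into it. This is handled by doing the cycle first, freezing the $g_i$ for periodic $i$, and then doing the branches in order of increasing distance to the cycle, at each node using the already-fixed value of $g_{f(i)}$ and solving for $g_i$; since $B$ is right-transitive and left-free, at each node there is a (unique) choice of $g_i$ realizing any desired head, so the only content is that contraction forces that head into the finite set $N$. Finally one records that $b_i\in NX$ for all $i\in I$, as required; this gives $\subscript G B_G$'s contracting structure a normal form for symbolic orbits, which is what subsequent algorithms will exploit.
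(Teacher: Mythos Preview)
Your plan is essentially correct, but it is considerably more elaborate than necessary, and the paper's proof is a one-liner by comparison. The paper does not decompose $I$ into periodic cycles and attached trees at all. Instead it simply writes each $b_i=g_ix_i$ with $g_i\in G$, $x_i\in X$, and conjugates the whole symbolic orbit by $(g_i)_{i\in I}$ at once: each $b_i$ becomes $g_i^{-1}b_i g_{f(i)}=x_i g_{f(i)}=g'_ix'_i$, so the new head $g'_i=g_{f(i)}@x_i$ is a first-level state of one of the old heads. Iterating, after $k$ steps every head is a $k$-fold state of one of the finitely many original $g_j$; by the definition of contraction this lands in $N$ for $k$ large. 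That is the entire argument---uniform over $I$, with no case distinction.

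Your route gets there too, but at the cost of extra bookkeeping (choosing a base index per grand orbit, handling the cycle by passing to $B^{\otimes n}$ and then ``unwinding'', and doing the branches by induction on depth). A couple of minor issues in your write-up: the conjugation convention in the paper is $c_i=g_i^{-1}b_i g_{f(i)}$ (from $g_ic_i=b_ig_{f(i)}$), so your formula $g_in_ix_ig_{f(i)}^{-1}$ has the signs reversed, and the structure-map computation you wrote does not quite type-check as stated. Also, for the preperiodic indices no appeal to contraction is needed: once $g_{f(i)}$ is fixed, $g_i$ is completely free (no other $b_j$ uses it on the right at that stage of the induction), so you can simply absorb the entire head into $g_i$ and put $b_i$ into $X\subseteq NX$ directly. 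The invocation of $XN\subseteq NX$ is likewise unnecessary for either approach.
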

\begin{proof}
  Write every $b_i$ in the form $g_i x_i$ with $g_i\in G$ and
  $x_i\in X$. Conjugate the symbolic orbit by $(g_i)_{i\in I}$; then
  each $b_i$ becomes
  $g_i^{-1} b_i g_{f(i)}=x_i g_{f(i)}=g'_i x'_i\in B$ for some
  $g'_i\in G,x'_i\in X$. Note that each $g'_i$ is a state of some
  $g_i$. Conjugate again by $(g'_i)_{i\in E}$, etc.; after a finite
  number of steps, each $b_i$ will belong to $N X$.
\end{proof}

\begin{lem}\label{lem:finiteperiod}
  Let $f\colon(S^2,A,\ord)\selfmap$ be a geometric map. Then for every
  $n\in\N$ the number of $n$-periodic points of $f$ is finite.
\end{lem}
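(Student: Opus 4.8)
The statement asserts that a geometric map $f\colon(S^2,A,\ord)\selfmap$ has only finitely many $n$-periodic points for each $n$. The plan is to split into the three types of geometric maps and, in the non-invertible cases, exploit the rigidity machinery already developed, in particular Theorem~\ref{thm:ShadowEquivalence}, which establishes that the shadowing map from conjugacy classes of symbolic finite orbits to poly-orbits in $S^2\setminus A^\infty$ is finite-to-one when $(A,\ord)=(P_f,\ord_f)$, together with Corollary~\ref{cor:shaw is finite to one}, which states there are finitely many conjugacy classes of portraits (equivalently symbolic orbits) in $B(f)$.

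First I would reduce to the minimal orbisphere: a point $p$ is $n$-periodic for $f\colon(S^2,A,\ord)\selfmap$ iff it is $n$-periodic for the underlying branched covering $f\colon S^2\selfmap$, and enlarging or shrinking the orbifold structure does not change the set of periodic points of a given period, so it suffices to bound the number of $n$-periodic points in $S^2\setminus A^\infty$ and to account separately for the finitely many points of $A^\infty$ (which form finitely many cycles by definition of $A^\infty$). Next I would handle the pseudo-Anosov (and finite-order) case: a pseudo-Anosov homeomorphism has, for each $n$, only finitely many $n$-periodic points, since each is either a singularity of the invariant foliations (finitely many) or a regular point whose local index forces isolation, by the standard Nielsen--Thurston theory of~\cite{thurston:surfaces}; a finite-order homeomorphism has finite order $n_0$, and its $n$-periodic points for $n$ coprime issues are its fixed points of $f^{\gcd(n,n_0)}$, a finite-order map, hence finite by Riemann--Hurwitz (a nontrivial finite-order homeomorphism of $S^2$ has exactly two fixed points unless it is the identity, in which case we interpret it correctly). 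Then for the non-invertible geometric case (B\"ottcher expanding or geometric exceptional), an $n$-periodic point $p\in S^2\setminus A^\infty$ is the start of a genuine periodic orbit, hence of a (constant) symbolic orbit of period dividing $n$ in $B(f\colon(S^2,P_f,\ord_f)\selfmap)$; distinct periodic points give distinct poly-orbits, and by Theorem~\ref{thm:ShadowEquivalence} and Corollary~\ref{cor:shaw is finite to one} the set of poly-orbits of bounded index-length that arise from symbolic orbits is finite (there are finitely many conjugacy classes of symbolic orbits of period $\le n$, since the index set $I=\Z/m\Z$ ranges over finitely many $m\le n$, and for each the shadowing map is finite-to-one onto poly-orbits). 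Hence only finitely many periodic points in $S^2\setminus A^\infty$.

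The main obstacle is making precise that ``finitely many conjugacy classes of symbolic orbits of bounded period'' really does hold: Corollary~\ref{cor:shaw is finite to one} as stated asserts finiteness of conjugacy classes of portraits in $B(f)$, but one must observe that for a fixed finite index set $I$ with its self-map, the symbolic orbits $(b_i)_{i\in I}$ modulo conjugacy inject into conjugacy classes of portraits of bisets (taking $\wtA = P_f \sqcup I$), and there are only finitely many index sets of period at most $n$ up to isomorphism; so the union over all of these is finite. Alternatively, and perhaps cleanest for the expanding subcase, I would invoke Lemma~\ref{lem:cc in NS}: any symbolic finite orbit of period $m$ is conjugate to one valued in the finite set $NX$, and there are at most $(\#NX)^{m}$ such, so finitely many for $m\le n$; this gives an explicit bound and sidesteps Corollary~\ref{cor:shaw is finite to one} entirely in the \Exp\ case, while the \Tor\ case follows from Proposition~\ref{prp:ShadIn2222Case}, which shows each symbolic orbit shadows a \emph{unique} poly-orbit determined by an invertible linear system, combined with the fact that $B(f)$ has finitely many conjugacy classes of periodic symbolic orbits of period $\le n$ by the same counting argument applied to the finite alphabet of a basis. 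Assembling these three cases yields the claim.
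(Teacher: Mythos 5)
The \Exp\ half of your argument is essentially the paper's proof: Lemma~\ref{lem:cc in NS} bounds the conjugacy classes of period-$\le n$ symbolic orbits (finitely many for each period, since every one is conjugate into the finite set $NX$), and Theorem~\ref{thm:ShadowEquivalence} (onto, with each symbolic orbit shadowing a unique poly-orbit) transfers this to periodic orbits. The gap is in the \Tor\ (geometric exceptional) case. Your primary route, through Corollary~\ref{cor:shaw is finite to one}, is circular: the second assertion of that corollary (``finitely many conjugacy classes of portraits in $B(f)$'') follows from Lemma~\ref{lem:orb shad the same pnt} only \emph{given} that there are finitely many poly-orbits of bounded period --- which is exactly the content of Lemma~\ref{lem:finiteperiod} --- so it cannot be used as an input here. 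Your fallback, ``the same counting argument applied to the finite alphabet of a basis'', is not available either: Lemma~\ref{lem:cc in NS} requires the biset to be contracting, and the biset of an exceptional map is not contracting (the map is not expanding), so there is no nucleus and no way to conjugate an arbitrary periodic symbolic orbit $(g_i x_i)_{i\in I}$ into a fixed finite alphabet; the group parts $g_i$ can be arbitrarily large. Finiteness in this case is arithmetic, not combinatorial: the paper counts the period-$n$ points of the affine model $z\mapsto Mz+q$ directly --- they lie in $(\one-M)^{-1}q+(\one-M^n)^{-1}\Z^2$, whose image in $\R^2/\langle\Z^2,-z\rangle$ has at most $|\det(\one-M^n)|$ points, finite precisely because no eigenvalue of $M$ is $\pm1$ (compare Lemma~\ref{lem:EigenValOfM-I}). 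Some determinant estimate of this kind (equivalently, the unique solvability of the linear systems in Proposition~\ref{prp:ShadIn2222Case} together with a count of the possible right-hand sides) is indispensable and is missing from your write-up.

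A smaller point: the definition of a geometric Thurston map used for this lemma (B\"ottcher expanding, geometric exceptional, or pseudo-Anosov) does not include finite-order homeomorphisms, and it cannot, since for $f^{n_0}=\one$ every point of $S^2$ is $n_0$-periodic; your finite-order discussion (Riemann--Hurwitz, ``interpret it correctly'') is therefore both out of scope and not repairable as stated. The pseudo-Anosov case, which the paper's proof leaves tacit, is fine by standard Nielsen--Thurston theory, and your reduction to the minimal orbisphere and separate treatment of $A^\infty$ are harmless.
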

\begin{proof}
  If $f$ is doubly covered by a torus endomorphism $z\mapsto M z+q$,
  then its period-$n$ points are all solutions to
  $z\in(\one-M)^{-1}q+(\one-M^n)^{-1}\Z^2$; the image of this set
  under $\R^2\to\R^2/\langle\Z^2,-z\rangle$ is finite, of size at most
  $\det(\one-M^n)$.

  If $f$ is expanding, then consider its biset $B$, which is
  orbisphere contracting,
  see~\cite{bartholdi-dudko:bc4}*{Theorem~\ref{bc4:thm:main}}. By
  Lemma~\ref{lem:cc in NS}, there are finitely many conjugacy classes
  of period-$n$ finite symbolic orbits in $B$, and by
  Theorem~\ref{thm:ShadowEquivalence}, every symbolic orbit shadows a
  unique finite poly-orbit.
\end{proof}

\begin{thm}\label{thm:conjcentr}
  Let $\wtG\to G$ be a forgetful morphism of groups and let
  \[\subscript{\wtG}{\wtB}_{\wtG}\to\subscript G B_G\;\text{ and }\; \subscript{\wtG}{\wtB'}_{\wtG}\to\subscript G{B'}_G
  \]
  be two forgetful biset morphisms as in~\eqref{eq:forgetCD2}.
  Suppose furthermore that $\wtB$ is geometric of degree
  $>1$. Denote by $(G_a,B_a)_{a\in \wtA}$ and
  $(G'_a,B'_a)_{a\in \wtA}$ the portraits of bisets induced by
  $\wtB$ and $\wtB'$ in $B$ and $B'$ respectively.

  Then $\wtB,\wtB'$ are conjugate under
  $\Mod(\wtG)$ if and only if there exists $\phi\in\Mod(G)$
  such that $B^\phi\cong B'$ and the portraits
  $(G_a^\phi,B_a^\phi)_{a\in \wtA}$ and
  $(G'_a,B'_a)_{a\in A\sqcup E}$ are conjugate.

  Furthermore, the centralizer of the portrait
  $(G_a,B_a)_{a\in\wtA}$ is trivial, and the centralizer
  $Z(\wtB)$ of $\wtB$ is isomorphic, via the forgetful
  map $\Mod(\wtG)\to\Mod(G)$, to
  \[\big\{\phi\in Z(B)
    \,\big|\,(G_a^{\phi},B_a^{\phi})_{a\in \wtA}\sim(G_a,B_a)_{a\in
    \wtA}\big\}
  \]
  and is a finite-index subgroup of $Z(B)$.
\end{thm}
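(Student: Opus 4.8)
The plan is to reduce the statement to the combinatorics of portraits of bisets already set up in Sections 1--3. Recall from Theorem~\ref{thm:portrait} (in the dynamical situation $H=G$, $\wtH=\wtG$) that $\wtB$ is recovered, up to pre- and post-composition by bisets of knitting mapping classes, from the pair $(B,(G_a,B_a)_{a\in\wtA})$, and that the map $\mathcal P$ of~\eqref{eq:thm:portrait:congr} is a congruence. So conjugacy of $\wtB$ and $\wtB'$ under $\Mod(\wtG)$ is, via Corollary~\ref{cor:conj probl reduction}, the same as conjugacy of their images in $M(B|D)$ under $\Mod(G|D)$, and this in turn, by the congruence $\mathcal P$, is the same as conjugacy of the portrait-data $(B,(G_a,B_a))$ and $(B',(G'_a,B'_a))$ under $\Mod(G|D)$. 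An element of $\Mod(G|D)$ acts through its image $\phi\in\Mod(G)$ together with the fundamental-group part in $\pi_1(X,D)\cong G^D$, which acts on the portrait by congruence; thus $\wtB\cong\wtB'$ iff there is $\phi\in\Mod(G)$ with $B^\phi\cong B'$ and $(G_a^\phi,B_a^\phi)_{a\in\wtA}$ conjugate (as a portrait, i.e.\ up to congruence) to $(G'_a,B'_a)_{a\in\wtA}$. Here the hypothesis that $\wtB$ is geometric non-invertible of degree $>1$ is what licenses Corollary~\ref{cor:conj probl reduction} (the ``Promotion'' theorem, Theorem~\ref{thm:promotion}), which is the crucial ingredient guaranteeing that forgetting knitting elements loses no conjugacy information.

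For the centralizer statement I would run the same reduction with $B=B'$. First, triviality of the centralizer of the portrait $(G_a,B_a)_{a\in\wtA}$: decompose $\wtA=A\sqcup F\sqcup I$ with $f_*^n(F)\subseteq A$ for $n\gg0$ and $f_*(I)\subseteq I$, as in the discussion before Lemma~\ref{lem:Zportrait=Zhpo}. On the periodic part $I$ the portrait reduces to a symbolic orbit $(b_i)_{i\in I}$, whose centralizer by Lemma~\ref{lem:Zsfi} lies in $\prod_{i\in I}G_{x_i}$ where $(x_i)$ is the shadowed poly-orbit; but a genuine \emph{self-conjugacy} of the portrait (not merely a relative centralizer element) must in addition centralize each peripheral group $G_a$ and act trivially on each $B_a$, and the ``full'' normalizer/centralizer remarks in~\S\ref{ss:diff centr} (together with the fact that $G_a$ is self-normalizing, Lemma~\ref{lmm:connect:ell_a}) force the $G$-coordinates at points of $A$ to be trivial, hence by the grand-orbit induction of Lemma~\ref{lem:Zportrait=Zhpo} the $F$-coordinates are trivial, and the $I$-coordinates, lying in local groups $G_{x_i}$ but also required to fix $B_i$ pointwise, are trivial as well. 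So the centralizer of the portrait is $1$.

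Next, the identification $Z(\wtB)\cong\{\phi\in Z(B)\mid (G_a^\phi,B_a^\phi)_{a\in\wtA}\sim(G_a,B_a)_{a\in\wtA}\}$. By Corollary~\ref{cor:conj probl reduction} the projection $\Mod(S^2,A\sqcup D)\to\Mod(X|D)$ restricts to an isomorphism $Z(\wtB)\xrightarrow{\ \sim\ }Z(\wtB|D)$, so it suffices to compute $Z(\wtB|D)$. From the short exact sequence $\pi_1(X,D)\hookrightarrow\Mod(G|D)\twoheadrightarrow\Mod(G)$ we get $1\to Z(\wtB|D)\cap\pi_1(X,D)\to Z(\wtB|D)\to Z(B)$, and by the Proposition at the end of~\S\ref{ss:diff centr} the kernel $Z(\wtB|D)\cap\pi_1(X,D)$ is $Z_D((G_a,B_a)_{a\in\wtA})$, the relative centralizer, which by Lemma~\ref{lem:Zportrait=Zhpo} is the centralizer of the symbolic orbit $(b_i)_{i\in I}$; by Lemma~\ref{lem:Zsfi} this is contained in $\prod_{i\in I}G_{x_i}$, but more sharply, after conjugating the portrait so that the local groups are the standard ones, the condition that an element of $\pi_1(X,D)$ \emph{fix the subbisets $B_i$} (not just stabilize the symbolic orbit up to conjugacy) forces it to be trivial --- this is exactly the non-invertible-geometric input, via the expanding estimate or Corollary~\ref{lem:IrratOfDynam2}, appearing in the proof of Lemma~\ref{lem:Zsfi}. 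Hence the kernel is trivial, $Z(\wtB|D)$ embeds in $Z(B)$, and its image is precisely the set of $\phi\in Z(B)$ for which the congruence in $M(B|D)$ can be completed, i.e.\ for which $(G_a^\phi,B_a^\phi)_{a\in\wtA}\sim(G_a,B_a)_{a\in\wtA}$. Finally, finite index: by Lemma~\ref{lem:finiteperiod} the geometric map $f_B$ has only finitely many periodic points of each period, so by Corollary~\ref{cor:shaw is finite to one} there are only finitely many conjugacy classes of portraits in $B$ with the given $f_*$ and $\deg$; $Z(B)$ permutes these finitely many classes, and the stabilizer of the class of $(G_a,B_a)_{a\in\wtA}$ is exactly our subgroup, which therefore has finite index.

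The main obstacle I expect is not the exact-sequence bookkeeping but the verification, at the level of the \emph{subbisets} $B_a$ rather than merely the peripheral groups or the induced symbolic orbit, that the relevant stabilizers shrink all the way down --- i.e.\ correctly tracking the difference between the ``relative centralizer'' of a portrait, the centralizer of the associated symbolic orbit, and the genuine self-conjugacy group, and invoking the rigidity/expansion input (Corollary~\ref{lem:IrratOfDynam2} and the contracting-biset estimate) at precisely the points where a loop around a marked point, trivial rel $(P_f,\ord_f)$, must be shown trivial rel $A\sqcup D$. Everything else is a matter of assembling Theorem~\ref{thm:portrait}, Corollary~\ref{cor:conj probl reduction}, Lemma~\ref{lem:Zportrait=Zhpo}, Lemma~\ref{lem:Zsfi}, and Lemma~\ref{lem:finiteperiod}.
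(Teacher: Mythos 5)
Your overall architecture matches the paper's: the conjugacy statement is proved there by applying Theorem~\ref{thm:portrait} to write $\wtf'=m\wtf$ for a knitting class $m$ and then killing $m$ by repeated lifting (Proposition~\ref{prop:knittinginert} and Lemma~\ref{lmm:TrvOfLift2Cases}), which is exactly the content of Corollary~\ref{cor:conj probl reduction} that you invoke; likewise your finite-index argument via Lemma~\ref{lem:finiteperiod} and Corollary~\ref{cor:shaw is finite to one} is the paper's. So for those parts you have essentially the same proof, just packaged through the already-stated corollary.

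There is, however, a genuine gap in your argument that the (relative) centralizer of the portrait, equivalently $Z(\wtB|D)\cap\pi_1(X,D)$, is trivial. You assert that an element of the local groups $\prod_i G_{x_i}$ which ``fixes the subbisets $B_i$ pointwise'' must be trivial, and you attribute this to the expanding estimate or Corollary~\ref{lem:IrratOfDynam2} ``appearing in the proof of Lemma~\ref{lem:Zsfi}''. But that geometric input is already spent in Lemma~\ref{lem:Zsfi}, and it yields only the \emph{containment} of the centralizer in $\prod_i G_{x_i}$; it does not make the local groups act trivially. Indeed, when an extra marked point shadows a point of $A$ (which can happen when only $B$, but not $\wtB$, is geometric), the local group is a nontrivial cyclic group and the relative centralizer is genuinely nontrivial --- this is exactly what Step~(5) of Algorithm~\ref{algo:conjportraittor} computes, and what the remark after Lemma~\ref{lem:Zportrait=Zhpo} alludes to. The missing step --- and the paper's entire justification --- is that since $\wtB$ itself is geometric, the points shadowed by $B_a$ for $a\in\wtA\setminus A$ are honest points of the sphere realizing $\wtB$, lying in $S^2\setminus A$, hence \emph{unmarked} in $G$; therefore all the relevant local groups are trivial, and together with Lemma~\ref{lem:Zportrait=Zhpo} (which forces the coordinates at points eventually mapped into $A$ to be trivial) this gives triviality. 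Once this observation is inserted, your identification of $Z(\wtB)$ with the stabilizer in $Z(B)$ of the conjugacy class of the portrait, and the finite-index conclusion, go through as in the paper.
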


\begin{proof}
  If $\wtB,\wtB'$ are conjugate, then certainly their
  images and subbisets are conjugate. Conversely, let $\phi\in\Mod(G)$
  be such that $B^\phi\cong B'$ and the portraits
  $(G_a^\phi,B_a^\phi)_{a\in \wtA}$ and
  $(G'_a,B'_a)_{a\in A\sqcup E}$ are conjugate, and let
  $\wtf,\wtf'$ be maps realizing
  $\wtB,\wtB'$ respectively, with $f$ geometric. By
  Theorem~\ref{thm:portrait}, we have $\wtf'=m\wtf$
  for a knitting mapping class $m$. Since $m$ is liftable by
  Proposition~\ref{prop:knittinginert}, we have $m\wtf=f m'$
  for some knitting mapping class $m'$ and therefore
  $m'\wtf'(m')^{-1}=m'\wtf$. Repeating with $m'$, we
  obtain
  $m^{(k)}\cdots m'\wtf'(m^{(k)}\cdots
  m')^{-1}=m^{(k)}\wtf$ for all $k\in\N$, and $m^{(k)}=\one$
  when $k$ is large enough by
  Lemma~\ref{lmm:TrvOfLift2Cases}. Therefore,
  $\wtf,\wtf'$ are conjugate, and so are
  $\wtB,\wtB'$.

  The description of $Z(\wtB)$ follows. The centralizer of the
  portrait $(G_a,B_a)_{a\in\wtA}$ is trivial, because (since
  $\wtB$ is geometric) all points shadowed by bisets $B_a$ are
  unmarked in $G$.  The $Z(B)$-orbit of $(G_a,B_a)_{a\in\wtA}$
  contains finitely many conjugacy classes by
  Lemma~\ref{lem:finiteperiod}, since all of them are induced from a
  geometric biset, and therefore are encoded in periodic
  or pre-periodic points for a map realizing $B$.
\end{proof}

\section{Algorithmic questions}
We give some algorithms that decide whether two portraits of bisets
are conjugate, thereby reducing the conjugacy and centralizer problem
from orbisphere maps to their minimal orbisphere (with only the
post-critical set marked).

All the algorithms we describe make use of the symbolic description of
maps via bisets, and are inherently quite fast and practical. Their
precise performance, and implementation details, be studied in the
last paper~\cite{bartholdi-dudko:bc5} of this series.

There are two implementations of Algorithms~\ref{algo:conjportrait}
and~\ref{algo:listportrait}, one for \Tor\ maps and one for \Exp\
maps. We describe them in separate subsections.

We already gave the following algorithms:
\begin{algo}[\cite{bartholdi-dudko:bc4}*{Algorithms~\ref{bc4:algo:is2cover} and~\ref{bc4:algo:getparam}}]\label{algo:2222}
  \textsc{Given} an orbisphere biset $\subscript G B_G$,\\
  \textsc{Decide} whether $B$ is the biset of a map double covered by
  a torus endomorphism, and if so \textsc{Compute} parameters $M,q$ for a torus endomorphism $z\mapsto M z+q$ covering $B$.
\end{algo}

\begin{algo}[\cite{bartholdi-dudko:bc4}*{Algorithms~\ref{bc4:algo:istor} and~\ref{bc4:algo:isexp}}]\label{algo:geom}
  \textsc{Given} an orbisphere biset $\subscript G B_G$,\\
  \textsc{Decide} whether $B$ is the biset of a \Tor\ or an \Exp\ map, and in particular whether $B$ is geometric.
\end{algo}

\subsection{\Tor\ maps}
In the case of \Tor\ maps, we can decide, without access to an oracle,
whether two such maps are conjugate, and we can compute their
centralizers, as follows. We shall need the following fact:
\begin{thm}[Corollary of~\cite{grunewald:conjugacyarithmetic}]\label{thm:MatrConj}
  There is an algorithm deciding whether two matrices
  $M,N\in \Mat_2^+(\Z)$ are conjugate by an element $X\in \SL_2(\Z)$,
  and produces such an $X$ if it exists.

  There is an algorithm computing, as a finitely generated subgroup of
  $\SL_2(\Z)$, the centralizer of $M\in \Mat_2^+(\Z)$.\qed
\end{thm}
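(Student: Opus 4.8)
The plan is to reduce the question, via the classical dictionary between conjugacy classes of integer matrices and isomorphism classes of lattices-with-endomorphism, to Grunewald's algorithm for $\operatorname{GL}_2(\Z)$, and then to perform the (routine) passage from $\operatorname{GL}_2(\Z)$ to $\SL_2(\Z)$, together with the corresponding centralizer refinement.

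First I would recall that $M,N\in\Mat_2^+(\Z)$ are conjugate by an element of $\operatorname{GL}_2(\Z)$ (resp.\ $\SL_2(\Z)$) if and only if they have the same characteristic polynomial $\chi$ and the $\Z[t]$-modules $(\Z^2,M)$ and $(\Z^2,N)$ --- with $t$ acting by $M$, resp.\ $N$ --- are isomorphic (resp.\ isomorphic by a determinant-$1$ change of basis). One computes and compares the characteristic polynomials first; if they differ, the matrices are not conjugate. If $\chi$ (a monic quadratic) is irreducible over $\Q$, then $R\coloneqq\Z[t]/(\chi)$ is an order in the quadratic field $K\coloneqq\Q(\sqrt{\operatorname{disc}\chi})$, the modules $(\Z^2,M)$ become torsion-free rank-$1$ $R$-modules, hence fractional $R$-ideals, and by the Latimer--MacDuffee correspondence the $\operatorname{GL}_2(\Z)$-conjugacy classes with characteristic polynomial $\chi$ biject with the ideal class monoid of $R$, which is finite and effectively enumerable (equivalently: Gauss's theory of binary quadratic forms of the relevant discriminant). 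In the remaining case $\chi=(t-a)(t-b)$ with $a,b\in\Z$ the classification is elementary and handled by the Smith normal forms of $M-aI$ and $M-bI$, giving a short explicit list of classes. In all cases Grunewald's algorithm~\cite{grunewald:conjugacyarithmetic} decides $\operatorname{GL}_2(\Z)$-conjugacy, produces a conjugator $X\in\operatorname{GL}_2(\Z)$ when one exists, and computes the centralizer $C\coloneqq Z_{\operatorname{GL}_2(\Z)}(M)$ as a finitely generated group.

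Next I would carry out the $\SL_2(\Z)$-refinement. If $M$ and $N$ are not $\operatorname{GL}_2(\Z)$-conjugate, they are certainly not $\SL_2(\Z)$-conjugate. Otherwise fix a conjugator $X$ with $N=XMX^{-1}$; if $\det X=1$ we are done. If $\det X=-1$, then $N=YMY^{-1}$ for some $Y\in\SL_2(\Z)$ if and only if $Z\coloneqq X^{-1}Y$ lies in $C$ and has $\det Z=-1$, i.e.\ if and only if $C$ contains an element of determinant $-1$; and if $Z\in C$ has $\det Z=-1$, then $XZ\in\SL_2(\Z)$ conjugates $M$ to $N$. Since $\det\colon\operatorname{GL}_2(\Z)\to\{\pm1\}$ is a homomorphism and $C$ is a group, $\det(C)$ is generated by the determinants of a generating set of $C$, so whether $-1\in\det(C)$ is decided by inspection of the generators produced in the previous step. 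Likewise $Z_{\SL_2(\Z)}(M)=C\cap\SL_2(\Z)=\ker(\det|_C)$: if $\det|_C$ is trivial this equals $C$, and otherwise it is an index-$2$ subgroup whose generators are extracted from those of $C$ by Reidemeister--Schreier. (Concretely, in the irreducible case $C$ is the unit group of an order in $K$, so ``$-1\in\det(C)$'' is the classical negative-Pell solvability test settled by continued fractions; in the split/degenerate cases $C$ is one of a short list of explicit subgroups.)

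The only substantive ingredient is Grunewald's theorem itself, equivalently the finiteness and computability of ideal class monoids of quadratic orders; everything afterwards --- moving between $\operatorname{GL}_2$ and $\SL_2$, extracting the relevant index-$2$ subgroups, reading off determinants --- is routine finitely-generated-group bookkeeping, and I expect no genuine obstacle. The one point that must not be skipped is the determinant check on $C$: the narrow ($\SL_2$) and wide ($\operatorname{GL}_2$) notions of class genuinely differ precisely when $M$ admits no centralizing element of determinant $-1$, which is exactly why that test is performed rather than omitted.
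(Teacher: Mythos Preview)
The paper does not actually prove this theorem: it is stated with a \qed\ immediately after the statement, as a corollary of Grunewald's work~\cite{grunewald:conjugacyarithmetic}, and is then used as a black box inside Algorithm~\ref{algo:minimaltor}. Your proposal therefore cannot be compared to a proof in the paper, because there is none.

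That said, your sketch is a correct unpacking of why the result follows from Grunewald (indeed, in this $2\times2$ case, from the classical Latimer--MacDuffee/Gauss theory that predates Grunewald's general arithmetic-group machinery). The reduction from $\operatorname{GL}_2(\Z)$-conjugacy to $\SL_2(\Z)$-conjugacy via the determinant homomorphism on the centralizer is standard and correctly handled, as is the extraction of $Z_{\SL_2(\Z)}(M)$ as the index-$\le 2$ kernel. One minor remark: you invoke Grunewald's algorithm to obtain the centralizer as a finitely generated group, but in this rank-$2$ setting you have already identified it concretely (unit group of a quadratic order in the irreducible case, an explicit list otherwise), so the appeal to the general theorem is not strictly needed; Dirichlet's unit theorem and continued fractions already deliver everything effectively.
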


\begin{algo}\label{algo:minimaltor}
  \textsc{Given} $\subscript G B_G$, $\subscript G C_G$ two minimal \Tor\ bisets\\
  \textsc{Decide} whether $B$ and $C$ are conjugate by an element of $\Mod(G)$, and if so \textsc{construct} a conjugator; and \textsc{compute} the centralizer $Z(B)$, \textsc{as follows:}\\\upshape
  \begin{enumerate}
  \item If $B_*\neq C_*$ as maps on peripheral
    conjugacy classes, then return \texttt{fail}.
  \item Using Proposition~\ref{prop:EndOfG}(A), identify $G$ with
    $\Z^2\rtimes\{\pm1\}$, and with Algorithm~\ref{algo:2222} present
    $B$ and $C$ as $B_{M^v}$ and $B_{N^w}$ respectively, see~\eqref{eq:InjEndOfK}.
  \item Using Theorem~\ref{thm:MatrConj} check whether $M$ and $N$ are
    conjugate. If not, return \texttt{no}; otherwise find a conjugator $X$
    and compute the centralizer $Z(M)$ of $M$.
  \item Check whether there is a $Y\in Z(M)$ such that
    $B_{(Y X)^0}$ conjugates $B_{M^v}$ to $B_{N^w}$, as follows. The orbit of $B_{M^v}$ under conjugation of \[\{B_{Y^0}\mid Y\in Z(M), B_{(Y X)^0}\in \Mod(G)\}\] is finite and hence computable; so is its image under
    $X^0$. Check whether $B_{N^w}$ belongs to it; if not, return
    \texttt{no}, and if yes, return \texttt{yes} and the conjugator
    $B_{(Y X)^0}$.
  \item The centralizer of $B$ is
    \[\{B_{Y^0}\mid Y\in Z(M),B_{Y^0}\in \Mod(G), \text{ and }B_{Y^0}\text{ centralizes }B_{M^v}\},\] naturally embeds as a subgroup of
    finite index in $Z(M)$.
  \end{enumerate}
\end{algo}

\noindent Here is an algorithmic version of
Proposition~\ref{prp:ShadIn2222Case}:
\begin{algo}\label{algo:shadowing}
   \textsc{Given} a minimal \Tor\ orbisphere biset $\subscript G B_G$ with $G=\langle \Z^2,-z+r \rangle$, a base point $b_0\in B$ specifying the map
  \begin{equation}
  \label{eq:Phi:algo:shadowing}
   \Phi\colon\subscript G B_G\otimes\subscript{\langle \Z^2,-z+r \rangle}{\R^2}\to\subscript{\langle \Z^2,-z+r \rangle}{\R^2}\colon(b_0, z)\mapsto M^{-1}z
  \end{equation}
  (see Proposition~\ref{prp:ShadIn2222Case}), an extension $f_*\colon\wtA\to\wtA$ of the dynamics of $B$ on its peripheral classes, and a portrait of bisets $(G_a,B_a)_{a\in\wtA}$,\\
  \textsc{Compute} $(r_a)_{ a\in \wtA}$ shadowed by  $(G_a,B_a)_{a\in\wtA}$: $r_a= \Phi(B_a \otimes r_{f_*(a)})$, the local groups $G_{\bar r_a}$ where $\bar r_a$ is the image of $r_a$ in $\R^2/G$ (see~\eqref{eq:ell-1 alpha ell}), and the relative centralizer $Z_D((G_a,B_a)_{a\in\wtA})$, which is a finite
  abelian group, \textsc{as follows:}\\\upshape
   \begin{enumerate}
   \item Choose $b_a\in B_a$. Proceed through all periodic cycles $E$ of
    $I$. Choose $e\in E$ and solve the linear equation
    \[r_e =\Phi ^{|E|}\left(b_e\otimes b_{f_*(e)}\otimes \dots\otimes
        b_{f_*^{|E|-1}}(e) \otimes r_e\right);
    \]
    the equation takes form $ (\one+\theta M^n) r_i = t$ with
    $\theta\in\{\pm1\}$ (see~\eqref{eq:r_iInR2}) and has a unique
    solution by Lemma~\ref{lem:EigenValOfM-I}.
  \item Inductively compute $r_a=\Phi(b_a\otimes r_{f_*(a)})$ for all
    $a\in \wtA$.
  \item For $a\in A$ we have $G_{\bar r_a}=G_{a}$.
  \item For $a\in\wtA\setminus A$ check whether $r_a-z\in
    G$. If $r_a-z\not \in G$, then $G_{\bar r_a}$ is the trivial
    subgroup; otherwise $G_{\bar r_a}=\langle r_a-z\rangle$.
  \item By a finite check compute $Z_D((G_a,B_a)_{a\in\wtA})$:
    by Lemma~\ref{lem:Zportrait=Zhpo} it is the set of
    self-conjugacies of the corresponding homotopy pseudo-orbit, and
    by Lemma~\ref{lem:orb shad the same pnt} it is a subgroup of
    $\prod_{a\in\wtA}G_{\bar r_a}$, and is therefore an easily
    computable finite group.
  \end{enumerate}
\end{algo}

\begin{algo}\label{algo:conjportraittor}
  \textsc{Given} a minimal \Tor\ orbisphere biset $\subscript G B_G$, an extension $f_*\colon\wtA\to\wtA$ of the dynamics of $B$ on its peripheral classes, and two portraits of bisets $(G_a,B_a)_{a\in\wtA}$ and $(G'_a,B'_a)_{a\in\wtA}$,\\
  \textsc{Decide} whether $(G_a,B_a)_{a\in\wtA}$ and
  $(G'_a,B'_a)_{a\in\wtA}$ are conjugate \textsc{as follows:}
  \begin{enumerate}
  \item Normalize the portraits in such a manner that $G_a=G'_a$ and
    $B_a=B'_a$ for all $a\in A$; by Lemma~\ref{lem:CritPortrUnique}
    this follows from the conjugacy for subgroups: find
    $(\ell_a)_{a\in A}\in G^A$ with $G^{\ell_a}_a=G'_a$ and conjugate
    $(G_a,B_a)_{a\in\wtA}$ by $(\ell_a)_{a\in \wtA}$
    with $\ell_a=1$ if $a\not\in A$.
  \item Identify $G$ with $\langle \Z^2,-z+r \rangle$ and choose
    $b_0\in B$ characterizing the map $\Phi$,
    see~\eqref{eq:Phi:algo:shadowing}.
  \item Using Algorithm~\ref{algo:shadowing} compute the points $r_a$
    and $r'_a$ shadowed by $(G_a,B_a)_{a\in\wtA}$ and
    $(G'_a,B'_a)_{a\in\wtA}$ respectively.
  \item Check if $\bar r_a=\bar r'_a$ in $\R^2/G$. If
    $\bar r_a\neq\bar r'_a$ for some $a\in\wtA$, then return
    \texttt{no}.  Otherwise find $\ell_a\in G$ with $\ell_a r'_a= r_a$
    with $\ell_a=1$ for $a\in A$ and conjugate $(G'_a,B'_a)$ by
    $(r_a)_{a\in \wtA}$. This reduces to original problem to
    the case $r'_a=r_a$.
  \item Using Algorithm~\ref{algo:shadowing} compute the (finite
    abelian) local groups $G_{\bar r_a}$ and by a finite check decide
    if an element in $\prod_{a\in\wtA}G_{\bar r_a}$ conjugates
    $(G_a,B_a)_{a\in\wtA}$ to
    $(G'_a,B'_a)_{a\in\wtA}$.
  \label{algo:conjportraittor:CentrStep}
  \end{enumerate}
\end{algo}
If $(\bar r_a)_{a\in \wtA}$ is an actual orbit, then
$G_{\bar r_a}$ are trivial groups for all
$a\not\in \wtA\setminus A$ and
Step~\eqref{algo:conjportraittor:CentrStep} can be omitted. This is
the case when the algorithm is called from Algorithm~\ref{algo:red}.

\begin{algo}\label{algo:listportraittor}
  \textsc{Given} a minimal \Tor\ orbisphere biset $\subscript G B_G$ and an extension $f_*\colon\wtA\to\wtA$ of the dynamics of $B$ on its peripheral classes,\\
  \textsc{Produce a list} of all conjugacy classes of portraits of bisets $(G_a,B_a)_{a\in\wtA}$ in $B$ with dynamics $f_*$ \textsc{as follows:}\\\upshape
  \begin{enumerate}
  \item Write $G=\langle\Z^2,-z+r\rangle$ and $B=B_{M^v}$, using
    Algorithm~\ref{algo:2222}. Choose $b_0\in B$ characterizing the map $\Phi$, see~\eqref{eq:Phi:algo:shadowing}.
  \item Using Algorithm~\ref{algo:shadowing} compute the orbit $(\bar r_a)_{a\in  A}$ of $M\colon \R^2/G\selfmap$. Produce a list of all possible poly-orbits $(\bar r_a)_{a\in \wtA}$ extending $(\bar r_a)_{a\in  A}$.
  \item For every poly-orbit $(\bar r_a)_{a\in \wtA}$ find a portrait $(G_a,B_a)_{a\in \wtA}$ that shadow $(\bar r_a)_{a\in \wtA}$.
  \item Using Algorithm~\ref{algo:shadowing} compute the (finite) local groups $G_{\bar r_a}$. By Lemma~\ref{lem:orb shad the same pnt} the finite set
    \[\{(G_a,B_ah_a)_{a\in \wtA}\mid h_a\in G_{\bar r_a},
      h_a=1 \text{ if }a\in A\}
    \]
    contains a representative of every conjugacy class of portraits
    that shadows $(\bar r_a)_{a\in \wtA}$. Using
    Algorithm~\ref{algo:shadowing} produce a list of all conjugacy
    classes of portraits of bisets that shadow $(\bar r_a)_{a\in A}$.
  \end{enumerate}
\end{algo}

\subsection{\Exp\ maps}
We now turn to expanding maps, and start by a short example showing
how the conjugacy problem may be solved algorithmically. We note, by
following the proof of Lemma~\ref{lem:cc in NS}, that every portrait
of bisets can be algorithmically conjugated to one in which the terms
belong to $N X$, for $X$ a basis and $N$ the associated nucleus.
\begin{exple}
\label{ex:alg:conj of fixed s o}
  Suppose $E=\{e\}$ with $f_*(e)=e$, and let
  $(G_a,B_a)_{a\in A\sqcup E}$ and $(G_a,C_a)_{a\in A\sqcup E}$ be two
  portraits of bisets. Then $G_e=1$ and $B_e=\{b\}$ and $C_e=\{c\}$.

  The portraits $(G_a,B_a)_{a\in A\sqcup E}$ and $(G_a,C_a)_{a\in
    A\sqcup E}$ are conjugate if and only if there exists $\ell \in G$
  such that $\ell^{-1} b \ell=c$.

  Write $B_e=\{g x\}$ in a basis $X$ of $B$, with associated nucleus
  $N$; recall that $N$ is symmetric, contains $1$ and generates
  $G$. Then $g x$ is conjugate to $x g=(g@x)x^g=g' x'$. After
  iterating finitely many times the process $g x\sim x g=g' x'$, we
  can assume $g\in N$, as in Lemma~\ref{lem:cc in NS}. Similarly, we may replace $C_e$ by a conjugate
  biset $\{h y\}$ with $h\in N$.

  Find, by direct search, a $t\in\N$ with $N^{t-3}X\supseteq X N^t$;
  such a $t\ge 4$ exists because $B$ is contracting. Then $B_e$ and
  $C_e$ are conjugate if and only if there exists $\ell\in G$ with
  $b\ell = \ell c$, namely $g x\ell=\ell h y$. Let $u\in\N$ be such
  that $\ell\in N^u\setminus N^{u-1}$; if $u\ge t$ then
  $g x\ell\in N N^{u-3}X$ while $\ell h y\not\in N^{u-1} N X$, a
  contradiction. Therefore the search for a conjugator $\ell$ is
  constrained to $\ell\in N^{t-1}$.
\end{exple}

As in Example~\ref{ex:alg:conj of fixed s o} we have
\begin{lem}\label{lem:t is min}
  Let $\subscript G B_G$ be a contracting biset; choose a basis $X$ of
  $B$, and let $N\subseteq G$ be the associated nucleus. Suppose that
  $t\in\N$ is such that $N^{t-3}X\supseteq X N^t$.

  If two symbolic orbits $(b_i)_{i\in I}$ and $(c_i)_{i\in I}$ with
  $b_i,c_i\in N X$ are conjugate by $g_i$, then $g_i\in N^{t-1}$.
\end{lem}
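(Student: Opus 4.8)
The plan is to run the argument of Example~\ref{ex:alg:conj of fixed s o} uniformly over the finite index set $I$. First I would fix the word metric $\|{\cdot}\|_N$ on $G$ attached to the nucleus $N$ (recall $N$ is symmetric, contains $1$, and generates $G$), so that $N^k=\{g\in G:\|g\|_N\le k\}$ is the ball of radius $k$ and $\|gh\|_N\ge\|g\|_N-\|h\|_N$. Since $X$ is a basis of the left-free biset $B$, every element of $B$ has a unique normal form $gx$ with $g\in G,\ x\in X$; the hypothesis $b_i,c_i\in NX$ then says the group parts of $b_i$ and $c_i$ lie in $N$, so I may write $b_i=n_ix_i$ and $c_i=m_iy_i$ with $n_i,m_i\in N$ and $x_i,y_i\in X$. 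By Definition~\ref{def:symbolic} the conjugating tuple $(g_i)_{i\in I}$ satisfies $g_im_iy_i=g_ic_i=b_ig_{f(i)}=n_ix_ig_{f(i)}$ in $B$ for every $i\in I$.

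Next I would set $u\coloneqq\max_{i\in I}\|g_i\|_N$ (finite, as $I$ is finite) and show $u\le t-1$ by contradiction; this is exactly the assertion $g_i\in N^{t-1}$. Assume $u\ge t$ and pick $i\in I$ with $\|g_i\|_N=u$. The one ingredient needed is that the contraction inequality persists under raising the exponent: from $XN^t\subseteq N^{t-3}X$ and $XN\subseteq NX$ one gets $XN^u\subseteq N^{u-3}X$ for all $u\ge t$, since $XN^u=(XN^t)N^{u-t}\subseteq N^{t-3}(XN^{u-t})\subseteq N^{t-3}N^{u-t}X=N^{u-3}X$. Applying this to the right-hand side, where $\|g_{f(i)}\|_N\le u$, gives $x_ig_{f(i)}\in XN^u\subseteq N^{u-3}X$, hence $b_ig_{f(i)}=n_i(x_ig_{f(i)})\in N^{u-2}X$; so the normal form of this biset element has group part of $N$-length at most $u-2$. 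On the left-hand side the normal form of $g_im_iy_i$ has group part $g_im_i$, and $\|g_im_i\|_N\ge\|g_i\|_N-\|m_i\|_N\ge u-1$. Equality of the two biset elements forces equality of normal forms, whence $\|g_im_i\|_N\le u-2$, contradicting $\|g_im_i\|_N\ge u-1$. Therefore $u\le t-1$ and every $g_i$ lies in $N^{t-1}$.

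The routine ingredients — existence and uniqueness of normal forms over $X$, the elementary properties of $\|{\cdot}\|_N$, and the exponent‑monotonicity of contraction — are standard and should cause no difficulty. The only point needing care, and the one I would flag as the main (purely bookkeeping) obstacle, is to keep straight which element shrinks under the recursion: the conjugator moves backwards along the orbit, so the contraction estimate must be applied to $x_ig_{f(i)}$ (a state of $g_{f(i)}$ seen through $x_i$), not to $g_i$ itself, and the indices $i$ versus $f(i)$ in the symbolic‑orbit relation $g_i c_i=b_i g_{f(i)}$ must be tracked precisely. No idea beyond Example~\ref{ex:alg:conj of fixed s o} is required.
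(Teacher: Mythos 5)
Your proof is correct and follows essentially the same route as the paper's: take $u=\max_i\|g_i\|_N$, assume $u\ge t$, propagate the contraction hypothesis to $XN^u\subseteq N^{u-3}X$ using $XN\subseteq NX$, and compare the two normal forms of $g_ic_i=b_ig_{f(i)}$ to get the contradiction $u-1\le u-2$; this is exactly the argument of Example~\ref{ex:alg:conj of fixed s o} as the paper reuses it, with your write-up merely making explicit the lower bound $\|g_im_i\|_N\ge u-1$ that the paper leaves implicit. The only cosmetic difference is that the paper writes the conjugacy as $g_ib_i=c_ig_{f_*(i)}$ (roles of $b$ and $c$ swapped), which is immaterial since $N$ is symmetric and the statement is symmetric in the two orbits.
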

\begin{proof}
  Suppose $g_i\in N^u\setminus N^{u-1}$ for $u\ge t$ and some $i$. We
  can also assume that $g_j\in N^u$ for all $j\in I$. Then on one
  hand, $g_i b_i\in N^u N X$; but on the other hand
  $g_i b_i=c_i g_{f_*(i)}\in N^{u-3}N X$. This is a contradiction.
\end{proof}

\begin{algo}\label{algo:conjportraitexp}
  \textsc{Given} a minimal \Exp\ orbisphere biset $\subscript G B_G$, an extension $f_*\colon\wtA\to\wtA$ of the dynamics of $B$ on its peripheral classes, and two portraits of bisets $(G_a,B_a)_{a\in\wtA}$ and $(G'_a,B'_a)_{a\in\wtA}$,\\
  \textsc{Decide} whether $(G_a,B_a)_{a\in\wtA}$ and
  $(G'_a,B'_a)_{a\in\wtA}$ are conjugate, and \textsc{Compute}
  the centralizer of $(G_a,B_a)_{a\in\wtA}$, which is a finite
  group, \textsc{as follows:}\\\upshape
  \begin{enumerate}
  \item Write $\wtA=A\sqcup J\sqcup I$ with
    $f_*^n(J)\subseteq A$ and $f_*(I)\subseteq I$. Normalize
    $(G_a,B_a)_{a\in\wtA}$ and
    $(G'_a,B'_a)_{a\in\wtA}$ such that $G_a=G'_a$ and
    $B_a=B'_a$; by Lemma~\ref{lem:CritPortrUnique} this follows from
    the conjugacy for subgroups: find $(\ell_a)_{a\in A}\in G^A$ with
    $G^{\ell_a}_a=G'_a$ and conjugate $(G_a,B_a)_{a\in\wtA}$
    by $(\ell_a)_{a\in \wtA}$ with $\ell_a=1$ if
    $a\not\in A$.
  \item Check whether there is $(\ell_a)_{a\in A\sqcup J}$ with
    $\ell_a=1$ for $a\in A$ conjugating $(G_a,B_a)_{a\in A\sqcup J}$
    and $(G'_a,B'_a)_{a\in A\sqcup J}$. If not, return
    \texttt{no}. This reduces the conjugacy problem of portraits to
    the conjugacy problem of symbolic orbits: writing $B_i=\{b_i\}$
    and $B'_i=\{b'_i\}$ for $i\in I$ solve a conjugacy problem of
    $(b_i)_{i\in I}$ and $(b'_i)_{i\in I}$.

  \item Write the biset $B$ in the form $G X$ for a basis $X$, and let
    $N$ be its nucleus. Find, by direct search, a $t\in\N$ with
    $N^{t-3}X\supseteq X N^t$; such a $t\ge4$ exists because $B$ is
    contracting.
  \item Write $b_i=g_ix_i$ and replace $b_i$ with $x_i g_i=g'_i
    x'_i$. After iterating finitely many times this process, we obtain
    $b_i\in N X$ by Lemma~\ref{lem:cc in NS}. By a similar iteration,
    we can assume $b'_i\in N X$.\label{st:4:algo:conjportraitexp}
  \item Answer whether $(b_i)_{i\in I}$ and $(b'_i)_{i\in I}$ are conjugate by elements in $N^{t-1}$. This is correct by Lemma~\ref{lem:t is min}.
  \item By a direct search compute the centralizer of
    $(G_a,B_a)_{a\in \wtA}$: the centralizer of
    $(G_a,B_a)_{a\in A\sqcup J}$ is trivial, while elements
    centralizing $(G_a,B_a)_{a\in I}$ are within $N^{t-1}$.
  \end{enumerate}
\end{algo}

\begin{algo}\label{algo:listportraitexp}
  \textsc{Given} a minimal \Exp\ orbisphere biset $\subscript G B_G$ and an extension $f_*\colon\wtA\to\wtA$ of the dynamics of $B$ on its peripheral classes,\\
  \textsc{Produce a list} of all conjugacy classes of portraits of bisets $(G_a,B_a)_{a\in\wtA}$ in $B$ with dynamics $f_*$ \textsc{as follows:}\\\upshape
  \begin{enumerate}
  \item Write $\wtA=A\sqcup J\sqcup I$ with $f_*^n(J)\subseteq A$
    and $f_*(I)\subseteq I$. Produce a list $\mathcal L_{A\sqcup J}$ of
    all conjugacy classes of portraits of bisets
    $(G_a,B_a)_{a\in A\sqcup J}$.
  \item Write the biset $B$ in the form $G X$ for a basis $X$, and let
    $N$ be its nucleus. Produce a list of all symbolic orbits
    $(b_i)_{i\in I}$ with $b_i\in N X$. Using
    Algorithm~\ref{algo:conjportraitexp} produce a list $\mathcal L_I$
    of all conjugacy classes of symbolic orbits $(b_i)_{i\in I}$ with
    $b_i\in N X$.
  \item Combine $\mathcal L_{A\sqcup J}$ and $\mathcal L_{I}$ to
    produce a list $\mathcal L_{A\sqcup J}\times \mathcal L_{I}$ of
    all conjugacy classes of portraits of bisets
    $(G_a,B_a)_{a\in\wtA}$ by setting $B_i\coloneq \{b_i\}$
    and $G_i\coloneq \{1\}$ for $i\in I$.
  \end{enumerate}
\end{algo}

\subsection{Decidability of conjugacy and centralizer problems}\label{ss:decidability}
The statements of Theorem~\ref{thm:RedConjCentrProb} can be turned
into an algorithm:
\begin{algo}\label{algo:red}
  \textsc{Given} $\subscript{\wtG}{\wtB}_{\wtG}$, $\subscript{\wtG}{\wtC}_{\wtG}$ two geometric bisets,\\
  \textsc{And given} an oracle that decides whether two minimal \Exp$\setminus$\Tor\ bisets are conjugate, and computes their centralizers,\\
  \textsc{Decide} whether $\wtB$ and $\wtC$ are conjugate by an element of $\Mod(\wtG)$, and if so \textsc{construct} a conjugator; and   \textsc{compute} the centralizer $Z(\wtB)$, \textsc{as follows:}\\\upshape
  \begin{enumerate}
  \item If $\wtB_*\neq \wtC_*$ as maps on peripheral
    conjugacy classes, then return \texttt{no}.
  \item Let
    $\subscript{\wtG}{\wtB}_{\wtG}\to
    \subscript G B_G$ and
    $\subscript{\wtG}{\wtC}_{\wtG}\to
    \subscript{G'}C_{G'}$ be the maximal forgetful morphisms. If
    $G\neq G'$, then return \texttt{no}.
  \item From now on assume $G=G'$. Using
    Algorithm~\ref{algo:minimaltor} or the oracle, decide whether
    $B,C$ are conjugate; if not, return \texttt{no}. Using
    Algorithm~\ref{algo:minimaltor} or the oracle compute the
    centralizer $Z(B)$.
  \item From now on, assume $B^\phi=C$ for some
    $\phi\in\Mod(G)$. Compute the action of $Z(B)$ on the list of all
    conjugacy classes of portraits of bisets (this list is finite by
    Corollary~\ref{cor:shaw is finite to one};
    Algorithms~\ref{algo:listportraittor}
    and~\ref{algo:listportraitexp} compute the list) and check whether
    $(G_a,B_a)_{a\in \wtA}$ and
    $(G^{\phi^{-1}}_a,C^{\phi^{-1}}_a)_{a\in \wtA}$ belong to
    the same orbit. Return \texttt{no} if the answer is
    negative. Otherwise replace $\phi$ by an element in $Z(B)\phi$
    such that $(G^\phi_a,B^\phi_a)_{a\in \wtA}$ and
    $(G_a,C_a)_{a\in \wtA}$ are conjugate portraits.
  \item Choose an arbitrary lift $\widetilde\phi\in\Mod(\wtG)$
    of $\phi$, and compute $k\in\Mod(\wtG)$ such that
    $k\wtB^{\widetilde\phi}\cong\wtC$. It follows that
    $k$ is a knitting element. Compute inductively $k^{(n)}$ with
    $k^{(n)} \wtB^{\widetilde\phi k' k''\cdots
      k^{(n-1)}}\cong\wtC$; proceed until $k^{(n)}$ is
    identity (this is guaranteed by
    Lemma~\ref{lmm:TrvOfLift2Cases}). Return
    $\widetilde\phi k' k''\cdots k^{(n-1)}$.\label{st:5:algo:red}
  \item To compute the centralizer of $\wtB$, consider first
    $Z(B)$. Compute the action of $Z(B)$ on the list of all conjugacy
    classes of portraits of bisets (this list is finite by
    Corollary~\ref{cor:shaw is finite to one};
    Algorithms~\ref{algo:listportraittor}
    and~\ref{algo:listportraitexp} compute the list) and compute the
    stabilizer $Z_0(B)$ of the $Z(B)$-action. Note that $Z_0(B)$ is a
    finite-index subgroup of $Z(B)$.
  \item List a generating set $S$ of $Z_0(B)$. For every $\phi\in S $
    compute its lift $\widehat\phi\in \Mod(\wtG)$ with
    $\wtB^{\widehat\psi}\cong \wtB$: compute first an
    arbitrary lift $\widetilde\phi$ of $\phi$, then inductively
    define $k^{(n)}$ with
    $k^{(n)} \wtB^{\widetilde\phi k' k''\cdots
      k^{(n-1)}}\cong\wtB$ until $k^{(n)}$ is identity, and
    set
    $\widehat\phi \coloneqq \widetilde\phi k' k''\cdots
    k^{(n-1)}$. Return $\{\widehat\phi\mid \phi\in S\}$.\label{st:7:algo:red}
  \end{enumerate}
\end{algo}

\begin{proof}[Proof of Corollary~\ref{cor:conjZpb}]
  We claim that
  Algorithms~\ref{algo:shadowing},~\ref{algo:conjportraittor},~\ref{algo:listportraittor},
  ~\ref{algo:conjportraitexp},~\ref{algo:listportraitexp} are
  efficient.  Indeed, the efficiency of
  Step~\ref{st:4:algo:conjportraitexp} of
  Algorithm~\ref{algo:conjportraitexp} follows from expansion. The
  efficiency of Steps~\ref{st:5:algo:red} and~\ref{st:7:algo:red} of
  Algorithm~\ref{algo:red} follows from
  Lemma~\ref{lmm:TrvOfLift2Cases}. All the remaining steps are
  obviously efficient.
\end{proof}

We note that, in the case of $(2,2,2,2)$ bisets, the oracle itself is
efficient. Indeed the oracle reduces to solving conjugacy and
centralizer problems in the group $\SL_2(\Z)$, and
Theorem~\ref{thm:MatrConj} is efficient, because $\SL_2(\Z)$ has a
free subgroup of index $12$.

\begin{cor}\label{cor:expdecidable}
  There is an algorithm that, given two geometric bisets
  $\subscript G B_G$ and $\subscript H C_H$, decides whether $B$ and
  $C$ are conjugate, and computes the centralizer of $B$.
\end{cor}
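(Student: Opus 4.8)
The plan is to supply, out of results already at hand, the oracle required by Algorithm~\ref{algo:red}. Given geometric bisets $\subscript G B_G$ and $\subscript H C_H$, I would first check whether $G$ and $H$ are isomorphic as orbisphere groups (if not, $B$ and $C$ are not conjugate), and, in the invertible case, fall back on the classical decidability of the conjugacy and centralizer problems in the mapping class group; so assume from now on that $B$ and $C$ are non-invertible over a common orbisphere group $G$. Using Algorithm~\ref{algo:geom} one computes the maximal forgetful quotients $\overline B$ and $\overline C$ on the minimal orbispheres $(S^2,P_f,\ord_f)$ and $(S^2,P_g,\ord_g)$ and determines whether each of them is of type \Tor\ or of type \Exp. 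By Algorithm~\ref{algo:red}, deciding conjugacy of $B$ and $C$ and computing $Z(B)$ then reduces to doing the same for $\overline B$ and $\overline C$, \emph{provided} we know how to answer these questions for minimal \Exp\ bisets.

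If $\overline B$ --- hence also $\overline C$, as otherwise the maps are not conjugate --- is of type \Tor, then no oracle is needed: Algorithm~\ref{algo:minimaltor}, built only on the effective $\SL_2(\Z)$-conjugacy result Theorem~\ref{thm:MatrConj}, decides whether $\overline B$ and $\overline C$ are conjugate and computes $Z(\overline B)$. If $\overline B$ is of type \Exp, then $\overline B$ is the orbisphere biset of an expanding Thurston map with only its post-critical set marked; by Corollary~\ref{cor:OrbiIsComplInvar}, $\overline B$ and $\overline C$ are conjugate precisely when the underlying expanding maps are Thurston equivalent, and this --- together with the computation of the centralizer of $\overline B$ --- is decidable by the general solution of the conjugacy and centralizer problems for Thurston maps obtained in~\cite{bartholdi-dudko:bc2} (the efficient, specialized algorithms being deferred to~\cite{bartholdi-dudko:bc5}). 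Plugging this decision procedure in for the oracle call in Algorithm~\ref{algo:red} then yields the claimed oracle-free algorithm.

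The one step that is more than bookkeeping --- and the one I expect to be the main obstacle --- is matching the \Exp\ branch precisely to~\cite{bartholdi-dudko:bc2}: one must verify that what Algorithm~\ref{algo:red} demands of its oracle (conjugacy of two geometric orbisphere bisets on their minimal orbispheres, and the centralizer of one of them) is genuinely a special case of the complete conjugacy invariant computed there --- for an expanding map the canonical decomposition is trivial, so that invariant collapses to the isomorphism class of the biset together with the induced centralizer data --- and that the passage between ``orbisphere biset'' and ``Thurston map up to isotopy'' is exactly Corollary~\ref{cor:OrbiIsComplInvar}. No efficiency issue arises, since the statement asserts decidability only, and the \Tor\ branch is entirely self-contained; so the substance of the proof is confirming that the oracle deferred in Corollary~\ref{cor:conjZpb} is, for the purpose of mere decidability, already furnished by the earlier paper in the series.
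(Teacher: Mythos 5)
Your overall architecture (reduce to the minimal orbisphere via Algorithm~\ref{algo:red}, dispose of the \Tor\ branch with Algorithm~\ref{algo:minimaltor}) matches the paper's, but the way you furnish the \Exp\ oracle has a genuine gap. The claim that ``for an expanding map the canonical decomposition is trivial'' is false: B\"ottcher expanding maps are Levy-free but need not be unobstructed, so the canonical obstruction can be non-empty and the map need not be equivalent to a single rational map. Consequently the conjugacy invariant does not collapse to ``the isomorphism class of the biset'': conjugacy under $\Mod(G)$ asks for an isomorphism $B^\phi\cong C$ for some twist $\phi$, and Corollary~\ref{cor:OrbiIsComplInvar} is an isotopy statement, not a conjugacy statement, so it does not decide this.

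Moreover, the appeal to \cite{bartholdi-dudko:bc2} as a ready-made ``general solution of the conjugacy and centralizer problems'' for minimal \Exp\ bisets is not available: what \cite{bartholdi-dudko:bc2}*{Theorem~\ref{bc2:thm:A}} provides is the comparison of the twist data along a given decomposition and the computation of centralizers once the geometric return maps are in hand; it does not by itself compute or compare those return maps. That missing step is precisely what the paper's proof supplies for the oracle: compute the canonical obstruction (computable by \cite{selinger:canonical}), decompose $B$ and $C$ into rational return maps, compute these maps' co\"efficients as algebraic numbers and compare them by finite algebraic computations, and only then invoke Theorem~A of \cite{bartholdi-dudko:bc2} for the twists and for the centralizer (the centralizer of each rational piece being trivial), with full details deferred to \cite{bartholdi-dudko:bc5}. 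Without this ingredient your \Exp\ oracle is not constructed, so the proposal as written does not prove the corollary.
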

\begin{proof}
  We briefly sketch an algorithm that justifies an existence of an
  oracle: given two minimal \Exp\ orbisphere bisets, whether they are
  conjugate and computes their centralizers; details will appear
  in~\cite{bartholdi-dudko:bc5}.

  Let $B,C$ be two minimal \Exp\ orbisphere $G$-$G$-bisets. They admit
  a decomposition into rational maps along the \emph{canonical
    obstruction}, which is computable
  by~\cite{selinger:canonical}. The graph of bisets along this
  decomposition is computable, and rational maps may be computed for
  each of the small bisets in the decomposition, e.g. by giving their
  co\"efficients as algebraic numbers with floating-point enclosures
  to distinguish them from their Galois conjugates. The bisets $B,C$
  are conjugate precisely when their respective rational maps are
  conjugate and the twists along the canonical obstruction agree; the
  first condition amounts to finite calculations with algebraic
  numbers, while the second is the topic
  of~\cite{bartholdi-dudko:bc2}*{Theorem~\ref{bc2:thm:A}}.

  The centralizer of a rational map is trivial,
  and~\cite{bartholdi-dudko:bc2}*{Theorem~\ref{bc2:thm:A}} shows that
  the centralizer of $B$ is computable.
\end{proof}

\begin{proof}[Proof of Corollary~\ref{cor:G}]
  If the rational map is $(2,2,2,2)$, then the oracle is efficient, as
  we noted above, so Corollary~\ref{cor:conjZpb} applies. In the other
  case, the rational map is hyperbolic, so it has trivial centralizer
  and no oracle is needed in the application of
  Algorithm~\ref{algo:red}.
\end{proof}

\section{Examples}
Finally, in this brief section, we consider some examples of portraits
of bisets. The first ones come from marking points on the Basilica map
$f(z)=z^2-1$, and more generally maps with three post-critical
points. The second ones from cyclic bisets (which are particularly
simple, but to which our main results apply with restrictions because
these bisets have automorphisms).

\subsection{Twisted marked Basilica}
Consider the Basilica polynomial
\[f(z)\coloneqq z^2-1\colon (\widehat\C,\{0,-1,\infty\})\selfmap.
\]
It has two fixed points $\alpha$ and $\beta$, with $\alpha\in(-1,0)$
and $\beta>1$. Let us take $\alpha$ to be the basepoint and let
$\subscript G B_G$ be the biset of $f$. Denote respectively by
$\gamma_{-1}$ and $\gamma_0$ the loops circling around $-1$ and $0$
as in Figure~\ref{Fig:BasDynPl}, and let
$\gamma_\infty=(\gamma_{-1} \gamma_0)^{-1}$ be the loop around
infinity. Then
\[G=\langle\gamma_{-1},\gamma_0,\gamma_\infty\mid \gamma_\infty \gamma_{-1}\gamma_0\rangle. \]

The basepoint $\alpha$ has two preimages $\alpha$ and $-\alpha$. Let
$x_1$ be the constant path at $\alpha$ and let $x_2$ be a path
slightly below $0$ connecting $\alpha$ to $-\alpha$. The presentation
of $B$ in the basis $S\coloneqq \{x_1,x_2\}$ is
\begin{align*}
  \gamma_{-1}=&\pair{1,\gamma_0}(1,2),\\
  \gamma_0=&\pair{\gamma_{-1},1},\\
  \gamma_\infty=&\pair{\gamma_{-1}^{-1}\gamma^{-1}_0,1}(1,2).
\end{align*}

\begin{figure}
\begin{tikzpicture}[>=stealth',scale=3,inner sep=1mm]
  \node at (-0.618,0) [label={90:$\alpha$}] (alpha) {$\bullet$};

  \node at (-1,0) [label={[label distance=-2mm]180:$-1$}] (mone) {$\bullet$};
  \coordinate[xshift=4mm] (monex) at (mone);
  \draw[->] (monex) .. controls +(145:6mm)  and +(215:6mm) .. node[left] {$\gamma_{-1}$} (monex);

  \node at (0,0) [label=0:{$0$}] (zero) {$\bullet$};
  \coordinate[xshift=-4mm] (zerox) at (zero);
  \draw (monex) -- (zerox);
  \draw[->] (zerox) node[above=2mm] {$\gamma_0$} .. controls +(-35:6mm)  and +(35:6mm) .. (zerox);

  \node[inner sep=0mm] at (1.618,0) [label={[label distance=-1mm]45:$\beta$}] (beta) {$\bullet$};
  \coordinate[shift={(225:4mm)}] (betax) at (beta.center);
  \draw (alpha.center) edge[bend right=45] node[above] {$\gamma_\beta$} (betax);
  \draw[->] (betax) .. controls +(10:6mm)  and +(80:6mm) .. (betax);

  \node at (0.618,0) [label=$-\alpha$] (malpha) {$\bullet$};
  \draw (alpha.center) edge[bend right=35] node[below] {$x_2$} (malpha.center);

  \draw [dashed] (1.3,0) .. controls +(0.3,0.5) and +(0.3,0.5) .. (2,0)
  .. controls +(-1.3,-1.3) and +(0.2,-1.7) .. node[above] {$t$} (-1.5,0)
  .. controls +(0,0.5) and +(0,0.5) .. (-0.75,0)
  .. controls +(0,-0.9) and +(-0.6,-1.0) .. (1.3,0);
  \end{tikzpicture}
  \caption{The dynamical plane of $z^2-1$. Loops
    $\gamma_{-1},\gamma_0, \gamma_\beta$ circle around
    $0,-1,\beta$ respectively. The curve $x_2$ connects $\alpha$ to
    its preimage $-\alpha$. The simple closed curve $t$ surrounds
    $\{-1,\beta\}$.}
  \label{Fig:BasDynPl}
\end{figure}
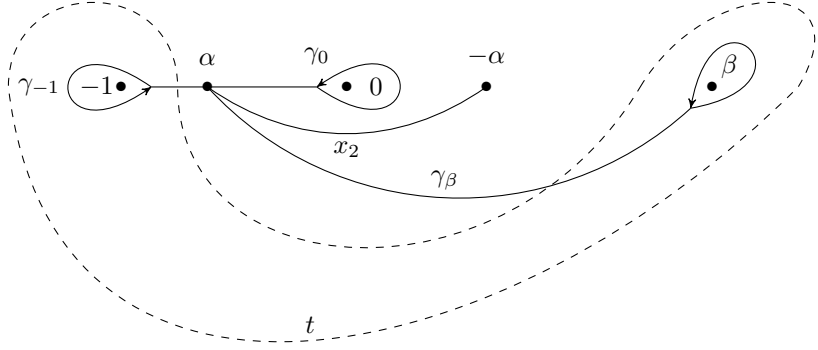

We shall consider first the effect of marking a fixed point, and then
of marking some preimages of the post-critical set.

\subsubsection{Marking $\beta$}\label{sss:MarkBeta}
Consider a marked Basilica
$\wtf(z)=z^2-1\colon(\widehat\C, \{\infty,0,-1,\beta\})\selfmap$. Let
$\gamma_\beta$ be the loop around $\beta$ depicted in
Figure~\ref{Fig:BasDynPl}. The fundamental group of
$(\widehat\C, \{\infty,0,-1,\beta\})$, based at $\alpha$, is
\[\wtG=\langle\gamma_{-1},\gamma_0,\gamma_\beta,\gamma_\infty\mid \gamma_\infty \gamma_{-1}\gamma_\beta\gamma_0\rangle
\]
and the forgetful map $\wtG\twoheadrightarrow G$ sends
$\gamma_\beta$ to $1$. The presentation of the biset
$\wtB=B(\wtf)$ in the basis $S$ is
\begin{align*}
\gamma_{-1}=&\pair{1,\gamma_0}(1,2),\\
\gamma_\beta=&\pair{1,\gamma_\beta},\\
\gamma_0=&\pair{\gamma_{-1},1},\\
\gamma_\infty=&\pair{\gamma_{-1}^{-1}\gamma^{-1}_0,\gamma_\beta^{-1}}(1,2).
\end{align*}
Write $A=\{\infty,-1,0\}$ and $\wtA=\{\infty,-1,0,\beta\}$;
then $\wtB$ has a minimal portrait of bisets
\begin{xalignat*}{2}
  \wtG_{-1}&=\langle\gamma_{-1}\rangle,&\wtB_{-1}&=\wtG_{-1}x_1,\\
  \wtG_\beta&=\langle\gamma_\beta\rangle,&\wtB_\beta&=\wtG_{-1}x_2,\\
  \wtG_0&=\langle\gamma_0\rangle,&\wtB_0&=\wtG_0x_1\sqcup \wtG_0x_2=x_1\wtG_{-1},\\
  \wtG_\infty&=\langle\gamma_\infty\rangle,&\wtB_\infty&=\wtG_\infty (\gamma_{-1}x_1)\sqcup
  \wtG_\infty (\gamma_{-1}x_2\gamma_\beta).\\
  \intertext{Under the forgetful intertwiner
    $\wtB\twoheadrightarrow B$ the portrait
    $(\wtG_a,B_a)_{a\in \wtA}$ projects to}
  G_{-1}&=\langle\gamma_{-1}\rangle,& B_{-1}&= G_{-1}x_1,\\
  G_\beta&=1,& B_\beta&=\{x_2\},\\
  G_0&=\langle\gamma_0\rangle,& B_0&= G_0x_1\sqcup  G_0x_2=x_1 G_{-1},\\
  G_\infty&=\langle\gamma_\infty\rangle,& B_\infty&= G_\infty (\gamma_{-1}x_1)\sqcup  G_\infty (\gamma_{-1}x_2).
\end{xalignat*}
Note that $B_\beta=\{x_2\}$ encodes $\beta$ in the sense that $x_2$
shadows $\beta$. This can be seen directly as follows: set $x_2^0=x_2$
and let $x_2^{i+1}$ be the $f$-lift of $x_2^i$ that starts at
$x_2^i(1)$. Then the sequence of endpoints $x_2^i(1)$ converges to
$\beta$, and the infinite concatenation $x_2^0\#x_2^1\#\cdots$ is a
path from $\alpha$ to $\beta$.

Let now $T$ be the clockwise Dehn twist around the simple closed curve
$t$ surrounding an interval $0$ and $\beta$ as in
Figure~\ref{Fig:BasDynPl}. The action of $T$ on $\wtG$ is
given by
\[T_*\colon\gamma_{-1}\to
  \gamma_{-1}^{(\gamma_{-1}\gamma_\beta)^{-1}},\gamma_\beta\to
  \gamma_\beta^{(\gamma_{-1}\gamma_\beta)^{-1}},\gamma_0\to
  \gamma_0,\gamma_\infty\to\gamma_\infty.
\]
The biset $\wtC$ of $g\coloneqq T \circ f $ is
$\wtB\otimes B_{T_*}$.  Let us show that $g$ is conjugate to
${z^2-1\colon (\widehat\C,\{\infty,-1,0,\alpha\})\selfmap}$.  By
Theorem~\ref{thm:portrait},
\[(\wtC_a,\wtG_a)_{a\in \wtA}= (\wtB_a,\wtG_a)_{a\in \wtA} \Antipush(T).
\]
Note that we have $C_a=B_a$ for all $a\in A$, so it remains to compute
$C_\beta$.

Let $\bar t\in \pi_1(\C\setminus\{0,-1\},\beta)$ be the simple loop
below $0$ circling around $-1$; then $T=\Push(\bar t)$. Let
$\ell_\beta\colon [0,1]\to\C\setminus\{-1,0\}$ (using notations of
Lemma~\ref{lmm:connect:ell_a}) be an arc from $\alpha$ to $\beta$
slightly below $0$ so that $\gamma_\beta$ may be homotoped to a small
neighborhood of $\ell_\beta$. We have
$C_\beta=B_\beta\gamma_{-1}^{-1}$; the claim then follows from
$x_2\gamma_{-1}^{-1}=x_1$ and the fact that $x_1$ shadows $\alpha$.

More generally, suppose that
$\wtg= \Push (\bar s) \wtf\Push (\bar t)$ for some
motions $\bar s, \bar t\in \pi_1(\C\setminus\{0,-1\},\beta)$ of
$\beta$; then
\[C_\beta=(\ell_\beta \#\bar s \#\ell_\beta^{-1})B_\beta(\ell_\beta \#
  \bar t \#\ell_\beta^{-1}).
\]
The process
\[(\ell_\beta \#\bar s \#\ell_\beta^{-1})x_2 (\ell_\beta \#\bar t \#\ell_\beta^{-1})=g_1x_{i(1)}\sim x_{i(1)}g_1= g_2 x_{i(2)}\sim x_{i(2)}g_2=\dots
\]
eventually terminates in either $x_1$ or $x_2$. In the former case,
$\wtg$ is conjugate to
$z^2-1\colon (\widehat\C, \{\infty,0,-1,\alpha\})\selfmap$ and in
the latter case $\wtg$ is conjugate to
$z^2-1\colon (\widehat\C, \{\infty,0,-1,\beta\})\selfmap$.

\subsubsection{Marking $1$ and $\sqrt2$}\label{sss:MarkSqrt2}
Consider now
\[\wtf(z)=z^2-1\colon(\widehat\C,\{\infty, -1,0,1,\sqrt2
  \})\selfmap
\]
with $\wtA\setminus A=\{1,\sqrt2\}$ and
$A=\{\infty,-1,0\}$. The dynamics are
$\sqrt2\mapsto1\mapsto0\leftrightarrow-1$. As in~\S\ref{sss:MarkBeta}
we readily compute a presentation of $\wtB=B(\wtf)$ in the
basis $S$:
\begin{align*}
  \gamma_{-1}=&\pair{1,\gamma_0}(1,2),\\
  \gamma_{\sqrt2}=&\pair{1,1},\\
  \gamma_1=&\pair{1,\gamma_{\sqrt2}},\\
  \gamma_0=&\pair{\gamma_{-1},\gamma_1},\\
  \gamma_\infty=&\pair{\gamma_{-1}^{-1}\gamma^{-1}_0,\gamma_1^{-1}\gamma_{\sqrt2}^{-1}}(1,2),
\end{align*}
with
$\wtG=\langle\gamma_\infty,\gamma_{-1},\gamma_{\sqrt2},\gamma_1,\gamma_0\mid
\gamma_\infty\gamma_{-1}\gamma_{\sqrt2}\gamma_1\gamma_0\rangle$.

As in Lemma~\ref{lmm:connect:ell_a}, let $\ell_{-1}$ and
$\ell_0\subset \R$ respectively be simple arcs from $\alpha$ to $-1$
and to $0$ so that $\gamma_{-1}$ and $\gamma_0$ may be homotoped to
small neighborhoods of $\gamma_{-1}$ and $\gamma_0$
respectively. Let $\ell_1$ and $\ell_{\sqrt2}$ be simple arcs from
$\alpha$ to $1$ and $\sqrt2$ slightly below $0$ as in
Figure~\ref{Fig:BasDynPl:2}.

\begin{figure}
\begin{tikzpicture}[>=stealth',scale=4.8,inner sep=1mm]
  \node (alpha) at (-0.618,0) [label={above:$\alpha$}] {$\bullet$};
  \node[inner sep=0] (mone) at (-1,0) [label={above:$-1$}] {$\bullet$};
  \draw (alpha.center) edge[->] node[above] {$\ell_{-1}$} (mone);
  \node[inner sep=0] (zero) at (0,0) [label={above:$0$}] {$\bullet$};
  \draw (alpha.center) edge[->] node[above] {$\ell_0$} (zero);
  \coordinate (malpha) at (0.618,0);
  \node[inner sep=0] (one) at (1,0) [label={above:$1$}] {$\bullet$};
  \draw (alpha.center) edge[->,bend right=20] node[above] {$\ell_1$} (one);
  \draw[dotted,thick] (one) edge[->] node[above] {$\ell^{-1}_0\lift{f}{1}$} (malpha) (malpha) node[above] {$-\alpha$};

  \node[inner sep=0] at (1.414,0) [label={above:$\sqrt2$}] (sqrt2) {$\bullet$};
  \draw (alpha.center) edge[->,bend right=45] node[above] {$\ell_{\sqrt2}$} (sqrt2);
  \draw[dotted,thick] (sqrt2) edge[->,bend left=45] node[above] {$\ell^{-1}_1\lift{f}{\sqrt2}$} (malpha);
  \end{tikzpicture}
  \caption{Curves $\ell_{-1},\ell_0,\ell_1,\ell_{\sqrt2}$ and their lifts.}
  \label{Fig:BasDynPl:2}
\end{figure}
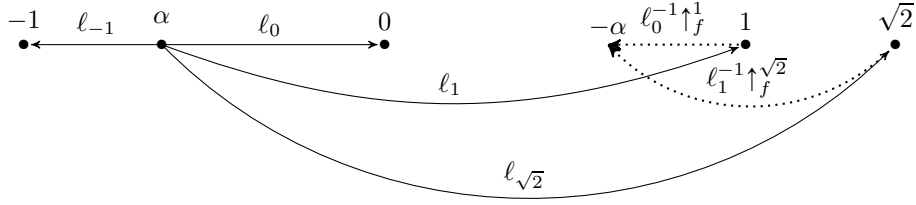

Denote by $(G_a,B_a)_{a\in \wtA}$ the portrait of bisets in
$B$ induced by $\wtB=B(\wtf)$. Then $G_a$ and $B_a$
are the same as in~\S\ref{sss:MarkBeta} for all $a\in A$, while (by
Lemma~\ref{lmm:connect:ell_a:2})
\[B_1=\{\ell_1\#\ell_0^{-1}\lift{f}{1}\}=\{x_2\}\text{ and }B_{\sqrt2}=\{\ell_{\sqrt2}\#\ell_1^{-1}\lift{f}{\sqrt2}\}=\{x_2\}.\]

We consider again some twists of $\wtf$.  Consider
$\wtg=m_1\wtf m_2$ with $m_1,m_2$ trivial rel $A$. Suppose
that $m_1$ moves $1$ and $\sqrt2$ along
$s_1\in \pi_1(\C\setminus\{-1,0\},1)$ and
$s_{\sqrt2}\in \pi_1(\C\setminus\{-1,0\},\sqrt2)$ respectively
while $m_2$ moves $1$ and $\sqrt2$ along
$t_1\in \pi_1(\C\setminus\{-1,0\},1)$ and
$t_{\sqrt2}\in \pi_1(\C\setminus\{-1,0\},\sqrt2)$ respectively. If
$(G_a,C_a)_{a\in \wtA}$ is the portrait of bisets in $B$
induced by $B(\wtg)$, then
\begin{align*}
  C_1=&\left\{(\ell_1\#s_1\#\ell^{-1}_1)x_2\right\},\\
  C_{\sqrt2}=&\left\{(\ell_{\sqrt2}\#s_{\sqrt2}\#\ell^{-1}_1)x_2 (\ell_{\sqrt2}\#t_1\lift{f}{\sqrt2} \#\ell^{-1}_{\sqrt2})\right\}.
\end{align*}
Write $C_1=\{h_1 x_2\}$ and $C_{\sqrt2}=\{h_2 x_j\}$. We may
conjugate $(C_1,C_2)$ to $(\{x_2\},\{x_j h'\})$, and write
$x_j h'=h'' x_k$. If $k=1$, then $\wtg$ is conjugate to
${z^2-1\colon(\widehat\C,\{\infty, -1,0,1,\sqrt2\})\selfmap}$;
otherwise $\wtg$ is conjugate to
${z^2-1\colon(\widehat\C,\{\infty, -1,0,1,-\sqrt2\})\selfmap}$.

\subsubsection{Mapping class bisets}
We continue the discussion from~\S\ref{sss:MarkSqrt2}, and consider
the related mapping class bisets. The biset $M(B)$ is of course
reduced to $\{B\}$, with $\Mod(G)=1$.

On the other hand, the biset $M(\wtB)$ is a left-free
$\Mod(\wtG)$-biset of degree $2$. This can be seen in various
ways: analytically, the point $\sqrt2$ may be moved to $-\sqrt2$, and
a basis of $M(\wtB)$ may be chosen as
$\{(z^2-1,\{\infty,-1,0,1,\sqrt2\}),(z^2-1,\{\infty,-1,0,1,-\sqrt2\})\}$. More
symbolically, the action of exchanging $\sqrt2$ with $-\sqrt2$ amounts
to changing, in the wreath recursion of $\wtf$, the entry
`$\gamma_1=\pair{1,\gamma_{\sqrt2}}$' into
`$\gamma_1=\pair{\gamma_{\sqrt2},1}$'.

Note that the biset $M(\wtB)$ is nevertheless connected, and
$M(\wtB)=M^*(\wtB)$: indeed the right action by the
mapping class that pushes $1$ once along the circle $\{|z|=1\}$ has
the effect of exchanging the two left orbits.

\begin{exple}\label{ex:notexact}
  Let us now consider the sets $A=\{\infty,-1,0,1\}$ and
  $D=\{\sqrt2\}$, still with the map
  $f(z)=z^2-1\colon(\widehat\C,A)\selfmap$ and
  $\wtf=f\colon(\widehat\C,A\sqcup D)\selfmap$. Then the fibres of the
  map $\Forget_{D,D}\colon M^*(\wtf)=M(\wtf)\to M(f)$ are not
  connected (by the second claim of Proposition~\ref{prop:FibBiset}).

  Indeed the left action of $\Mod(\widehat\C\setminus A,D)$ has two
  orbits, while the right action does not identify these orbits since
  $1$ is not allowed to move around a critical value.
\end{exple}

\subsection{Belyi maps}
The Basilica map $f(z)=z^2-1$ is an example of dynamical \emph{Belyi
  map}, namely a map whose post-critical set consists of $3$
points. All such maps are realizable as holomorphic maps, and the
three points may be normalized as $\{0,1,\infty\}$.

In this subsection, we briefly state how the main results of this
article simplify considerably. We concentrate on the dynamical
situation, namely a map $f\colon(S^2,A)\selfmap$ covered by
$\wtf\colon(S^2,A\sqcup D)\selfmap$, with $\#A=3$.

In that case, $\Mod(S^2,A)=1$, and we have a short exact sequence
\[1\to\knBraid(S^2,A\sqcup D)\to\Mod(S^2,A\sqcup D)\to\pi_1(S^2\setminus A,D)\to1.
\]
Assume first that $D$ consists only of $\wtf$-fixed
points. Then the extension of bisets decomposition of
$M(\wtf)$ from Theorem~\ref{mainthm:extension} reduces to the
statement that $M(\wtf)$ is an inert extension of
$B(f)^D$. The case of $D$ consisting of a single fixed point was
considered in~\cite{bartholdi-dudko:bc2}*{\S\ref{bc2:ss:BelyaMaps}}.

More concretely: if we are given a wreath recursion $G\to G\wr S\perm$
for $B(f)$, with $G=\pi_1(S^2\setminus A,*)$, then a generating set
for $\Mod(S^2,A\sqcup D)$ can be chosen to consist of $\#D$ copies of
a generating set of $G$, corresponding to point pushes of $D$ in
$S^2\setminus A$, together with some additional knitting elements. A
wreath recursion for $M(\wtf)$ will then be of the form
$\Mod(S^2,A\sqcup D)\to\Mod(S^2,A\sqcup D)\wr (S^D)\perm$, consisting
of $D$ parallel copies of the wreath recursion of $B(f)$. The wreath
recursion associated with the knitting elements is trivial.

In case $D$ consists of periodic points of period $>1$, then the
actions of $G$ on the left and the right should be appropriately
permuted. If $D$ contains $n_i$ cycles of period $i$, then abstractly
$M(\wtf)$ will consist in the direct product of $n_i$ copies
of $B(f^i)=B(f)^{\otimes i}$.

\begin{exple}\label{ex:M*neqM}
  Consider the map $\wtf(z)=f(z)=z^3$ with $A=\{0,1,\infty\}$ and
  $D=\{\omega=\exp(2\pi i/3)\}$. The biset $M(f)$ is of course reduced
  to $\{f\}$, while
  $M(\wtf)=\Mod(\widehat\C\setminus A,D)\cong\pi_1(\widehat\C\setminus
  A)$ with trivial right action. Indeed $D$ is not in the image of
  $f$, so pushing $\omega$ has no effect.

  However, $M^*(\wtf)$ has two left
  $\Mod(\widehat\C\setminus A,D)$-orbits by Lemma~\ref{lem:twisting bar E}. Representatives may be
  chosen as $\{(z^3,A\sqcup\{\omega\}),(z^3,A\sqcup\{\omega^2\})\}$,
  or equivalently (if the marked set is to remain $A\sqcup D$) as the
  maps $\{z^3,z^3\circ m\}$ for a homeomorphism
  $m\colon(\widehat\C,A)\selfmap$ that pushes $\omega$ to $\omega^2$.
\end{exple}

\subsection{Cyclic bisets}
\label{ss:CyclBis}
We finally consider the easy case of a cyclic biset, namely the biset
$\subscript G B_G$ of a monomial map
$f(z)=z^d\colon(\widehat\C,\{0,\infty\})\selfmap$ for some
$d\in\Z\setminus\{-1,0,1\}$. Then $G\cong\Z$ and $B$ is a left-free
right-principal biset. Choosing $b_0\in B$ we can identify $B=b_0G$
with $\Z$ with the actions are given by
\[m\cdot b\cdot n=d m+b+n.\]

Observe first that $b\to b+k$ is an automorphism of $B$ for all
$k\in \Z$. Therefore, contrarily Proposition~\ref{mainprop:NoGhostAut}
we have $\Aut(B)\cong\Z$.

Suppose that $(G_a,B_a)_{a\in\wtA}$ is a portrait of bisets in
$B$. Then $\Aut(B)$ acts on portraits by
\[(G_a,B_a)_{a\in\wtA} +k \coloneqq
  (G_a,B_a+k)_{a\in\wtA}.
\]
We denote by $(G_a,B_a)_{a\in\wtA}/\Aut(B)$ the orbit of this
action.  We can now adjust Theorem~\ref{main:thm:A} for cyclic bisets:
\begin{lem}\label{lem:main:thm:A:cyclic case}
  Let $\wtG\twoheadrightarrow G$ be a forgetful morphism of
  groups with $G\cong\Z$, and let $\subscript G B_G$ be the biset of
  $z^d\colon(\widehat\C,\{0,\infty\})\selfmap$.

  There is then a bijection between, on the one hand, conjugacy
  classes of portraits of bisets $(B_a)_{a\in\wtA}$ in $B$
  considered up to the action of $\Aut(B)$ and, on the other hand,
  $\wtG$-$\wtG$-bisets projecting to $B$ under
  $\wtG\twoheadrightarrow G$ considered up to composition with
  the biset of a knitting element. This bijection maps every minimal
  portrait of bisets of $\wtB$ to
  $(B_a)_{a\in\wtA}/\Aut(B)$.
\end{lem}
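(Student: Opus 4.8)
The statement is the cyclic analogue of Theorem~\ref{main:thm:A} (= Theorem~\ref{thm:portrait} in the dynamical situation), and the plan is to re-run the proof of Theorem~\ref{thm:portrait} and track exactly the one place where it uses $\#A\ge3$. That proof has two halves: (i) existence of a biset $\wtB$ whose minimal portrait projects to a given portrait $(B_a)_{a\in\wtA}$, obtained by lifting $f$ to $f^+\colon(S^2,f^{-1}(A))\selfmap$, matching marked points, and twisting by $\prod_{c\in E}\Push(h_c)$; and (ii) the statement that $\mathcal P$ is a congruence, i.e.\ that two portraits give isomorphic $\wtB$'s up to knitting elements iff they are conjugate. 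Both halves go through verbatim for $G\cong\Z$ \emph{except} that Lemma~\ref{lem:CritPortrUnique}, which pins down the minimal portrait uniquely, fails: its proof explicitly invokes that $G,H$ are non-cyclic and centreless. For a cyclic biset the minimal portrait $(B_c)_{c\in C}$ is \emph{not} unique — by the remark after Lemma~\ref{lem:CritPortrUnique} all $B_c$ equal $B$, but the ambient identification $B=b_0G\cong\Z$ is only well-defined up to the choice of $b_0$, and this ambiguity is precisely $\Aut(B)\cong\Z$ acting by $b\mapsto b+k$.

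First I would isolate the cyclic replacement for Lemma~\ref{lem:CritPortrUnique}: \emph{for a cyclic orbisphere biset $\subscript G B_G$ with $G\cong\Z$, the minimal portrait of bisets is unique up to the $\Aut(B)$-action} (indeed up to conjugacy-and-$\Aut(B)$, but since $G$ is abelian conjugacy is trivial here, so it is literally unique up to $\Aut(B)$). This is immediate: a minimal portrait has all $B_c$ equal to $B$ by Definition~\ref{dfn:PrtrOfBst} once we note $H_c=H=G$, $G_{f_*(c)}=G$, so the only data is the chosen generators, i.e.\ a choice of $b_0\in B$; two choices differ by $b_0\rightsquigarrow b_0+k$, which is exactly an element of $\Aut(B)$. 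Then, second, I would re-examine the two halves of the proof of Theorem~\ref{thm:portrait} with $D=\emptyset$ reductions as there: half (i) uses only the lift $f^+$ and $\Push$ operators, which are insensitive to whether $G$ is cyclic, and produces some $\wtB$; half (ii) concludes $\mathcal P$ preserves left orbits — the only input is Lemma~\ref{lem:CritPortrUnique}, which I now replace by its cyclic version. The net effect is that the fibre of $\mathcal P$ over a $\wtG$-biset $\wtB$, instead of a single conjugacy class of portraits, is a single $\Aut(B)$-orbit of conjugacy classes of portraits — which is exactly what the statement $(B_a)_{a\in\wtA}/\Aut(B)$ records. So the bijection of the lemma is: $\wtB\pmod{\text{knitting}}\leftrightarrow (\text{induced minimal portrait})/\Aut(B)$, and it is well-defined and injective by the cyclic uniqueness lemma, and surjective by half (i).

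Concretely I would write: \emph{Proof.} Identify $B=b_0G$ with $\Z$ via some $b_0$; then $\Aut(B)=\{b\mapsto b+k\}\cong\Z$ acts on portraits as in the statement. Applying Lemma~\ref{lem:main:thm:A:cyclic case}'s hypotheses to Theorem~\ref{thm:portrait}, every $\wtB$ projecting to $B$ carries a minimal portrait, giving a well-defined class in $\{(B_a)_{a\in\wtA}\}/(\text{conjugacy},\Aut(B))$; the only step of the proof of Theorem~\ref{thm:portrait} using $\#A\ge3$ is the appeal to Lemma~\ref{lem:CritPortrUnique}, and it is used solely to say ``the minimal portrait is well-defined up to conjugacy,'' which in the cyclic case holds up to conjugacy and $\Aut(B)$. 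Conversely, given a portrait, the construction in the existence half of Theorem~\ref{thm:portrait} (lift $f$ to $f^+$, identify which preimages of $A$ are the marked points, twist by $\prod\Push(h_c)$) produces a $\wtB$ inducing it, and the knitting-class ambiguity is the only freedom, as there. The two maps are mutually inverse. $\square$

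\textbf{Main obstacle.} The only genuine subtlety — and the step I'd spend the most care on — is checking that the twisting/congruence argument of Theorem~\ref{thm:portrait} really does go through when the $B_c$'s are all of $B$ and the ``which preimage corresponds to which marked point'' step degenerates: with $G$ cyclic, the condition ``the $B_{c'}$ lie in different $H$-orbits'' is vacuous for the parts coming from $A$ (there is one orbit), so one must be careful that the map $\wtC\to f^{-1}(A)$ is still well-defined — it is, because for $c\in C$ the correspondence with $f^{-1}(A)$ is forced by $f_*$ and $\deg$, and the genuine freedom is exactly the $\Aut(B)$-translation, which is why it appears in the quotient. I would also double-check the boundary behavior of the parabolic/completion issues implicit in the $\#A=2$ setting, but since Lemma~\ref{lem:main:thm:A:cyclic case} is stated purely algebraically (forgetful morphism $\wtG\twoheadrightarrow G$, $G\cong\Z$) this does not actually intervene. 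Everything else is a transcription of the proof of Theorem~\ref{thm:portrait} with ``conjugacy'' replaced by ``conjugacy and $\Aut(B)$.''
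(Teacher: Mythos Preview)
Your approach is genuinely different from the paper's, and while the strategy is reasonable, there is a gap in your accounting of where the standing assumption $\#A\ge3$ is used.

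The paper's proof is a one-line trick: mark an extra fixed point of $z^d$ (say $1$), so that the marked set becomes $\{0,1,\infty\}$ and the orbisphere group is no longer cyclic; then Theorem~\ref{main:thm:A} applies directly. Forgetting that extra fixed point afterwards introduces exactly an $\Aut(B)$-worth of ambiguity on the portrait side, which is the quotient in the statement. This sidesteps entirely the need to re-examine the proof of Theorem~\ref{thm:portrait}.

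Your claim that ``the only step of the proof of Theorem~\ref{thm:portrait} using $\#A\ge3$ is the appeal to Lemma~\ref{lem:CritPortrUnique}'' is not accurate. At least two other ingredients rely on $\#A\ge3$: Theorem~\ref{thm:BirmExistUniq} (Birman) is only an epimorphism, not an isomorphism, when $\#A\le2$, so $\Push$ has a kernel; and Lemma~\ref{lem:M* is left free}(1), which gives left-freeness of $M^*(\wtB)$, explicitly uses $\#C\ge3$ in its proof (a deck transformation fixing only two points need not be trivial --- indeed the rotations $z\mapsto\zeta z$ with $\zeta^d=1$ fix $\{0,\infty\}$ and commute with $z^d$). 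The proof of Theorem~\ref{thm:portrait} then invokes left-freeness when it says ``$\mathcal P$ is an intertwiner between left-free bisets with isomorphic acting groups, so it suffices to show that $\mathcal P$ preserves left orbits.'' Without left-freeness this reduction fails, and you would have to track the stabilizers explicitly. These additional failures are all avatars of the same phenomenon (nontrivial centre of $G$, hence nontrivial $\Aut(B)$), so your approach can likely be salvaged by carefully quantifying each of them --- but that is real work you have not done, and the paper's device of adding a point avoids it entirely.
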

\begin{proof}
  Mark an extra fixed point in $(\widehat\C,\{0,\infty\})$ so as to
  remove automorphisms, and apply Theorem~\ref{main:thm:A}.
\end{proof}

To illustrate Lemma~\ref{lem:main:thm:A:cyclic case}, consider
$\wtf(z)=z^d\colon (\widehat\C,\{0,\infty\}\cup D)\selfmap$
and let $(G_a,B_a)_{a\in \wtA}$ be the induced portrait of
bisets on $(\widehat\C,\{0,\infty\})$. Let us also assume that
$d>1$. For $a\in D$ write $B_a=\{b_a\}$ with $b_a\in B\cong \Z$.

Let us consider a twisted map $\wtg=m\wtf n$. For
$a\in D$ let $m_a$ and $n_a$ be the number of times $m$ and $n$ push
$a$ around $0$. If $(G_a,C_a)_{a\in \wtA}$ is the induced
portrait of bisets, then $C_a=\{d m_a+b_d+n_{f_*(a)}\}=\{c_a\}$ for
$a\in A$. For $a\in D$ set
\[x_a\coloneq c_a/d+c_{f_*(a)}/d^2+ c_{f_*^2(a)}/d^3+\dots \mod \Z\in
  \R/\Z.\] Then $(C_a)_{a\in D}$ shadows $(x_a)_{a\in D}$. The map
$\wtg$ is unobstructed if and only if all points $x_a$ are
pairwise different. If $\wtg$ is unobstructed, then
$(x_a)_{d\in D}$ considered up to the action
\[(x_a)_{a\in D} \to (x_a+k)_{a\in D},k\in\Z/(d-1)\Z\]
is a complete conjugacy invariant of $\wtg$.

\begin{bibdiv}
\begin{biblist}
\bibselect{math}
\end{biblist}
\end{bibdiv}

\end{document}